\theoremstyle{plain}
\newtheorem{theorem}{Theorem}[section]
\newtheorem{lemma}[theorem]{Lemma}
\newtheorem{corollary}[theorem]{Corollary}
\newtheorem{prop}[theorem]{Proposition}
\theoremstyle{remark}
\newtheorem{remark}[theorem]{Remark}
\newtheorem{example}[theorem]{Example}
\newtheorem{conj}[theorem]{Conjecture}
\newtheorem*{note*}{Note}
\newtheorem*{remark*}{Remark}
\newtheorem*{example*}{Example}
\theoremstyle{definition}
\newtheorem*{definition*}{Definition}
\newtheorem{definition}[theorem]{Definition}
\newtheorem*{hypothesis*}{Hypothesis}
\newtheorem{theoremA}{Theorem}
\newcommand{\Z}{\mathbb{Z}}
\newcommand{\R}{\mathbb{R}}
\newcommand{\Q}{\mathbb{Q}}
\newcommand{\C}{\mathbb{C}}
\newcommand{\N}{\mathbb{N}}
\newcommand{\F}{\mathbb{F}}
\newcommand{\Irr}{\mathrm{Irr}}
\newcommand{\Ann}{\mathrm{Ann}}
\newcommand{\Gal}{\mathrm{Gal}}
\newcommand{\Tr}{\mathrm{Tr}}
\newcommand{\GL}{\mathrm{GL}}
\newcommand{\cl}{\mathrm{cl}}
\newcommand{\Hyp}{\mathrm{Hyp}}
\newcommand{\ind}{\mathrm{ind}}
\newcommand{\res}{\mathrm{res}}
\newcommand{\infl}{\mathrm{infl}}
\newcommand{\nr}{\mathrm{nr}}
\newcommand{\Hom}{\mathrm{Hom}}
\newcommand{\Maps}{\mathrm{Maps}}
\newcommand{\ram}{\mathrm{ram}}
\newcommand{\et}{\mathrm{\acute{e}t}}
\newcommand{\Spec}{\mathrm{Spec}}
\newcommand{\Fitt}{\mathrm{Fitt}}
\newcommand{\cone}{\mathrm{cone}}
\newcommand{\half}{{\textstyle \frac{1}{2}}}
\newcommand{\PMod}{\mathrm{PMod}}
\newcommand{\perf}{\mathrm{perf}}
\newcommand{\quot}{\mathrm{quot}}
\newcommand{\rad}{\mathrm{rad}}
\newcommand{\squarematzwei}[4]{
 	\begin{pmatrix}
 		{#1} & {#2} \\
 		{#3} & {#4} 
 	\end{pmatrix}
}
\numberwithin{equation}{section}
\title[Integrality of Stickelberger elements]{Integrality of Stickelberger elements\\ 
and annihilation of natural Galois modules}
\author{Nils Ellerbrock}
\address[Ellerbrock]{Universit\"{a}t Duisburg--Essen\\
	Fakult\"{a}t f\"{u}r Mathematik\\
	Thea-Leymann-Str.\ 9\\
	45127 Essen\\
	Germany}
\email{nils.ellerbrock@uni-due.de}
\author{Andreas Nickel}
\address[Nickel]{Universit\"{a}t der Bundeswehr M\"{u}nchen\\
	Fakult\"{a}t f\"{u}r Informatik\\
	Wer\-ner-Hei\-sen\-berg-Weg 39\\
	85579 Neubiberg\\
	Germany}
\email{andreas.nickel@unibw.de}
\urladdr{https://www.unibw.de/timor/mitarbeiter/univ-prof-dr-andreas-nickel}
\subjclass{11R42, 19F27, 11R23}
\keywords{Stickelberger elements, Artin $L$-series, Brumer's conjecture,
Coates--Sinnott conjecture, Galois modules, Stark conjectures, Iwasawa theory,
class groups, $K$-theory}
\date{Version of 28th November 2024}
\begin{document}

\begin{abstract}
	To each Galois extension $L/K$ of number fields with Galois group $G$
	and each integer $r \leq 0$
	one can associate Stickelberger elements in the centre of the 
	rational group ring $\mathbb{Q}[G]$ in terms of values of
	Artin $L$-series at $r$. We show that the denominators of
	their coefficients are bounded by the cardinality of the
	commutator subgroup $G'$ of $G$ whenever $G$ is nilpotent.
	Moreover, we show that, after multiplication by $|G'|$ and away
	from $2$-primary parts,
	they annihilate the class group of $L$ if $r=0$ and higher
	Quillen $K$-groups of the ring of integers in $L$ if $r<0$.
	This generalizes recent progress on conjectures of Brumer
	and of Coates and Sinnott from abelian to nilpotent extensions.
	
	For arbitrary $G$ we show that the denominators remain bounded along
	the cyclotomic $\mathbb{Z}_p$-tower of $L$ for every odd
	prime $p$. This allows us to give an affirmative answer to
	a question of Greenberg and of Gross on the behaviour of 
	$p$-adic Artin $L$-series at zero.
\end{abstract}

\maketitle

\section*{Introduction}

Let $K$ be a number field and let $L$ be a finite Galois extension of $K$
with Galois group $G = \Gal(L/K)$. 
Then $G$ acts naturally
upon the class group $\cl_L$ of $L$
so that we may view the class group as a module over
the integral group ring $\Z[G]$. Its structure is very hard to
determine in general and even constructing unconditional non-trivial
annihilators is very challenging. 

The class number obviously annihilates the class group and is related to
the leading term of the Laurent series expansion at $s=1$ 
(or equivalently at $s=0$ via the functional
equation) of the Dedekind zeta function attached to $L$
by means of the analytic class number formula.
It is therefore natural to consider (special) values of 
Artin $L$-series $L(s,\chi)$
at $s=0$ in order to construct interesting annihilators, where $\chi$
runs over the irreducible complex-valued characters of $G$.

More precisely, let $S$ and $T$ be two finite disjoint sets of places of $K$.
We always assume that $S$ contains all archimedean places and all
places that ramify in $L$. The set $T$ is chosen such that
no non-trivial root of unity in $L$ is congruent to $1$ modulo
every place of $L$ lying above one in $T$. 
The $S$-truncated equivariant Artin $L$-function $\Theta_S(s)$ is
a meromorphic function on the whole complex plane with values in
$\zeta(\C[G])$, where $\zeta(\Lambda)$ denotes the 
centre of a ring $\Lambda$. 
There is a natural isomorphism $\zeta(\C[G]) \simeq \prod_{\chi} \C$,
where the product runs over all complex valued irreducible characters of 
$G$. Then $\Theta_S(s)$ corresponds to the tuple $(L_S(s, \check\chi))_{\chi}$,
where $\check\chi$ denotes the character contragredient to $\chi$,
and $L_S(s,\chi)$ denotes the $S$-truncated Artin $L$-series
associated to $\chi$. The latter is obtained from $L(s,\chi)$
by removing the Euler factors corresponding to the primes in $S$.
The $T$-modified
$S$-truncated equivariant Artin $L$-function $\Theta_{S,T}(s)$ is
then obtained from $\Theta_S(s)$ 
by inserting certain 
modified Euler factors for the primes in $T$.
Let $r \leq 0$ be an integer. Then the equivariant $L$-values
$\theta_S^T(r) := \Theta_{S,T}(r)$ are called Stickelberger elements
and it follows from a result of 
Siegel \cite{MR0285488} that they indeed have rational coefficients.

If $G$ is abelian, then it was independently shown by
Pi.\ Cassou-Nogu\`es \cite{MR524276}, Deligne and Ribet \cite{MR579702}, and by Barsky \cite{MR525346} that one has
\begin{equation} \label{eqn:abelian-int}
	\theta_S^T(r) \in \Z[G]
\end{equation}
and Brumer's conjecture as discussed by Tate \cite{MR782485} simply
asserts that $\theta_{S}^T(0)$ annihilates the class group $\cl_L$.
In the case $K = \Q$ this is equivalent to Stickelberger's theorem from the
late 19th century \cite{MR1510649}. 

In the last decades, Brumer's conjecture has been attacked via Iwasawa theory,
starting with the seminal work of Wiles \cite{MR1053490}. The method
has been refined over the years by several authors 
\cite{MR1750935, MR3383600, MR2805422, MR3980291}, but in almost all cases
the results were dependent on the vanishing of the $\mu$-invariant of 
a certain Iwasawa module. The latter is conjectured to be true, but
this question is still wide open and so this only has led to conditional 
results. A different approach, which assumes the validity of the
relevant special case of the equivariant Tamagawa number conjecture,
is due to Greither \cite{MR2371374}.

Rather recently, ground-breaking work of Dasgupta and Kakde
\cite{dasgupta-kakde} has overcome this problem. They have shown
that Brumer's conjecture holds unconditionally away from its $2$-primary
part. They indeed proved refinements thereof, one of which is known as 
the Brumer--Stark conjecture. Roughly speaking, the latter replaces the
class group by a ray class group $\cl_L^T$ whose definition also involves the
set $T$. Their method is inspired by Wiles' proof of the main conjecture
for totally real fields \cite{MR1053488} and is a very clever 
variant of `Ribet's method' using group ring valued Hilbert modular forms.
The interest in the Brumer--Stark conjecture comes from its relation 
to Hilbert's 12th problem, and in \cite{Hilbert12} 
Dasgupta and Kakde indeed provide an effective method
to construct the maximal abelian extension of a totally real field.

Finally, we also want to mention some unconditional and computational results
on the $2$-primary part of the conjecture for extensions of exponent $2$
and $4$ due to Sands \cite{MR743968} 
and due to Roblot and Tangedal \cite{MR1850628}, respectively.
Extensions of degree $2p$ for odd primes $p$
have been considered in \cite{MR2034123}.

Coates and Sinnott \cite{MR0369322} have formulated an
analogue of Brumer's conjecture when $r<0$. It asserts that $\theta_S^T(r)$
annihilates the higher Quillen $K$-group $K_{-2r}(\mathcal{O}_{L,S})$
of the ring of $S(L)$-integers in $L$. Here $S(L)$ denotes the set of places
of $L$ lying above a place in $S$. Based on the work of Dasgupta
and Kakde, Johnston and the second named author \cite{abelian-MC} 
have shown that
the conjecture of Coates and Sinnott holds away from $2$-primary parts.
Assuming the vanishing of the relevant $\mu$-invariant, this was already
known by work of Burns and Greither \cite{MR2046598}
(see also \cite{MR3383600} for a different approach). 
The $2$-primary part has been considered by Kolster and Taleb
\cite{MR4120473}.\\

In this article we go a step further and no longer assume that
the Galois group $G$ is abelian. Generalizations of Brumer's conjecture
have been formulated by the second named author \cite{MR2976321}
and, independently and in even greater generality, by
Burns \cite{MR2845620}. A further, slightly different approach has been 
developed by Dejou and Roblot \cite{MR3208394}. Here we mainly
follow the treatment in the survey article \cite{nickel-conjectures}.

A main obstacle is the fact that Stickelberger elements do no longer
have integral coefficients. So our first task is to bound their
denominators. The theory of noncommutative Fitting invariants
\cite{MR2609173, MR3092262} has led to the definition of the so-called 
integrality ring $\mathcal{I}(G)$ and the denominator ideal
$\mathcal{H}(G)$ of the integral group ring $\Z[G]$. The former is
defined to be the smallest subring of $\zeta(\Q[G])$ that
contains $\zeta(\Z[G])$ and the reduced norms $\nr(H)$
of all square matrices $H$ with entries in $\Z[G]$.
The definition of the denominator ideal is even less explicit.
It is an ideal in $\mathcal{I}(G)$ such that
\[
	\mathcal{H}(G)\mathcal{I}(G) \subseteq \zeta(\Z[G])
\]
and it measures the failure of `generalized adjoint matrices'
to have coefficients in $\Z[G]$.
The Stickelberger elements are now conjectured to lie in the
integrality ring so that each integer in $\mathcal{H}(G)$
is conjecturally a bound for the denominators.

It is not hard to see that $|G| \in \mathcal{H}(G)$
and indeed $|G| \theta_S^T(r) \in \zeta(\Z[\half][G])$ as follows from
work of Burns and the second named author (we refer the reader to
Theorem \ref{thm:ETNC-cases} for details). It is also known
that $|G| \theta_S^T(0)$ annihilates the class group away from
its $2$-primary part as has been shown by Burns and Johnston
\cite{MR2771125} under certain hypotheses and by the second named author
\cite{StrongStark} in general (note, however, that the hypotheses in
\cite{MR2771125} have been tailored to guarantee the validity of the 
strong Stark
conjecture of Chinburg \cite{MR724009} for totally odd characters
to the state of knowledge at that time, and it is the latter
that has been verified in \cite{StrongStark}). 
For a slightly weaker variant
of these results for the non-abelian Brumer--Stark conjecture
of Dejou and Roblot see \cite{MR3830817}.

Already the abelian case makes it clear that this is not the best
possible bound. For this reason, we first deal with the purely 
algebraic question of which integers lie in the denominator ideal.
The following result gives a complete answer.

\begin{theoremA} \label{thm:denominator-ideal-capZ}
	Let $G$ be a finite group with commutator subgroup $G'$. Then we have an equality
	\[
		\mathcal{H}(G) \cap \Z = |G'| \Z.
	\]
\end{theoremA}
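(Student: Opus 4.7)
The claim is an equality of ideals in $\Z$ and so amounts to the two divisibility statements $|G'|\in\mathcal{H}(G)$ and, conversely, $|G'|\mid\alpha$ for every $\alpha\in\mathcal{H}(G)\cap\Z$.

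For the upper bound $|G'|\in\mathcal{H}(G)$, the plan is to unwind the definition of the denominator ideal: I must show that for every $n\geq 1$, every $H\in M_n(\Z[G])$, and every generalized adjoint $H^*\in M_n(\Q[G])$ of $H$ (so $HH^* = H^*H = \nr(H)\cdot I_n$), the product $|G'|H^*$ lies in $M_n(\Z[G])$. The strategy is to split $\Q[G]$ according to its Wedderburn decomposition, or equivalently via the central primitive idempotents $e_\chi$ indexed by $\Gal(\overline{\Q}/\Q)$-orbits of irreducible characters $\chi$ of $G$. On the abelian Wedderburn factors, those for which $\chi$ is trivial on $G'$, the generalized adjoint coincides with the classical adjugate of the reduction of $H$ to $M_n(\Z[G/G'])$, which is already integral. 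The only denominator appears when one lifts back to $\Z[G]$ via the embedding $\Z[G/G']\hookrightarrow\Q[G]$ given by multiplication by $e_{G'} = \frac{1}{|G'|}\sum_{g\in G'}g$, and this contribution is exactly $|G'|$. On the remaining non-abelian factors one starts from the standard bound $|G|\in\mathcal{H}(G)$ and improves it by a factor $|G/G'|$ using a finer analysis of the non-linear central primitive idempotents $e_\chi$ and their interaction with $\Z[G]$.

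For the lower bound, I would exhibit a $1\times 1$ test element $x\in\Z[G]$ whose generalized adjoint $x^* = \nr(x)x^{-1}\in\Q[G]$ has denominator divisible by $|G'|$; a natural candidate is a suitable small perturbation of $\sum_{g\in G'}g$ chosen to be a unit in $\Q[G]$ yet still picking up the factor $|G'|$ in its abelian component via $e_{G'}$. Once such an $x$ is in hand, any $\alpha\in\mathcal{H}(G)\cap\Z$ must satisfy $\alpha x^*\in\Z[G]$, which immediately forces $|G'|\mid\alpha$. Alternatively, one can work prime by prime: for each prime $p$ dividing $|G'|$, a local analysis inside $\Z_p[G]$ produces an explicit element realising the optimal $p$-adic denominator.

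The main obstacle is the upper bound on the non-abelian Wedderburn factors, where gaining the factor $|G/G'|$ over the trivial bound $|G|$ is non-trivial: it requires genuine use of the noncommutative structure of $\Z[G]$ and cannot be extracted from Artin-induction type estimates alone. The reverse inclusion should, by contrast, reduce to a concrete computation once the correct test element is identified.
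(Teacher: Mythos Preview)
Your proposal correctly identifies the two inclusions to be proved, but the hard direction ($|G'|\in\mathcal{H}(G)$) contains a genuine gap, which you yourself flag in your final paragraph.

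For the upper bound, your plan is to split $\Q[G]$ into its abelian and non-abelian Wedderburn blocks. The abelian block argument is fine: the projection $\Z[G]\to\Z[G/G']$ sends $H$ to a matrix over a commutative ring, whose classical adjugate is integral, and lifting back via $e_{G'}=|G'|^{-1}\Tr_{G'}$ introduces exactly a factor $|G'|$. But for the non-abelian blocks you only assert that one can improve the easy bound $|G|$ by a factor $[G:G']$ via ``a finer analysis of the non-linear central primitive idempotents''. No such analysis is supplied, and there is no direct way to make this work: the idempotents $e_\chi$ for non-linear $\chi$ have denominator $|G|$, not $|G'|$, and nothing in your outline explains why the required cancellations occur. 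The paper proceeds by an entirely different route, namely an induction on $|G|$ driven by Clifford theory. One fixes a normal subgroup $N\supseteq G'$, groups the irreducible characters of $G$ according to the $G$-orbit of an irreducible constituent $\eta$ of their restriction to $N$, and shows $|N|\,\nr(H)\,e(\eta)\in\zeta(R[G])$ for each such packet. The Clifford-theoretic structure (stabilizers $G_\eta$, the subgroup $U_\psi$, and ultimately an explicit integral representation realising a Morita equivalence $F[G]e(\eta)\simeq M_{\psi(1)}(F[G/N])$) allows one to pass to strictly smaller groups. This reduction is the substance of the argument and is not visible from the Wedderburn decomposition alone.

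For the lower bound you search for a \emph{unit} $x$ with $x^\ast=\nr(x)x^{-1}$ having denominator $|G'|$. This is more complicated than necessary, because the generalized adjoint is defined for any matrix via the reduced characteristic polynomial, not just for units. The paper simply takes the $1\times 1$ matrix $H=0$: on the $\chi$-block the reduced characteristic polynomial is $X^{\chi(1)}$, whence
\[
0^\ast=\sum_{\chi(1)=1}e_\chi=\frac{1}{|G'|}\Tr_{G'},
\]
and any $\alpha\in\mathcal{H}(G)\cap\Z$ must render $\alpha|G'|^{-1}\Tr_{G'}$ integral, forcing $|G'|\mid\alpha$.
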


The proof is by induction on the cardinality of $G$ and makes heavy use
of Clifford theory. The method also allows us to show that
the integrality ring $\mathcal{I}(G)$ is strictly larger than
$\zeta(\Z[G])$ unless $G$ is abelian (in which case both rings
are obviously equal). This observation is not needed in later sections,
but we feel that it might be of independent interest.

Given Theorem \ref{thm:denominator-ideal-capZ}, the integrality
conjecture predicts that $|G'|\theta_S^T(r)$ has integral
coefficients. We refine the method of proof of Theorem 
\ref{thm:denominator-ideal-capZ} using \eqref{eqn:abelian-int}
and the results of Dasgupta and Kakde \cite{dasgupta-kakde}
and of Johnston and the second named author \cite{abelian-MC}
as our base case. The main result of this article is the following.

\begin{theoremA} \label{thm:intro-main-result}
	Let $L/K$ be a Galois extension of number fields such that $G = \Gal(L/K)$
	is nilpotent. Let $r \leq 0$ be an integer. Then we have
	\[
	|G'| \theta_S^T(r) \in \zeta(\Z[G])
	\]
	and moreover the following holds.
	\begin{enumerate}
		\item 
		If $r=0$, then $|G'| \theta_S^T(0)$ annihilates $\Z[\half] \otimes_{\Z} \cl_{L}^{T}$.
		\item
		If $r<0$, then $|G'| \theta_S^T(r)$ annihilates $\Z[\half] \otimes_{\Z} 
		K_{-2r}(\mathcal{O}_{L,S})$.
	\end{enumerate}
\end{theoremA}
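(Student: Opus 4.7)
My plan is to argue by induction on $|G|$, with base case $G$ abelian. When $G$ is abelian, $|G'|=1$, and the three assertions specialise respectively to the classical integrality \eqref{eqn:abelian-int}, to the theorem of Dasgupta and Kakde \cite{dasgupta-kakde} in the case $r=0$, and to its extension to higher $K$-groups by Johnston and the second named author \cite{abelian-MC} in the case $r<0$. For the inductive step I would refine the Clifford-theoretic induction already used in the proof of Theorem \ref{thm:denominator-ideal-capZ}, taking these abelian statements as the base clause.

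The integrality $|G'|\theta_S^T(r)\in\zeta(\Z[G])$ I would obtain in two steps. First, since every finite nilpotent group is an M-group, each irreducible character $\chi$ of $G$ is monomial, $\chi=\ind_{H_\chi}^G\psi_\chi$ with $\psi_\chi$ a linear character of a subgroup $H_\chi\leq G$. The inductive behaviour of Artin $L$-series then identifies, under $\zeta(\C[G])\cong\prod_\chi\C$, the $\chi$-coordinate of $\theta_S^T(r,L/K)$ with the $\check\psi_\chi$-coordinate of the abelian Stickelberger element for $L/L^{H_\chi}$, which is an algebraic integer by \eqref{eqn:abelian-int}. Marrying this character-wise input to the Clifford-theoretic compatibility relations used in the proof of Theorem \ref{thm:denominator-ideal-capZ}---but now applied to Stickelberger values rather than to integer scalars---one deduces $\theta_S^T(r)\in\mathcal{I}(G)$. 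Second, since Theorem \ref{thm:denominator-ideal-capZ} supplies $|G'|\in\mathcal{H}(G)$, the defining containment $\mathcal{H}(G)\cdot\mathcal{I}(G)\subseteq\zeta(\Z[G])$ yields the stated integrality at once.

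For the annihilation statements (i) and (ii) I would localise at each odd prime $p$ and exploit the direct product decomposition $G=G_p\times G^{(p')}$ afforded by nilpotency, projecting to $\eta$-isotypic components under characters $\eta$ of $G^{(p')}$. On each such component the problem reduces to showing that a suitable $\eta$-twist of the Stickelberger element annihilates a finite $\mathcal{O}_\eta[G_p]$-module, where $\mathcal{O}_\eta$ is the ring of integers of $\Q_p(\eta)$. If $G_p$ is itself non-abelian I would continue by monomial induction inside $G_p$, so that the abelian Brumer--Stark and Coates--Sinnott theorems applied to the subextensions $L/L^{H}$ (for $H$ ranging over the subgroups through which monomial characters factor) furnish integral annihilators at each abelian layer; these are then combined into a single $\Z_p[G]$-annihilator via the non-commutative Fitting-invariant formalism of \cite{MR2609173,MR3092262}, which introduces exactly the denominator $|G_p'|$ at the prime $p$. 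The multiplicativity $|G'|=\prod_p|G_p'|$ built into the nilpotent decomposition then produces $|G'|\theta_S^T(r)$ as an annihilator over $\Z[\half]\otimes_\Z\Z[G]$.

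The principal obstacle, and the point at which the essential new work must sit, is this assembly step: one must show that the abelian annihilators appearing at every layer of the monomial induction combine coherently under Clifford theory into a genuine annihilator over $\Z[G]$, with denominator bounded sharply by $|G'|$. This is precisely where Theorem \ref{thm:denominator-ideal-capZ} becomes indispensable: the sharp identity $\mathcal{H}(G)\cap\Z=|G'|\Z$ is what permits the induction to close, whereas the softer bound $|G|\in\mathcal{H}(G)$ available unconditionally from Theorem \ref{thm:ETNC-cases} would be too weak to render the resulting factor optimal. The $p$-adic ETNC data collected in Theorem \ref{thm:ETNC-cases} is a secondary ingredient, needed to bridge from the abelian annihilation inputs over each $L/L^{H}$ to annihilation over the full non-abelian extension $L/K$.
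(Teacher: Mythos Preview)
Your overall plan---induction on $|G|$ with abelian base case drawn from \cite{dasgupta-kakde} and \cite{abelian-MC}, and monomiality of nilpotent groups as the engine of the inductive step---matches the paper. But the execution diverges from the paper at two points, and the first is a genuine gap.

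The central problem is your claim that the character-wise integrality of the $\chi$-coordinates, together with ``Clifford-theoretic compatibility relations'', yields $\theta_S^T(r)\in\mathcal{I}(G)$. This is precisely the Integrality Conjecture (Conjecture~\ref{conj:integrality}) for nilpotent groups, and the paper does \emph{not} prove it. Knowing that each $\chi$-component is an algebraic integer only places $\theta_S^T(r)$ in the centre of a maximal order $\zeta(\mathcal{M}(G))$; membership in $\mathcal{I}(G)$ requires exhibiting it as a $\zeta(\Z[G])$-linear combination of reduced norms of integral matrices, and nothing in your sketch supplies such an expression. The proof of Theorem~\ref{thm:denominator-ideal-capZ} does not provide a mechanism for this either: that proof shows $|G'|\nr(H)\in\zeta(R[G])$ \emph{directly} for each matrix $H$, without ever factoring through a statement of the form ``$\nr(H)$ lies in some intermediate ring''. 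The paper therefore abandons the two-step route $\theta_S^T(r)\in\mathcal{I}(G)$ followed by multiplication by $|G'|\in\mathcal{H}(G)$, and instead proves $|G'|\theta_S^T(r)\epsilon_\chi\in\zeta(\Z[G])$ directly, by running the Clifford induction of Theorem~\ref{thm:integrality-mainstep} with Stickelberger elements in place of reduced norms and with the abelian integrality \eqref{eqn:abelian-int} replacing the trivial base case. The monomiality step is used not to produce algebraic-integer coordinates but to find, when $\psi$ is non-linear, a proper \emph{normal} subgroup $H$ (via \cite[5.2.4]{MR1357169}) from which $\psi$ is induced, so that the triple $(G,N,\chi)$ can be replaced by triples $(H,N\cap H,{}^c\lambda)$ of strictly smaller order.

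Your separate treatment of annihilation via the Sylow splitting $G=G_p\times G^{(p')}$ and $\eta$-isotypic decomposition is also not what the paper does, and the ``assembly step'' you flag as the obstacle is left entirely unspecified. The paper instead runs integrality and annihilation through a \emph{single} Clifford-theoretic induction: at each reduction step (to $G_\eta$, then to $U_\psi$, then either to the abelian quotient $G/G'$ when $\psi$ is linear or to $H$ when $\psi$ is monomial from $H$) one simply views the Galois module in question as a module over the smaller group by restriction, and checks that the Stickelberger element transforms correctly because Artin $L$-series are inductive. No Fitting-invariant machinery and no ETNC input from Theorem~\ref{thm:ETNC-cases} is used anywhere in the proof of Theorem~\ref{thm:main-result}; the only external inputs are the abelian theorems of Dasgupta--Kakde and Johnston--Nickel at the base of the induction.
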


We refer the reader to Theorem \ref{thm:main-result}
for a more precise statement. We stress that
replacing the factor $|G|$ by $|G'|$ produces a 
considerably stronger result. This is similar in spirit to the fact that
the equivariant Tamagawa number conjecture
(as formulated by Burns and Flach \cite{MR1884523})
for the motive $h^0(\Spec(L)(r))$
with coefficients in $\Z[G]$ is stronger than with coefficients
in a maximal order containing $\Z[G]$.

We do not know yet how to remove the condition on $G$ to be nilpotent.
Roughly speaking, the reason is that Artin $L$-series 
behave well under induction
and inflation of characters, but not under Morita equivalence --
in contrast to denominator ideals. That is why the condition appears
in Theorem \ref{thm:intro-main-result}, but not in
Theorem \ref{thm:denominator-ideal-capZ}.\\

We finally give an application to Iwasawa theory.
Let $K$ be a totally real number field and let $p$ be an odd prime.
Let $\psi$ be a totally even Artin character of the absolute 
Galois group of $K$.
Recall that the $S$-truncated $p$-adic Artin $L$-series
$L_{p,S}(s, \psi) : \Z_{p} \rightarrow \C_{p}$ attached to $\psi$
is the unique $p$-adic meromorphic
function with the property that for each strictly negative 
integer $r$ we have
\begin{equation}\label{eqn:intro-interpolation-property}
	L_{p,S}(r, \psi) = L_{S}(r, \psi \omega^{r-1}),
\end{equation}
where $\omega$ denotes the Teichmüller character.
If $\psi$ is linear, then \eqref{eqn:intro-interpolation-property} is also valid when $r=0$ and the construction of $p$-adic Artin $L$-series is due to
Pi.\ Cassou-Nogu\`es \cite{MR524276}, Deligne and Ribet \cite{MR579702}, and  Barsky \cite{MR525346}. Greenberg \cite{MR692344} 
then generalized the construction 
to arbitrary $\psi$ by means of Brauer induction. As the
$L$-values at $r=0$ may vanish, we cannot deduce that
\eqref{eqn:intro-interpolation-property} still holds at $r=0$, as we would
possibly divide by zero. Greenberg \cite[\S 4]{MR692344} writes that
`it would be interesting to prove that $L_p(s, \psi)$ is in fact analytic
at $s=0$ and has the correct value', which is also predicted by a
conjecture of Gross \cite[Conjecture 2.12b]{MR656068} on leading terms
of $p$-adic $L$-functions.

It is clear that \eqref{eqn:intro-interpolation-property} holds
with $r=0$ if the character is monomial, i.e.\ induced from a linear
character. In general, it is known by work of Burns \cite{MR4195656} that
the left hand side of \eqref{eqn:intro-interpolation-property} vanishes whenever the right hand side does. This is ultimately a consequence
of the main conjecture proved by Wiles \cite{MR1053488}. If Gross's `order of vanishing
conjecture' \cite[Conjecture 2.12a]{MR656068} holds 
for all irreducible characters,
then \eqref{eqn:intro-interpolation-property} with $r=0$ follows from work
of Dasgupta, Kakde and Ventullo \cite{MR3866887} as has been observed
by Burns \cite[Theorem 2.6]{MR4195656} (see also 
\cite[Corollary 16.4.18]{nickel-conjectures}).

Since every irreducible character of a nilpotent group is monomial
by \cite[Theorem 11.3]{MR632548},
we do not want to restrict ourselves to nilpotent extensions. 
So we cannot apply Theorem
\ref{thm:intro-main-result} directly. However, we can still prove
that the denominators of
the Stickelberger elements remain bounded
along the cyclotomic $\Z_p$-tower as predicted by the integrality
conjecture and our Theorem \ref{thm:denominator-ideal-capZ}
(we stress that the easy bound $|G| \in \mathcal{H}(G)$ would not be
sufficient for this purpose).
As a consequence of Proposition \ref{prop:bounding-denominators} (ii)
we have the following result, which is
Theorem \ref{thm:bounded-denominators} below.
We write $\theta_S^T(L/K,r)$ if the Stickelberger
element $\theta_S^T(r)$ is associated with the extension $L/K$.

\begin{theoremA}
	Let $p$ be an odd prime and let $r \leq 0$ be an integer.
	Let $L/K$ be a Galois CM extension of number fields
	and denote the $n$-th layer of the cyclotomic
	$\Z_p$-extension of $L$ by $L_n$. 
	Then the denominators
	of the Stickelberger elements 
	$\theta_{S}^T(L_n/K,r) \in \Q_p[\mathrm{Gal}(L_n/K)]$
	are bounded by a constant that does not depend on either
	$n$ or $r$.
\end{theoremA}

In this way, we obtain a new direct construction of $p$-adic Artin
$L$-series with the additional property that the value at zero
is as predicted by Greenberg and by Gross. 
This essentially follows by construction so that
we obtain the following result, which is Corollary
\ref{cor:Gross-conjecture} below.

\begin{theoremA} \label{thm:Gross-conj}
	For each totally even character $\psi$ and each odd prime $p$
	one has an equality
	\[
	L_{p,S}(0,\psi) = L_S(0, \psi \omega^{-1}).
	\]
\end{theoremA}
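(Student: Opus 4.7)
The plan is to prove Theorem \ref{thm:Gross-conj} essentially by construction, once the bounded-denominator statement along the cyclotomic $\mathbb{Z}_p$-tower announced in the introduction is in hand. The subtlety is that Greenberg's definition of $L_{p,S}(s,\psi)$ via Brauer induction can produce $0/0$ indeterminacies at $s=0$, so its value there is not determined by the defining interpolation \eqref{eqn:intro-interpolation-property}; a direct Iwasawa-theoretic construction from Stickelberger elements bypasses this problem, provided one has a uniform denominator bound along the tower.

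First, I would fix a finite Galois extension $L/K$ through which $\psi$ factors, and enlarge $L$ so that $L \cap K_\infty = K$, where $K_\infty/K$ is the cyclotomic $\mathbb{Z}_p$-extension. Setting $H := \Gal(L/K)$, $L_n := L K_n$, $G_n := \Gal(L_n/K)$ and $\mathcal{G} := \Gal(L_\infty/K) \simeq H \times \Gamma$ with $\Gamma := \Gal(K_\infty/K)$, one has $G_n' = \mathcal{G}' = H'$ for all $n$. The Stickelberger elements $\theta_n := \theta_S^T(0)$ attached to $L_n/K$ are compatible under the transition maps $G_{n+1} \twoheadrightarrow G_n$ and therefore assemble into an inverse limit $\Theta := \varprojlim \theta_n$ lying a priori in the total ring of fractions of $\zeta(\mathbb{Z}_p[[\mathcal{G}]])$.

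By the cyclotomic-tower strengthening of Theorem \ref{thm:denominator-ideal-capZ} announced in the introduction, $|H'|\theta_n \in \zeta(\mathbb{Z}_p[G_n])$ uniformly in $n$; hence $|H'|\Theta \in \zeta(\mathbb{Z}_p[[\mathcal{G}]])$, and $\Theta$ is a pseudo-measure on $\mathcal{G}$ in the sense of Serre. The standard Iwasawa-theoretic recipe for extracting $p$-adic $L$-functions from such a pseudo-measure then yields a $p$-adic meromorphic function $\mathcal{L}(s,\psi)$ on $\mathbb{Z}_p$. By construction $\mathcal{L}(r,\psi) = L_S(r,\psi\omega^{r-1})$ for every integer $r \leq 0$, and the key point is that the value at $r=0$ is simply the direct evaluation of $\Theta$ at the finite-order character $\psi\omega^{-1}$ of $\mathcal{G}$, so no $0/0$ cancellation issue arises.

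It remains to identify $\mathcal{L}(s,\psi)$ with Greenberg's $L_{p,S}(s,\psi)$. Both are $p$-adic meromorphic on $\mathbb{Z}_p$ and agree at every strictly negative integer, by \eqref{eqn:intro-interpolation-property} for the latter and by construction for the former; since $\mathbb{Z}_{<0}$ is dense in $\mathbb{Z}_p$ and a non-zero $p$-adic meromorphic function is determined by its values on a dense subset of its domain of analyticity, one concludes $\mathcal{L}(s,\psi) = L_{p,S}(s,\psi)$, and Theorem \ref{thm:Gross-conj} follows by specializing to $s=0$. The principal technical obstacle is, as stressed in the introduction, the cyclotomic-tower bounded-denominator statement itself: it is precisely this result that turns $\Theta$ into a genuine pseudo-measure and justifies the direct evaluation at the finite-order character $\psi\omega^{-1}$; once that is granted, the construction of $\mathcal{L}$, its identification with $L_{p,S}(s,\psi)$, and the formula at $s=0$ are essentially formal consequences.
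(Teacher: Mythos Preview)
Your overall strategy matches the paper's: build the limit $\Theta = \varprojlim_n \theta_S^T(L_n/K,0)$ using the bounded-denominator result, extract from it a $p$-adic function whose value at $s=0$ is $L_S(0,\psi\omega^{-1})$ by direct evaluation at a finite-order character, and then identify this function with Greenberg's $L_{p,S}(s,\psi)$. There are, however, two genuine gaps.

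First, the reduction ``enlarge $L$ so that $L\cap K_\infty = K$'' does not work. Enlarging $L$ can only enlarge $L\cap K_\infty$; and if $\psi$ has a nontrivial type-$W$ constituent (or more generally if the $\Gamma$-action on $H$ is nontrivial), no choice of $L$ through which $\psi$ factors satisfies $L\cap K_\infty = K$. The paper therefore works with the general semidirect product $\mathcal{G} = H\rtimes\Gamma$ and proves the uniform denominator bound in that setting (Proposition~\ref{prop:bounding-denominators}), obtaining $|H|$ rather than $|H'|$ as the bound. This is easily repaired in your outline, but as written the first paragraph is incorrect.

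Second, and more seriously, the assertion ``by construction $\mathcal{L}(r,\psi)=L_S(r,\psi\omega^{r-1})$ for every integer $r\le 0$'' is not justified for $r<0$. The element $\Theta$ is a limit of Stickelberger elements \emph{at $s=0$}; evaluating it at a finite-order character $\chi$ of $\mathcal{G}$ gives $L_S(0,\check\chi)$, not an $L$-value at $s=r$ for $r<0$. To obtain $L_S(r,\psi\omega^{r-1})$ you must evaluate at the infinite-order character involving $\kappa^{-r}$, and there is no a priori reason the resulting number is the complex $L$-value at $r$; that is precisely the interpolation property, and proving it for nonlinear $\psi$ is the content of Theorem~\ref{thm:comparing-series}. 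The paper establishes this by showing that $g_{\chi,S}$ and Greenberg's power series agree for linear $\chi$ (where both are known to interpolate correctly) and then invoking Brauer induction together with the good behaviour of both sides under induction and inflation (equations~\eqref{eqn:behaviour-inflation},~\eqref{eqn:behaviour-induction}). Your density argument on $\mathbb{Z}_{<0}$ would be valid \emph{if} you already knew the interpolation at $r<0$ for $\mathcal{L}$, but that is the missing step, so the density argument does not circumvent Brauer induction.
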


As a further application, we give a new proof of the $p$-adic Artin conjecture
\cite[p.\ 82]{MR692344}, which does not rely on the validity of the
main conjecture.

\subsection*{Notation and conventions}
All rings are assumed to have an identity element and all modules are assumed
to be left modules unless otherwise  stated. We denote the set of all $m \times n$
matrices with entries in a ring $R$ by $M_{m \times n} (R)$ and in the case $m=n$
the group of all invertible elements of $M_n(R) := M_{n \times n} (R)$ by $\GL_{n}(R)$.
We let $\mathbf{1}_{n \times n}$ be the $n \times n$ identity matrix.
We write $\zeta(R)$ for the centre of a ring $R$ and $\Ann_{R}(M)$
for the annihilator ideal of an $R$-module $M$.
We shall sometimes abuse notation by using the symbol $\oplus$ to 
denote the direct product of rings or orders.

If $q$ is a prime power, we denote the finite field with $q$ elements by $\F_q$.
If $K$ is a field, we choose a separable closure $K^c$ of $K$
and set $G_K := \Gal(K^c/K)$.

\subsection*{Acknowledgments}
The authors wish to thank Samit Dasgupta and Mahesh Kakde for helpful
correspondence concerning their work on the Brumer--Stark conjecture 
\cite{dasgupta-kakde} and 
Xavier-Fran\c{c}ois Roblot for answering our questions on
his work with Dejou \cite{MR3208394}. We are very grateful to
Henri Johnston for a thorough reading
of the first author's PhD thesis \cite{Nils_thesis} and his useful comments.
Moreover, we thank the anonymous referee for her or his
suggestions to further improve the article.
The first named author thanks his office mates 
Antonio Mejías Gil and Gürkan Dogan for several fruitful discussions.

The authors acknowledge financial support provided by the 
Deutsche Forschungsgemeinschaft (DFG) through the research grant
`Integrality of Stickelberger elements' (project no.\ 399131371).
The second named author was also supported through
the DFG Heisenberg programme (project no.\ 437113953).

\section{The integrality ring and the denominator ideal}

\subsection{Reduced norms and the integrality ring} \label{subsec:int-ring}
Let $\mathcal{O}$ be a noetherian integrally closed domain 
with field of quotients $F$ and
let $A$ be a finite dimensional separable $F$-algebra. 
If $e_{1}, \ldots, e_{t}$ are the central primitive
idempotents of $A$ then
\[
A = A_{1} \oplus \cdots \oplus A_{t}
\]
where $A_{i}:=Ae_{i}=e_{i}A$.
Each $A_{i}$ is isomorphic to an algebra of $n_{i} \times n_{i}$ matrices over a skewfield $D_{i}$,
and $F_{i}:=\zeta(A_{i})=\zeta(D_{i})$ is a finite separable 
field extension of $F$; in particular, each $A_{i}$ is a central
simple $F_{i}$-algebra. We denote the Schur index of $D_{i}$ by $s_{i}$ so that $[D_{i}:F_{i}]=s_{i}^{2}$.
The reduced norm map
\[
\nr = \nr_{A}: A \longrightarrow \zeta(A)=F_{1} \oplus \cdots \oplus F_{t}
\]
is defined componentwise (see \cite[\S 9]{MR1972204})
and extends to matrix rings over $A$ in the obvious way.

Let $\Lambda$ be an $\mathcal{O}$-order in $A$. Then $\Lambda$
is noetherian and by \cite[Corollary 10.4]{MR1972204} 
we may choose a maximal $\mathcal{O}$-order
$\Lambda'$ in $A$ that contains $\Lambda$. Moreover, $\Lambda'$
decomposes into a finite product
\[
	\Lambda' = \Lambda_1' \oplus \cdots \oplus \Lambda_t'
\]
where $\Lambda_i' := \Lambda' e_i$. Denote the integral closure
of $\mathcal{O}$ in $F_i$ by $\mathcal{O}_i$. 
By \cite[Theorem 10.5]{MR1972204} each 
$\Lambda_i'$ is a maximal $\mathcal{O}_i$-order
in the $F_i$-algebra $A_i$ and one has $\zeta(\Lambda_i') = \mathcal{O}_i$.
We point out that the reduced norm maps an element in $\Lambda$
or, more generally, in $M_{n}(\Lambda)$ into
$\zeta(\Lambda') = \mathcal{O}_1 \oplus \cdots \oplus \mathcal{O}_t$
by \cite[Theorem 10.1]{MR1972204},
but not necessarily into $\zeta(\Lambda)$.
Following \cite[\S 3.4]{MR3092262}, 
we define a $\zeta(\Lambda)$-submodule of $\zeta(A)$ by
\[
	\mathcal{I}(\Lambda) := \langle \nr(H) \mid H \in M_{n}(\Lambda), 
	n \in \N  \rangle_{\zeta(\Lambda)}.
\]
As the reduced norm is multiplicative, this is in fact an $\mathcal{O}$-order in $\zeta(A)$ 
contained in $\zeta(\Lambda')$. We call $\mathcal{I}(\Lambda)$ the
\emph{integrality ring} of $\Lambda$.
It is clear that $\mathcal{I}(\Lambda)$ always contains the centre
$\zeta(\Lambda)$, and that $\mathcal{I}(\Lambda) = \zeta(\Lambda)$
whenever $\Lambda$ is commutative. In general, however,
the inclusion $\zeta(\Lambda) \subseteq \mathcal{I}(\Lambda)$
might be strict. We consider some examples below.\\

We are mainly interested in the case $\mathcal{O} = \Z$
and $\Lambda = \Z[G]$, where $G$ is a finite group. Then $\Lambda$
is a $\Z$-order in the $\Q$-algebra $A = \Q[G]$ which is not maximal
unless $G = 1$. We will also consider certain variations thereof.
In particular, if $p$ is a prime, we shall be interested in the
case $\mathcal{O} = \Z_p$, $\Lambda = \Z_p[G]$ and $A = \Q_p[G]$.
For brevity, we set $\mathcal{I}(G) := \mathcal{I}(\Z[G])$
and $\mathcal{I}_p(G) := \mathcal{I}(\Z_p[G])$.

For any commutative ring $\mathcal{O}$ the group ring $\mathcal{O}[G]$
is free of rank $|G|$ as an $\mathcal{O}$-module. Its centre
$\zeta(\mathcal{O}[G])$ is also a free $\mathcal{O}$-module
of rank $c(G)$, where $c(G)$ denotes the number of conjugacy classes of $G$.
More precisely, if $\mathcal C_i$, $1 \leq i \leq c(G)$,
are the distinct conjugacy classes,
then $C_i := \sum_{g \in \mathcal{C}_i} g$ constitute
an $\mathcal{O}$-basis of $\zeta(\mathcal{O}[G])$.
We denote the commutator subgroup of $G$ by $G'$.

\begin{example}
	Let $G = D_{8} = \langle a, x \mid a^{8} = x^{2} = 1 , x a x = a^{-1} \rangle$ be the dihedral group of order $16$. Then the commutator subgroup 
	$D_{8}' = \langle a^2 \rangle$ is cyclic of order $4$. 
	Furthermore, the conjugacy classes of $D_{8} $ are given by 
	\[	
		\mathcal C_{1} = \{ 1 \}, \quad \mathcal C_{2}  = \{ a^{4} \}, 
		\quad \mathcal C_{3} = \{a^{2}, a^{6} \}, 
		\quad \mathcal C_{4} = \{a, a^{7} \}, 
	\]
	\[
	 	\mathcal C_{5} = \{ a^{3}, a^{5} \}, 
	 	\quad \mathcal C_{6} = \{ x, a^{2}x, a^{4}x,a^{6}x\}, 
	 	\quad \mathcal C_{7} = \{ ax, a^{3}x,a^{5}x,a^{7}x\}. 
	\]
	An easy calculation gives that
	\[ 
		\nr(a) = \frac{1}{4} (3 C_{1} - C_{2} - C_{3} + C_{4} + C_{5})
		\in \mathcal{I}(D_8).
	\]
	On the one hand, we see that $\zeta(\Z[D_8])$ is strictly contained in
	$\mathcal{I}(D_8)$. On the other hand, we have that
	$|D_{8}'| \nr(a) = 4 \nr(a) \in \zeta(\Z[D_8])$. 
\end{example}

\begin{example} \label{ex:special-linear-group}
	Let $G = \mathrm{SL}_2(\F_3)$ be the special linear group of degree $2$
	over $\F_3$. This group is generated by the matrices
	\[
		\alpha = \squarematzwei{0}{-1}{1}{1}, \quad  \beta = \squarematzwei{1}{1}{1}{-1}, \quad \gamma = \squarematzwei{1}{1}{0}{1}
	\]
	and indeed has the presentation
	\[
	 	\mathrm{SL}_2(\F_3) = \langle \alpha, \beta, \gamma \mid \alpha^{4} = \beta^{4} = \gamma^{3} = 1 , \beta \gamma = \gamma \alpha, \gamma \beta = \alpha \beta \gamma \rangle. 
	\]
	There are seven conjugacy classes. We let 
	$\mathcal{C}_1 = \left\{\mathbf{1}_{2 \times 2}\right\}$ and
	$\mathcal{C}_2 = \left\{-\mathbf{1}_{2 \times 2}\right\}$. Moreover,
	we denote the conjugacy classes containing $\gamma$ and $\gamma^2$
	by $\mathcal{C}_3$ and $\mathcal{C}_4$, and those containing
	$\alpha^2 \gamma$ and $\alpha^2 \gamma^2$ by $\mathcal{C}_5$ and
	$\mathcal{C}_6$, respectively. Finally, $\mathcal{C}_7$ denotes
	the class that contains $\alpha$ and hence $\beta = \gamma \alpha \gamma^{-1}$. 
	The commutator subgroup equals $\mathcal{C}_1 \cup \mathcal{C}_2
	\cup \mathcal{C}_7$ and has cardinality $8$. Then clearly
	$\nr(\alpha) = \nr(\beta) = 1$. However, a lengthy computation
	shows that
	\[
		\nr(\gamma) =  \frac{1}{8} (3C_{1} + 3C_{2} + C_{3} - 2C_{4} + 2C_{5} + C_{6} - C_{7}) \in \mathcal{I}(\mathrm{SL}_2(\F_3)).
	\]
	We see that the integrality ring $\mathcal{I}(\mathrm{SL}_2(\F_3))$ 
	is again strictly larger
	than $\zeta(\Z[\mathrm{SL}_2(\F_3)])$ and that 
	$|\mathrm{SL}_2(\F_3)'| \nr(\gamma) = 8 \nr(\gamma) \in
	\zeta(\Z[\mathrm{SL}_2(\F_3)])$.	
\end{example}

\begin{lemma} \label{lem:int-ring-under-projections}
	Let $G$ be a finite group and let $N$ be a normal subgroup of $G$.
	Let $\mathcal{O}$ be a noetherian integrally closed domain 
	with field of quotients $F$ whose characteristic does not divide 
	the cardinality of $G$. Then the canonical projection $\pi: G \rightarrow
	\overline{G} := G/N$ induces a map $\pi: F[G] \rightarrow F[\overline{G}]$
	such that the following hold.
	\begin{enumerate}
		\item 
		We have inclusions $\pi(\zeta(\mathcal{O}[G])) \subseteq 
		\zeta(\mathcal{O}[\overline{G}])$ and 
		$\pi(\mathcal{I}(\mathcal{O}[G])) \subseteq 
		\mathcal{I}(\mathcal{O}[\overline{G}])$.
		\item 
		If $\pi: \zeta(\mathcal{O}[G]) \rightarrow
		\zeta(\mathcal{O}[\overline{G}])$ is surjective, then  
		$\pi: \mathcal{I}(\mathcal{O}[G]) \rightarrow
		\mathcal{I}(\mathcal{O}[\overline{G}])$ is also surjective.
		\item 
		The map $\pi: \mathcal{I}(\mathcal{O}[G]) \rightarrow
		\mathcal{I}(\mathcal{O}[\overline{G}])$ is surjective whenever
		$\overline{G}$ is abelian.
	\end{enumerate}
\end{lemma}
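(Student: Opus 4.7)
The plan is to reduce all three assertions to a single compatibility statement: that the reduced norm commutes with $\pi$ in the sense that $\pi(\nr_{F[G]}(M)) = \nr_{F[\overline{G}]}(\pi(M))$ for every $M \in M_{n}(\mathcal{O}[G])$. Since $\mathrm{char}(F)$ does not divide $|N|$, the element $e_{N} := |N|^{-1} \sum_{n \in N} n$ is a central idempotent of $F[G]$, and the resulting decomposition $F[G] = F[G]e_{N} \oplus F[G](1-e_{N})$ identifies $F[\overline{G}]$ with the direct summand $F[G]e_{N}$ via $\overline{g} \mapsto g e_{N}$. Under this identification the projection $\pi$ corresponds to multiplication by $e_{N}$.

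Accordingly, the primitive central idempotents $e_{1}, \ldots, e_{t}$ of $F[G]$ split into the set with $e_{i}e_{N} = e_{i}$ (those corresponding to irreducible representations on which $N$ acts trivially) and the set with $e_{i}e_{N} = 0$. The first subset constitutes the primitive central idempotents of $F[\overline{G}]$, and the associated simple components $A_{i}$ are preserved under the identification above. Since the reduced norm is computed componentwise on the Wedderburn decomposition, the desired formula follows by comparing both sides on each simple factor and observing that the factors outside $F[\overline{G}]$ are annihilated by $e_{N}$. Granting this, part (i) is essentially formal: the projection carries conjugacy class sums of $G$ to non-negative integer multiples of conjugacy class sums of $\overline{G}$, yielding the inclusion of centres, while the compatibility formula shows that $\pi$ carries the generators $\nr(H)$ of $\mathcal{I}(\mathcal{O}[G])$ into $\mathcal{I}(\mathcal{O}[\overline{G}])$.

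Part (ii) then reduces to a routine lifting argument: an arbitrary generator $\overline{z} \cdot \nr(\overline{H})$ of $\mathcal{I}(\mathcal{O}[\overline{G}])$, with $\overline{z} \in \zeta(\mathcal{O}[\overline{G}])$ and $\overline{H} \in M_{n}(\mathcal{O}[\overline{G}])$, is the image of $z \cdot \nr(H)$ for any preimages $z \in \zeta(\mathcal{O}[G])$ (available by the hypothesis) and $H \in M_{n}(\mathcal{O}[G])$ (chosen entrywise via any set-theoretic section of $G \to \overline{G}$). For part (iii), the additional observation is that when $\overline{G}$ is abelian, every simple component of $F[\overline{G}]$ is a field, so the reduced norm on $F[\overline{G}]$ is the identity and $\mathcal{I}(\mathcal{O}[\overline{G}]) = \mathcal{O}[\overline{G}]$. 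Consequently, for every $g \in G$ with image $\overline{g}$, the compatibility formula gives $\pi(\nr_{F[G]}(g)) = \nr_{F[\overline{G}]}(\overline{g}) = \overline{g}$, and since the elements $\overline{g}$ generate $\mathcal{O}[\overline{G}]$ as an $\mathcal{O}$-module, surjectivity follows.

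The main obstacle is the verification of the compatibility $\pi(\nr_{F[G]}(M)) = \nr_{F[\overline{G}]}(\pi(M))$; it hinges on making precise how $F[\overline{G}]$ sits as a direct factor of $F[G]$ and how this realisation interacts with the decomposition into simple components used to define the reduced norm. It is worth noting that naively one might hope to derive (iii) from (ii), but the centre map $\zeta(\mathcal{O}[G]) \to \zeta(\mathcal{O}[\overline{G}])$ need not be surjective even when $\overline{G}$ is abelian (for instance, for $G = S_{3}$ and $N = A_{3}$ the image is $\Z + 3\overline{g}\Z \subsetneq \Z[\overline{G}]$); it is essential that the reduced norms enlarge $\zeta(\mathcal{O}[G])$ to $\mathcal{I}(\mathcal{O}[G])$ in just the right way to make (iii) hold.
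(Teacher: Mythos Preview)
Your proof is correct and follows essentially the same route as the paper: both hinge on the compatibility $\pi(\nr_{G}(H)) = \nr_{\overline G}(\pi(H))$ and then deduce (i)--(iii) by the same formal lifting arguments, with (iii) using that $\mathcal{I}(\mathcal{O}[\overline G]) = \mathcal{O}[\overline G]$ in the abelian case. You actually give more detail than the paper on \emph{why} the compatibility holds (via the idempotent $e_{N}$ and the resulting splitting of the Wedderburn decomposition), and your $S_{3}$ counterexample to deriving (iii) from (ii) is precisely the one the paper records immediately after the lemma.
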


\begin{proof}
	We first observe that $F[G]$ is a separable $F$-algebra
	by Maschke's theorem \cite[Theorem 3.14]{MR632548}.
	If $x$ is central in $\mathcal{O}[G]$, then clearly $\pi(x)$ is central
	in $\mathcal{O}[\overline{G}]$, which is the first inclusion in (i). 
	Let us write $\nr_G$ for the reduced
	norm on (matrix rings over) $F[G]$ and similarly for $\overline{G}$.
	Then $\pi$ induces a surjective map 
	$\pi_n: M_{n}(\mathcal{O}[G]) \rightarrow
	M_{n}(\mathcal{O}[\overline{G}])$ for each $n \in \N$ and one has
	\[
		\pi(\nr_G(H)) = \nr_{\overline{G}}(\pi_n(H))
	\]
	for each $H \in M_{n}(\mathcal{O}[G])$. Since every 
	$x \in \mathcal{I}(\mathcal{O}[G])$ can be written as a finite sum
	$x = \sum_j x_j \nr_G(H_j)$ with $x_j$ central in $\mathcal{O}[G]$
	and square matrices $H_j$ with entries in $\mathcal{O}[G]$,
	this proves (i). For (ii) we let $y \in \mathcal{I}(\mathcal{O}[\overline{G}])$
	be arbitrary. Then likewise $y$ may be written as a finite sum
	$y = \sum_j y_j \nr_{\overline{G}}(Z_j)$ with $y_j$ central in $\mathcal{O}[\overline{G}]$
	and matrices $Z_j \in M_{n_j}(\mathcal{O}[\overline{G}])$
	with $n_j \in \N$.
	Since $\pi_n$ is surjective for every $n$, there are matrices
	$H_j \in M_{n_j}(\mathcal{O}[G])$ such that 
	$\pi_{n_j}(H_j) = Z_j$ for all $j$. If $\pi: \zeta(\mathcal{O}[G]) \rightarrow
	\zeta(\mathcal{O}[\overline{G}])$ is surjective, then there likewise are
	$x_j \in \zeta(\mathcal{O}[G])$ such that $\pi(x_j) = y_j$
	and hence $x := \sum_j x_j \nr_G(H_j)$ is a preimage of $y$ such that
	$x \in \mathcal{I}(\mathcal{O}[G])$. For (iii) we assume that
	$\overline{G}$ is abelian. Then we have that 
	$\mathcal{I}(\mathcal{O}[\overline{G}]) = \mathcal{O}[\overline{G}]$
	and we can simply write $y = \nr_{\overline{G}}(y) = \pi(\nr_G(x))$
	for any $x \in \mathcal{O}[G]$ with $\pi(x) = y$.
\end{proof}

\begin{example}
	Let $G = S_3$ be the symmetric group on three letters and let
	$N = A_3$ be the unique non-trivial normal subgroup of $S_3$.
	Then $S_3/A_3$ is cyclic of order $2$ so that the canonical map
	$\mathcal{I}(\mathcal{O}[S_3]) \rightarrow \mathcal{I}(\mathcal{O}[S_3/A_3])
	= \mathcal{O}[S_3/A_3]$ is surjective by 
	Lemma \ref{lem:int-ring-under-projections} (iii). However, we show that
	$\pi: \zeta(\mathcal{O}[S_3]) \rightarrow \zeta(\mathcal{O}[S_3/A_3])
	= \mathcal{O}[S_3/A_3]$ is in general not surjective. There are
	three conjugacy classes $\mathcal{C}_i$ of $S_3$, where each $\mathcal{C}_i$
	consists of the $i$-cycles, $1 \leq i \leq 3$. Let us denote the non-trivial
	element of $S_3/A_3$ by $\tau$. Then $\pi(C_1) = 1$, $\pi(C_2) = 3 \tau$
	and $\pi(C_3) = 2$ so that the image of $\pi$ equals $\mathcal{O} + 
	3 \mathcal{O} \tau$. Hence $\pi$ is surjective if and only if $3$ is
	a unit in $\mathcal{O}$.
\end{example}

\begin{remark} \label{rem:canonical-central-order}
	In \cite[\S 3.1]{non-abelian-zeta} Burns and Sano introduce 
	a variant of the integrality ring when $\mathcal{O}$
	is a Dedekind domain. Assume for simplicity that $\mathcal{O}$
	is a discrete valuation ring. Then they
	let $\xi(\Lambda)$ be the $\mathcal{O}$-submodule 
	of $\zeta(\Lambda')$ generated by the
	elements $\nr(H)$, where $H$ runs over all square matrices with entries
	in $\Lambda$, and call it the Whitehead order of $\Lambda$. 
	Then $\xi(\Lambda)$ is clearly contained in $\mathcal{I}(\Lambda)$,
	and this inclusion can be strict in general.
	In the setting of Lemma \ref{lem:int-ring-under-projections}
	the Whitehead order indeed behaves better under projections.
	The proof of Lemma \ref{lem:int-ring-under-projections} (ii) shows that
	one always has $\pi(\xi(\mathcal{O}[G])) = \xi(\mathcal{O}[\overline{G}])$.
	See also \cite[Lemma 3.2(v)]{non-abelian-zeta}. 
\end{remark}

\subsection{Generalised adjoints}
We keep the notation of \S \ref{subsec:int-ring}.
Let $H \in M_{n}(A)$ and decompose 
$H = (H_1, \dots, H_t)$,
where $H_i = H e_i \in M_{n}(A_i)$. Following \cite[\S 9]{MR1972204},
we let $f_{H_i}(X) \in F_i[X]$ be the reduced characteristic polynomial of 
$H_i$ and set $f_H(X) := (f_{H_i}(X))_i \in \bigoplus_{i=1}^{t} F_i[X]$.
The latter may itself be written as reduced norm of the separable
$F(X)$-algebra $A \otimes_F F(X)$. Indeed a straightforward exercise
(see also \cite[Lemma 1.6.6]{watson_thesis}) shows that
\begin{equation} \label{eqn:redchar-as-nr}
	f_H(X) = \nr_{A \otimes_F F(X)} (1 \otimes X - H \otimes 1) \in
	\zeta(A)[X] = \bigoplus_{i=1}^{t} F_i[X].
\end{equation}
Set $m_i := n_i \cdot s_i \cdot n$.
Then we may write $f_{H_i}(X) = \sum_{j=0}^{m_i} \alpha_{ij} X^j$
with $\alpha_{ij} \in F_i$ for all $1 \leq j \leq m_i$.
Note that the constant term $\alpha_{i0}$ equals $(-1)^{m_i} \cdot \nr(H_i)$.
We define
\[
	H_i^{\ast} := (-1)^{m_i+1} \sum_{j=1}^{m_i} \alpha_{ij} H_i^{j-1}
	\in M_{n}(A_i)
\]
and call  $H^{\ast} := (H_1^{\ast}, \dots, H_t^{\ast}) \in M_{n}(A)$
the \emph{generalised adjoint} of $H$. Since $f_{H_i}(H_i)$ vanishes
for each $i$, one has
\[
	H H^{\ast} = H^{\ast} H = \nr(H) \cdot \mathbf{1}_{n \times n}.
\]
Note that we follow the conventions in \cite[\S 3.6]{MR3092262} 
which slightly differ from those given in \cite[\S 4]{MR2609173}.

Now suppose that $H \in M_{n}(\Lambda)$. Then by another application
of \cite[Theorem 10.1]{MR1972204} the generalised adjoint $H^{\ast}$ has
entries in $\Lambda'$, but not necessarily in $\Lambda$.
This motivates the following definition. We set
\[
	\mathcal{H}(\Lambda) := \left\{ x \in \zeta(\Lambda) \mid 
	xH^{\ast} \in M_{n}(\Lambda) \forall H \in M_{n}(\Lambda) 
	\forall n \in \N \right\}
\]
and call $\mathcal{H}(\Lambda)$ the \emph{denominator ideal}
of $\Lambda$. It is easy to see that $\mathcal{H}(\Lambda)$
is an ideal in $\zeta(\Lambda)$. With a little more work 
\cite[Lemma 1.10.9]{watson_thesis} one can 
actually show that it is an ideal in $\mathcal{I}(\Lambda)$ such that
\begin{equation} \label{eqn:HI-in-centre}
	\mathcal H(\Lambda) \cdot \mathcal I(\Lambda) \subseteq \zeta(\Lambda).
\end{equation}
If $G$ is a finite group, we set
$\mathcal{H}(G) := \mathcal{H}(\Z[G])$ and $\mathcal{H}_p(G) :=
\mathcal{H}(\Z_p[G])$ for each prime $p$.

General results on denominator ideals are rather sparse. Watson \cite{watson_thesis}
has computed $\mathcal{H}(\mathcal{O}[G])$ whenever $\mathcal{O}$ has characteristic
$0$ and $G$ is a finite $p$-group such that $|G'| = p$.
Moreover, we have the following result.

\begin{prop} \label{prop:HpG-equals-centre}
	Let $G$ be a finite group and let $p$ be a prime. Then we have
	$\mathcal{H}_p(G) = \zeta(\Z_p[G])$ if and only if $p$ does not
	divide the 
	order of the commutator subgroup $G'$ of $G$. In this case, we also have that
	$\mathcal{I}_p(G) = \zeta(\Z_p[G])$.
\end{prop}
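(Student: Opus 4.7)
The plan is to establish the equivalence in three steps, handling the ``only if'' direction and the auxiliary claim simultaneously via a common contraposition. The identity $HH^* = \nr(H)\mathbf{1}_{n \times n}$ immediately gives $\nr(H) \in \zeta(\Z_p[G])$ whenever $H^* \in M_n(\Z_p[G])$, so $\mathcal{H}_p(G) = \zeta(\Z_p[G])$ forces $\mathcal{I}_p(G) = \zeta(\Z_p[G])$, which proves the additional statement. By contraposition, to show $\mathcal{H}_p(G) \neq \zeta(\Z_p[G])$ it suffices to exhibit some $H$ with $\nr(H) \notin \zeta(\Z_p[G])$.

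For the ``only if'' direction, I suppose $p \mid |G'|$ and take a non-trivial $p$-element $x \in G'$, which exists by Sylow's theorem applied to $G'$. I then take the $1 \times 1$ matrix $H = 1 - x$ and compute $\nr(H)$ componentwise via the central characters $\omega_\chi$ of $\zeta(\Q_p[G])$. For an appropriately chosen non-linear irreducible character $\chi$ on which $x$ acts non-trivially, lifting the $\chi$-component of $\nr(H)$ to an element of $\zeta(\Z_p[G])$ requires the primitive central idempotent $e_\chi = \tfrac{\chi(1)}{|G|}\sum_g \chi(g^{-1})g$, whose denominator inherits a $p$-part from the $p$-part of $|G'|$. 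The mechanism is visible concretely in $G = S_3$ at $p = 3$: taking $\sigma$ a three-cycle, the central idempotent for the two-dimensional irreducible character equals $\tfrac{1}{3}(2 - \sigma - \sigma^2)$, and a direct calculation shows that the generalised adjoint of $1 - \sigma$ lies in $\tfrac{1}{3}\Z[S_3] \setminus \Z_3[S_3]$.

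For the ``if'' direction, which is the heart of the argument, I proceed by induction on $|G|$ using Clifford theory in the spirit of the upcoming proof of Theorem \ref{thm:denominator-ideal-capZ}. Abelian groups are trivial, and simple non-abelian groups with $p \nmid |G'|$ are handled by Maschke's theorem, since then $p \nmid |G| = |G'|$ and $\Z_p[G]$ is already a maximal order. Otherwise I pick a proper non-trivial normal subgroup $N$ of $G$, decompose the irreducible characters of $G$ via Clifford's theorem as induced from stabilizers of $G$-orbits of characters of $N$, apply the inductive hypothesis to the (smaller) stabilizers, and lift the integrality of their generalised adjoints back to $G$ via induction. The hypothesis $p \nmid |G'|$ controls the denominators throughout this lift; in particular, it forces the $p$-Sylow of $G$ to inject into the abelian quotient $G/G'$ and hence to be abelian. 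The main obstacle is the Clifford-theoretic bookkeeping -- tracking how the denominator ideal and integrality ring transform under character induction and inflation, and pinpointing where $p \nmid |G'|$ enters to absorb the denominators.
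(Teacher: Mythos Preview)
Your proposal has a genuine gap in the ``only if'' direction. You reduce the claim $\mathcal{H}_p(G) \neq \zeta(\Z_p[G])$ to exhibiting an $H$ with $\nr(H) \notin \zeta(\Z_p[G])$, i.e.\ to showing $\mathcal{I}_p(G) \neq \zeta(\Z_p[G])$. But this is strictly stronger than what is needed and, as the discussion following Theorem~\ref{thm:integrality-vs-abelian} makes explicit, the implication ``$p \mid |G'| \Rightarrow \mathcal{I}_p(G) \neq \zeta(\Z_p[G])$'' is \emph{not} established for general $G$: it is proved only for nilpotent groups (Proposition~\ref{prop:integrality-rings-nilpotent}) and a few special families, and the paper states it knows of no counterexample --- meaning the question is open. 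Your sketch with $H = 1-x$ is too vague to close this gap: saying that $e_\chi$ ``has a $p$-part in its denominator'' does not show that the specific $\Z_p$-linear combination $\nr(1-x) = \sum_{\chi} \det(1 - \pi_\chi(x))\, e_\chi$ fails to be $p$-integral after the components are summed.

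The correct route is far simpler and works directly with $\mathcal{H}_p(G)$: take $H = 0 \in M_1(\Z_p[G])$ as in Example~\ref{ex:0-star}. One computes $0^\ast = |G'|^{-1}\Tr_{G'}$, which lies in $\Z_p[G]$ if and only if $p \nmid |G'|$. Hence if $p \mid |G'|$ then $1 \cdot 0^\ast \notin \Z_p[G]$, so $1 \notin \mathcal{H}_p(G)$ and $\mathcal{H}_p(G) \neq \zeta(\Z_p[G])$. No statement about $\mathcal{I}_p(G)$ is required.

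Your approach to the ``if'' direction (Clifford-theoretic induction on $|G|$) and to the auxiliary claim is sound and matches the paper: the induction is precisely the content of Corollary~\ref{cor:hard-inclusion} (giving $|G'| \in \mathcal{H}_p(G)$, hence $\mathcal{H}_p(G) = \zeta(\Z_p[G])$ when $|G'| \in \Z_p^\times$), and the auxiliary claim is immediate from \eqref{eqn:HI-in-centre}.
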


\begin{proof}
	The first claim is \cite[Proposition 4.4]{MR3092262}, but
	note that this can also be easily deduced
	from Theorem \ref{thm:denominator-ideal-capZ} (whose proof does not
	depend on Proposition \ref{prop:HpG-equals-centre}).
	The second claim then follows immediately from \eqref{eqn:HI-in-centre}.
\end{proof}

We are mainly interested in the ideal $\Z \cap \mathcal{H}(G)$.
The following important example, which is essentially taken from 
the proof of Proposition \ref{prop:HpG-equals-centre} given in \cite{MR3092262},
provides an upper bound for that ideal.

\begin{example} \label{ex:0-star}
	Assume that $\mathcal{O}$ has characteristic $0$.
	We consider $H = 0 \in M_{1}(\mathcal{O}[G])$.
	Decompose $H = (H_1, \dots, H_t)$ as above. Then the
	reduced characteristic polynomial of $H_i$ is given by $f_{H_i}(X) = X^{m_i}$,
	where $m_i = n_i \cdot s_i$, so that $H_i^{\ast} = h_i(0)$
	with $h_i(X) = X^{m_i - 1}$. In other words, we have $H_i^{\ast} = 1$
	if $m_i=1$, and $H_i^{\ast} = 0$ otherwise. This implies that
	\[
		0^{\ast} = \frac{1}{|G'|} \Tr_{G'},
	\]
	where $\Tr_{G'} := \sum_{g \in G'} g$.
	In particular, every integer in $\Z \cap \mathcal{H}(G)$ must be divisible
	by $|G'|$.
\end{example}

\section{Clifford theory} \label{sec:Clifford-theory}

\subsection{General notation and Clifford's Theorem} \label{subsec:Clifford-general}
Let $K$ be a field of characteristic $0$ and let $G$ be a finite group.
We denote the set of $K$-irreducible characters of $G$ by $\Irr_K(G)$.
If $K$ is clear from context, we simply write $\Irr(G)$ for $\Irr_{K^c}(G)$.

We will always assume in this section that $K$ is sufficiently large 
in the sense that it is a
splitting field for $G$ and all of its subgroups. In particular,
we have $\Irr_K(G) = \Irr(G)$ and
the Wedderburn decomposition of $K[G]$ is given by
\[
	K[G] = \bigoplus_{\chi \in \Irr(G)} K[G] e_{\chi}
	\simeq \bigoplus_{\chi \in \Irr(G)} M_{\chi(1)}(K),
\]
where $e_{\chi} := \chi(1)|G|^{-1} \sum_{g \in G} \chi(g^{-1}) g$
are the primitive central idempotents of $K[G]$.
We write $R^+(G)$ for the set of characters associated to finite-dimensional
$K^c$-valued representations of $G$, and $R(G)$ for the ring of
virtual characters generated by $R^+(G)$.
For each character $\chi \in R^+(G)$ we choose a $K[G]$-module $V_{\chi}$
with character $\chi$. Conversely, if $V$ is a finitely generated $K[G]$-module,
we denote the associated character by $\chi_{V}$.
If $U$ is a subgroup of $G$, we write $\res^G_U \chi$ for the character
in $R^+(U)$ obtained by restriction. Conversely, for $\psi \in R^+(U)$
we let $\ind^G_U \psi \in R^+(G)$ be the induced character.
The associated $K[G]$-module is given by $K[G] \otimes_{K[U]} V_{\psi}$,
from which one can easily deduce the well-known formula (see \cite[(10.2)]{MR632548})
\begin{equation} \label{eqn:induced-character-formula}
	(\ind_U^G \psi)(g) = \frac{1}{|U|} \sum_{h \in G: h^{-1}gh \in U} \psi(h^{-1}gh).
\end{equation}
We will use \eqref{eqn:induced-character-formula} frequently.
Finally, if $N$ is a normal subgroup of $G$ and $\chi \in R^+(G/N)$,
we write $\infl^G_{G/N} \chi \in R^+(G)$ for the inflated character. 
These notions extend to virtual characters by linearity.
For $\chi, \psi \in R(G)$ we write $\langle \chi, \psi \rangle_G$ for the usual
inner product of virtual characters. We recall
the following well-known fact \cite[Theorem 10.9]{MR632548}.

\begin{prop}[Frobenius reciprocity] \label{prop:Frobenius-reciprocity}
	Let $U$ be a subgroup of $G$. Then for each $\chi \in R(G)$ and $\psi \in R(U)$ 
	one has
	\[
		\langle \chi, \ind^G_U \psi \rangle_G= \langle \res^G_U \chi, \psi \rangle_U.
	\]
\end{prop}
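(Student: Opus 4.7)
The plan is a two-step argument: first reduce to the case of genuine characters by linearity, then carry out a direct computation from the induced-character formula \eqref{eqn:induced-character-formula}.

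For the reduction, I would note that both sides of the claimed equality are $\Z$-bilinear in $(\chi, \psi)$ and that $\res^G_U$ and $\ind^G_U$ are themselves $\Z$-linear. Since $R(G)$ is generated as an abelian group by $R^+(G)$, and likewise for $U$, it suffices to establish the identity when $\chi \in R^+(G)$ and $\psi \in R^+(U)$, i.e.\ for actual characters of finite-dimensional representations.

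For the computation, I would use the equivalent bilinear expression $\langle \alpha, \beta \rangle_H = |H|^{-1} \sum_{h \in H} \alpha(h) \beta(h^{-1})$ (valid for characters since $\alpha(h^{-1}) = \overline{\alpha(h)}$) and substitute \eqref{eqn:induced-character-formula} to obtain
\begin{equation*}
\langle \chi, \ind^G_U \psi \rangle_G = \frac{1}{|G|\,|U|} \sum_{g \in G}\, \sum_{\substack{h \in G \\ h^{-1}g^{-1}h \in U}} \chi(g)\, \psi(h^{-1}g^{-1}h).
\end{equation*}
Swapping the order of summation and, for each fixed $h \in G$, making the change of variable $u = h^{-1}g^{-1}h \in U$, the class-function property of $\chi$ yields $\chi(g) = \chi(hu^{-1}h^{-1}) = \chi(u^{-1})$. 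The inner sum then becomes $\sum_{u \in U} \chi(u^{-1}) \psi(u)$, which is independent of $h$, so summing over the $|G|$ choices of $h$ contributes a factor $|G|$ that cancels against $|G|^{-1}$. What remains is precisely $|U|^{-1} \sum_{u \in U} (\res^G_U \chi)(u^{-1}) \psi(u) = \langle \res^G_U \chi, \psi \rangle_U$.

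There is no genuine obstacle here: Frobenius reciprocity is a classical identity and the computation is routine bookkeeping once \eqref{eqn:induced-character-formula} is in hand. A more conceptual alternative would invoke the tensor--Hom adjunction $\Hom_{K[G]}(K[G] \otimes_{K[U]} V, W) \cong \Hom_{K[U]}(V, \res^G_U W)$, combined with the identity $\langle \chi_V, \chi_W \rangle_G = \dim_K \Hom_{K[G]}(V, W)$ (valid because $K$ is assumed to be a splitting field for all groups in sight); but since the paper has already recorded \eqref{eqn:induced-character-formula} explicitly, the direct character-theoretic calculation is the most efficient route.
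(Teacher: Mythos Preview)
Your proof is correct; the computation from \eqref{eqn:induced-character-formula} is the standard argument and the bookkeeping is clean. Note, however, that the paper does not actually prove this proposition: it is stated as a well-known fact and cited from Isaacs \cite[Theorem 10.9]{MR632548}, so there is no proof in the paper to compare against.
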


Now let $N$ be a normal subgroup of $G$. Then $G$ acts upon $\eta \in R(N)$ via
$^g \eta(x) = \eta(g^{-1}xg)$, $g \in G$, $x \in N$. We denote
its stabilizer $\{g \in G \mid {}^g \eta = \eta\}$ by $G_{\eta}$ and note that
$N$ is always contained in $G_{\eta}$.
The following result is due to Clifford \cite{MR1503352}
(see also \cite[Proposition 11.4]{MR632548})
and will be used extensively in the following.

\begin{theorem}[Clifford] \label{thm:Clifford}
	Let $N$ be a normal subgroup of a finite group $G$. Let $\chi \in \Irr(G)$
	and chose an irreducible constituent $\eta$ of $\res^G_N \chi$. 
	Then there is a positive integer $m$ such that
	\[
		\res^G_N \chi = m \sum_{g \in G / G_{\eta}} {}^g \eta.
	\]
\end{theorem}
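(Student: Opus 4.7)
The plan is to prove Clifford's theorem by analysing how $G$ permutes the irreducible $K[N]$-submodules of $V_\chi$. First I would fix an irreducible $K[N]$-submodule $W \subseteq V_\chi$ of character $\eta$, which exists by Maschke's theorem since $\eta$ is a constituent of $\res^G_N\chi$. For any $g \in G$, the translate $gW := \{gw \mid w \in W\}$ is again a $K[N]$-submodule of $V_\chi$, because $n(gw) = g(g^{-1}ng)w \in gW$ for all $n \in N$. The $K$-linear bijection $W \to gW$, $w \mapsto gw$, intertwines the action of $n$ on $gW$ with that of $g^{-1}ng$ on $W$, so comparing traces shows that $gW$ has character ${}^g\eta$ in the convention fixed just above the theorem.

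Next I would form the isotypic decomposition
\[
\res^G_N V_\chi = \bigoplus_{\eta' \in \Irr(N)} V_\chi(\eta'),
\]
where $V_\chi(\eta')$ denotes the sum of all irreducible $K[N]$-submodules of $V_\chi$ with character $\eta'$. By the previous step, left multiplication by $g \in G$ sends $V_\chi(\eta')$ isomorphically onto $V_\chi({}^g\eta')$; in particular, the multiplicity $m_{\eta'} := \langle \res^G_N\chi, \eta' \rangle_N$ depends only on the $G$-orbit of $\eta'$. I would then set $M := \bigoplus_{\eta'} V_\chi(\eta')$ where $\eta'$ runs over the $G$-orbit of $\eta$. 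This subspace is $N$-stable by construction and $G$-stable by the preceding observation, hence a non-zero $K[G]$-submodule of $V_\chi$. Since $V_\chi$ is irreducible we get $M = V_\chi$, so the set of irreducible constituents of $\res^G_N\chi$ is precisely $\{{}^g\eta \mid g \in G/G_\eta\}$ (by the orbit--stabiliser theorem), each appearing with the common multiplicity $m := m_\eta$.

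The main obstacle, such as it is, is the careful bookkeeping in the first step: one has to match the convention ${}^g\eta(x) = \eta(g^{-1}xg)$ with the naive left $G$-action on $V_\chi$, and a sign error here would swap the roles of $g$ and $g^{-1}$ throughout. Everything else is a direct consequence of Maschke's theorem, the irreducibility of $V_\chi$ as a $K[G]$-module, and elementary orbit combinatorics for the action of $G$ on $\Irr(N)$.
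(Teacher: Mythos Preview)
Your argument is correct and is exactly the standard proof of Clifford's theorem via isotypic components; the paper does not supply its own proof but simply cites Clifford's original article and Curtis--Reiner, where precisely this argument appears. Your remark about matching the convention ${}^g\eta(x)=\eta(g^{-1}xg)$ with the left translation $W\mapsto gW$ is well taken and the computation you give is the right one.
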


In the situation of Theorem \ref{thm:Clifford} we write
\begin{equation} \label{eqn:ind-eta}
	\ind_N^{G_{\eta}} \eta= \sum_{i=1}^{s} m_i \psi_i,
\end{equation}
where $s$ and $m_1, \dots, m_s$ are positive integers and $\psi_i \in \Irr(G_{\eta})$
are distinct irreducible characters. 
By \cite[Kapitel V, Satz 17.11]{MR0224703} the characters 
$\chi_i := \ind_{G_{\eta}}^G \psi_i$ are irreducible 
and each irreducible character of $G$
whose restriction to $N$ is divisible by $\eta$ is a $\chi_i$.
So we may and do assume that $\chi = \chi_1$ and set $\psi := \psi_1$.
It follows that
\begin{equation} \label{eqn:ind-eta-II}
	\ind_N^G \eta = \sum_{i=1}^s m_i \chi_i
\end{equation}
and we have $m_1 = m$ by Frobenius reciprocity. Likewise we have that
\[
	m_i = \langle \psi_i, \ind_N^{G_{\eta}} \eta \rangle_{G_{\eta}} = 
	\langle \res^{G_{\eta}}_N \psi_i, \eta \rangle_N.
\]
Since $G_{\eta}$ acts trivially upon $\eta$, Clifford's Theorem \ref{thm:Clifford}
implies that we have
\begin{equation} \label{eqn:res-psi}
	\res^{G_{\eta}}_N \psi_i= m_i \eta
\end{equation}
for $1 \leq i \leq s$. See also \cite[Proposition 11.4]{MR632548}.

\subsection{Clifford theory for subgroups containing the commutator subgroup} \label{subsec:Clifford-for-commutator-subgroup}
We keep the notation of \S \ref{subsec:Clifford-general} and assume in addition
that $N$ contains the commutator subgroup $G'$ of $G$.
Then $N$ is normal in $G_{\eta}$ and the quotient $G_{\eta}/N$ is abelian.

\begin{prop} \label{prop:psi-when-N-contains-commutator}
	Let $G$ be a finite group and let $N$ be a (normal) subgroup containing
	the commutator subgroup $G'$ of $G$. Let $\chi \in \Irr(G)$ be an irreducible
	character and let $\eta$ be an irreducible constituent of $\res^G_N \chi$.
	Write $\ind_N^{G_{\eta}} \eta= \sum_{i=1}^{s} m_i \psi_i$ as in \eqref{eqn:ind-eta}
	and recall that $\psi = \psi_1$ and $m = m_1$.
	Then the following holds.
	\begin{enumerate}
		\item 
		Each $\psi_i$ is of the from $\psi_i = \psi \otimes \omega_i$
		for some $\omega_i \in \Irr(G_{\eta}/N)$.
		\item
		Each $\psi \otimes \omega$, $\omega \in \Irr(G_{\eta}/N)$ is a $\psi_i$.
		\item 
		We have that $m_i = m$ for all $1 \leq i \leq s$ and $[G_{\eta}:N] = sm^2$.
	\end{enumerate}
\end{prop}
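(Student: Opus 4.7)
The plan is as follows. Since $N \supseteq G'$, the quotient $G_{\eta}/N$ is abelian, so every $\omega \in \Irr(G_{\eta}/N)$ is a linear character. Claim (ii) is then almost immediate: for any such $\omega$, the tensor product $\psi \otimes \omega$ is an irreducible character of $G_{\eta}$ (since twisting by a linear character preserves irreducibility), and by \eqref{eqn:res-psi} its restriction to $N$ equals $\psi|_N \cdot \omega|_N = m\eta$. Hence $\eta$ is a constituent of $\res^{G_{\eta}}_N(\psi \otimes \omega)$, which by the characterisation of the $\psi_i$ recalled after \eqref{eqn:ind-eta} forces $\psi \otimes \omega \in \{\psi_1, \ldots, \psi_s\}$.

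The crux is the following Frobenius reciprocity computation, valid for each $1 \leq i \leq s$:
\[
\langle \psi_i \otimes \check{\psi},\, \ind_N^{G_{\eta}} 1_N \rangle_{G_{\eta}} = \langle (\psi_i \otimes \check{\psi})|_N,\, 1_N \rangle_N = \langle m_i m\, \eta \check{\eta},\, 1_N \rangle_N = m_i m,
\]
where $\check{\psi}$ denotes the contragredient character and the final equality uses $\langle \eta \check{\eta}, 1_N \rangle_N = \langle \eta, \eta \rangle_N = 1$. Now since $G_{\eta}/N$ is abelian, $\ind_N^{G_{\eta}} 1_N$ is the inflation of the regular representation of $G_{\eta}/N$ and so equals $\sum_{\omega \in \Irr(G_{\eta}/N)} \omega$. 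Substituting and using the standard identity $\langle \psi_i, \psi \otimes \omega \rangle = \langle \psi_i \otimes \check{\psi}, \omega \rangle$ together with the irreducibility of each $\psi \otimes \omega$, the left hand side becomes
\[
\sum_{\omega \in \Irr(G_{\eta}/N)} \langle \psi_i,\, \psi \otimes \omega \rangle_{G_{\eta}},
\]
which is precisely the number of $\omega$ satisfying $\psi \otimes \omega = \psi_i$.

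To conclude, let $S := \{\omega \in \Irr(G_{\eta}/N) \mid \psi \otimes \omega = \psi\}$. Applying the count with $i = 1$ gives $|S| = m^2$. Since the fibres of the map $\omega \mapsto \psi \otimes \omega$ are cosets of $S$, every non-empty fibre has cardinality $m^2$; hence the number of $\omega$ with $\psi \otimes \omega = \psi_i$ is either $0$ or $m^2$. As this number equals $m_i m \geq m > 0$, every $\psi_i$ lies in the $\Irr(G_{\eta}/N)$-orbit of $\psi$, which proves (i), and $m_i m = m^2$ yields $m_i = m$. The identity $[G_{\eta}:N] = sm^2$ in (iii) then follows from
\[
[G_{\eta} : N] = \langle \eta,\, \res^{G_{\eta}}_N \ind_N^{G_{\eta}} \eta \rangle_N = \langle \ind_N^{G_{\eta}} \eta,\, \ind_N^{G_{\eta}} \eta \rangle_{G_{\eta}} = \sum_{i=1}^s m_i^2 = sm^2,
\]
where the first equality holds because $\res^{G_{\eta}}_N \ind_N^{G_{\eta}} \eta = [G_{\eta}:N]\eta$ (as $N$ is normal in $G_{\eta}$ and $G_{\eta}$ stabilises $\eta$) and the second is Frobenius reciprocity.

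I do not expect a serious obstacle; the delicate point is rather the bookkeeping needed to extract (i), (ii) and the equality of the multiplicities simultaneously. The abelianness hypothesis on $G_{\eta}/N$ is used crucially, precisely so that $\ind_N^{G_{\eta}} 1_N$ decomposes as a sum of linear characters -- without it the fibres of $\omega \mapsto \psi \otimes \omega$ would no longer cover all constituents of $\ind_N^{G_{\eta}} \eta$.
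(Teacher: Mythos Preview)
Your proof is correct and takes a genuinely different route from the paper's. The paper establishes the explicit character identity $m\,\ind_N^{G_\eta}\eta = \sum_{\omega \in \Irr(G_\eta/N)} \psi \otimes \omega$ by evaluating both sides at each $g \in G_\eta$ (separately for $g \in N$ and $g \notin N$); parts (i) and (ii) then fall out by reading off constituents, $m_i = m$ follows from comparing degrees via \eqref{eqn:res-psi}, and $[G_\eta:N] = sm^2$ from evaluating $\ind_N^{G_\eta}\eta$ at~$1$. You instead compute $\langle \psi_i \otimes \check\psi,\, \ind_N^{G_\eta} 1_N\rangle_{G_\eta} = m_i m$ via Frobenius reciprocity and \eqref{eqn:res-psi}, interpret this as the cardinality of the fibre $\{\omega : \psi \otimes \omega = \psi_i\}$, and use that these fibres are cosets of the isotropy subgroup $S = \{\omega : \psi \otimes \omega = \psi\}$ (of size $m^2$, from the case $i=1$) to deduce everything at once. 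Your argument works entirely with inner products and the group structure on $\Irr(G_\eta/N)$, avoiding any pointwise character evaluation, and it yields the additional information $|S| = m^2$ which the paper does not record; on the other hand the paper's approach produces the global character identity above as a clean byproduct.
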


\begin{proof}
	We first note that $\psi \otimes \omega$ is irreducible for every 
	$\omega \in \Irr(G_{\eta}/N)$, since the quotient $G_{\eta}/N$ is abelian.
	We claim that we have an equality
	\[
		m \, \ind_N^{G_{\eta}} \eta = 
			\sum_{\omega \in \Irr(G_{\eta}/N)} \psi \otimes \omega.
	\]
	Then (i) and (ii) follow from \eqref{eqn:ind-eta}.
	For the claim we evaluate both sides of the equation at $g \in G_{\eta}$.
	Since $N$ is normal in $G_{\eta}$, the left-hand side vanishes
	for every $g \not\in N$;
	likewise the right-hand side vanishes, as $\sum_{\omega} \omega$
	is the regular character of $G_{\eta}/N$. Now let $g \in N$. Then the
	formula \eqref{eqn:induced-character-formula} for induced characters implies
	that the left-hand side equals $m [G_{\eta}:N] \eta(g)$. Likewise we have that
	$(\sum_{\omega} \psi \otimes \omega)(g) = [G_{\eta}:N] \psi(g) = m [G_{\eta}:N]
	\eta(g)$, where the last equality is \eqref{eqn:res-psi}. This shows the claim.
	Again by \eqref{eqn:res-psi} we see that for each $1 \leq i \leq s$ one has
	\[
		m_i \eta(1) = \psi_i(1) =  \psi(1) \omega_i(1) = \psi(1) = m \eta(1).
	\]
	As $\eta(1)$ is non-zero, we must have $m_i = m$ for all $i$. Finally, we compute
	\[
		[G_{\eta}:N] \eta(1) = \ind_N^{G_{\eta}} \eta(1) = m \sum_{i=1}^s \psi_i(1)
		= s m^2 \eta(1).
	\]
	This finishes the proof of (iii).
\end{proof}

\begin{corollary} \label{cor:idempotent-equalities}
	Keep the notation of Proposition \ref{prop:psi-when-N-contains-commutator}
	and recall that $\chi_i = \ind_{G_{\eta}}^G \psi_i$ is an irreducible character
	of $G$. Then the following holds.
	\begin{enumerate}
		\item 
		The sum $\sum_{i=1}^s e_{\chi_i}$ is an element of $K[N]$ and indeed
		\[
			\sum_{i=1}^s e_{\chi_i} = \sum_{x \in G/G_{\eta}} e_{^x \eta}.
		\]
		\item 
		The sum $\sum_{i=1}^s e_{\psi_i}$ is an element of $K[N]$ and indeed
		\[
		\sum_{i=1}^s e_{\psi_i} = e_{\eta}.
		\]
		\item 
		For each $1 \leq i \leq s$ the idempotent $e_{\chi_i}$ is an element of
		$K[G_{\eta}]$ and indeed
		\[
			e_{\chi_i} = \sum_{x \in G/G_{\eta}} e_{^x \psi_i}.
		\]
	\end{enumerate}
\end{corollary}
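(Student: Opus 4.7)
The plan is to establish (ii) by direct computation, then (i) similarly, and finally to derive (iii) from (i) and (ii) algebraically. For (ii), observe from \eqref{eqn:ind-eta} and Proposition~\ref{prop:psi-when-N-contains-commutator}(iii) that $\sum_{i=1}^{s} \psi_i = m^{-1} \ind_N^{G_\eta} \eta$. Since $\eta$ is $G_\eta$-stable, the induced character formula \eqref{eqn:induced-character-formula} gives $\ind_N^{G_\eta} \eta(g) = [G_\eta:N]\, \eta(g) = sm^2 \eta(g)$ for $g \in N$, and $\ind_N^{G_\eta} \eta$ vanishes on $G_\eta \setminus N$ by normality of $N$ in $G_\eta$. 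Hence $\sum_i \psi_i(g^{-1}) = sm\,\eta(g^{-1})$ on $N$ and vanishes elsewhere. Combining this with $\psi_i(1) = m\eta(1)$ and $|G_\eta| = sm^2|N|$ in the defining sum for $e_{\psi_i}$ collapses $\sum_i e_{\psi_i}$ to $\frac{\eta(1)}{|N|} \sum_{g \in N} \eta(g^{-1}) g = e_\eta$.

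For (i), I apply transitivity of induction together with \eqref{eqn:ind-eta-II} and Proposition~\ref{prop:psi-when-N-contains-commutator}(iii) to write $\ind_N^G \eta = \ind_{G_\eta}^G \ind_N^{G_\eta} \eta = m \sum_i \chi_i$, so that $\sum_i \chi_i(g^{-1}) = m^{-1}\ind_N^G \eta(g^{-1})$. Normality of $N$ in $G$ forces $\ind_N^G\eta$ to vanish outside $N$, and for $g \in N$ the induced character formula, together with the fact that ${}^h\eta$ depends only on the coset of $h$ modulo $G_\eta$, yields $\ind_N^G\eta(g^{-1}) = [G_\eta:N] \sum_{h \in G/G_\eta} {}^h\eta(g^{-1})$. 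Substituting this into the sum for $\sum_i e_{\chi_i}$ and using $\chi_i(1) = [G:G_\eta]\, m\, \eta(1)$ (independent of $i$) produces, after cancellation, exactly $\sum_{h \in G/G_\eta} e_{{}^h\eta}$.

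For (iii), the main work is to verify that $e_{\chi_i} \in K[G_\eta]$, and this is the unique step using the full hypothesis $N \supseteq G'$. Since $G/N$, and hence its quotient $G/G_\eta$, is abelian, the characters of $G/G_\eta$ (inflated to $G$) separate the cosets of $G_\eta$ in $G$; for any such $\omega$, the projection formula gives $\chi_i \otimes \omega = \ind_{G_\eta}^G(\psi_i \otimes \omega|_{G_\eta}) = \chi_i$, because $\omega|_{G_\eta}$ is trivial. Evaluating at any $g \notin G_\eta$ and choosing $\omega$ with $\omega(g) \neq 1$ forces $\chi_i(g) = 0$, so $e_{\chi_i}$ is supported on $G_\eta$. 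To obtain the equation, I exploit the algebraic identity $e_{\chi_i} e_{\psi_j} = \delta_{ij} e_{\psi_j}$: by Frobenius reciprocity $\psi_j$ appears in $V_\chi|_{G_\eta}$ if and only if $\chi = \chi_j$, so $e_{\psi_j}$ annihilates $V_\chi$ for $\chi \neq \chi_j$, forcing $e_{\psi_j} = e_{\chi_j} e_{\psi_j}$. Combined with (ii) this yields $e_{\chi_i} e_\eta = e_{\psi_i}$, and centrality of $e_{\chi_i}$ then gives $e_{\chi_i} e_{{}^x\eta} = x(e_{\chi_i} e_\eta) x^{-1} = e_{{}^x\psi_i}$. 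Summing over $x$ and invoking (i) together with orthogonality of the $e_{\chi_j}$ yields $\sum_{x} e_{{}^x\psi_i} = e_{\chi_i} \sum_j e_{\chi_j} = e_{\chi_i}$. The main obstacle is precisely the verification that $e_{\chi_i}$ is supported on $G_\eta$: without the assumption $N \supseteq G'$, the quotient $G/G_\eta$ need not be abelian and the tensor-by-character argument breaks down.
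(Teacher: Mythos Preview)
Your arguments for (i) and (ii) are correct and essentially the same direct computation the paper carries out (the paper proves (i) first and simply remarks that (ii) is the same calculation with $G$ replaced by $G_{\eta}$, so the right-hand side collapses to the single term $e_{\eta}$).

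For (iii) your proof is correct but takes a genuinely different route. The paper observes that (iii) is just (i) applied to the triple $(G, G_{\eta}, \psi_i)$ in place of $(G, N, \eta)$: since $G_{\eta} \supseteq N \supseteq G'$ the hypotheses of Proposition~\ref{prop:psi-when-N-contains-commutator} are again met, and the only thing to check is that the stabiliser $G_{\psi_i}$ equals $G_{\eta}$, which the paper does via a short degree comparison using Clifford's theorem. In that setting $\ind_{G_{\eta}}^{G}\psi_i = \chi_i$ is already irreducible, so the analogue of $s$ is $1$ and (i) specialises exactly to the displayed formula in (iii). You instead first prove directly that $\chi_i$ vanishes off $G_{\eta}$ by twisting with characters of the abelian quotient $G/G_{\eta}$, and then deduce the idempotent identity by a neat manipulation combining $e_{\chi_i} e_{\psi_j} = \delta_{ij} e_{\psi_j}$ with parts (i) and (ii). The paper's approach is shorter and more structural (pure reuse of the case already established), while yours is more self-contained and makes transparent exactly where abelianness of $G/G_{\eta}$ enters. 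One small remark: your separate vanishing argument for $\chi_i$ outside $G_{\eta}$ is not strictly necessary, since once you have $e_{\chi_i} = \sum_{x} e_{^{x}\psi_i}$ the containment $e_{\chi_i} \in K[G_{\eta}]$ follows automatically from normality of $G_{\eta}$ in $G$.
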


\begin{proof}
	We first note that for each $1 \leq i \leq s$ one has
	\[
		\chi_i(1) = [G:G_{\eta}] \psi_i(1) = [G:G_{\eta}]m \eta(1),
	\]
	where we have used Proposition \ref{prop:psi-when-N-contains-commutator} (iii)
	for the last equality.
	Therefore we have that
	\[
		\sum_{i=1}^s e_{\chi_i} = 
		\frac{m \eta(1)}{|G_{\eta}|} \sum_{i=1}^s \sum_{g \in G} \chi_i(g^{-1}) g =
		\frac{\eta(1)}{|G_{\eta}|} \sum_{g \in G} (\ind_N^G \eta)(g^{-1}) g,
	\]
	where the second equality is \eqref{eqn:ind-eta-II}. Since $(\ind_N^G \eta)(g^{-1})$
	vanishes whenever $g \in G \setminus N$, and equals
	$[G_{\eta}:N] \sum_{x \in G/G_{\eta}} {}^x \eta(g^{-1})$ otherwise,
	claim (i) follows. (ii) and (iii) are special cases of (i).
	For (iii) note that by \eqref{eqn:res-psi} we have an inclusion 
	$G_{\psi_i} \subseteq G_{\eta}$ which is indeed an equality. To see this,
	we first observe that 
	$\res^G_{G_{\eta}} \chi_i = \sum_{x \in G/G_{\psi_i}} {}^x \psi_i$
	by Clifford's Theorem and Frobenius reciprocity. This yields
	\[
		[G:G_{\psi_i}] \psi_i(1) = \chi_i(1) = \ind_{G_{\eta}}^G \psi_i(1) 
		= [G:G_{\eta}] \psi_i(1)
	\]
	as desired.
\end{proof}

The characters $\psi \otimes \omega$ for varying $\omega \in \Irr(G_{\eta}/N)$
may not be distinct. To remedy this, we introduce the following notion.
For each $1 \leq i \leq s$ we let $U_{\psi_i}$ be the smallest subgroup of $G_{\eta}$
that contains $N$ and all $g \in G_{\eta}$ such that $\psi_i(g) \not= 0$.
Since each $\psi_i$ is of the form $\psi \otimes \omega_i$ for some linear
character $\omega_i$ by Proposition \ref{prop:psi-when-N-contains-commutator} (i),
this definition actually does not depend on $i$. So we simply write
$U_{\psi}$ in the following.

\begin{lemma} \label{lem:psi-twists}
	Let $\omega, \tilde{\omega} \in \Irr(G_{\eta}/N)$. Then $\psi \otimes \omega
	= \psi \otimes \tilde{\omega}$ if and only if $\omega$ and $\tilde{\omega}$
	coincide on $U_{\psi}$.
\end{lemma}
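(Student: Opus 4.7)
The plan is to translate the equality $\psi \otimes \omega = \psi \otimes \tilde\omega$ into a statement about the character $\nu := \omega \tilde\omega^{-1} \in \Irr(G_\eta/N)$, and then use the definition of $U_\psi$ together with the fact that $\nu$ is a linear character.

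First, I would observe that since $G_\eta/N$ is abelian, both $\omega$ and $\tilde\omega$ are linear characters of $G_\eta$ that are trivial on $N$. Hence for each $g \in G_\eta$ one has $(\psi \otimes \omega)(g) = \psi(g)\omega(g)$ and similarly for $\tilde\omega$, so the equality $\psi \otimes \omega = \psi \otimes \tilde\omega$ is equivalent to
\[
	\psi(g)\nu(g) = \psi(g) \quad \text{for all } g \in G_\eta,
\]
i.e.\ to $\nu(g) = 1$ for every $g \in G_\eta$ with $\psi(g) \neq 0$.

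Next I would combine this with the fact that $\nu$ is automatically trivial on $N$. Thus $\psi \otimes \omega = \psi \otimes \tilde\omega$ is equivalent to $\nu$ being trivial on the set
\[
	N \cup \{g \in G_\eta \mid \psi(g) \neq 0\}.
\]
Since $\nu$ is a group homomorphism into $K^\times$, being trivial on this set is equivalent to being trivial on the subgroup it generates, which by definition is exactly $U_\psi$. This proves the equivalence.

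There is essentially no obstacle here: the argument is just unwinding the definitions. The only small point to watch is that the tensor product $\psi \otimes \omega$ has to be interpreted correctly as the pointwise product of character values (which is legal because $\omega$ is one-dimensional), and that $U_\psi$ is well-defined independently of the choice of $\psi_i$, as already noted in the paragraph preceding the lemma.
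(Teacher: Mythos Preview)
Your proof is correct and is essentially the same as the paper's: both reduce the question to whether the linear character $\omega\tilde\omega^{-1}$ (equivalently $\omega^{-1}\tilde\omega$) is trivial on the set $N \cup \{g : \psi(g)\neq 0\}$, and then use that the kernel of a linear character is a subgroup to pass to the generated subgroup $U_\psi$. The only difference is that you spell out the ``subgroup generated by'' step a bit more explicitly than the paper does.
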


\begin{proof}
	The set of all $g \in G_{\eta}$ such that $\omega(g) = \tilde{\omega}(g)$
	is equal to the kernel of $\omega^{-1}\tilde{\omega}$ and thus a normal
	subgroup of $G_{\eta}$.
	It is clear that $\psi(g)\omega(g) = \psi(g)\tilde{\omega}(g)$ for all
	$g \in G_{\eta}$ if and only if $\omega(g) = \tilde{\omega}(g)$ whenever
	$\psi(g) \not=0$. Since both $\omega$ and $\tilde{\omega}$ are trivial
	on $N$, the result follows.
\end{proof}

By definition the characters $\psi_i$ vanish outside $U_{\psi}$. In particular,
the associated idempotents $e_{\psi_i}$ belong to the group ring $K[U_{\psi}]$.
We actually need to consider a slightly more general situation.
Let $U$ be a subgroup of $G_{\eta}$ containing $U_{\psi}$. Then $U$ in particular
contains $N$ and thus the commutator subgroups of $G$ 
and a fortiori that of $G_{\eta}$.
Hence we can play the same game as above with $(G,N,\chi)$ replaced by
$(G_{\eta}, U, \psi)$. Choose an irreducible constituent $\rho$ of
$\res_U^{G_{\eta}} \psi$ and denote the stabiliser of $\rho$ in $G_{\eta}$
by $G_{\eta,\rho}$. Then by Clifford's Theorem there is a positive integer $f$
such that
\[
	\res_U^{G_{\eta}} \psi = f \sum_{x \in G_{\eta}/G_{\eta,\rho}} {}^x \rho.
\]
Since ${}^x\omega = \omega$ for every $\omega \in \Irr(G_{\eta}/N)$,
$x \in G_{\eta}$, it is
straightforward to show that likewise
\begin{equation} \label{eqn:res-U-of-psi_i}
	\res_U^{G_{\eta}} \psi_i = f \sum_{x \in G_{\eta}/G_{\eta,\rho}} {}^x \rho_i
\end{equation}
with the same $f$ and where we set
\begin{equation} \label{eqn:rho_i}
	\rho_i := \rho \otimes (\res^{G_{\eta}}_U \omega_i)
\end{equation}
for all $1 \leq i \leq s$.

\begin{lemma} \label{lem:ind-rho_i}
	With the above notation we have $\ind_U^{G_{\eta}} \rho_i = f \psi_i$
	and $[G_{\eta, \rho_i} : U] = f^2$ for all $1 \leq i \leq s$.
\end{lemma}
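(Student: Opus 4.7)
The plan is to reduce both assertions to the single identity $[G_{\eta,\rho}:U] = f^2$. I first observe that $G_{\eta,\rho_i} = G_{\eta,\rho}$ for every $1 \leq i \leq s$: indeed, $\omega_i$ is a character of the abelian quotient $G_\eta/N$ and is therefore $G_\eta$-invariant, so for $x \in G_\eta$ we have ${}^x\rho_i = {}^x\rho \otimes \res^{G_\eta}_U \omega_i$ (noting that $U$ is normal in $G_\eta$ since it contains $G_\eta' \leq G' \leq N$), which equals $\rho_i$ precisely when ${}^x\rho = \rho$. Given this, the second claim of the lemma is exactly the asserted index identity.

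For the first claim I compare the characters $\ind_U^{G_\eta}\rho_i$ and $f\psi_i$ pointwise. Since $U$ is normal in $G_\eta$, the induction formula \eqref{eqn:induced-character-formula} yields that $\ind_U^{G_\eta}\rho_i$ vanishes on $G_\eta \setminus U$ and that for $g \in U$
\[
(\ind_U^{G_\eta} \rho_i)(g) \;=\; [G_{\eta,\rho}:U] \sum_{x \in G_\eta/G_{\eta,\rho}} {}^x\rho_i(g).
\]
On the other hand $\psi_i = \psi \otimes \omega_i$ has the same support as $\psi$ since $\omega_i$ is linear, and by hypothesis $U_\psi \subseteq U$; hence $f\psi_i$ also vanishes off $U$. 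For $g \in U$, the Clifford decomposition \eqref{eqn:res-U-of-psi_i} gives $f\psi_i(g) = f^2 \sum_{x \in G_\eta/G_{\eta,\rho}} {}^x\rho_i(g)$. Thus the two characters agree exactly when $[G_{\eta,\rho}:U] = f^2$.

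To establish this index identity I compute the norm $\langle\psi,\psi\rangle_{G_\eta}=1$ using that $\psi$ vanishes outside $U$ and that the distinct $G_\eta$-conjugates ${}^x\rho$ of $\rho$ are mutually orthogonal irreducible characters of $U$:
\[
1 \;=\; \frac{1}{|G_\eta|} \sum_{g \in U} |\psi(g)|^2 \;=\; \frac{f^2 \cdot |U| \cdot [G_\eta:G_{\eta,\rho}]}{|G_\eta|} \;=\; \frac{f^2}{[G_{\eta,\rho}:U]}.
\]
This forces $[G_{\eta,\rho}:U] = f^2$ and, together with the previous paragraph, proves the lemma.

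The hardest part to spot is where the running hypothesis $U_\psi \subseteq U$ actually enters: it is needed precisely to guarantee that $\psi_i$ vanishes on $G_\eta \setminus U$, so that $\ind_U^{G_\eta}\rho_i$ does not pick up additional irreducible constituents of $G_\eta$ (namely the other twists $\psi \otimes \omega'$ lying above $\rho_i$) and thus collapses to a scalar multiple of the single character $\psi_i$. This explains why Lemma \ref{lem:psi-twists} and the subgroup $U_\psi$ were introduced just before the statement.
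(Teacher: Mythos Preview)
Your proof is correct and follows essentially the same route as the paper: both arguments note that $\psi_i$ and $\ind_U^{G_\eta}\rho_i$ vanish off $U$, then compare values on $U$ via the Clifford decomposition \eqref{eqn:res-U-of-psi_i} to see that $\ind_U^{G_\eta}\rho_i$ is a scalar multiple of $\psi_i$. The only difference is in pinning down the scalar: the paper applies Frobenius reciprocity $\langle\psi_i,\ind_U^{G_\eta}\rho_i\rangle_{G_\eta}=\langle\res^{G_\eta}_U\psi_i,\rho_i\rangle_U=f$ directly, whereas you compute $\langle\psi,\psi\rangle_{G_\eta}=1$ as a sum over $U$ --- both are the same inner-product bookkeeping, and your explicit verification that $G_{\eta,\rho_i}=G_{\eta,\rho}$ makes overt something the paper uses implicitly.
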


\begin{proof}
	The characters $\psi_i$ and $\ind_U^{G_{\eta}} \rho_i$ both vanish outside $U$.
	On $U$ the formula for the character $\ind_U^{G_{\eta}} \rho_i$ almost coincides
	with that of $\res_U^{G_{\eta}} \psi_i$ given in \eqref{eqn:res-U-of-psi_i},
	only $f$ is replaced by $[G_{\eta,\rho_i}:U]$. 
	Hence we have that
	\[
		\ind_U^{G_{\eta}} \rho_i = \frac{[G_{\eta,\rho_i}:U]}{f} \psi_i.
	\]
	However, by Frobenius reciprocity the multiplicity of $\psi_i$ in
	$\ind_U^{G_{\eta}} \rho_i$ is given by
	\[
		\langle \psi_i, \ind_U^{G_{\eta}} \rho_i\rangle_{G_{\eta}} = 
		\langle \res^{G_{\eta}}_U \psi_i, \rho_i\rangle_U = f.
	\]
	Both claims follow.
\end{proof}

We record the following consequence of Lemma \ref{lem:ind-rho_i}
and Corollary \ref{cor:idempotent-equalities}.

\begin{corollary} \label{cor:idempotents-for-U}
	For each $1 \leq i \leq s$ the idempotent $e_{\psi_i}$ is an element of $K[U]$
	and indeed
	\[
		e_{\psi_i} = \sum_{x \in G_{\eta}/G_{\eta, \rho}} e_{^x \rho_i}.
	\]
\end{corollary}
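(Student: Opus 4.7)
The plan is to prove the identity by expanding both sides in the $K$-basis $\{u : u \in U\}$ of $K[U]$ and matching coefficients, making direct use of \eqref{eqn:res-U-of-psi_i} and Lemma \ref{lem:ind-rho_i}. The membership $e_{\psi_i} \in K[U]$ will be immediate: by definition of $U_\psi$, the character $\psi_i$ vanishes on $G_\eta \setminus U_\psi$, so the standard formula $e_{\psi_i} = \frac{\psi_i(1)}{|G_\eta|} \sum_{g \in G_\eta} \psi_i(g^{-1}) g$ is actually supported in $U_\psi \subseteq U$.

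For the right-hand side, I would substitute \eqref{eqn:res-U-of-psi_i} to obtain
\[
\sum_{x \in G_\eta/G_{\eta,\rho}} e_{^x \rho_i} \;=\; \frac{\rho_i(1)}{|U|} \sum_{u \in U} \Big(\sum_{x \in G_\eta/G_{\eta,\rho}} {}^x\rho_i(u^{-1})\Big) u \;=\; \frac{\rho_i(1)}{f|U|} \sum_{u \in U} \psi_i(u^{-1}) u.
\]
Comparing with $e_{\psi_i} = \frac{\psi_i(1)}{|G_\eta|} \sum_{u \in U} \psi_i(u^{-1}) u$, the desired equality reduces to the scalar identity $\psi_i(1) \cdot f|U| = \rho_i(1) \cdot |G_\eta|$. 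Evaluating \eqref{eqn:res-U-of-psi_i} at the identity of $U$ yields $\psi_i(1) = f\,[G_\eta : G_{\eta,\rho}]\,\rho_i(1)$, and substituting reduces the identity further to $[G_{\eta,\rho} : U] = f^2$, which is exactly the content of Lemma \ref{lem:ind-rho_i}.

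There is no substantial obstacle here: once \eqref{eqn:res-U-of-psi_i} and Lemma \ref{lem:ind-rho_i} are in place, the corollary follows by a routine matching of coefficients. A more conceptual alternative is to reapply Corollary \ref{cor:idempotent-equalities} (i) to the triple $(G_\eta, U, \psi_i)$, with $\rho_i$ playing the role of $\eta$: the hypotheses of \S\ref{subsec:Clifford-for-commutator-subgroup} are met because $U \supseteq U_\psi \supseteq N \supseteq G' \supseteq G_\eta'$, and by Frobenius reciprocity combined with Lemma \ref{lem:ind-rho_i} the character $\psi_i$ is the unique irreducible character of $G_\eta$ whose restriction to $U$ has $\rho_i$ as a constituent, so the left-hand side of the formula furnished by Corollary \ref{cor:idempotent-equalities} (i) collapses to the single term $e_{\psi_i}$.
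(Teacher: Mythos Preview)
Your proposal is correct. The paper's own proof is the one-line observation that the result follows from Lemma \ref{lem:ind-rho_i} and Corollary \ref{cor:idempotent-equalities}, which is precisely your second, ``more conceptual'' approach: apply part (i) of Corollary \ref{cor:idempotent-equalities} to the triple $(G_\eta, U, \psi_i)$ with $\rho_i$ in the role of $\eta$, and use $\ind_U^{G_\eta}\rho_i = f\psi_i$ from Lemma \ref{lem:ind-rho_i} to see that $\psi_i$ is the unique irreducible character of $G_\eta$ lying over $\rho_i$, so the idempotent sum collapses to $e_{\psi_i}$. (One small point you leave implicit: the index set $G_\eta/G_{\eta,\rho_i}$ arising from Corollary \ref{cor:idempotent-equalities} agrees with $G_\eta/G_{\eta,\rho}$ because $\rho_i = \rho \otimes \res^{G_\eta}_U \omega_i$ with $\omega_i$ a $G_\eta$-invariant linear character, so $G_{\eta,\rho_i} = G_{\eta,\rho}$.)

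Your primary approach, the direct coefficient comparison via \eqref{eqn:res-U-of-psi_i} and the scalar identity $[G_{\eta,\rho}:U]=f^2$, is also entirely correct and is in effect an unpacking of the proof of Corollary \ref{cor:idempotent-equalities} (i) in this specific setting. It is slightly more hands-on but has the virtue of being self-contained; the paper's route is shorter because the heavy lifting has already been done in Corollary \ref{cor:idempotent-equalities}.
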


\subsection{More Clifford theory} \label{subsec:more-Clifford}
In this final subsection we provide a few technical lemmas which will be used
in \S \ref{sec:int-and-ann}.

\begin{lemma} \label{lem:psi-induced}
	Let $G$ be a finite group and let $\psi \in \Irr(G)$ be an irreducible character.
	Let $H$ be a normal subgroup of $G$ such that 
	$\psi= \ind_H^G \lambda$ for some (necessarily irreducible) character $\lambda$
	of $H$. Then $G_{\lambda} = H$ and $\res^G_H \psi = \sum_{c \in G/H} {}^c\lambda$.
\end{lemma}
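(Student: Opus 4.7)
The plan is to apply Clifford's Theorem directly to the pair $(G,H,\psi)$ together with the constituent $\lambda$, and then to pin down the Clifford multiplicity and the index $[G:G_\lambda]$ via Frobenius reciprocity and a degree count. Since $H$ is normal in $G$, we already know from \S \ref{subsec:Clifford-general} that $H\subseteq G_\lambda$, so the task reduces to showing that this inclusion is an equality.

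First I would verify that $\lambda$ is in fact a constituent of $\res^G_H\psi$, so that Clifford's Theorem \ref{thm:Clifford} is applicable. By Frobenius reciprocity (Proposition \ref{prop:Frobenius-reciprocity}) and the irreducibility of $\psi$, one has
\[
\langle \res^G_H\psi,\lambda\rangle_H \;=\; \langle \psi,\ind_H^G\lambda\rangle_G \;=\; \langle \psi,\psi\rangle_G \;=\; 1,
\]
so $\lambda$ occurs in $\res^G_H\psi$ with multiplicity exactly one. Theorem \ref{thm:Clifford} then gives
\[
\res^G_H\psi \;=\; m\sum_{c\in G/G_\lambda}{}^{c}\lambda
\]
for some positive integer $m$, and comparing with the computation above forces $m=1$.

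Next I would compare degrees. On the one hand the formula just obtained yields $\psi(1)=[G:G_\lambda]\,\lambda(1)$, while on the other hand the hypothesis $\psi=\ind_H^G\lambda$ and \eqref{eqn:induced-character-formula} give $\psi(1)=[G:H]\,\lambda(1)$. Since $\lambda(1)\neq 0$, this implies $[G:G_\lambda]=[G:H]$, hence $|G_\lambda|=|H|$. Combined with the inclusion $H\subseteq G_\lambda$, this forces $G_\lambda=H$, and substituting back into the Clifford decomposition yields the claimed formula $\res^G_H\psi=\sum_{c\in G/H}{}^{c}\lambda$.

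There is no real obstacle here; the only point that needs a brief justification is the inclusion $H\subseteq G_\lambda$, which holds because $\lambda$ is a class function on $H$ and conjugation by any element of $H$ is an inner automorphism of $H$. Everything else is a formal combination of Clifford's Theorem, Frobenius reciprocity, and a degree count, all of which are available from \S \ref{subsec:Clifford-general}.
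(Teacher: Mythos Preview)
Your proof is correct and essentially identical to the paper's: both use Frobenius reciprocity to see that $\lambda$ occurs in $\res^G_H\psi$ with multiplicity one, apply Clifford's Theorem, and then compare the two degree formulas $\psi(1)=[G:G_\lambda]\lambda(1)$ and $\psi(1)=[G:H]\lambda(1)$ to force $G_\lambda=H$. Your added justification of the inclusion $H\subseteq G_\lambda$ is a helpful gloss but does not change the argument.
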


\begin{proof}
	By Frobenius reciprocity we have
	\[
		\langle \res^G_H \psi, \lambda \rangle_H 
		= \langle \psi, \ind_H^G \lambda \rangle_G
		= \langle \psi, \psi \rangle_G = 1
	\]
	so that Clifford's Theorem implies
	$\res^G_H \psi = \sum_{c \in G/G_{\lambda}} {}^c\lambda$. In particular,
	we have $\psi(1) = [G : G_{\lambda}] \lambda(1)$. As $\psi = \ind_H^G \lambda$,
	we also have $\psi(1) = [G:H] \lambda(1)$. Hence the inclusion 
	$H \subseteq G_{\lambda}$ must be an equality.
\end{proof}

\begin{lemma} \label{lem:psi-with-H-and-N}
	We keep the notation and assumptions of Lemma \ref{lem:psi-induced}.
	Let $N$ be a normal subgroup of $G$ which contains the commutator subgroup $G'$.
	Assume that $\eta := \res^G_N \psi$ is irreducible (so that in particular
	$G_{\eta} = G$) and that $U_{\psi} = G$. Then the inclusion $H \hookrightarrow G$
	induces an isomorphism $H / (H \cap N) \simeq G/N$ by which we identify
	$\Irr(H / (H\cap N))$ and $\Irr(G/N)$. Then for all $\omega \in \Irr(G/N)$
	\begin{enumerate}
		\item 
		we have $\psi \otimes \omega = \ind_H^G(\lambda \otimes \omega)$;
		\item
		the idempotent $e_{\psi \otimes \omega}$ is an element of $K[H]$ and indeed
		\[
			e_{\psi \otimes \omega} = \sum_{c \in G/H} e_{{}^c \lambda \otimes \omega}.
		\]
	\end{enumerate}
\end{lemma}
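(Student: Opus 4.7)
My plan is to first secure the group-theoretic identification $H/(H \cap N) \simeq G/N$ by showing that $HN = G$, then to derive (i) from a standard push-pull formula, and finally to use (i) to reduce (ii) to a short character-theoretic computation.

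For the identification, I would observe that since $\psi = \ind_H^G \lambda$ with $H$ normal in $G$, the induced-character formula \eqref{eqn:induced-character-formula} forces $\psi$ to vanish outside $H$: if $g \notin H$ then no conjugate $h^{-1}gh$ lies in $H$, because $H$ is normal. Consequently every $g \in G$ with $\psi(g) \neq 0$ already lies in $H$, so the subgroup $U_\psi$ generated by $N$ together with such elements is contained in $HN$. The hypothesis $U_\psi = G$ therefore forces $HN = G$, and the second isomorphism theorem yields $H/(H \cap N) \cong HN/N = G/N$ via the inclusion $H \hookrightarrow G$. Under this identification, restriction from $G$ to $H$ matches $\Irr(G/N)$ with $\Irr(H/(H \cap N))$.

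For (i), I would invoke the projection formula: since $\omega$ is a linear character, $\omega(h^{-1}gh) = \omega(g)$ for all $g, h \in G$, so inserting this into \eqref{eqn:induced-character-formula} immediately gives
\[
\psi \otimes \omega = (\ind_H^G \lambda) \otimes \omega = \ind_H^G\bigl(\lambda \otimes \res^G_H \omega\bigr).
\]
Under the identification from the previous paragraph, $\res^G_H \omega$ is precisely $\omega$ regarded as a character of $H/(H \cap N)$, which yields (i).

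For (ii), applying Lemma \ref{lem:psi-induced} to the presentation $\psi \otimes \omega = \ind_H^G(\lambda \otimes \omega)$ supplied by (i) gives $G_{\lambda \otimes \omega} = H$ together with
\[
\res^G_H(\psi \otimes \omega) = \sum_{c \in G/H} {}^c(\lambda \otimes \omega).
\]
Since $\omega$ is $G$-conjugation invariant (it factors through the abelian group $G/N$), one has ${}^c(\lambda \otimes \omega) = {}^c\lambda \otimes \omega$. I would then expand $e_{\psi \otimes \omega}$ from its definition, insert the induced-character formula, and use that $\psi \otimes \omega$ vanishes outside $H$ together with $(\psi \otimes \omega)(1) = [G:H] \cdot (\lambda \otimes \omega)(1)$; a routine re-indexing of the resulting double sum recognises it as $\sum_{c \in G/H} e_{{}^c\lambda \otimes \omega}$, which a fortiori lies in $K[H]$. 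The main obstacle is really the opening step: the hypothesis $U_\psi = G$ enters only through forcing $HN = G$, but without this equality neither the isomorphism in the statement nor its use to reinterpret characters of $G/N$ on $H$ would be available. Once $HN = G$ is in hand, both (i) and (ii) become essentially formal.
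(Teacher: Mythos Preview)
Your proposal is correct and follows essentially the same approach as the paper: the same vanishing argument gives $HN = G$, part (i) is exactly the push--pull identity that the paper verifies by direct computation with \eqref{eqn:induced-character-formula}, and for (ii) both you and the paper expand $e_{\psi\otimes\omega}$ using $(\psi\otimes\omega)(1) = [G:H]\lambda(1)$, vanishing outside $H$, and the decomposition $\res^G_H(\psi\otimes\omega) = \sum_{c\in G/H}{}^c\lambda\otimes\omega$ coming from Lemma~\ref{lem:psi-induced}. The only cosmetic difference is that you apply Lemma~\ref{lem:psi-induced} to $\psi\otimes\omega = \ind_H^G(\lambda\otimes\omega)$ directly, whereas the paper applies it to $\psi = \ind_H^G\lambda$ and then tensors; either route gives the same formula.
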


\begin{proof}
	We first note that indeed $G_{\eta} = G$ by Clifford's Theorem. 
	Since $\psi$ is induced from a character of the normal subgroup $H$, it vanishes
	outside $H$ and hence $U_{\psi}$ is contained in $NH$. By assumption we have
	$U_{\psi} = G$ so that also $NH = G$ which shows that the natural inclusion
	$H / (H \cap N) \rightarrow G/N$ is an isomorphism. We now prove (i).
	It is clear that both characters vanish outside $H$. For $h \in H$ we compute
	\begin{align*}
		\ind_H^G (\lambda \otimes \omega)(h) & = 
		\frac{1}{|H|} \sum_{g \in G} (\lambda \otimes \omega)(g^{-1}hg)
		= \frac{1}{|H|} \sum_{g \in G} \lambda (g^{-1}hg) \omega(h)\\
		& = \ind_H^G(\lambda)(h) \omega(h) = (\psi \otimes \omega)(h)
	\end{align*}
	as desired. For (ii) we have
	\begin{align*}
		e_{\psi \otimes \omega} & = 
		\frac{\psi(1)}{|G|} \sum_{g \in G} (\psi \otimes \omega)(g^{-1})g\\
		& = \frac{[G:H] \lambda(1)}{|G|} \sum_{h \in H} \sum_{c \in G/H} ({}^c \lambda \otimes \omega)(h^{-1}) h\\
		& = \sum_{c \in G/H} \frac{\lambda(1)}{|H|} \sum_{h\in H} ({}^c \lambda \otimes \omega)(h^{-1}) h\\
		& = \sum_{c \in G/H} e_{{}^c \lambda \otimes \omega},
	\end{align*}
	where the second equality follows from Lemma \ref{lem:psi-induced}.
\end{proof}

\begin{lemma} \label{lem:final-lemma-on-idempotents}
	We keep the notation and assumptions of the two preceding lemmas.
	Then the following holds.
	\begin{enumerate}
		\item 
		For each $c \in G/H$ the character
		\[
		\eta_c := \res^H_{H \cap N} {}^c \lambda = 
		\res^H_{H \cap N} {}^c \lambda \otimes \omega
		\]
		is irreducible and we have an equality
		\[
			\ind^H_{H \cap N} \eta_c = \sum_{\omega \in \Irr(G/N)} {}^c \lambda \otimes \omega.
		\]
		\item
		For $c \in G/H$ we set $e(\eta_c) := \sum_{\omega \in \Irr(G/N)} e_{{}^c \lambda \otimes \omega} \in K[H\cap N]$. Then we have an equality
		\[
			e_{\eta} = \sum_{c \in G/H} e(\eta_c).
		\]
	\end{enumerate}
	
\end{lemma}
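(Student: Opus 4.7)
The plan is to derive both parts from the ``projection formula'' $\ind^H_{H\cap N} \res^H_{H\cap N}\chi = \chi \cdot \ind^H_{H\cap N} \mathbf{1}_{H\cap N}$ inside $H$, combined with the idempotent formulas already available in Corollary \ref{cor:idempotent-equalities} and Lemma \ref{lem:psi-with-H-and-N}(ii).

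For (i), the equality $\res^H_{H\cap N}({}^c\lambda \otimes \omega) = \res^H_{H\cap N} {}^c\lambda$ will be automatic, since every $\omega \in \Irr(G/N)$ is identified via Lemma \ref{lem:psi-with-H-and-N} with a character of $H/(H\cap N)$ and is therefore trivial on $H\cap N$. Applying the projection formula to $\chi = {}^c\lambda$, and using that $H/(H\cap N)$ is abelian (which holds because $H' \subseteq G' \subseteq N$), I would obtain $\ind^H_{H\cap N} \eta_c = {}^c\lambda \cdot \sum_{\tilde\omega \in \Irr(H/(H\cap N))} \tilde\omega$, which is exactly $\sum_{\omega \in \Irr(G/N)} {}^c\lambda \otimes \omega$ under the above identification. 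To show that $\eta_c$ is itself irreducible I will compute $\langle \eta_c, \eta_c\rangle_{H\cap N}$ by Frobenius reciprocity against $\ind^H_{H\cap N}\eta_c$; this will equal the number of $\omega \in \Irr(G/N)$ that are trivial on the support $\{h \in H : {}^c\lambda(h) \neq 0\}$. The decisive point will be that, since $G' \subseteq N$, the conjugation action of $G$ on $G/N$ is trivial; hence conjugation by $c$ preserves every coset of $H\cap N$ in $H$, and so the image in $H/(H\cap N)$ of the support of ${}^c\lambda$ agrees with that of the support of $\lambda$. Because $\psi = \ind_H^G \lambda$ vanishes outside $H$, this common image generates $G/N \simeq H/(H\cap N)$ by the hypothesis $U_\psi = G$, leaving only the trivial $\omega$ and yielding $\langle \eta_c, \eta_c\rangle_{H\cap N} = 1$.

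For (ii), I would first apply Corollary \ref{cor:idempotent-equalities}(ii) to the triple $(G, N, \psi)$. Since $G_\eta = G$ and the characters $\psi \otimes \omega$ are pairwise distinct by Lemma \ref{lem:psi-twists} together with $U_\psi = G$, the corollary delivers $e_\eta = \sum_{\omega \in \Irr(G/N)} e_{\psi \otimes \omega}$. Substituting the decomposition $e_{\psi \otimes \omega} = \sum_{c \in G/H} e_{{}^c\lambda \otimes \omega}$ supplied by Lemma \ref{lem:psi-with-H-and-N}(ii) and interchanging the two finite sums then produces the desired $e_\eta = \sum_{c \in G/H} e(\eta_c)$. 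To verify the containment $e(\eta_c) \in K[H \cap N]$ I would apply Corollary \ref{cor:idempotent-equalities}(ii) once more, this time inside $H$ to the triple $(H, H\cap N, {}^c\lambda)$: by (i) the relevant list of constituents is exactly $\{{}^c\lambda \otimes \omega\}_{\omega \in \Irr(G/N)}$, so that corollary gives $e_{\eta_c} = \sum_\omega e_{{}^c\lambda \otimes \omega} = e(\eta_c)$, which lies in $K[H\cap N]$ by its definition as a central idempotent there.

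The step that I expect to require the most care is the irreducibility of $\eta_c$ in (i), and specifically the translation of the hypothesis $U_\psi = G$ into a generation statement for the image in $H/(H\cap N)$ of the support of a single conjugate ${}^c\lambda$. The reduction rests on the triviality of the conjugation action of $G$ on $G/N$, which is a direct consequence of $G' \subseteq N$. Once this is in hand, everything else amounts to rearranging sums of idempotents already provided by the preceding corollaries and lemmas.
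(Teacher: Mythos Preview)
Your proposal is correct. Part (ii) is essentially identical to the paper's argument (Corollary~\ref{cor:idempotent-equalities}(ii) followed by Lemma~\ref{lem:psi-with-H-and-N}(ii) and interchanging sums), with your extra observation that $e(\eta_c)=e_{\eta_c}$ giving a clean reason for the containment in $K[H\cap N]$.

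For part (i), however, you take a genuinely different route. The paper argues indirectly: it picks an irreducible constituent $\tilde\eta_c$ of $\eta_c$, observes via Frobenius reciprocity that every ${}^c\lambda\otimes\omega$ occurs in $\ind^H_{H\cap N}\tilde\eta_c$, and then uses the \emph{already established} distinctness of the $\psi\otimes\omega$ (Lemma~\ref{lem:psi-twists} with $U_\psi=G$) together with $\ind_H^G({}^c\lambda\otimes\omega)=\psi\otimes\omega$ (Lemma~\ref{lem:psi-with-H-and-N}(i)) to force a degree inequality $\lambda(1)\le\tilde\eta_c(1)$, whence $\eta_c=\tilde\eta_c$; the induction formula then follows by comparing degrees. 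Your approach instead obtains the induction formula \emph{first} from the projection formula $\ind^H_{H\cap N}\res^H_{H\cap N}({}^c\lambda)={}^c\lambda\cdot\ind^H_{H\cap N}\mathbf 1$ and then reads off $\langle\eta_c,\eta_c\rangle_{H\cap N}=\#\{\omega:{}^c\lambda={}^c\lambda\otimes\omega\}=1$ by a direct support analysis, the key point being that $G'\subseteq N$ makes the image of $\mathrm{supp}({}^c\lambda)$ in $H/(H\cap N)$ independent of $c$ and hence (via $U_\psi=G$) generating. The paper's argument is shorter because it recycles the distinctness of the $\psi\otimes\omega$ from earlier; yours is more self-contained and simultaneously re-proves that the ${}^c\lambda\otimes\omega$ are pairwise distinct without passing through induction to $G$.
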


\begin{proof}
	Let $\tilde \eta_c \in \Irr(H \cap N)$ be an irreducible constituent of
	$\eta_c$. By Frobenius reciprocity we have
	\begin{equation} \label{eqn:Frobenius-rec}
		\langle {}^c \lambda \otimes \omega, \ind^H_{H \cap N} \tilde \eta_c \rangle_H
		= \langle \eta_c, \tilde \eta_c \rangle_{H \cap N} > 0.
	\end{equation}
	Since the characters 
	${}^c \lambda \otimes \omega$, $\omega \in \Irr(G/N)$ 
	are pairwise distinct by Lemmas
	\ref{lem:psi-with-H-and-N} (i) and \ref{lem:psi-twists}, this shows the inequality
	\[
		[G:N] \lambda(1) \leq [H:H\cap N] \tilde \eta_c(1) = [G:N] \tilde \eta_c(1).
	\]
	However, we clearly have $\tilde \eta_c(1) \leq \lambda(1)$ as $\tilde \eta_c$
	divides $\eta_c$. It follows that $\tilde \eta_c(1) = \lambda(1) = \eta_c(1)$
	and thus $\eta_c = \tilde \eta_c$ is irreducible. By \eqref{eqn:Frobenius-rec}
	it suffices to compare degrees for the final claim in (i). We compute
	\begin{align*}
		\ind^H_{H \cap N} \eta_c(1) & =  [H:H \cap N] \eta_c(1)\\
		& =  [G:N] {}^c\lambda(1)\\
		& =  \sum_{\omega \in \Irr(G/N)} ({}^c \lambda \otimes \omega)(1).
	\end{align*}
	For (ii) we compute
	\[
		e_{\eta} = \sum_{\omega \in \Irr(G/N)} e_{\psi \otimes \omega} = 
		\sum_{\omega \in \Irr(G/N)} \sum_{c \in G/H} e_{{}^c \lambda \otimes \omega}
		= \sum_{c \in G/H} e(\eta_c),
	\]
	where the first and second equality follow from 
	Corollary \ref{cor:idempotent-equalities} (ii) and Lemma 
	\ref{lem:psi-with-H-and-N} (ii), respectively.
\end{proof}

\section{Applications to integrality rings and denominator ideals}

A main aim of this section is to prove that $|G'| \in \mathcal{H}(G)$
for every finite group $G$. This is clear for abelian groups and we will do 
induction on the group order, where the results established in \S
\ref{sec:Clifford-theory} will play a crucial role.

\subsection{Behaviour of the reduced norm under restriction}
Let $G$ be a finite group and let $K$ be a field of characteristic $0$
that is a splitting field for $G$ and all of its subgroups.
Since we want to do induction, we need to know how the reduced norm behaves
when we restrict to a subgroup $U$ of $G$. More precisely, let
$H \in M_n(K[G])$ be a matrix with entries in $K[G]$. On the one hand we can compute
its reduced norm $\nr_G(H) \in \zeta(K[G])$. On the other hand, we may view $H$ as
a $K[G]$-linear map in the usual way. This map is also $K[U]$-linear and
(after a choice of basis) we obtain a new matrix $H|_U \in M_{n[G:U]}(K[U])$.
Its reduced norm $\nr_U(H|_U) \in \zeta(K[U])$ does not depend upon the particular
choice of basis, and we want to determine its value in terms of $\nr_G(H)$.

If $H$ is invertible, results in the spirit of
Lemma \ref{lem:nr-under-restriction} below already appear in the literature 
if $K$ is a number field \cite[(52.23)]{MR892316} 
and if $K = \mathbb{Q}_p^c$ \cite[\S 2.2]{MR2078894}.
Unfortunately, this is not sufficient for our purposes so that we include  a proof of Lemma \ref{lem:nr-under-restriction}
for convenience. We mainly follow the argument given in \cite{MR892316},
but since $H$ might not be invertible, we cannot reduce to the case $n=1$.

\begin{lemma} \label{lem:nr-under-restriction}
	Let $G$ be a finite group and let $U$ a subgroup of $G$. Let $H \in M_n(K[G])$
	and write $\nr_G(H) = \sum_{\chi \in \Irr(G)} \alpha_{\chi} e_{\chi}$ with
	$\alpha_{\chi} \in K$. Then we have that 
	\[
		\nr_U(H|_U) = \sum_{\psi \in \Irr(U)} \beta_{\psi} e_{\psi}, \mbox{ where }
		\beta_{\psi} = \prod_{\chi \in \Irr(G)} \alpha_{\chi}^{\langle \ind_U^G \psi, \chi \rangle_G}.
	\]
\end{lemma}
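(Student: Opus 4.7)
The plan is to compute both sides as determinants on Wedderburn components and then compare using how representations decompose under restriction. Since $K$ is a splitting field for $G$ and all its subgroups, Wedderburn decomposition gives
\[
	M_n(K[G]) \simeq \prod_{\chi \in \Irr(G)} M_{n\chi(1)}(K), \quad
	M_{n[G:U]}(K[U]) \simeq \prod_{\psi \in \Irr(U)} M_{n[G:U]\psi(1)}(K),
\]
under which the reduced norms become ordinary determinants.

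Next I would interpret these projections geometrically. Viewing $K[G]^n$ as a right $K[G]$-module, one has $M_n(K[G]) = \End_{K[G]}(K[G]^n)$, and with respect to the isotypic decomposition $K[G]^n \simeq \bigoplus_\chi V_\chi^{n\chi(1)}$ the image $H^\chi$ of $H$ satisfies $\alpha_\chi = \det(H^\chi)$. Crucially, $H|_U$ represents the same $K$-linear endomorphism of $K[G]^n$, only viewed $K[U]$-linearly (which is legitimate because the action of $H$ commutes with the whole right $K[G]$-action, a fortiori with the $K[U]$-action). Under the corresponding identification $\End_{K[U]}(K[G]^n) \simeq \prod_\psi M_{n[G:U]\psi(1)}(K)$ one obtains $\beta_\psi = \det(H^\psi)$, where $H^\psi$ is the $\psi$-component of this same endomorphism.

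The heart of the argument is to compute $H^\psi$ in terms of the $H^\chi$. I would write $V_\chi|_U = \bigoplus_\psi V_\psi^{k_{\chi,\psi}}$ with $k_{\chi,\psi} := \langle \res^G_U \chi, \psi \rangle_U$, and observe that, via the isomorphism $V_\chi^{n\chi(1)} \simeq V_\chi \otimes_K K^{n\chi(1)}$, the operator $H^\chi$ acts only through the second (multiplicity) factor; it therefore respects the further splitting of the first factor upon restriction to $U$. Consequently on the $V_\psi$-isotypic part of $V_\chi^{n\chi(1)}|_U$ the endomorphism $H^\chi$ is similar to $H^\chi$ taken with multiplicity $k_{\chi,\psi}$. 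Summing over $\chi$, the operator $H^\psi$ is block-diagonal with blocks $H^\chi$ each occurring $k_{\chi,\psi}$ times, so
\[
	\beta_\psi = \det(H^\psi) = \prod_\chi (\det H^\chi)^{k_{\chi,\psi}} = \prod_\chi \alpha_\chi^{k_{\chi,\psi}}.
\]
Frobenius reciprocity (Proposition \ref{prop:Frobenius-reciprocity}) rewrites $k_{\chi,\psi}$ as $\langle \ind^G_U \psi, \chi \rangle_G$, giving the claimed formula.

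The main obstacle I anticipate is keeping the bookkeeping straight between the two roles of the tensor factors in $V_\chi^{n\chi(1)} \simeq V_\chi \otimes_K K^{n\chi(1)}$: restriction to $U$ decomposes the first factor while $H^\chi$ acts on the second. As a dimension check, one verifies $\sum_\chi n\chi(1) k_{\chi,\psi} = n[G:U]\psi(1) = n(\ind^G_U \psi)(1)$, which is itself an instance of Frobenius reciprocity and guarantees that the Wedderburn multiplicities on the two sides of the formula match.
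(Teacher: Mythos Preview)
Your argument is correct, but it proceeds along a route dual to the paper's. The paper fixes coset representatives $g_{1},\dots,g_{m}$ of $U$ in $G$, writes down explicitly the block matrix $B = \bigl(\omega_{G,U}(g_{j}^{-1}Hg_{i})\bigr)_{i,j}$ representing $H|_{U}$ (where $\omega_{G,U}$ is the $K$-linear projection $K[G]\to K[U]$), and then recognises that for any representation $\pi$ of $U$ with character $\psi$ one has $\pi(B) = \ind_{U}^{G}(\pi)(H)$ by the standard matrix formula for induced representations; thus $\beta_{\psi} = \det\bigl(\ind_{U}^{G}(\pi)(H)\bigr)$, and the exponents $\langle \ind_{U}^{G}\psi,\chi\rangle_{G}$ drop out immediately from decomposing the induced character. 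You instead work on the restriction side: you track how each $V_{\chi}$-isotypic block of $K[G]^{n}$ breaks up under $K[U]$, obtain the exponents as $\langle \res^{G}_{U}\chi,\psi\rangle_{U}$, and only then invoke Frobenius reciprocity to match the statement. The paper's approach yields, as a by-product, an explicit matrix for $H|_{U}$ (not needed elsewhere, but concrete); your approach is coordinate-free and makes it transparent that invertibility of $H$ plays no role, which is precisely the point stressed before the lemma.
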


\begin{proof}
	We consider $K[G]^n$ as a right $K[G]$-module and let $e_1, \dots, e_n$
	be its standard basis. We set $m := [G:U]$ and choose left coset representatives 
	$g_1, \dots, g_m$ of $U$ in $G$. 
	Then $(e_k g_{\ell})_{1 \leq k \leq n, 1 \leq \ell \leq m}$
	constitutes a basis of $K[G]^n$ as a right $K[U]$-module. We define a $K$-linear map
	\begin{eqnarray*}
		\omega_{G,U}: K[G] & \longrightarrow &  K[U]\\
		\sum_{g \in G} x_g g & \mapsto & \sum_{g \in U} x_g g
	\end{eqnarray*}
	that sends each $g \in G \setminus U$ to zero. This map extends to matrix rings
	in the obvious way. With respect to the above basis the $K[U]$-linear map
	$H|_U$ is given by the block matrix
	\[
		B := \left(\omega_{G,U}(g_j^{-1}Hg_i)\right)_{1 \leq i,j \leq m}.
	\]
	To see this let $H = (h_{ij})_{1 \leq i,j \leq n}$ so that
	$H e_kg_{\ell} = \sum_{j=1}^n h_{kj} g_{\ell} e_j$. We likewise have that
	\[
		B e_kg_{\ell} = \sum_{i=1}^m \sum_{j=1}^n (e_j g_i) \omega_{G,U}(g_i^{-1} h_{kj} g_{\ell}).
	\]
	Comparing the coefficients at each $e_j$ we see that it suffices to show that
	\begin{equation} \label{eqn:xg_ell}
		x g_{\ell} = \sum_{i=1}^m g_i \omega_{G,U}(g_i^{-1} x g_{\ell})
	\end{equation}
	for each $x \in K[G]$. By $K$-linearity we may assume that $x = g \in G$.
	Then $\omega_{G,U}(g_i^{-1} g g_{\ell})$ vanishes unless 
	$g_i^{-1} g g_{\ell} \in U$. The latter happens if and only if
	$gg_{\ell} \in g_i U$ and therefore for exactly one choice of $i$.
	This shows \eqref{eqn:xg_ell}.
	
	Now let $\psi$ be an irreducible character of $U$ and let $\pi$ be a
	representation with character $\psi$. Then the induced representation is given by
	(see \cite[(10.1)]{MR632548}, for instance)
	\[
		\ind_U^G(\pi)(x) = \left(\pi( \omega_{G,U} ( g_{j}^{-1} x g_{i}))\right)_{1\leq i,j \leq m}.
	\]
	In particular, we have $\ind_U^G(\pi)(H) = \pi(B)$ and hence
	\[
		\beta_{\psi} = \det(\pi(B)) = \det(\ind_U^G(\pi)(H)) = 
		\prod_{\chi \in \Irr(G)} \alpha_{\chi}^{\langle \ind_U^G \psi, \chi \rangle_G}
	\]
	as desired.
\end{proof}

\subsection{Bounding denominators of reduced norms}
We first consider the case of local fields. The crucial result we have
to show is the following.

\begin{theorem} \label{thm:integrality-mainstep}
	Let $p$ be  prime and let $G$ be a finite group. Let $F$ be a (possibly infinite)
	extension of $\Q_p$ and $R \subset F$ an integrally closed noetherian ring
	with field of fractions $F$. Then for every square matrix $H$ with entries
	in the $R$-order $R[G]$ one has
	\[
		|G'| \nr(H) \in \zeta(R[G]).
	\]
\end{theorem}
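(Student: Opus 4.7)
The proof I would give proceeds by induction on $|G|$. The base case $G$ abelian is immediate: then $G'=\{e\}$, $R[G]=\zeta(R[G])$ is commutative, and the reduced norm of $H\in M_n(R[G])$ coincides with the ordinary determinant, which manifestly lies in $R[G]$. For the inductive step, assume $G$ is non-abelian and the result holds for every proper subgroup. Extending scalars to a splitting field $K\supseteq F$ for $G$ and all of its subgroups, I would expand
\[
\nr_G(H)=\sum_{\chi\in\Irr(G)}\alpha_\chi e_\chi\in\zeta(K[G]),
\]
and analyse this sum Clifford orbit by Clifford orbit with respect to $N:=G'$: for each $\eta\in\Irr(N)$ let $G_\eta$ be its stabilizer, and let $\chi_1,\ldots,\chi_s\in\Irr(G)$ enumerate the irreducibles whose restriction to $N$ contains $\eta$, with Clifford correspondents $\psi_i\in\Irr(G_\eta)$ satisfying $\chi_i=\ind_{G_\eta}^G\psi_i$.

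In the case $G_\eta\subsetneq G$, Frobenius reciprocity yields $\langle\ind_{G_\eta}^G\psi_i,\chi_j\rangle_G=\delta_{ij}$, so Lemma~\ref{lem:nr-under-restriction} applied to $U=G_\eta$ forces $\alpha_{\chi_i}=\beta_{\psi_i}$, where $\nr_{G_\eta}(H|_{G_\eta})=\sum_\psi\beta_\psi e_\psi$. Since $G_\eta'\subseteq G'$, the inductive hypothesis provides $|G'|\nr_{G_\eta}(H|_{G_\eta})\in\zeta(R[G_\eta])$. Corollary~\ref{cor:idempotent-equalities} identifies $e_{\chi_i}=\sum_{x\in G/G_\eta}e_{{}^x\psi_i}$ as an element of $K[G_\eta]$ and the Clifford-orbit idempotent $\sum_{i=1}^s e_{\chi_i}=\sum_{x\in G/G_\eta}e_{{}^x\eta}$ as an element of $K[N]$, which together yield
\[
\sum_{i=1}^s\alpha_{\chi_i}e_{\chi_i}=\nr_{G_\eta}(H|_{G_\eta})\cdot\Bigl(\sum_{x\in G/G_\eta}e_{{}^x\eta}\Bigr);
\]
averaging over a full $\Gal(K/F)$-orbit of such $\eta$ then transfers the integrality from $\zeta(R[G_\eta])$ up to $\zeta(R[G])$.

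In the case $G_\eta=G$, restriction to $G_\eta$ gives no reduction, and Proposition~\ref{prop:psi-when-N-contains-commutator} describes the $\chi_i$ as twists $\chi\otimes\omega$ with $\omega\in\Irr(G/N)=\Irr(G^{\ab})$. Here I would invoke the finer Clifford analysis of~\S\ref{subsec:more-Clifford}: unless $\chi$ is primitive, one may write $\chi=\ind_H^G\lambda$ for an irreducible $\lambda$ of some proper normal subgroup $H$ of $G$ containing $G'$. Lemmas~\ref{lem:psi-with-H-and-N} and~\ref{lem:final-lemma-on-idempotents} decompose $e_\eta=\sum_{c\in G/H}e(\eta_c)$ into pieces lying in $K[H\cap N]$, while Lemma~\ref{lem:nr-under-restriction} with $U=H$ expresses $\alpha_{\chi_i}$ as a coefficient of $\nr_H(H|_H)$; combined with $H'\subseteq G'$ the inductive hypothesis closes this sub-case. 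In the residual setting where $\chi$ is primitive and $G$ is perfect (so $|G'|=|G|$), the claim collapses to the elementary fact $|G|\in\mathcal{H}(G)$ together with~\eqref{eqn:HI-in-centre}.

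The main obstacle is the sub-case $G_\eta=G$ for primitive $\chi$: no proper subgroup immediately delivers $\chi$ as an induced character, so one must exploit the numerical identity $[G_\eta:N]=sm^2$ from Proposition~\ref{prop:psi-when-N-contains-commutator}\,(iii) together with a careful analysis of the support subgroup $U_\psi$ of~\S\ref{subsec:Clifford-for-commutator-subgroup}. The explicit idempotent identities of~\S\ref{sec:Clifford-theory} are precisely calibrated so that the single factor $|G'|$ absorbs every denominator appearing in the coefficients of $e_\chi$ once the full Clifford (and Galois) orbit is summed.
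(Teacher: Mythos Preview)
Your reduction in the case $G_\eta\subsetneq G$ is essentially the paper's, but your treatment of the case $G_\eta=G$ has a genuine gap. You invoke the lemmas of \S\ref{subsec:more-Clifford} to write $\chi=\ind_H^G\lambda$ with $H$ a proper \emph{normal} subgroup, and fall back on the implication ``$\chi$ primitive $\Rightarrow$ $G$ perfect'' when this fails. Neither step is valid for general $G$: the lemmas of \S\ref{subsec:more-Clifford} are deployed in the paper only in the proof of Theorem~\ref{thm:main-result}, where the nilpotency hypothesis guarantees both that every irreducible is monomial and that proper normal overgroups exist. For arbitrary $G$ there is no reason a non-primitive $\chi$ should be induced from a \emph{normal} subgroup, and primitive characters certainly do not force $G'=G$ (take $G=\mathrm{SL}_2(\F_3)$ as in Example~\ref{ex:special-linear-group}, where $|G'|=8\neq 24=|G|$ and the degree-$2$ characters are not monomial).

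The paper's actual argument for $G_\eta=G$ proceeds in two further steps that you only gesture at. First, if the support subgroup $U_\psi$ is proper, choose a maximal $U$ with $U_\psi\subseteq U\subsetneq G$; then $[G:U]$ is prime, the integer $f$ of Lemma~\ref{lem:ind-rho_i} satisfies $f^2\mid[G:U]$ and hence $f=1$, so $\psi_i=\ind_U^G\rho_i$ and one reduces to $U$ exactly as before via~\eqref{eqn:iota-rho}. Second, when $U_\psi=G=G_\eta$, Lemma~\ref{lem:psi-twists} gives $s=[G:N]$ and $m=1$, and the crucial new idea is to pick an \emph{integral} representation $\pi:G\to\GL_{\psi(1)}(\mathcal{O}_K)$ affording $\psi$. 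The twists $\pi\otimes\omega$ assemble into a ring isomorphism
\[
\iota_\pi:F[G]e(\eta)\xrightarrow{\ \sim\ }M_{\psi(1)}(F[G/N]),\qquad g\,e(\eta)\mapsto \overline g\,\pi(g),
\]
under which $\nr(H\,e(\eta))$ becomes an honest determinant over the commutative ring $R[G/N]$, hence integral. One then checks by a direct orthogonality computation that $\iota_\pi^{-1}(\overline g)\in |N|^{-1}R[G]$ for each $g$, which is exactly the factor $|G'|$ when $N=G'$. This Morita-type step is the heart of the proof and is entirely absent from your proposal.

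A secondary point: the paper does not prove the bare statement by induction, but rather the stronger assertion~\eqref{eqn:inductive-integrality} that $|N|\,\nr(H)\,e(\eta)\in\zeta(R[G])$ for every $N\supseteq G'$ and every $\eta\in\Irr(N)$. This strengthening is what makes the inductive step to $G_\eta$ work cleanly, since $G'$ may strictly contain $G_\eta'$ and one needs the flexibility to keep $N$ fixed along the descent.
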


\begin{proof}
	We first note that $\nr(H) \in \zeta(F[G])$ so that it suffices to show that
	$|G'| \nr(H)$ has integral coefficients.
	Let $K$ be a finite extension of $\Q_p$ that is a splitting field for $G$ and
	all of its subgroups. By \cite[Proposition 13.14]{MR1322960}
	we may and do assume that $F$ contains $K$.
	
	Choose a normal subgroup $N$ containing the commutator subgroup $G'$ of $G$.
	Let $\eta$ be an irreducible character of $N$ and let $\chi_1, \dots, \chi_s$ 
	be the irreducible characters of $G$ dividing 
	$\ind^G_N \eta$, which are exactly those whose restriction to $N$ is divisible
	by $\eta$. Hence this is compatible with the notation of 
	\S \ref{subsec:Clifford-for-commutator-subgroup}.
	By Corollary \ref{cor:idempotent-equalities} (i) we have
	\begin{equation} \label{eqn:definition-e-of-eta}
		e(\eta) := \sum_{i=1}^s e_{\chi_i} = \sum_{x \in G/G_{\eta}} e_{^x \eta}
		\in K[N].
	\end{equation}
	The ring of integers $\mathcal{O}_K$ in $K$ is contained in $R$ and we clearly
	have 
	\begin{equation} \label{eqn:N-times-idempotent-is-integral}
		|N|e(\eta) \in \mathcal{O}_K[G].
	\end{equation}
	Let $\eta'$ be a second irreducible
	character of $N$. We write $\eta' \sim \eta$ is $\eta'$ is of the form
	${}^x \eta$ for some $x \in G$. Then $e(\eta') = e(\eta)$ if and only
	if $\eta' \sim \eta$, and $e(\eta')e(\eta) = 0$ otherwise. Hence we can write
	$1 = \sum_{\eta \in \Irr(N) / \sim} e(\eta)$ and likewise
	$\nr(H) = \sum_{\eta \in \Irr(N) / \sim} \nr(H)e(\eta)$.
	We now show that
	\begin{equation} \label{eqn:inductive-integrality}
		|N| \nr(H) e(\eta) \in \zeta(R[G])
	\end{equation}
	for each $\eta$. Choosing $N = G'$ then proves the theorem.
	
	We do this by induction on $|G|$. If $G$ is abelian, then 
	$\nr(H) \in R[G] = \zeta(R[G])$ and \eqref{eqn:inductive-integrality}
	follows from \eqref{eqn:N-times-idempotent-is-integral}.
	Now suppose that $G$ is non-abelian.
	By Corollary \ref{cor:idempotent-equalities} (iii) we have
	$e_{\chi_i} = \sum_{x \in G/G_{\eta}} e_{^x \psi_i}$ for all $i$, where as before
	$\chi_i = \ind^G_{G_{\eta}} \psi_i$ for an (irreducible) character
	$\psi_i$ of $G_{\eta}$. Hence the natural diagonal embeddings
	$F e_{\chi_i} \hookrightarrow \oplus_{x \in G / G_{\eta}} F e_{^x \psi_i}$
	induce an embedding
	\begin{equation} \label{eqn:iota-eta}
		\iota_{\eta}: \zeta(F[G]e(\eta)) = \bigoplus_{i=1}^s F e_{\chi_i} 
		\hookrightarrow \bigoplus_{i=1}^s \bigoplus_{x \in G / G_{\eta}} F e_{^x \psi_i}
		= \zeta(F[G_{\eta}] e(\eta)).
	\end{equation}
	Since $\chi_i = \ind^G_{G_{\eta}} \psi_i$ is irreducible, 
	Lemma \ref{lem:nr-under-restriction} implies that
	$\iota_{\eta}(\nr(H) e(\eta)) = \nr(H|_{G_{\eta}}) e(\eta)$.
	Hence by induction we may assume that $G = G_{\eta}$
	(note that for this step it is not sufficient to consider 
	\eqref{eqn:inductive-integrality} with $N = G'$,
	as the commutator subgroup of $G_{\eta}$ might be strictly smaller
	than $G'$). 
	
	Recall the definition of $U_{\psi}$ from \S 
	\ref{subsec:Clifford-for-commutator-subgroup} (where $\psi = \psi_1$
	by convention). We first assume that
	$U_{\psi}$ is a proper subgroup of $G$. As $G/U_{\psi}$ is abelian,
	we may choose a maximal proper subgroup $U$ of $G$ that contains $U_{\psi}$.
	Then $\ell := [G:U]$ is a prime. Let $\rho$ be an irreducible
	constituent of $\res^G_U \psi$ and define $\rho_i$ as in \eqref{eqn:rho_i}.
	Then there is a positive integer $f$ such that 
	$\res^G_U \psi_i = f \sum_{x \in G /G_{\rho}} {}^x \rho_i$
	by \eqref{eqn:res-U-of-psi_i}. By Lemma \ref{lem:ind-rho_i}
	we have that $[G_{\rho}:U] = f^2$ is a square; but $[G_{\rho}:U]$
	divides $[G:U] = \ell$ so that we must have $G_{\rho} = U$ and $f=1$.
	In complete analogy to the definition of $\iota_{\eta}$,
	but using Corollary \ref{cor:idempotents-for-U}, we obtain an embedding
	\begin{equation} \label{eqn:iota-rho}
		\iota_{\rho}: \zeta(F[G]e(\eta)) = \bigoplus_{i=1}^s F e_{\psi_i}
		\hookrightarrow \bigoplus_{i=1}^s \bigoplus_{x \in G / G_{\rho}} F e_{^x \rho_i}
		= \zeta(F[G_{\rho}]e(\eta)).
	\end{equation}
	We have $\psi_i = \ind^G_{G_{\rho}} \rho_i$ for all $i$ by Frobenius
	reciprocity so that again $\iota_{\rho}(\nr(H)e(\eta)) = \nr(H|_{G_{\rho}}e(\eta))$
	by Lemma \ref{lem:nr-under-restriction}. Hence \eqref{eqn:inductive-integrality}
	holds by induction.
	
	We are left with the case $U_{\psi} = G = G_{\eta}$. By Lemma \ref{lem:psi-twists}
	the twists of $\psi$ by irreducible characters $\omega \in \Irr(G/N)$
	are pairwise distinct. Since the set of these twists coincides with the set
	$\{\psi_1, \dots, \psi_s\}$ by Proposition \ref{prop:psi-when-N-contains-commutator}
	(i) and (ii), we have $s = [G:N]$ and thus $m=1$ by part (iii) of the same result.
	Here we recall that $m = m_i$ denotes the multiplicity 
	of $\eta$ in $\res_N^G \psi_i$. By \cite[Kapitel V, Satz 12.5]{MR0224703}
	we can choose an integral representation 
	\[
		\pi: G \rightarrow \GL_{\psi(1)}(\mathcal{O}_K)
	\]
	with character $\psi$. Let
	$\omega \in \Irr(G/N)$. Then likewise $\pi \otimes \omega$ is an
	integral representation with character $\psi \otimes \omega$. 
	Let us denote the image of $g \in G$ in $G/N$ by $\overline{g}$.
	Note that we have an isomorphism
	$F[G/N] \simeq \oplus_{\omega \in \Irr(G/N)} F$. 
	Hence the representations
	$\pi \otimes \omega$ induce an isomorphism of rings
	\begin{equation} \label{eqn:iota_pi}
		\iota_{\pi}: F[G] e(\eta) \simeq \bigoplus _{\omega \in \Irr(G/N)} M_{\psi(1)}(F)
		\simeq M_{\psi(1)}(F[G/N])
	\end{equation}
	which sends $g e(\eta)$ to $\overline{g} \pi(g)$ for all $g \in G$.
	This extends to matrix rings in the obvious way and 
	for each $n \in \N$ we obtain a commutative	diagram
	\[ \xymatrix{
		M_n(F[G]e(\eta)) \ar[rr]^{\iota_{\pi}}_{\simeq} \ar[d]^{\nr} & &
			M_{n \psi(1)}(F[G/N]) \ar[d]^{\det} \\
		\zeta(F[G]e(\eta)) \ar[rr]^{\iota_{\pi}}_{\simeq} & & F[G/N].
	}
	\]
	Now if $H \in M_n(R[G])$ has integral coefficients, then the image of $H e(\eta)$
	under $\iota_{\pi}$ is contained in $M_{n \psi(1)}(R[G/N])$, since $\pi$
	was chosen to be integral. Hence we obtain
	\begin{equation} \label{eqn:integrality-of-nr}
		\iota_{\pi}(\nr(H e(\eta))) = \det(\iota_{\pi}(H e(\eta))) \in R[G/N].
	\end{equation}
	To finish the proof of \eqref{eqn:inductive-integrality} it now suffices to
	show that $\iota_{\pi}^{-1}(\overline{g}) \in |N|^{-1}R[G]$ for all $g \in G$.
	We compute
	\begin{align*}
		\iota_{\pi}^{-1}(\overline{g}) & = 
		\sum_{\omega \in \Irr(G/N)} \omega(\overline{g}) e_{\psi \otimes \omega}\\
		& =  \sum_{\omega \in \Irr(G/N)} \omega(\overline{g}) \frac{\psi(1)}{|G|}
		\sum_{h \in G}(\psi \otimes \omega)(h^{-1}) h \\
		& = \frac{\psi(1)}{|N|} \sum_{h \in G: \overline{h} = \overline{g}} \psi(h^{-1}) h,
	\end{align*}
	where the last equality follows from the well-known orthogonality relation
	\[
		\frac{1}{[G:N]} \sum_{\omega \in \Irr(G/N)} \omega(\overline{g}) 
		\omega(\overline{h}^{-1}) = \delta_{\overline{g}, \overline{h}}.
	\]
	This finishes the proof of the theorem.
\end{proof}

\begin{corollary} \label{cor:bound-integrality-ring}
	Let $G$ be a finite group with commutator subgroup $G'$. Then the following holds.
	\begin{enumerate}
		\item 
		For each prime $p$ we have an inclusion $|G'| \mathcal{I}_p(G) \subseteq 
		\zeta(\Z_p[G])$.
		\item
		We have an inclusion $|G'| \mathcal{I}(G) \subseteq \zeta(\Z[G])$.
	\end{enumerate}
\end{corollary}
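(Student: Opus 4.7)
The plan is to deduce the corollary directly from Theorem \ref{thm:integrality-mainstep}, with part (i) being essentially a rephrasing and part (ii) following by a standard local-global argument.

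First I would prove (i). By definition of $\mathcal{I}_p(G)$, any $x \in \mathcal{I}_p(G)$ can be written as a finite sum
\[
x = \sum_j z_j \, \nr(H_j)
\]
with $z_j \in \zeta(\Z_p[G])$ and $H_j \in M_{n_j}(\Z_p[G])$ for some $n_j \in \N$. Applying Theorem \ref{thm:integrality-mainstep} with $R = \Z_p$ and $F = \Q_p$, each term $|G'|\nr(H_j)$ lies in $\zeta(\Z_p[G])$. Since $\zeta(\Z_p[G])$ is a ring, it follows that
\[
|G'| x = \sum_j z_j \bigl(|G'|\nr(H_j)\bigr) \in \zeta(\Z_p[G]),
\]
which is exactly the desired inclusion.

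For (ii), I would argue via a local-global principle. Recall from \S\ref{subsec:int-ring} that $\zeta(\Z[G])$ is a free $\Z$-module with basis given by the conjugacy class sums $C_1, \ldots, C_{c(G)}$, while the same elements form a $\Z_p$-basis of $\zeta(\Z_p[G])$ and a $\Q$-basis of $\zeta(\Q[G])$. Since $\Z = \Q \cap \bigcap_p \Z_p$ inside $\Q$, comparing coefficients yields
\[
\zeta(\Z[G]) = \zeta(\Q[G]) \cap \bigcap_p \zeta(\Z_p[G])
\]
inside $\zeta(\Q[G])$. Now any $x \in \mathcal{I}(G) \subseteq \zeta(\Q[G])$ is a finite $\zeta(\Z[G])$-linear combination of reduced norms of matrices over $\Z[G]$, and by extension of scalars the same expression exhibits $x$ as an element of $\mathcal{I}_p(G)$ for every prime $p$. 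Hence by part (i) we have $|G'| x \in \zeta(\Z_p[G])$ for all $p$, and clearly $|G'| x \in \zeta(\Q[G])$, so $|G'| x \in \zeta(\Z[G])$ by the displayed intersection.

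The real content is already contained in Theorem \ref{thm:integrality-mainstep}, so no genuine obstacle remains; the only point requiring a moment of care is the passage from the $p$-local statements to the global one, which is handled cleanly by the observation that centres of integral group rings satisfy the evident local-global description.
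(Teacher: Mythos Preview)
Your proof is correct and follows exactly the paper's own approach: part (i) is deduced directly from Theorem \ref{thm:integrality-mainstep} with $R = \Z_p$, and part (ii) follows from (i) for all primes $p$ via the local-global principle. The paper's proof is simply a terser version of what you wrote.
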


\begin{proof}
	(i) follows from Theorem \ref{thm:integrality-mainstep} with $F = \Q_p$.
	(i) for all primes $p$ implies (ii).
\end{proof}

\begin{corollary} \label{cor:hard-inclusion}
	Let $G$ be a finite group with commutator subgroup $G'$. Then the following holds.
	\begin{enumerate}
		\item 
		For each prime $p$ we have $|G'| \in \mathcal{H}_p(G)$.
		\item
		We have that $|G'| \in \mathcal{H}(G)$.
	\end{enumerate}
\end{corollary}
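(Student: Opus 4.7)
The plan is to reduce the assertion to Theorem \ref{thm:integrality-mainstep} applied over a polynomial ring in an auxiliary variable, by means of an identity that expresses $H^{\ast}$ as a polynomial in $H$ whose coefficients are coefficients of a reduced characteristic polynomial. By the very definition of the denominator ideal, it suffices to show that $|G'|\cdot H^{\ast} \in M_n(\Lambda)$ for every $n \in \N$ and every $H \in M_n(\Lambda)$. Since $H^{\ast}$ does not depend on the coefficient ring (it is characterised by the defining formula in $M_n(\Q[G])$), part (ii) will follow from (i) at every prime $p$ together with the elementary identity $\Z[G] = \Q[G] \cap \bigcap_p \Z_p[G]$, so I focus on part (i) with $\Lambda = \Z_p[G]$.

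My first step is to establish the key identity
\[
H^{\ast} \;=\; \sum_{j=1}^{M} \beta_j \cdot (-H)^{j-1},
\]
where $M := \max_i m_i$ and $\beta_j \in \zeta(A)$ denotes the coefficient of $Y^j$ in the reduced characteristic polynomial $f_{-H}(Y) = \nr(Y\mathbf{1}+H)$. Writing $\beta_j e_i = \beta_{ij} \in F_i$, a componentwise check using $f_{-H_i}(Y) = (-1)^{m_i} f_{H_i}(-Y)$ shows $\beta_{ij} = (-1)^{m_i+j}\alpha_{ij}$, where $\alpha_{ij}$ is the coefficient of $Y^j$ in $f_{H_i}(Y)$; the signs then collapse precisely to the defining formula $H_i^{\ast} = (-1)^{m_i+1}\sum_j \alpha_{ij} H_i^{j-1}$. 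The point of working with $-H$ rather than with $H$ is that it absorbs the sign $(-1)^{m_i+1}$, which otherwise varies across components, into a globally uniform expression.

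The second step is to bound the denominators of the $\beta_j$. Let $Y$ be an indeterminate and set $\tilde H := Y\mathbf{1} + H \in M_n(\Z_p[Y][G])$. The ring $\Z_p[Y]$ is a noetherian integrally closed domain whose field of fractions $\Q_p(Y)$ is a (possibly infinite) extension of $\Q_p$, so Theorem \ref{thm:integrality-mainstep} is applicable with $R = \Z_p[Y]$. It yields
\[
|G'| \cdot f_{-H}(Y) \;=\; |G'| \cdot \nr(\tilde H) \;\in\; \zeta\bigl(\Z_p[Y][G]\bigr) \;=\; \zeta(\Z_p[G])[Y],
\]
and comparing coefficients of $Y^j$ gives $|G'|\beta_j \in \zeta(\Z_p[G])$ for every $j$. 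Combined with the identity above,
\[
|G'| \cdot H^{\ast} \;=\; \sum_{j=1}^{M} (|G'|\beta_j)\cdot (-H)^{j-1} \;\in\; M_n(\Z_p[G]),
\]
which is exactly the statement $|G'| \in \mathcal{H}_p(G)$ in part (i); part (ii) then follows by intersecting over all primes. The only delicate point is the sign bookkeeping in the first step, which is purely formal once one observes that the substitution $H \mapsto -H$ is the correct device to replace the componentwise sign $(-1)^{m_i+1}$ in the definition of $H^{\ast}$ by a uniform formula, after which Theorem \ref{thm:integrality-mainstep} does all the work.
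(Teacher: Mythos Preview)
Your proof is correct and follows the same overall strategy as the paper: apply Theorem~\ref{thm:integrality-mainstep} over a polynomial ring to obtain integrality of the coefficients of the reduced characteristic polynomial, then feed these into the defining formula for $H^{\ast}$.

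There is, however, a genuine streamlining in your execution that is worth highlighting. The paper applies Theorem~\ref{thm:integrality-mainstep} to $X\mathbf{1}-H$ and then must cope with the component-dependent sign $(-1)^{m_i+1}$ in the definition of $H_i^{\ast}$; it does so by invoking the finer statement \eqref{eqn:inductive-integrality} to work one idempotent $e(\eta)$ at a time (where all the relevant degrees coincide), which in turn forces a passage through the larger coefficient ring $\mathcal{O}_K$ and a final intersection $M_n(\Q_p[G])\cap M_n(\mathcal{O}_K[G])=M_n(\Z_p[G])$. Your device of replacing $H$ by $-H$ absorbs the troublesome sign uniformly, so you only need the \emph{statement} of Theorem~\ref{thm:integrality-mainstep} (not the internal decomposition by $e(\eta)$), and you can work directly over $R=\Z_p[Y]$ without ever leaving $\Z_p$-coefficients. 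This is a neat simplification of the paper's argument.
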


\begin{proof}
	(ii) is again implied by (i) for all $p$ so that it suffices to prove (i).
	So let $p$ be a prime and let $H \in M_n(\Z_p[G])$ be a matrix
	with reduced characteristic polynomial $f_H(X)$.
	Then the generalised adjoint $H^{\ast}$ a priori has entries in $\Q_p[G]$
	and we have to show that $|G'| H^{\ast} \in M_n(\Z_p[G])$.
	As before we let $K$ be a finite extension of $\Q_p$ such that $K$ is a splitting
	field for $G$ and all of its subgroups. The polynomial ring $\mathcal{O}_K[X]$
	is noetherian and integrally closed. So we may apply Theorem 
	\ref{thm:integrality-mainstep} with $R = \mathcal{O}_K[X]$ and 
	the matrix $\mathbf{1}_{n \times n} \otimes X - H \otimes 1 \in M_n(\mathcal{O}_K[X][G])$. Taking \eqref{eqn:redchar-as-nr} into account we
	see that $|G'| f_H(X) \in \zeta(\mathcal{O}_K[X][G])$. Indeed by
	\eqref{eqn:inductive-integrality} we know that for each irreducible character
	$\eta$ of $G'$ we have that
	\[
		|G'| f_H(X) e(\eta) \in \zeta(\mathcal{O}_K[X][G]),
	\]
	where $e(\eta)$ was defined in \eqref{eqn:definition-e-of-eta}.
	The characters $\chi_1, \dots, \chi_s$ that occur in this definition are
	all of the same degree $d$.
	In particular, if we write $f_H(X) e(\eta) = \sum_{j=0}^{dn} \alpha_j X^j$
	with $\alpha_j \in \zeta(K[G] e(\eta))$, then $|G'| \alpha_j$ has integral
	coefficients for all $j$, whence
	\[
		|G'| H^{\ast} e(\eta) = |G'| (-1)^{dn+1} \sum_{j=1}^{dn} \alpha_j e(\eta) H^{j-1}
		\in M_n(\mathcal{O}_K[G]).
	\]
	As this holds for all $\eta$, we get 
	\[
		|G'| H^{\ast} \in M_n(\Q_p[G]) \cap M_n(\mathcal{O}_K[G]) = M_n(\Z_p[G])
	\]
	as desired.
\end{proof}

\begin{proof}[Proof of Theorem \ref{thm:denominator-ideal-capZ}]
	This is now immediate from Corollary \ref{cor:idempotent-equalities}
	and Example \ref{ex:0-star}.
\end{proof}

\begin{remark} \label{rem:more-elements-in-H}
	An inspection of the proof of Theorem \ref{thm:integrality-mainstep}
	shows that for each normal subgroup $N$ of $G$ containing
	$G'$ and each character $\eta \in \Irr(N)$ one has
	\[
		\frac{|N|}{\eta(1)} e(\eta) \in \mathcal{H}(R[G]).
	\]
\end{remark}

\subsection{More on integrality rings}

The aim of this subsection is to show that the integrality ring $\mathcal{I}(G)$
is strictly larger than $\zeta(\Z[G])$ for every non-abelian finite group $G$.
This is not needed in later sections and may be skipped by the reader who is
only interested in our applications to number theory. We first state the main
result of this subsection.

\begin{theorem} \label{thm:integrality-vs-abelian}
	Let $G$ be a finite group. Then $\mathcal{I}(G) = \zeta(\Z[G])$ if and only if
	$G$ is abelian.
\end{theorem}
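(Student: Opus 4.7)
The forward direction is immediate: if $G$ is abelian then $\Z[G]$ is commutative, so the reduced norm of any square matrix over $\Z[G]$ coincides with its usual determinant and lies in $\Z[G] = \zeta(\Z[G])$; hence $\mathcal{I}(G) \subseteq \zeta(\Z[G])$, and the reverse inclusion is automatic.

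For the converse, my plan is an induction on $|G|$ that parallels the Clifford-theoretic argument in the proof of Theorem \ref{thm:integrality-mainstep}, but now constructs rather than bounds the denominators. The base case is the smallest non-abelian group $G = S_3$, where a direct computation in the spirit of Examples \ref{ex:special-linear-group} gives $\nr((12)) = -\tfrac{2}{3}C_1 + \tfrac{1}{3}C_2 + \tfrac{1}{3}C_3$, which already fails to lie in $\zeta(\Z[S_3])$ and so witnesses $\mathcal{I}(S_3) \supsetneq \zeta(\Z[S_3])$.

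For the inductive step, I would first try to lift a non-integral reduced norm from a non-abelian quotient. Namely, if $G$ admits a proper normal subgroup $N$ such that $G/N$ is non-abelian and the centre projection $\pi\colon \zeta(\Z[G]) \to \zeta(\Z[G/N])$ is surjective, then by Lemma \ref{lem:int-ring-under-projections}(ii) the induced map $\pi\colon \mathcal{I}(G) \to \mathcal{I}(G/N)$ is also surjective. The inductive hypothesis supplies $y \in \mathcal{I}(G/N) \setminus \zeta(\Z[G/N])$, and any preimage $x \in \mathcal{I}(G)$ of $y$ cannot lie in $\zeta(\Z[G])$, for otherwise $\pi(x) = y$ would be integral.

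The hard case is when no such favourable quotient exists --- for instance when $G$ is simple non-abelian, or ``just non-abelian'' in the sense that every proper quotient is abelian. For simple non-abelian $G$ I would use the direct observation that, for any involution $g \in G$ (which exists by Feit--Thompson) and any non-trivial $\chi \in \Irr(G)$, one has $g \notin \ker \chi = 1$, so that $-1$ is an eigenvalue of $\rho_\chi(g)$ and $\det(I + \rho_\chi(g)) = 0$; hence $\nr(1+g) = 2 e_{\chi_0} = \tfrac{2}{|G|}\sum_{h \in G} h \notin \zeta(\Z[G])$ as soon as $|G| > 2$. For the remaining just-non-abelian groups I would run the Clifford reduction of Theorem \ref{thm:integrality-mainstep} with $N = G'$ and a non-trivial $\eta \in \Irr(N)$, reducing to the final configuration $G = G_\eta$, $U_\psi = G$, where $\iota_\pi\colon F[G]e(\eta) \simeq M_{\psi(1)}(F[G/N])$; the key formula
\[
\iota_\pi^{-1}(\bar g) = \frac{\psi(1)}{|N|} \sum_{h \colon \bar h = \bar g} \psi(h^{-1}) h
\]
derived there shows that exhibiting any $H \in M_n(\Z[G])$ with $\det(\iota_\pi(H e(\eta))) = \bar g$ for a non-trivial $\bar g \in G/N$ will produce a component $\nr(H)e(\eta)$ with denominator of size $|N|/\psi(1) > 1$. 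The main obstacle is verifying that this non-integrality survives recombination with the Wedderburn components of $\nr(H)$ orthogonal to $e(\eta)$: to handle this one must choose $H$ so that its contribution outside $F[G]e(\eta)$ is already integral, for instance by first multiplying by a suitable central integral idempotent or, as in the base-case and $D_{2p}$ computations, by picking $H = g$ for a well-chosen element $g \notin G'$ whose non-integral linear contribution $\tfrac{1}{|G'|}\sum_{h \in gG'} h$ cannot be cancelled by the non-linear Wedderburn pieces.
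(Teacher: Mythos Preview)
Your proposal has the right overall shape, but there are two genuine gaps.

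\medskip

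\textbf{The quotient-lifting step.} You impose the extra hypothesis that $\pi\colon \zeta(\Z[G]) \to \zeta(\Z[G/N])$ is surjective in order to invoke Lemma~\ref{lem:int-ring-under-projections}(ii). As the example after that lemma shows, this surjectivity can fail (e.g.\ for $S_3$ at the prime $3$), so you have not covered all non-abelian quotients. The fix is the one the paper uses: strengthen the induction hypothesis from ``$\mathcal{I}(G)\supsetneq\zeta(\Z[G])$'' to ``there exists $x\in\Z[G]$ with $\nr_G(x)\notin\Z[G]$''. Then for any proper normal $N$ with $G/N$ non-abelian, choose $\bar x\in\Z[G/N]$ with $\nr_{G/N}(\bar x)\notin\Z[G/N]$, lift it to $x\in\Z[G]$, and observe that $\pi(\nr_G(x))=\nr_{G/N}(\bar x)$ forces $\nr_G(x)\notin\zeta(\Z[G])$ by Lemma~\ref{lem:int-ring-under-projections}(i). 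No surjectivity is needed.

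\medskip

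\textbf{The ``just non-abelian'' case.} Your argument for simple non-abelian $G$ via $\nr(1+g)=2e_{\chi_0}$ for an involution $g$ is correct and rather elegant (and different from the paper, which instead invokes Corollary~\ref{cor:squarefree}(iii) to reduce to solvable groups). But for groups that are just non-abelian yet not simple, your sketch is not a proof. You cite the formula for $\iota_\pi^{-1}(\bar g)$ and say one should pick $H=g$ for a ``well-chosen'' $g\notin G'$, but you have not verified that the non-integrality produced in $F[G]e(\eta)$ is not cancelled by the other Wedderburn components, nor have you identified which groups actually arise here. Without the reduction of Corollary~\ref{cor:squarefree} (that $|G'|$ must be squarefree, hence $G$ solvable), the class of just-non-abelian groups is not under control: you only know that $G'$ is a minimal normal subgroup, which a priori could be an arbitrary characteristically simple group.

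The paper closes this gap by first using Corollary~\ref{cor:squarefree} to assume $G$ solvable with $|G'|$ squarefree; then, in the just-non-abelian case, $G'$ is a minimal normal subgroup of a solvable group, hence an elementary abelian $p$-group, hence (by squarefreeness) cyclic of order $p$. Lemma~\ref{lem:two-cases} then shows that only two structural types remain --- a $p$-group with $G'\subseteq Z(G)$, or a Frobenius group with kernel $G'$ --- and Lemmas~\ref{lem:first-special-group} and~\ref{lem:second-special-group} handle each by showing that for any $x$ outside $Z(G)$ (resp.\ $G'$) the coefficient of $\nr(x)$ at $x$ equals $1/p$ (resp.\ $1/|G'|$). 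This is exactly the ``pick $H=g$'' idea you gesture at, but carried out with the structural information that makes it work.
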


If $G$ is abelian, then clearly $\mathcal{I}(G) = \zeta(\Z[G]) = \Z[G]$.
So the main task of this subsection is to show that the integrality ring
is strictly larger than $\zeta(\Z[G])$ whenever $G$ is non-abelian.
This means that we aim to construct elements $x \in \Z[G]$ such that
their reduced norm $\nr(x)$ is not integral. 

For a finite group $G$ and a positive integer $d$ we set
\[
E_d = E_{G,d} := \sum_{\chi(1)=d} e_{\chi},
\]
where the sum runs over all irreducible characters of $G$ of degree $d$.
In particular, one has $E_1 = |G'|^{-1} \Tr_{G'}$.
We will need the following interesting consequence of 
Corollary \ref{cor:idempotent-equalities}.

\begin{lemma} \label{lem:E_d-integrality}
	Let $G$ be a finite group. Then for each $d \in \N$ one has
	\[
	|G'| E_d \in \Z[G'].
	\]
\end{lemma}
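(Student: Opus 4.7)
The plan is to establish the claim in two steps: first show that $E_d$ is supported on $G'$ (so $E_d \in \Q[G']$), and then show that multiplication by $|G'|$ clears all remaining denominators. For the first step, observe that $E_d$ already lies in $\zeta(\Q[G])$ because the set $\{\chi \in \Irr(G) : \chi(1) = d\}$ is stable under $\Gal(\Q^c/\Q)$. Moreover, for any linear character $\omega$ of $G$ (equivalently, a character of $G/G'$), tensoring $\chi \mapsto \chi \otimes \omega$ gives a degree-preserving bijection on $\Irr(G)$, so
\[
\sum_{\chi(1)=d} \chi(g) = \sum_{\chi(1)=d}(\chi \otimes \omega)(g) = \omega(g) \sum_{\chi(1)=d} \chi(g)
\]
for every $g \in G$. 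If $g \notin G'$ then some such $\omega$ satisfies $\omega(g) \neq 1$, forcing the coefficient of $g$ in $E_d$ to vanish. Hence $E_d \in \Q[G']$.

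For the integrality step, set $a_g := \sum_{\chi(1)=d} \chi(g^{-1})$, which is both rational (by Galois invariance) and an algebraic integer, hence in $\Z$. Since $E_d$ is central, $a_g$ depends only on the $G$-conjugacy class of $g$, and we may write
\[
|G'| E_d = \sum_{[g]_G \subseteq G'} \frac{d \, a_g}{[G:G']} \, C_g,
\]
where $C_g$ is the class sum of $g$ in $G$. It therefore suffices to prove that $[G:G']$ divides $d \, a_g$ for every $g \in G'$. Partition $\{\chi \in \Irr(G) : \chi(1) = d\}$ according to the $G$-orbit $[\eta]$ in $\Irr(G')$ occurring in $\res^G_{G'} \chi$, and let $\chi_1, \ldots, \chi_s$ be the irreducible characters above a fixed $\eta$. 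By Clifford's Theorem \ref{thm:Clifford}, $\chi_i(g^{-1}) = m \sum_{x \in G/G_\eta} {}^x\eta(g^{-1})$ for $g \in G'$, where by Proposition \ref{prop:psi-when-N-contains-commutator}(iii) the multiplicity $m$ is common to all $i$ and $sm^2 = [G_\eta : G']$; moreover $\chi_i(1) = [G:G_\eta] \, m \, \eta(1)$. Combining these identities with the resulting formula $d/[G:G'] = \eta(1)/(sm)$, the total contribution of each orbit $[\eta]$ to $\frac{d \, a_g}{[G:G']}$ equals $\eta(1) \sum_{x \in G/G_\eta} {}^x\eta(g^{-1})$, which is an algebraic integer. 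Summing over the orbits $[\eta]$ contributing to degree $d$ shows $\frac{d \, a_g}{[G:G']}$ is an algebraic integer; being also rational, it lies in $\Z$.

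The main obstacle is identifying the correct partition of $\{\chi(1)=d\}$ and tracking the multiplicities cleanly. Once the characters are grouped by $G$-orbits on $\Irr(G')$, the factor $[G:G_\eta] m$ inside $d$ combines with $[G_\eta:G'] = sm^2$ inside $[G:G']$ in exactly the way needed to cancel the $[G:G']$ in the denominator against $d$ in the numerator, leaving only the algebraic integer $\eta(1) \sum_x {}^x\eta(g^{-1})$.
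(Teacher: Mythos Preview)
Your proof is correct and follows essentially the same route as the paper: both partition $\{\chi : \chi(1) = d\}$ by the $G$-orbit of the constituent $\eta \in \Irr(G')$ of $\res^G_{G'}\chi$ and use the Clifford-theoretic count of Proposition~\ref{prop:psi-when-N-contains-commutator}(iii). The paper packages the coefficient computation into the idempotent identity $\sum_i e_{\chi_i} = \sum_{x \in G/G_\eta} e_{{}^x\eta}$ of Corollary~\ref{cor:idempotent-equalities}(i) (which immediately gives $|G'|e(\eta) \in \mathcal{O}_K[G']$), whereas you unpack this identity by hand and handle the support on $G'$ via linear twists; these are minor presentational differences.
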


\begin{proof}
	Let $K$ be a finite extension of $\Q$ that is a 
	splitting field for $G$ and all of its subgroups. 
	Let $X_d$ be the set of all irreducible characters $\eta$ of $G'$
	such that there is a $\chi \in \Irr(G)$ of degree $d$ with 
	$\langle \res_{G'}^G \chi, \eta \rangle >0$. Let $\eta \in X_d$. As before
	we set $e(\eta) := \sum_{i=1}^s e_{\chi_i}$, where the sum runs
	over all irreducible characters of $G$ whose restriction to $G'$
	is divisible by $\eta$. The characters 
	$\chi_1, \dots, \chi_s$ 
	then all have degree $d$ and Corollary \ref{cor:idempotent-equalities} (i)
	implies that $|G'| e(\eta) \in \mathcal{O}_K[G']$, 
	as was essentially already observed
	in \eqref{eqn:N-times-idempotent-is-integral}.
	As in the proof of Theorem \ref{thm:integrality-mainstep} we write
	$\eta \sim \eta'$ if and only if $\eta' = {}^x \eta$ for some $x \in G$
	if and only if $e(\eta) = e(\eta')$.
	Then we have $E_d = \sum_{\eta \in X_d / \sim} e(\eta)$ and thus
	$|G'| E_d  \in \mathcal{O}_K[G']$.
	Since $\sigma(E_d) = E_d$ for each Galois automorphism $\sigma \in G_{\Q}$, we
	see that $|G'| E_d \in \Q[G] \cap \mathcal{O}_K[G'] = \Z[G']$ as desired.
\end{proof}

\begin{corollary} \label{cor:squarefree}
	Let $G$ be a finite group. Then the following holds.
	\begin{enumerate}
		\item 
		If $\mathcal{I}(G) = \zeta(\Z[G])$,
		then the order of the commutator subgroup is squarefree.
		\item 
		If $p$ is a prime such that $\mathcal{I}_p(G) = \zeta(\Z_p[G])$, 
		then $|G'|$ is not divisible by $p^2$.
		\item
		If $\mathcal{I}(G) = \zeta(\Z[G])$,
		then $G$ is solvable.
	\end{enumerate} 
\end{corollary}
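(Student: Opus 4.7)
The plan is to prove (ii) first and deduce (i) and (iii) from it.

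For (ii), I argue by contrapositive: assume $p^2 \mid |G'|$ and exhibit an element of $\mathcal{I}_p(G)$ not in $\zeta(\Z_p[G])$. The decisive input is Lemma \ref{lem:E_d-integrality}: $|G'| E_d \in \Z[G']$ for each $d \geq 1$. Writing $E_d = |G'|^{-1} z_d$ with $z_d \in \Z[G']$, the $p$-denominator of $E_d$ in the class-sum basis of $\zeta(\Q_p[G])$ divides $p^{v_p(|G'|)}$, and for $d=1$ we have $z_1 = \Tr_{G'}$ with all coefficients equal to $1$, so the $p$-denominator is exactly $p^{v_p(|G'|)} \geq p^2$. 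In particular $E_1 \notin \zeta(\Z_p[G])$; it therefore suffices to show that some $\Z_p$-scalar multiple of $E_d$ (with $p$-denominator still exceeding $1$) lies in $\mathcal{I}_p(G)$. I plan to do this by following the Clifford-theoretic framework of the proof of Theorem \ref{thm:integrality-mainstep}: starting from the decomposition $E_d = \sum_{\eta \in X_d/\sim} e(\eta)$ recorded in the proof of Lemma \ref{lem:E_d-integrality}, I would realize appropriate multiples of each orbit-idempotent $e(\eta)$ as reduced norms (or $\zeta(\Z_p[G])$-combinations of reduced norms) of explicit matrices over $\Z_p[G]$ built from integral representations of $G$.

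For (i), I use that both the integrality ring and its centre commute with $p$-adic completion, i.e.\ $\mathcal{I}(G) \otimes_\Z \Z_p \cong \mathcal{I}_p(G)$ and $\zeta(\Z[G]) \otimes_\Z \Z_p \cong \zeta(\Z_p[G])$; this is justified because $\mathcal{I}(G)$ is a finitely generated $\Z$-submodule of the maximal order $\zeta(\Z[G]')$ and reduced norms commute with base change. Hence $\mathcal{I}(G) = \zeta(\Z[G])$ if and only if $\mathcal{I}_p(G) = \zeta(\Z_p[G])$ for every prime $p$, and applying (ii) at each prime forces $p^2 \nmid |G'|$ for all $p$, so $|G'|$ is squarefree. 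For (iii), part (i) gives $|G'|$ squarefree, and by the classical theorem of H\"older every finite group of squarefree order is metacyclic and in particular solvable; thus $G'$ is solvable, and since $G/G'$ is abelian, $G$ itself is solvable.

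The principal obstacle is the constructive step in (ii): producing an explicit reduced norm that realizes (a suitable $\Z_p$-scalar multiple of) $E_d$. The orbit decomposition $E_d = \sum e(\eta)$ and the Galois invariance of $E_d$ provide the correct framework, but descending the constructions from $\mathcal{O}_K[G]$ (for a $p$-adic splitting field $K$) back to $\Z_p[G]$ is delicate and must be coordinated so that the $p$-denominator of the resulting element in $\mathcal{I}_p(G)$ is strictly positive whenever $p^2 \mid |G'|$.
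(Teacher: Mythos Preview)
Your overall structure for (i) and (iii) is fine, but there is a genuine gap in (ii): you never actually produce an element of $\mathcal{I}_p(G)$ that fails to be integral. You correctly identify Lemma~\ref{lem:E_d-integrality} as the key input and observe that $E_1 = |G'|^{-1}\Tr_{G'}$ has $p$-denominator exactly $p^{v_p(|G'|)}$, but then your plan to ``realize appropriate multiples of each orbit-idempotent $e(\eta)$ as reduced norms of explicit matrices'' via Clifford theory is vague, and you yourself flag it as the principal obstacle. It is not at all clear that this route succeeds, and descending from $\mathcal{O}_K[G]$ to $\Z_p[G]$ while controlling the denominator is exactly the kind of step that can fail.

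The paper avoids all of this with a much simpler observation: for any integer $n$ one has
\[
\nr(n) = \sum_{\chi \in \Irr(G)} n^{\chi(1)} e_\chi = \sum_{d \geq 1} n^d E_d \in \mathcal{I}(G),
\]
so there is no need to realize the $E_d$ individually as reduced norms. Taking $n = p^{-1}|G'|$ (an integer since $p^2 \mid |G'|$), the $d=1$ term is $p^{-1}|G'| E_1 = p^{-1}\Tr_{G'}$, while for $d \geq 2$ one writes $(p^{-1}|G'|)^d E_d = (p^{-1}|G'|)^{d-2}\cdot (|G'|/p^2)\cdot |G'| E_d$, which is integral by Lemma~\ref{lem:E_d-integrality}. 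Hence $\nr(p^{-1}|G'|)$ lies in $\mathcal{I}(G) \subseteq \mathcal{I}_p(G)$ but not in $\zeta(\Z_p[G])$. This single computation gives both (i) and (ii) simultaneously, with no completion argument needed for (i). Your deduction of (iii) from (i) via H\"older's theorem matches the paper.
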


\begin{proof}
	Assume that $p$ is a prime such that $p^2$ divides $|G'|$. Then $p^{-1}|G'|$
	is an integer and
	\begin{align*}
		\nr(p^{-1}|G'|) & =  \sum_{\chi \in \Irr(G)}(p^{-1}|G'|)^{\chi(1)} e_{\chi}\\
		& =  p^{-1}|G'| E_1 + \sum_{d \geq 2} (p^{-1}|G'|)^d E_d\\
		& =  p^{-1} \Tr_{G'} + \sum_{d \geq 2} (p^{-1}|G'|)^{d-2} \frac{|G'|}{p^2} |G'| E_d.
	\end{align*}
	By Lemma \ref{lem:E_d-integrality} the sum on the right-hand side is integral.
	Since the first term has denominator $p$, we see that 
	$\nr(p^{-1}|G'|) \in \mathcal{I}(G) \subseteq \mathcal{I}_p(G)$	is not integral.
	This shows (i) and (ii). If $\mathcal{I}(G) = \zeta(\Z[G])$,
	then the cardinality of $G'$ is squarefree by (i) and thus all Sylow
	subgroups of $G'$ are cyclic. Then $G'$ is solvable by Hölder's Theorem
	\cite[Kapitel IV, Satz 2.9]{MR0224703}. Hence also $G$ is solvable.
\end{proof}

We need a few more preliminary results. In particular, the induction argument
will lead to two types of rather special groups that we have to treat separately. 

\begin{lemma} \label{lem:first-special-group}
	Let $p$ be a prime and let $G$ be a finite $p$-group. Suppose that $G'$
	is cyclic of order $p$ and contained in the centre $Z(G)$ of $G$.
	Moreover, assume that every non-linear irreducible character of $G$ is
	faithful. Then for every $x \in G \setminus Z(G)$ one has
	$\nr(x) \not\in \Z_p[G]$.
\end{lemma}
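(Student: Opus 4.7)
The plan is to compute $\nr(x) = \sum_\chi \lambda_\chi(x)\, e_\chi$ explicitly, where $\lambda_\chi := \det \circ \pi_\chi$ denotes the (linear) determinant character of an irreducible representation $\pi_\chi$ affording $\chi$; since $\lambda_\chi$ is one-dimensional it factors through $G/G'$. The goal is to exhibit a single coefficient of $\nr(x)$ with denominator exactly $p$.

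First I would separate the sum into linear and non-linear contributions. For linear $\lambda \in \Irr(G)$ one has $\lambda_\lambda = \lambda$; a direct Fourier computation on $G/G'$ using $\sum_{\mu \in \Irr(G/G')} \mu(y) = |G/G'|$ if $y \in G'$ and $0$ otherwise yields
\[
\sum_{\lambda \text{ linear}} \lambda(x)\, e_\lambda \;=\; p^{-1}\, x \Tr_{G'},
\]
so each $g$ in the coset $xG'$ receives contribution $1/p$ from this piece.

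The crux is the claim that every non-linear, hence faithful, character $\chi$ of $G$ vanishes on $G \setminus Z(G)$. Granting this, each idempotent $e_\chi = \chi(1)|G|^{-1}\sum_g \chi(g^{-1})\, g$ is supported on $Z(G)$, so the non-linear part $T := \sum_{\chi \text{ non-linear}} \lambda_\chi(x)\, e_\chi$ lies in $\Q^c[Z(G)]$. Since $x \notin Z(G) \supseteq G'$, the coset $xG'$ is disjoint from $Z(G)$, so $T$ contributes nothing to the coefficient at $g = x$. Consequently that coefficient of $\nr(x)$ equals $1/p \notin \Z_p$, proving $\nr(x) \notin \Z_p[G]$.

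To establish the vanishing, I would argue as follows. Faithfulness of $\chi$ combined with nilpotence of $G$ forces $\chi|_{Z(G)} = \chi(1)\, \nu$ for a \emph{faithful} linear character $\nu$ of $Z(G)$, via $\ker(\chi|_{Z(G)}) = \ker \chi \cap Z(G) = 1$. Fix $g \notin Z(G)$ and choose $h \in G$ with $[h, g] \neq 1$; then $[h, g] \in G' \subseteq Z(G)$, so $\pi_\chi([h, g]) = \nu([h, g])\, I$. The hypothesis $G' \subseteq Z(G)$ means $G$ has nilpotency class at most $2$, whence $hgh^{-1} = [h, g] \cdot g$; taking traces in $\pi_\chi$ yields
\[
\chi(g) \;=\; \chi(hgh^{-1}) \;=\; \nu([h, g])\, \chi(g).
\]
Since $\nu$ is faithful and $[h, g] \neq 1$, $\nu([h, g]) \neq 1$, which forces $\chi(g) = 0$. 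This vanishing step is the only substantive obstacle; everything else is bookkeeping.
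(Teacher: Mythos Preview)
Your proof is correct and follows essentially the same route as the paper's: both use the commutator identity $\chi(g)=\chi(hgh^{-1})=\nu([h,g])\chi(g)$ with $\nu$ a non-trivial (faithful) central character to force $\chi(g)=0$ for non-linear $\chi$ and $g\notin Z(G)$, and then compute the coefficient of $\nr(x)$ at $x$ from the linear characters alone to obtain $1/|G'|=1/p$. The only cosmetic differences are that you restrict $\chi$ to $Z(G)$ rather than to $G'$, and you package the linear contribution as $p^{-1}x\,\Tr_{G'}$ rather than extracting the single coefficient directly.
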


\begin{proof}
	Let $\chi$ be an irreducible ($\C_p$-valued) character of $G$ of 
	degree $d>1$. 
	Choose a representation
	$\pi_{\chi}: G \rightarrow \GL_{d}(\C_p)$ with character $\chi$. Since $G'$
	is central in $G$, also $\pi_{\chi}(g)$ is central for every $g \in G'$
	and thus of the form $\lambda(g) \mathbf{1}_{d \times d}$, where
	$\lambda: G' \rightarrow \C_p^{\times}$ is a linear character of $G'$.
	In other words, we have $\res^G_{G'} \chi = \chi(1) \lambda$.
	Since $\chi$ is faithful, the character $\lambda$ cannot be trivial.
	
	Now let $x \in G \setminus Z(G)$. As $x$ is not central, we may choose
	$y \in G$ such that the commutator $[x,y] := xyx^{-1}y^{-1}$ is non-trivial.
	Since $\chi$ is a class function and $[x,y] \in G'$, we see that
	\[
		\chi(x^{-1}) = \chi(yx^{-1}y^{-1}) = \chi(x^{-1}[x,y]) = \chi(x^{-1})\lambda([x,y]).
	\]
	By our choice of $x$ and $y$ we have $\lambda([x,y]) \not=1$ so that
	$\chi(x^{-1})$ vanishes. In particular, the coefficient of the associated 
	idempotent $e_{\chi}$ at $x$ vanishes. 
	Hence the coefficient of $\nr(x)$ at $x$ is given by
	\[
		\sum_{\genfrac{}{}{0pt}{}{\chi \in \Irr(G)}{\chi(1) = 1} } \chi(x) \frac{1}{|G|} \chi(x^{-1})
		= \frac{1}{|G'|} = \frac{1}{p} \not\in \Z_p
	\]
	and thus $\nr(x) \not\in \Z_p[G]$ as desired.
\end{proof}

We recall the definition and some basic facts about Frobenius groups.

\begin{definition}
	A \emph{Frobenius group} is a finite group $G$ with a 
	proper nontrivial subgroup $H$
	such that $H \cap gHg^{-1}=\{ 1 \}$ for all $g \in G \setminus H$,
	in which case $H$ is called a \emph{Frobenius complement}.
\end{definition}

\begin{theorem}\label{thm:frob-kernel}
	Let $G$ be a Frobenius group with Frobenius complement $H$. 
	Then the following holds.
	\begin{enumerate}
		\item 
		There is a unique normal subgroup $N$ in $G$, known as the Frobenius kernel, 
		such that $G$ is a semi-direct product $N \rtimes H$.
		\item 
		If $\chi$ is an irreducible character of $G$ such that  
		$N \not \leq \ker(\chi)$ then $\chi= \ind_{N}^{G}\psi$ for some 
		$1 \neq \psi \in \Irr(N)$.
	\end{enumerate}
\end{theorem}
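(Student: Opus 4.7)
For part (i), the plan is to follow Frobenius' classical argument. First, I would define the subset
\[
N := \{1\} \cup \left(G \setminus \bigcup_{g \in G} g(H \setminus \{1\})g^{-1}\right),
\]
and observe that the Frobenius condition forces distinct cosets $gH$ to contribute disjointly to the union above, so a direct count yields $|N| = [G:H]$. The set $N$ is manifestly closed under conjugation, so once it is shown to be a subgroup it is automatically normal, meets $H$ only at the identity, and is of the right order to yield $G = N \rtimes H$. Any normal complement of $H$ clearly lies inside $N$, which settles uniqueness.

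The main obstacle will be proving that $N$ is a subgroup, for which no purely internal argument is known. My approach will use the classical character-theoretic isometry: for every virtual character $\theta$ of $H$ with $\theta(1) = 0$, the induced class function $\ind_H^G \theta$ restricts back to $\theta$ on $H$, because for $h \in H \setminus \{1\}$ the formula \eqref{eqn:induced-character-formula} receives contributions only from $g \in H$ (an element $g \notin H$ with $g^{-1}hg \in H$ would violate the Frobenius condition). Applied to $\psi - \psi(1) 1_H$ for each $\psi \in \Irr(H)$, this isometry produces a canonical irreducible extension $\tilde\psi \in \Irr(G)$ of $\psi$ that is constant on every conjugate of $H \setminus \{1\}$. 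The normal subgroup $\bigcap_{\psi \in \Irr(H)} \ker(\tilde\psi)$ then coincides set-theoretically with $N$, giving (i).

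For part (ii), the plan is to apply Clifford's Theorem \ref{thm:Clifford}. Pick an irreducible constituent $\eta$ of $\res^G_N \chi$; since $N \not\leq \ker(\chi)$, we may assume $\eta \neq 1_N$. The task reduces to showing that the stabilizer $G_\eta$ equals $N$, for then the decomposition \eqref{eqn:ind-eta} collapses to $\ind_N^{G_\eta} \eta = \eta$ and hence $\chi = \ind_N^G \eta$ in the notation of \S\ref{subsec:Clifford-general}. The key point is that conjugation by any $h \in H \setminus \{1\}$ acts fixed-point-freely on $N \setminus \{1\}$: a fixed element $n \neq 1$ would force $h \in nHn^{-1} \cap H$, contradicting the Frobenius condition. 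By Brauer's permutation lemma this freeness transfers to the induced $H$-action on $\Irr(N) \setminus \{1_N\}$, so no non-identity element of $H$ fixes $\eta$. Since $G = N \rtimes H$, this gives $G_\eta = N$ as required.
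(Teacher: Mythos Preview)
The paper does not actually prove this theorem: it simply cites Huppert \cite[Kapitel V, Satz 7.6]{MR0224703} for part (i) and \cite[Proposition 14.4]{MR632548} for part (ii). Your proposal, by contrast, supplies a genuine proof outline, and it is correct.

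For (i) you are reproducing Frobenius' original character-theoretic argument, which is exactly what one finds in Huppert, so there is no real divergence from the cited source. For (ii) your route via Clifford theory together with Brauer's permutation lemma is a clean and standard argument: the fixed-point-free action of $H\setminus\{1\}$ on $N\setminus\{1\}$ forces each non-identity $h\in H$ to fix only the class $\{1\}$, hence only the trivial character of $N$, and since $G_\eta \supseteq N$ with $G = N \rtimes H$ one gets $G_\eta = N(G_\eta\cap H) = N$. The collapse of \eqref{eqn:ind-eta} to a single term then gives $\chi = \ind_N^G \eta$ as you say. This is essentially the argument behind the cited proposition, so again your approach aligns with the literature the paper defers to; you have simply written out what the paper leaves as a reference.
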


\begin{proof}
	Part (i) is due to Frobenius, see \cite[Kapitel V, Satz 7.6]{MR0224703}.
	For (ii) see \cite[Proposition 14.4]{MR632548}.
\end{proof}

\begin{lemma} \label{lem:second-special-group}
	Let $G \simeq G' \rtimes A$ be a Frobenius group with Frobenius kernel $G'$
	and (necessarily abelian) Frobenius complement $A$. Then for every
	$x \in G \setminus G'$ one has that $\nr(x) \not\in \Z[G]$.
\end{lemma}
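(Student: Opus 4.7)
The plan is to compute the coefficient of $x$ itself in the expansion $\nr(x) = \sum_{g \in G} a_g g \in \zeta(\Q[G])$ and show that $a_x = 1/|G'|$, which is not integral since the Frobenius kernel is nontrivial.

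First I would pass to a splitting field $K \supset \Q$ for $G$ and all its subgroups and use the Wedderburn decomposition $K[G] = \bigoplus_{\chi \in \Irr(G)} M_{\chi(1)}(K)$. Under this decomposition $x$ maps to $(\pi_\chi(x))_\chi$ for chosen irreducible representations $\pi_\chi$, so the reduced norm takes the simple form
\[
\nr(x) = \sum_{\chi \in \Irr(G)} \det(\pi_\chi(x))\, e_\chi,
\]
whence the coefficient of $x$ equals
\[
a_x = \sum_{\chi \in \Irr(G)} \det(\pi_\chi(x)) \,\frac{\chi(1)}{|G|}\, \chi(x^{-1}).
\]

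Next I would split $\Irr(G)$ into two families via Theorem \ref{thm:frob-kernel}(ii): either $G' \subseteq \ker(\chi)$, in which case $\chi$ is inflated from $\Irr(A)$ and hence linear (since $A$ is abelian), or $\chi = \ind^G_{G'} \psi$ for some non-trivial $\psi \in \Irr(G')$. For the second family the induced-character formula \eqref{eqn:induced-character-formula} combined with the normality of $G'$ forces $\chi(x^{-1}) = 0$ whenever $x^{-1} \notin G'$; hence these characters contribute nothing to $a_x$. For a linear character $\chi$ one has $\chi(1) = 1$ and $\det(\pi_\chi(x))\,\chi(x^{-1}) = \chi(x)\chi(x)^{-1} = 1$, so each such character contributes $1/|G|$. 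Since there are exactly $[G:G'] = |A|$ linear characters, summing gives $a_x = |A|/|G| = 1/|G'|$.

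Finally, since $G'$ is the Frobenius kernel of a Frobenius group we have $|G'| \geq 2$, so $a_x = 1/|G'| \notin \Z$ and consequently $\nr(x) \notin \Z[G]$. The key observation, which carries essentially all the weight of the argument, is that Theorem \ref{thm:frob-kernel}(ii) forces every non-linear irreducible character of $G$ to be induced from $G'$ and hence to vanish on $G \setminus G'$; given that input, the rest is a two-line character computation. There is no real obstacle beyond correctly matching the two families of irreducible characters with the behaviour of $\det(\pi_\chi(x))\chi(x^{-1})$.
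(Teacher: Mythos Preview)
Your proof is correct and follows essentially the same route as the paper: both arguments show that every non-linear irreducible character of $G$ is induced from $G'$ (via Theorem~\ref{thm:frob-kernel}(ii)) and hence vanishes on $G\setminus G'$, leaving only the $[G:G']$ linear characters to contribute to the coefficient of $x$ in $\nr(x)$, which then equals $1/|G'|$. The paper phrases the final computation by reference to the proof of Lemma~\ref{lem:first-special-group}, whereas you spell out the $\det(\pi_\chi(x))$ factor explicitly, but there is no substantive difference.
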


\begin{proof}
	Let $\chi \in \Irr(G)$ be non-linear. Then $G'$ is not contained in the kernel
	of $\chi$ so that $\chi$ is induced from a non-trivial irreducible character
	of $G'$ by Theorem \ref{thm:frob-kernel} (ii).
	Since $G'$ is normal, we have $\chi(x^{-1}) = 0$ for every 
	$x \in G \setminus G'$. It follows as in the proof of 
	Lemma \ref{lem:first-special-group} that the coefficient of $\nr(x)$ at
	$x$ is given by $|G'|^{-1}$. Hence $\nr(x)$ is not integral.
\end{proof}

\begin{example} 
	Let $G = S_3$ be the symmetric group on $3$ letters. Then $G$ is a Frobenius
	group with Frobenius kernel $G' = A_3$, the alternating group
	on $3$ letters, and the proof of Lemma 
	\ref{lem:second-special-group} shows that the coefficient of
	$\nr(\tau)$ at $\tau$ equals $1/3$ for every transposition $\tau$.
\end{example}

\begin{example} \label{ex:Aff(q)}
	More generally, let $q\not= 2$ be a prime power. Then the group of affine 
	transformations $\mathrm{Aff}(q) = \F_q \rtimes \F_q^{\times}$ on the
	finite field $\F_q$ is a Frobenius group with Frobenius kernel
	$\mathrm{Aff}(q)' = \F_q$ and Frobenius complement $\F_q^{\times}$.
	Then the coefficient of $\nr(x)$ at $x$ equals $1/q$ for every 
	$x \in \mathrm{Aff}(q) \setminus \mathrm{Aff}(q)'$.
	Note that $\mathrm{Aff}(3) \simeq S_3$ and $\mathrm{Aff}(4) \simeq A_4$,
	the alternating group on $4$ letters.
\end{example}

\begin{lemma} \label{lem:two-cases}
	Let $G$ be a finite, non-abelian, solvable group. Let $p$ be a prime
	and assume that $G'$ is the unique minimal normal subgroup of $G$
	and is cyclic or order $p$. Then either (i) $G$ is a $p$-group
	with $G' \subseteq Z(G)$ or (ii) $G$ is a Frobenius group with Frobenius
	kernel $G'$ (and abelian Frobenius complement $A$).
\end{lemma}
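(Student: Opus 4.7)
The plan is to analyse the centralizer $C := C_G(G')$, which is normal in $G$, contains $G'$, and has cyclic quotient $G/C \hookrightarrow \Aut(G') \cong (\Z/p\Z)^{\times}$ of order coprime to $p$. I will split into two cases according to whether $C = G$ or $C \subsetneq G$; these will correspond to conclusions (i) and (ii), respectively.

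If $C = G$, then $G' \subseteq Z(G)$ and $G$ has nilpotency class at most $2$, so $G$ is the direct product of its Sylow subgroups. For each prime $\ell \neq p$ the corresponding Sylow is a normal subgroup of $G$ of order coprime to $p$; by uniqueness of $G'$ as minimal normal subgroup it must be trivial, so $G$ is a $p$-group.

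If $C \subsetneq G$, I first argue that $F(G) = C$ and that $F(G)$ is a $p$-group. Indeed, $G' \subseteq Z(C)$ and $C/G'$ is abelian, so $C$ is nilpotent of class at most $2$ and hence $C \subseteq F(G)$; conversely, the unique minimal normal subgroup $G'$ meets the centre of the nilpotent group $F(G)$ non-trivially, so $G' \subseteq Z(F(G))$ and thus $F(G) \subseteq C_G(G') = C$. The Sylow $\ell$-subgroups of $F(G)$ for $\ell \neq p$ are normal in $G$ and cannot contain $G'$, hence they are trivial, and $F(G)$ is a $p$-group. Schur--Zassenhaus then produces a decomposition $G = F(G) \rtimes H$ with $H$ cyclic of order coprime to $p$ acting faithfully on $G'$. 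Granted the equality $F(G) = G'$, the fact that every non-identity element of $\Aut(\Z/p\Z)$ fixes only $0$ shows that $H$ acts fixed-point-freely on $G' \setminus \{1\}$, whence $G$ is Frobenius with kernel $G'$ and abelian complement $H$.

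The main obstacle is therefore to prove $F := F(G) = G'$, which I will establish by contradiction. Assuming $F \supsetneq G'$, I first show $Z(F) = G'$: the elementary abelian $\F_p[H]$-module $\Omega_1(Z(F))$ decomposes semisimply by Maschke (as $|H|$ divides $p-1$), each non-trivial simple summand must contain $G'$ by uniqueness of minimal normal subgroups and hence equal $G'$, and no trivial $H$-summand can occur (else it would give a normal subgroup of $G$ of order $p$ distinct from $G'$); so $\Omega_1(Z(F)) = G'$, $Z(F)$ is cyclic of order $p^m$, and a generator of $H$ acts on it by some $\sigma \in (\Z/p^m\Z)^{\times}$ with $\sigma \not\equiv 1 \pmod{p}$ (by faithfulness on $G'$), making $\sigma - 1$ a unit. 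Then $[H, Z(F)] = Z(F)$, but $[H, Z(F)] \subseteq [G,G] = G'$ forces $Z(F) = G'$. If $F$ is non-abelian, then $F' = G' = Z(F)$ of order $p$; any automorphism of $F$ trivial on $F/Z(F)$ is determined by a homomorphism $F/Z(F) \to Z(F)$ and so has $p$-power order, so no non-trivial $p'$-element of $H$ can centralize $F/Z(F)$, giving $[H, F] \not\subseteq Z(F) = G'$ against $[H, F] \subseteq [G,G] = G'$. If $F$ is abelian, the Maschke argument gives $\Omega_1(F) = G'$ so $F$ is cyclic of order $p^m$, and the same unit argument yields $[H, F] = F \subseteq G'$, forcing $F = G'$ and contradicting $F \supsetneq G'$.
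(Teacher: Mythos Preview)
Your proof is correct. The case split is actually the same as the paper's: under the hypothesis that $G'$ is the unique minimal normal subgroup, one has $C_G(G')=G \Leftrightarrow G'\subseteq Z(G) \Leftrightarrow G$ is nilpotent, and the paper splits into nilpotent versus non-nilpotent. The treatment of case~(i) is essentially identical.

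The real difference is in case~(ii). The paper invokes Wielandt's theorem that $G$ is nilpotent if and only if $G'\subseteq\Phi(G)$; since $G$ is not nilpotent and $G'$ lies in every non-trivial normal subgroup, this forces $\Phi(G)=1$, so some maximal subgroup $A$ misses $G'$, giving $G=G'\rtimes A$ at once. The Frobenius condition $A\cap gAg^{-1}=1$ is then verified by a short centrality argument. Your route instead identifies $C_G(G')$ with the Fitting subgroup, applies Schur--Zassenhaus to split off a cyclic $p'$-complement $H$, and then spends most of the effort proving $F(G)=G'$ via Maschke and a stability-group argument (automorphisms of $F$ trivial on $F/Z(F)$ have $p$-power order). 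The paper's approach is markedly shorter because Wielandt's theorem collapses your entire ``$F(G)=G'$'' step into one line; your approach trades that single citation for more elementary ingredients (Fitting, Schur--Zassenhaus, coprime action) and is closer in spirit to a direct structural analysis. Note also that your abelian-$F$ subcase is redundant: once you have shown $Z(F)=G'$, the abelian case $F=Z(F)$ already gives $F=G'$.
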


\begin{proof}
	Let us first assume that $G$ is nilpotent. Then $G$ decomposes into a direct
	product $G = P \times H$, where $P$ is a $p$-group and the cardinality of $H$
	is coprime to $p$. Since $G'$ is contained in every normal subgroup of $G$
	and is cyclic of order $p$, we must have $H=1$. So $G$ is a $p$-group.
	The centre of a $p$-group is a non-trivial normal subgroup  
	so that it contains the unique minimal normal subgroup $G'$.
	Hence we are in case (i). 
	
	Recall that the Frattini subgroup
	$\Phi(G)$ is the intersection of all maximal subgroups of $G$.
	If $G$ is not nilpotent, then the commutator subgroup $G'$ is not contained
	in $\Phi(G)$ by a theorem of Wielandt \cite[Kapitel III, Satz 3.11]{MR0224703}.
	Hence $\Phi(G)$ is trivial and there is a maximal subgroup $A$ that does not
	contain $G'$. Then $G = AG'$ by maximality of $A$ and $A\cap G' = 1$,
	as $G' \subseteq A$ otherwise. Hence $G$ is a semi-direct product $G' \rtimes A$
	so that $A \simeq G/G'$ is abelian. It remains to show that for every
	$g \in G \setminus A$ we have $A \cap g^{-1}Ag = 1$. Suppose to the contrary
	that there is an $a \in A$ such that $g^{-1}ag \in A$. Then
	$a^{-1}g^{-1}ag \in G' \cap A = 1$ so that $a$ commutes with $g$.
	Since $A$ is a maximal subgroup of $G$ and is abelian, the element
	$a$ is indeed central. In particular, it generates a normal subgroup
	of $G$ and thus contains $G'$. This contradicts the fact that
	$G' \cap A = 1$. So if $G$ is not nilpotent, we are in case (ii).
\end{proof}

\begin{proof}[Proof of Theorem \ref{thm:integrality-vs-abelian}]
	We have to show that the integrality ring
	$\mathcal{I}(G)$ is strictly larger
	than the centre $\zeta(\Z[G])$. By Corollary \ref{cor:squarefree} we
	may and do assume that $G$ is solvable and that $|G'|$ is squarefree.
	We will now show by induction that there is an $x \in \Z[G]$ 
	such that $\nr(x) = \nr_G(x) \not\in \Z[G]$.
	Suppose first that $G$ contains a proper normal subgroup $N$ such that $G/N$ is non-abelian. Then we may assume by induction that there is an 
	$\overline{x} \in \Z[G/N]$ such that $\nr_{G/N}(\overline{x}) \not \in \Z[G/N]$.
	Let $x \in \Z[G]$ be any lift of $\overline{x}$. Then $\nr_G(x)$ cannot be integral
	as it maps to $\nr_{G/N}(\overline{x})$ under the canonical projection map
	$\Q[G] \rightarrow \Q[G/N]$. If no such $N$ exists, the commutator subgroup
	is contained in every non-trivial normal subgroup, i.e.~it is the unique
	minimal normal subgroup of $G$. By \cite[Kapitel I, Satz 9.13]{MR0224703}
	the minimal normal subgroups of a solvable group are $p$-groups.
	Since $|G'|$ is squarefree, it must be cyclic of order $p$.
	Moreover, the non-linear irreducible characters of $G$ are faithful,
	as their kernel is a normal subgroup that does not contain $G'$.
	So it remains to treat the two cases given by Lemma \ref{lem:two-cases}.
	These are covered by Lemmas \ref{lem:first-special-group} and
	\ref{lem:second-special-group}, respectively.
\end{proof}

Let us briefly discuss the case of $p$-adic groups rings.
If $p$ does not divide the cardinality of the commutator subgroup,
then $\mathcal{I}_p(G) = \zeta(\Z_p[G])$ 
by Proposition \ref{prop:HpG-equals-centre}. 
The converse implication seems
to be likely, but the above argument fails in several respects.
For instance, we can only conclude that $|G'|$ is not divisible by $p^2$,
but this does not imply that $G'$ is solvable. Moreover, if $N$ is a normal
subgroup of $G$ such that $G/N$ is non-abelian, the commutator subgroup of the
quotient may be of cardinality coprime to $p$ even if this does not hold for $G'$.
What we can offer, is the following variant of Theorem \ref{thm:integrality-vs-abelian}.

\begin{prop} \label{prop:integrality-rings-nilpotent}
	Let $G$ be a finite nilpotent group and let $p$ be a prime. Then
	$\mathcal{I}_p(G) = \zeta(\Z_p[G])$ if and only if $p \nmid |G'|$.
\end{prop}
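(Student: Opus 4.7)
The implication $p \nmid |G'| \Rightarrow \mathcal{I}_p(G) = \zeta(\Z_p[G])$ is already contained in Proposition \ref{prop:HpG-equals-centre}, so the task is the converse: assuming $p \mid |G'|$, produce an element of $\mathcal{I}_p(G) \setminus \zeta(\Z_p[G])$.

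Since $G$ is nilpotent, write $G = P \times H$, where $P$ is the Sylow $p$-subgroup of $G$ and $|H|$ is coprime to $p$. Then $G' = P' \times H'$, and $p \mid |G'|$ forces $P' \neq 1$, so $P$ is non-abelian. I would use the projection $\pi : G \twoheadrightarrow G/H = P$: every conjugacy class of $P$ is the image under $\pi$ of a class of $G$ of the form $\mathcal{C}_P \times \{1\}$, so $\pi$ induces a surjection $\zeta(\Z_p[G]) \twoheadrightarrow \zeta(\Z_p[P])$, and Lemma \ref{lem:int-ring-under-projections}(ii) then gives a surjection $\pi : \mathcal{I}_p(G) \twoheadrightarrow \mathcal{I}_p(P)$. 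Any lift of an element in $\mathcal{I}_p(P) \setminus \zeta(\Z_p[P])$ lies in $\mathcal{I}_p(G) \setminus \zeta(\Z_p[G])$, since $\pi(\zeta(\Z_p[G])) \subseteq \zeta(\Z_p[P])$. So it suffices to prove $\mathcal{I}_p(P) \supsetneq \zeta(\Z_p[P])$.

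The plan is to show by induction on $|P|$ that every non-abelian $p$-group $P$ admits some $x \in \Z_p[P]$ with $\nr(x) \notin \Z_p[P]$, mimicking the argument of Theorem \ref{thm:integrality-vs-abelian}. If $P$ admits a proper normal subgroup $N$ with $P/N$ non-abelian, I pick by induction $\overline{x} \in \Z_p[P/N]$ with $\nr_{P/N}(\overline{x}) \notin \Z_p[P/N]$, lift it to $x \in \Z_p[P]$, and observe that $\pi(\nr_P(x)) = \nr_{P/N}(\overline{x})$ forces $\nr_P(x) \notin \Z_p[P]$. Otherwise every proper quotient of $P$ is abelian, so $P'$ is contained in every non-trivial normal subgroup of $P$ and hence is itself the unique minimal normal subgroup. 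As a minimal normal subgroup of a $p$-group, $P'$ lies in $Z(P)$; since every subgroup of a central subgroup is normal in $P$, minimality of $P'$ forces $|P'| = p$. Every non-linear irreducible character of $P$ is then faithful, since its kernel cannot contain $P'$ (else the character would factor through the abelian quotient $P/P'$) and any non-trivial normal subgroup of $P$ does contain $P'$. The hypotheses of Lemma \ref{lem:first-special-group} are thus in place, and for any $x \in P \setminus Z(P)$ we obtain $\nr(x) \notin \Z_p[P]$, completing the induction. The main point is that nilpotency allows the reduction to a pure $p$-group, so that the Frobenius-group branch of Lemma \ref{lem:two-cases} does not arise. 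I do not expect any serious obstacle: the entire argument is a structural adaptation of the global statement Theorem \ref{thm:integrality-vs-abelian} to the local $p$-adic setting.
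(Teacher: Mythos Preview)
Your proposal is correct and follows essentially the same route as the paper: reduce to the Sylow $p$-subgroup via the projection $G \twoheadrightarrow P$, induct on $|P|$ by passing to non-abelian proper quotients, and in the base case apply Lemma~\ref{lem:first-special-group} once $P'$ is identified as the unique minimal normal subgroup. The only cosmetic difference is that the paper obtains $|P'|=p$ by first invoking Corollary~\ref{cor:squarefree}(ii) to rule out $p^{2}\mid |P'|$, whereas you argue directly that a minimal normal subgroup of a $p$-group is central and hence cyclic of order $p$.
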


\begin{proof}
	Let $G$ be a finite nilpotent group such that $|G'|$ 
	is divisible by the prime $p$. It suffices to construct an element
	$x \in \Z_p[G]$ such that $\nr_G(x) \not\in \Z_p[G]$. The group $G$ decomposes
	into a direct product $G \simeq P \times H$, where $H$ is of cardinality prime
	to $p$ and $P$ is a finite non-abelian $p$-group. If there is
	$\overline x \in \Z_p[P]$ such that $\nr_P(\overline x)\not\in \Z_p[P]$,
	then any lift $x \in \Z_p[G]$ of $\overline x$ under the 
	projection $\Z_p[G] \rightarrow \Z_p[P]$ will do. So we may assume that
	$G$ is a $p$-group. By a similar argument we may assume that there is no
	proper normal subgroup $N$ of $G$ such that $G/N$ is non-abelian.
	Hence $G'$ is contained in every non-trivial normal subgroup of $G$ and
	every non-linear irreducible character of $G$ must be faithful.
	By Corollary \ref{cor:squarefree} we may assume that $p^2$ does not divide
	$|G'|$. Thus $G'$ is a cyclic group of order $p$.
	Now Lemma \ref{lem:first-special-group} implies the result.
\end{proof}

We are not aware of any example of a finite group $G$, where the conclusion of
Proposition \ref{prop:integrality-rings-nilpotent} fails. For instance, we have
seen in Example \ref{ex:special-linear-group} that for the special linear group
$\mathrm{SL}_2(\F_3)$ the commutator subgroup is of order $8$ and $\mathcal{I}_2(\mathrm{SL}_2(\F_3))$
is indeed strictly larger than $\zeta(\Z_2[\mathrm{SL}_2(\F_3)])$.
We give two further examples.

\begin{example}
	Let $G = G' \rtimes A$ be a Frobenius group with Frobenius kernel $G'$
	such as the group of affine transformations considered in Example \ref{ex:Aff(q)}.
	Then the proof of Lemma \ref{lem:second-special-group} shows that 
	$\nr(x)$ has a denominator $|G'|$ at $x$ for every $x \in G \setminus G'$.
	In particular, $\mathcal{I}_p(G)$ is strictly larger than $\zeta(\Z_p[G])$
	whenever $|G'|$ is divisible by $p$.
\end{example}

\begin{example}
	Let $G = M$ be the Monster group, which is the largest sporadic simple group.
	We show that $\mathcal{I}_p(M)$ is strictly larger than $\zeta(\Z_p[M])$
	whenever $p$ divides
	\[
		|M'| = |M| = 2^{46} \cdot 3^{20} \cdot 5^{9} \cdot 7^{6} \cdot 11^{2} \cdot 13^{3} \cdot 17 \cdot 19 \cdot 23 \cdot 29 \cdot 31 \cdot 41 \cdot 47 \cdot 59 \cdot 71.
	\]
	By Corollary \ref{cor:squarefree} (ii) we only have to consider primes $p$
	in the set $P$ of those primes that divide $|M|$ exactly once.
	For $n \in \Z$ the coefficient of the reduced norm $nr_{M}(n) $ at $1$ 
	is given by $|M|^{-1} A(n)$ where
	\[
		A(n) := \sum_{\chi \in Irr(M)} n^{\chi(1)} \chi(1)^{2}.
	\]
	The degrees of the characters are too large to compute this expression easily.
	However, it suffices to show that for each $p \in P$ there is an $n$ such
	that $A(n) \mod p \not= 0$. Choosing $n = -1$ we computed the following
	values for $A(-1) \mod p$ using Python:
	\begin{center}
		\begin{tabular}{| l | l | l | l | l | l | l | l | l | l |}
			\hline
			$p$ & 17 & 19 & 23 & 29 & 31 & 41 & 47 & 59 & 71 \\ \hline
			$A(-1) \mod p$ & 1 & 1 & 9&15 &10 & 5& 17& 31& 51\\ \hline
		\end{tabular}
	\end{center}
	In particular, we see that $A(-1)$ is not divisible by any $p \in P$
	as desired.
\end{example}

\section{Conjectures and results on integrality and annihilation} \label{sec:int-and-ann}

\subsection{Equivariant Artin $L$-functions} 
Let $L/K$ be a finite Galois extension of number fields with Galois group $G$.
For each place $v$ of $K$ we fix a place $w$ of $L$ above $v$ and write $G_{w}$ and $I_{w}$
for the decomposition group and the inertia subgroup of $G$ at $w$, respectively.  
When $w$ is a finite place, we choose a lift $\sigma_{w} \in G_{w}$ of the Frobenius automorphism at $w$. We write $\mathfrak{P}_{w}$ for the associated prime ideal 
in $L$ and $L(w)$ for the residue field at $w$.
For a finite place $v$ of $K$ we denote the cardinality of its residue field 
$K(v)$ by $\mathrm{N}v$.

Let $S$ be a finite set of places of $K$ containing the set $S_{\infty}$
of all archimedean places of $K$.
For $\chi \in \Irr_{\C}(G)$ let $V_{\chi}$ be a $\C[G]$-module with character $\chi$.
The $S$-truncated Artin $L$-function $L_{S}(s,\chi)$
is defined as the meromorphic extension to the whole complex plane 
of the holomorphic function given by the Euler product
\[
L_{S}(s,\chi) = \prod_{v \notin S} \det (1 - (\mathrm{N}v)^{-s}\sigma_{w} \mid V_{\chi}^{I_{w}})^{-1}, \quad \mathrm{Re}(s)>1.
\]
Recall that there is a canonical isomorphism $\zeta(\C[G]) \simeq \prod_{\chi \in \Irr_{\C}(G)} \C$.
The equivariant $S$-truncated Artin $L$-function is defined to be the meromorphic $\zeta(\C[G])$-valued function
\[
L_{S}(s) := (L_{S}(s,\chi))_{\chi \in \Irr_{\C}(G)}.
\]

Now suppose that $T$ is a second finite set of places of $K$ such that $S \cap T = \emptyset$.
Then we define 
\begin{equation}\label{eq:def-delta-T}
	\delta_{T}(s,\chi) := \prod_{v \in T} \det(1 - (\mathrm{N}v)^{1-s} \sigma_{w}^{-1} \mid V_{\chi}^{I_{w}}) \quad  \textrm{ and }  \quad
	\delta_{T}(s) := (\delta_{T}(s,\chi))_{\chi \in \Irr_{\C}(G)}. 
\end{equation}
Let $x \mapsto x^{\#}$ denote the anti-involution on $\C[G]$ induced by $g \mapsto g^{-1}$ for $g \in G$.
The $(S,T)$-modified $G$-equivariant Artin $L$-function is defined to be
\[
\Theta_{S,T}(s) := \delta_{T}(s) \cdot L_{S}(s)^{\#}.
\]
Note that $L_{S}(s)^{\#} = (L_{S}(s,\check{\chi}))_{\chi \in \Irr_{\C} (G)}$ 
where $\check \chi$ denotes the character contragredient to $\chi$.
Evaluating $\Theta_{S,T}(s)$ at integers $r \leq 0$ gives $(S,T)$-modified Stickelberger elements
\[
\theta_{S}^{T}(r) := \Theta_{S,T}(r) \in \zeta(\Q[G]).
\]
Note that a priori we only have $\theta_{S}^{T}(r) \in \zeta(\C[G])$, but by a result of Siegel \cite{MR0285488} we know that
$\theta_{S}^{T}(r)$ in fact belongs to $\zeta(\Q[G])$. 
If $T$ is empty, we abbreviate $\theta^{T}_{S}(r)$ to $\theta_{S}(r)$.
If the extension $L/K$ is not clear from context, we will also write
$\theta_{S}^{T}(L/K,r)$, $L_S(L/K,s)$, $\delta_{T}(L/K,s)$ etc.

\subsection{The integrality conjecture}
If $S$ is any finite set of places of $K$, we write $S(L)$ for the set of
places of $L$ lying above those in $S$. 
Assume that $S$ contains $S_{\infty}$
and let $T$ be a second finite set of places 
of $K$ such that $T$ and $S$ are disjoint.
We denote the ring of $S(L)$-integers in $L$ by $\mathcal{O}_{L,S}$ and let
$\mathcal{O}_{L,S,T}^{\times}$ be the subgroup of $\mathcal{O}_{L,S}^{\times}$
comprising those $S(L)$-units which are congruent to $1$ modulo each place
in $T(L)$.

We will usually impose the following hypotheses on the sets $S$ and $T$.
Note that condition (iii) in particular implies that $T$ is non-empty.
\begin{hypothesis*}
	Let $S$ and $T$ be finite sets of places of $K$. 
	We say that $\Hyp(S,T)$ is satisfied if
	(i)
	$S_{\ram} \cup S_{\infty} \subseteq S$,
	(ii)
	$S \cap T = \emptyset$, and
	(iii)
	$\mathcal{O}_{L,S,T}^{\times}$ is torsionfree.
\end{hypothesis*}

If $G$ is abelian and $r \leq 0$ is an integer, 
then independent work of Pi.\ Cassou-Nogu\`es \cite{MR524276} 
and of Deligne and Ribet \cite{MR579702} (see also Barsky \cite{MR525346}) shows that
\[
\theta_S^T(r) \in \Z[G]
\]
whenever $\Hyp(S,T)$ is satisfied. For general $G$ this is no longer true,
but we have the following conjecture due to the second named author.
It originates from the more general \cite[Conjecture 2.1]{MR2976321} (if $r=0$) and 
\cite[Conjecture 2.11]{MR2801311} (if $r<0$). See also 
\cite[\S 3]{MR3552493}.

\begin{conj}[Integrality Conjecture] \label{conj:integrality}
	Let $L/K$ be a Galois extension of number fields with Galois group $G$
	and let $r \leq 0$ be an integer. Then for
	every pair $S,T$ of finite sets of places of $K$ satisfying $\Hyp(S,T)$ we have
	$\theta_{S}^{T}(r) \in \mathcal{I}(G)$.
\end{conj}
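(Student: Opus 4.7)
The plan is to prove the conjecture in the case where $G$ is nilpotent (the general case appears to lie beyond current techniques because Artin $L$-series behave well under induction and inflation of characters but not under Morita equivalence), and to do so by running an induction on $|G|$ that parallels the proof of Theorem \ref{thm:denominator-ideal-capZ} but uses the classical abelian integrality statement \eqref{eqn:abelian-int} of Cassou-Nogu\`es, Deligne--Ribet and Barsky as the base clause in place of the trivial identity $|G'|=1$.

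The base case $G$ abelian is immediate from \eqref{eqn:abelian-int}, since then $\mathcal{I}(G) = \zeta(\Z[G]) = \Z[G]$. For the step, with $G$ nilpotent and non-abelian, I would pick a normal subgroup $N \supseteq G'$ with $G/N$ abelian and decompose
\[
\theta_S^T(r) = \sum_{\eta \in \Irr(N)/{\sim}} \theta_S^T(r) e(\eta),
\]
the $e(\eta)$ being the idempotents of \eqref{eqn:definition-e-of-eta}. For each $\eta$, the trichotomy from the proof of Theorem \ref{thm:integrality-mainstep} applies. If $G_\eta \subsetneq G$, the embedding $\iota_\eta$ from \eqref{eqn:iota-eta}, combined with the compatibility of Artin $L$-values with induction of characters, identifies $\theta_S^T(L/K,r)e(\eta)$ with the image of the corresponding Stickelberger element for the smaller extension $L/L^{G_\eta}$; the inductive hypothesis places that element in $\mathcal{I}(\Z[G_\eta])$, and Lemma \ref{lem:nr-under-restriction} supplies the transfer of the relevant reduced-norm representations back to $\mathcal{I}(G)$. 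If $G_\eta = G$ but $U_\psi \subsetneq G$, a maximal proper subgroup $U \supseteq U_\psi$ has prime index (as $G/U_\psi$ is abelian and $G$ is nilpotent), and the analogous descent via $\iota_\rho$ from \eqref{eqn:iota-rho} again invokes the inductive hypothesis.

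The crux, and the main obstacle, is the terminal case $U_\psi = G_\eta = G$. Here an integral realisation $\pi: G \to \GL_{\psi(1)}(\mathcal{O}_K)$ of $\psi$ yields the ring isomorphism
\[
\iota_\pi: F[G]e(\eta) \simeq M_{\psi(1)}(F[G/N])
\]
from \eqref{eqn:iota_pi}, under which $\theta_S^T(r)e(\eta)$ corresponds to a determinantal expression over the abelian group ring $F[G/N]$. Applying \eqref{eqn:abelian-int} to the abelian extension $L^N/K$, that expression is already integral and arises as $\det(H)$ for an explicit $H \in M_{\psi(1)}(\Z[G/N])$. I would then lift $H$ along $\iota_\pi$ to a matrix $\tilde H \in M_{\psi(1)}(\Z[G])$ whose reduced norm $\nr_G(\tilde H)$ captures $\theta_S^T(r) e(\eta)$ up to a central integer multiplier, thereby exhibiting the latter in $\mathcal{I}(G)$. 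The delicate point is that $\iota_\pi^{-1}$ generically introduces a denominator of $|N|$ (as visible in the computation at the end of the proof of Theorem \ref{thm:integrality-mainstep}), and one must choose the lift $\tilde H$ so that this denominator is absorbed; having the freedom to iterate the construction along a chain $G \supset N \supset \cdots \supset 1$ of normal subgroups with abelian successive quotients is precisely what the nilpotency hypothesis buys, and its failure is the obstruction to extending the proof beyond nilpotent $G$.
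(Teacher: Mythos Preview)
The statement you are attempting is a \emph{conjecture}; the paper does not prove it, not even under the nilpotency hypothesis. What the paper proves for nilpotent $G$ is the strictly weaker consequence $|G'|\,\theta_S^T(r)\in\zeta(\Z[G])$ (Theorem~\ref{thm:main-result}), which would follow from the Integrality Conjecture via Corollary~\ref{cor:bound-integrality-ring} but is not known to be equivalent to it.

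Your terminal case is where the argument breaks. Under $\iota_\pi$ the central element $\theta_S^T(L/K,r)e(\eta)$ lands in $F[G/N]$, but its $\omega$-component is $\delta_T(r,\psi\otimes\omega)L_S(r,\check\psi\otimes\check\omega)$, \emph{not} $\delta_T(r,\omega)L_S(r,\check\omega)$. So the image is not the Stickelberger element of the abelian extension $L^N/K$, and \eqref{eqn:abelian-int} says nothing about it. This is exactly the ``Artin $L$-series are not compatible with Morita equivalence'' obstruction flagged in the introduction. The paper's proof of Theorem~\ref{thm:main-result} avoids $\iota_\pi$ entirely in the terminal case: once $G=G_\eta=U_\psi$, it uses that nilpotent groups are monomial to write $\psi=\ind_H^G\lambda$ from a proper normal subgroup $H$ and descends via an embedding $\iota_\chi^H$ built from induction of characters (Lemma~\ref{lem:psi-with-H-and-N} and Lemma~\ref{lem:final-lemma-on-idempotents}), which \emph{is} compatible with $L$-values.

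There is also a structural gap in the non-terminal steps. The inductive hypothesis gives $\theta_{S_\eta}^{T_\eta}(L/K_\eta,r)\in\mathcal{I}(G_\eta)$, i.e.\ a $\zeta(\Z[G_\eta])$-linear combination of reduced norms of $\Z[G_\eta]$-matrices. Lemma~\ref{lem:nr-under-restriction} computes $\nr_U(H|_U)$ from $\nr_G(H)$, not the other way round, so it does not furnish a lift to $\mathcal{I}(G)$; there is no evident functoriality $\mathcal{I}(G_\eta)\to\mathcal{I}(G)$ compatible with the pieces $e(\eta)$. This is why the paper contents itself with the weaker integrality statement: the embeddings $\iota_\eta$, $\iota_\rho$, $\iota_\chi^H$ are literal inclusions of subsets of $F[G]$, so mere integrality of coefficients transfers back for free, whereas membership in $\mathcal{I}(G)$ does not.
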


\begin{remark}
	Let $d_G$ be the lowest common multiple of $|G'|$
	and the cardinalities of the conjugacy classes of $G$.
	If Conjecture \ref{conj:integrality} holds, then Corollary 
	\ref{cor:bound-integrality-ring} implies that the denominators of
	$\theta_{S}^{T}(r)$ are bounded by $|G'|$, that is $|G'| \theta_{S}^{T}(r) \in 
	\zeta(\Z[G])$. A variant of this conjecture in the case $r=0$ 
	has been formulated by Dejou
	and Roblot \cite{MR3208394}, which suggests that one has to replace $|G'|$
	by the weaker bound $d_G$. This was motivated by the following
	example which is taken from \cite[p. 83]{Dejou_thesis}.
	Let $L$ be the splitting field of the polynomial 
	$\mathtt{x^8 - 2x^7+x^6+x^5-x^4+2x^3+4x^2-16x+16}$.
	Then $L/\Q$ is a Galois extension with Galois group 
	$G \simeq \mathrm{SL}_2(\F_3)$ so that $|G'| = 8$ and $d_G = 24$.
	The only roots of unity
	in $L$ are $\pm 1$ and the only ramified prime is $853$.
	Using the notation of Example \ref{ex:special-linear-group} one has
	\[
		\theta_{S_{\infty} \cup S_{\ram}}(0) = \frac{1}{3} (C_1 - C_2)
		(-42 C_1 + 2 C_3 - 22 C_5 - 21 C_6).
	\]
	Because of the factor $1/3$ it seems that one has to multiply by $d_G$
	rather than $|G'|$. (Note that there is no set $T$ in this example,
	but the number of roots of unity in $L$ is $2$ so that
	there is a totally decomposed prime $p \not= 2$ such that
	$\delta_{\{p\}}(0) \in \zeta(\Z_3[G])^{\times}$ by 
	\cite[Lemma 2.2]{MR2976321}; hence the $3$-adic denominators
	do not change if we multiply the equation
	by $\delta_{\{p\}}(0)$.)
	However, using equalities of the form $C_3 = C_2 C_5$ 
	this can be rewritten as
	\[
	\theta_{S_{\infty} \cup S_{\ram}}(0) = 
	-14 C_1 + 14 C_2 + 8 C_3 + 7 C_4 - 8 C_5 - 7 C_6
	\]
	which is indeed integral.
\end{remark}

\subsection{Brumer's conjecture and generalisations}
Let $L/K$ be a finite Galois extension of number fields with Galois group $G$.
Assume that $S$ contains $S_{\infty}$
and let $T$ be a second finite set of places 
of $K$ such that $T$ and $S$ are disjoint. Write $S_f := S\setminus S_{\infty}$
for the subset of $S$ comprising all finite places in $S$.
We let $\cl_L^T$ be the ray class group of $L$ associated to the modulus
$\mathfrak{M}_{L}^{T} := \prod_{w \in T(L)} \mathfrak{P}_{w}$ and write
$\cl_{L,S}^T$ for the cokernel of the natural map
$\Z[S_f(L)] \rightarrow \cl_L^T$ which sends each place $w \in S_f(L)$ to
the class of the associated prime ideal $\mathfrak{P}_w$.
If $T$ is empty we abbreviate $\cl_{L,S}^T$ and $\cl_{L}^{T}$ to $\cl_{L,S}$ and
$\cl_{L}$, respectively.
All these modules are equipped with a natural $G$-action
and we have the following exact sequence of finitely generated $\Z[G]$-modules
\begin{equation}\label{eqn:ray-class-sequence}
	0 \longrightarrow \mathcal{O}_{L,S,T}^{\times} \longrightarrow \mathcal{O}_{L,S}^{\times} \longrightarrow \prod_{w \in T(L)} L(w)^{\times}
	\stackrel{\nu}{\longrightarrow} \cl_{L,S}^{T} \longrightarrow \cl_{L,S} \longrightarrow 0,
\end{equation}
where the map $\nu$ lifts an element $\overline x \in \prod_{w \in T(L)} L(w)^{\times} \simeq (\mathcal{O}_{L} / \mathfrak{M}_{L}^{T})^{\times}$ to
$x \in \mathcal{O}_{L}$ and
sends it to the ideal class $[(x)] \in \cl_{L,S}^{T}$ of the principal ideal $(x)$.

If $G$ is abelian, then Brumer's conjecture
simply asserts that $\theta_S^T(0)$ annihilates the class group $\cl_L$.
However, we will mainly deal with the following strengthening 
that was stated by Tate \cite{MR782485} and is known as the Brumer--Stark conjecture.

\begin{conj}\label{conj:Brumer--Stark}
	Let $L/K$ be an abelian extension of number fields with Galois group $G$. Then for
	every pair $S,T$ of finite sets of places of $K$ satisfying $\Hyp(S,T)$ we have
	$\theta_{S}^{T}(0) \in \Ann_{\Z[G]}(\cl_{L}^{T})$.
\end{conj}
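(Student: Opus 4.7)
The strategy would be to attack the problem via Iwasawa theory, specifically through an equivariant refinement of Wiles' main conjecture for totally real fields, following the approach pioneered by Greither, Kurihara and others and brought to fruition (away from $2$) by Dasgupta and Kakde. The first step is to reduce to the essential case where $K$ is totally real and $L$ is a CM extension of $K$. Complex conjugation furnishes a central involution, and after inverting $2$ one can decompose any module over $\Z[G]$ into its plus and minus parts; the plus part of $\cl_L^T$ is annihilated by $\theta_S^T(0)$ for rather soft reasons (the Stickelberger element essentially vanishes there), so the entire content lies on the minus part.

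The core of the proof would be to establish an equivariant main conjecture prime by prime. For each odd prime $p$, pass to the cyclotomic $\Z_p$-extension $L_\infty/L$ and package the compatible system of Stickelberger elements $\theta_S^T(L_n/K,0)$ into a single $p$-adic $L$-function with values in the completed group ring $\Z_p[G][[\Gal(L_\infty/L)]]$. One would then relate this to the Fitting ideal of a natural Iwasawa module, namely the minus part of the Galois group of the maximal $T$-ramified abelian pro-$p$ extension of $L_\infty$ unramified outside $S$.

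To establish the main conjecture one would follow Ribet's method in its group-ring valued form: construct Hilbert modular forms with coefficients in the Iwasawa algebra whose cuspidal parts are congruent to Eisenstein series whose constant terms encode the Stickelberger data. A lattice argument applied to the associated Galois representations produces unramified cohomology classes of the required size, yielding one divisibility; the opposite divisibility is supplied by analytic class number formula considerations (or an Euler characteristic count). Once the equivariant main conjecture is in hand, descent from the cyclotomic tower to $L$, combined with the exact sequence \eqref{eqn:ray-class-sequence} and the torsion-freeness built into $\Hyp(S,T)$, yields annihilation of $\Z_p \otimes_\Z \cl_L^T$ by $\theta_S^T(0)$ for every odd $p$. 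Assembling these prime-by-prime statements recovers the full conjecture away from $2$.

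The main obstacle is the construction of the group-ring valued Hilbert modular forms with sharp congruence properties. Classical scalar Eisenstein congruences are insufficient: one needs a family of forms parametrized by the full Iwasawa algebra $\Z_p[G][[\Gal(L_\infty/L)]]$, with precise simultaneous control of all Fourier coefficients and constant terms. Dealing with the ramification at $T$ (rather than proving annihilation of the ordinary class group) and controlling the $\mu$-invariant issue unconditionally are further serious difficulties; the $2$-primary part, where the sign of the functional equation and the plus/minus decomposition become subtle, remains outside the reach of this method and would require a separate attack.
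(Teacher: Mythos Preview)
The statement you have been asked to prove is Conjecture \ref{conj:Brumer--Stark} in the paper, and the paper does \emph{not} prove it. It is recorded there as an open conjecture; immediately afterwards the paper quotes the Dasgupta--Kakde theorem (Theorem \ref{thm:Dasgupta-Kakde}) which establishes it away from the $2$-primary part, but no proof of that theorem is given either --- it is simply cited as \cite[Corollary 3.8]{dasgupta-kakde}. So there is no ``paper's own proof'' to compare against.

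What you have written is a reasonable high-level narrative of the Dasgupta--Kakde strategy for the odd part, and you are right to flag the $2$-primary part as beyond reach. A couple of points of calibration: Dasgupta and Kakde do not in fact run the classical ``go up the cyclotomic $\Z_p$-tower, prove an Iwasawa main conjecture, then descend'' argument you describe; rather they work directly at finite level and prove a statement about Fitting ideals of Ritter--Weiss type modules (or equivalently Selmer groups) over $\Z_p[G]$ itself, precisely so as to avoid the $\mu$-invariant hypothesis that the descent step would otherwise require. The Ribet-method input via group-ring valued Hilbert modular forms is correct in spirit, but the actual argument is a careful ``horizontal'' congruence rather than a limit up the tower. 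Your sketch would, if carried out literally, re-introduce the $\mu=0$ dependence that earlier work (Greither, Kurihara, Burns, etc.) could not remove.

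In short: there is no gap to name because there is no proof in the paper to match; your outline is broadly in the right family but mischaracterises the Dasgupta--Kakde argument as an Iwasawa-theoretic descent, which it deliberately is not.
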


By groundbreaking work of Dasgupta and Kakde \cite[Corollary 3.8]{dasgupta-kakde}
Conjecture \ref{conj:Brumer--Stark} holds away from its $2$-primary part.

\begin{theorem}[Dasgupta--Kakde] \label{thm:Dasgupta-Kakde}
	Let $L/K$ be an abelian extension of number fields with Galois group $G$. Then for
	every pair $S,T$ of finite sets of places of $K$ satisfying $\Hyp(S,T)$ we have
	$\theta_{S}^{T}(0) \in \Ann_{\Z[1/2][G]}(\Z[\half] \otimes_{\Z} \cl_{L}^{T})$.
\end{theorem}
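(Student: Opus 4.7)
The statement is precisely Corollary~3.8 of the recent work of Dasgupta and Kakde \cite{dasgupta-kakde}, so the natural "proof" is to invoke their result directly. To indicate the strategy behind it, I would first reduce to the case where $L/K$ is a CM extension of a totally real base field: when $L$ is totally real the minus part vanishes and the plus part of $\theta_S^T(0)$ is zero for trivial sign reasons, so after inverting $2$ one is reduced to the action of the minus idempotent on the $p$-part of $\cl_L^T$ for each odd prime $p$. Next, I would pass to the cyclotomic $\Z_p$-tower $L_\infty/L$, assemble the finite-level Stickelberger elements $\theta_S^T(L_n/K,0)$ into a single element of a noncommutative Iwasawa algebra (this requires a distribution/norm-compatibility check and makes crucial use of $T$ to kill the pole at $s=0$), and reformulate the conjecture as the claim that this Iwasawa-Stickelberger element annihilates (equivalently, generates the first Fitting ideal of) the minus part of a Ritter--Weiss-style transpose of the inverse limit $X_\infty := \varprojlim \cl_{L_n}^T \otimes \Z_p$.

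The technical heart of the argument, and what I would expect to be the main obstacle, is then the proof of this equivariant Iwasawa main conjecture. Following Dasgupta--Kakde, the plan is to implement a group-ring-valued refinement of Ribet's method: construct a cuspidal Hilbert modular form over $K$ with coefficients in the relevant noncommutative Iwasawa algebra whose constant terms at the cusps encode the Stickelberger element, then apply Hida theory to produce a Galois representation whose image forces the Stickelberger element into the annihilator of $X_\infty^-$. The delicate technical work lies in exhibiting a cusp form with prescribed constant terms (an equivariant, integral strengthening of the classical Ribet argument, where it suffices to control an Eisenstein series modulo a single prime), in carrying out the smoothing arguments needed to invert auxiliary Euler factors without introducing denominators, and in descending from the infinite tower back to $L$ while preserving the $T$-deformation. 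Once the equivariant main conjecture is established, an application of \eqref{eqn:abelian-int} (which controls integrality at finite level) together with a standard Iwasawa descent yields the finite-level annihilation statement away from the $2$-primary part.
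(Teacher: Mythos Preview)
Your proposal is correct and matches the paper's treatment exactly: the paper does not prove this theorem but simply records it as \cite[Corollary 3.8]{dasgupta-kakde}, just as you do. One small slip in your sketch: since $G$ is abelian here, the Iwasawa algebra in question is commutative, not noncommutative.
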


Non-abelian generalisations of Brumer's conjecture have been formulated by
the second named author \cite{MR2976321}, by Dejou and Roblot \cite{MR3208394}
and in even greater generality by Burns \cite{MR2845620}. We consider 
the following generalisation of Conjecture \ref{conj:Brumer--Stark}.

\begin{conj}\label{conj:Brumer--Stark-nonab}
	Let $L/K$ be a Galois extension of number fields with Galois group $G$. Then for
	every pair $S,T$ of finite sets of places of $K$ satisfying $\Hyp(S,T)$ we have
	\[
		\mathcal{H}(G) \cdot \theta_{S}^{T}(0) \subseteq \Ann_{\zeta(\Z[G])}(\cl_{L}^{T}).
	\]
\end{conj}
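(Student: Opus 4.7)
My plan is to mirror the Clifford-theoretic induction used in the proof of Theorem \ref{thm:denominator-ideal-capZ}, with Theorem \ref{thm:Dasgupta-Kakde} (and the classical integrality \eqref{eqn:abelian-int} in the abelian case) playing the role of the base clause. I would first localize at each prime $p$, so that it suffices to prove that $\mathcal{H}_p(G) \cdot \theta_S^T(0)$ annihilates the $p$-part of $\cl_L^T$. Since Theorem \ref{thm:Dasgupta-Kakde} is only available away from $2$, the $2$-primary part would need separate arguments; I would focus on odd $p$. For such $p$, after extending scalars to a sufficiently large local splitting field, it suffices to treat the explicit generators of $\mathcal{H}_p(G)$ provided by Remark \ref{rem:more-elements-in-H}, namely the elements
\[
h_{N,\eta} := \frac{|N|}{\eta(1)}\, e(\eta),
\]
where $N$ ranges over normal subgroups of $G$ containing $G'$ and $\eta$ over a set of $G$-orbit representatives in $\Irr(N)$.

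Next I would exploit the orthogonal decomposition $1 = \sum_\eta e(\eta)$ to reduce the annihilation statement to showing that $h_{N,\eta} \theta_S^T(0)$ kills $\cl_L^T$ for each orbit representative $\eta$. The key step, modelled on the proof of Theorem \ref{thm:integrality-mainstep}, is a projection formula analogous to Lemma \ref{lem:nr-under-restriction}: combining the identity $\chi_i = \ind_{G_{\eta}}^{G} \psi_i$ with Corollary \ref{cor:idempotent-equalities}~(iii) identifies $e(\eta) \theta_S^T(L/K,0)$ with the image of a Stickelberger element attached to the subextension $L/L^{G_\eta}$. The normalizing factor $|N|/\eta(1)$ is precisely what is needed to absorb the denominators created by this passage and to land in $\zeta(\Z_p[G])$ rather than merely in $\zeta(\Q_p[G])$.

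Running the induction on $|G|$, each reduction strictly shrinks the group unless $\psi$ is already linear. In the nilpotent case this terminates at an abelian subquotient by \cite[Theorem 11.3]{MR632548} (the monomiality worked out in Lemma \ref{lem:psi-induced} and Lemma \ref{lem:psi-with-H-and-N}), and Theorem \ref{thm:Dasgupta-Kakde} supplies the required annihilator on $\cl_L^T \otimes_\Z \Z[\half]$. A corestriction argument along $L^{G_\eta}/K$, built on the norm functoriality of ray class groups and the induction formula for $\ind_{G_\eta}^G$, then promotes the abelian annihilation to the asserted $\Z[G]$-equivariant annihilation of $\cl_L^T$.

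The main obstacle, and the reason the approach genuinely stops at nilpotent $G$ in the present paper, is the asymmetry flagged in the introduction: Artin $L$-series are induction- and inflation-functorial but not Morita-functorial, whereas denominator ideals behave in the opposite way. For a non-monomial irreducible character of a non-nilpotent $G$ the Clifford-theoretic reduction has no abelian base case to appeal to, and some new ingredient beyond Theorem \ref{thm:Dasgupta-Kakde} would be required. A secondary technical nuisance is the bookkeeping for the modified Euler factors $\delta_T(s)$: the hypothesis $\Hyp(S,T)$ may fail to descend to the intermediate extension $L/L^{G_\eta}$, so one must either enlarge $T$ and peel the extra factors off at the end (using that $\delta_T(0)$ becomes a unit in $\zeta(\Z_p[G])$ away from $2$ for suitable $T$, as in \cite[Lemma 2.2]{MR2976321}) or incorporate them into the recursive statement.
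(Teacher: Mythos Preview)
The statement you are attempting to prove is a \emph{conjecture}: the paper does not supply a proof of Conjecture~\ref{conj:Brumer--Stark-nonab} in general, and your sketch correctly identifies that the Clifford-theoretic reduction only reaches an abelian base when $G$ is nilpotent (which is exactly the content of Theorem~\ref{thm:main-result}~(i)). So at best your argument can recover the paper's partial result, not the full conjecture.

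Even restricted to nilpotent $G$, there is a genuine gap. You write that ``it suffices to treat the explicit generators of $\mathcal{H}_p(G)$ provided by Remark~\ref{rem:more-elements-in-H}'', but that remark only asserts that the elements $\frac{|N|}{\eta(1)}e(\eta)$ \emph{lie in} $\mathcal{H}(R[G])$; it nowhere claims they generate $\mathcal{H}_p(G)$, and the paper does not establish any such generation statement. Without that, handling these particular elements does not prove the containment $\mathcal{H}_p(G)\cdot\theta_S^T(0)\subseteq \Ann_{\zeta(\Z_p[G])}(\Z_p\otimes\cl_L^T)$. The paper's Theorem~\ref{thm:main-result} sidesteps this by proving the annihilation only for the single element $|G'|\in\mathcal{H}(G)$ (and, via the remark following the theorem, for the elements $\frac{|N|}{\eta(1)}\epsilon_{\chi}$), not for all of $\mathcal{H}(G)$.

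Apart from this gap, your inductive skeleton is close to the paper's proof of Theorem~\ref{thm:main-result}: both reduce via the embeddings of Corollary~\ref{cor:idempotent-equalities} to $G_{\eta}$, then to $U_{\psi}$, and in the non-linear case invoke monomiality of nilpotent groups to descend to a proper normal subgroup. One difference worth noting is that the paper works with the Galois-stable idempotents $\epsilon_{\chi}=\sum_{\sigma}\sigma(e(\eta))$ rather than the individual $e(\eta)$, so that all intermediate objects stay in $\zeta(\Q[G])$ without extending scalars; and in the linear base case it factors multiplication by $|G'|\theta_S^T(0)\epsilon_{\chi}$ through the trace map $\cl_L^T\to\cl_{L^{G'}}^T$ to invoke Theorem~\ref{thm:Dasgupta-Kakde} directly, rather than the corestriction argument you gesture at.
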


Note that the Integrality Conjecture implies that the set on the left is indeed
contained in the centre of $\Z[G]$. By Corollary \ref{cor:bound-integrality-ring}
we thus expect that in particular $|G'| \theta_{S}^{T}(0)$ has integral
coefficients and annihilates the ray class group $\cl_L^T$.

\subsection{Derived categories and Galois cohomology} \label{sec:derived}
Let $\Lambda$ be a noetherian ring and $\PMod(\Lambda)$ be the category of all finitely
generated projective $\Lambda$-modules. We write $\mathcal D(\Lambda)$ for the derived category
of $\Lambda$-modules and $\mathcal C^b(\PMod(\Lambda))$ for the category of bounded complexes
of finitely generated projective $\Lambda$-modules.
Recall that a complex of $\Lambda$-modules is called perfect if it is isomorphic in $\mathcal D(\Lambda)$
to an element of $\mathcal C^b(\PMod(\Lambda))$.
We denote the full triangulated subcategory of $\mathcal D(\Lambda)$
comprising perfect complexes by $\mathcal D^{\perf}(\Lambda)$.

If $M$ is a $\Lambda$-module and $n$ is an integer, we write $M[n]$ for the complex
\[
\cdots \rightarrow 0 \rightarrow 0 \rightarrow M \rightarrow 0 \rightarrow 0 \rightarrow \cdots,
\]
where $M$ is placed in degree $-n$. 

Let $L/K$ be a Galois extension
of number fields with Galois group $G$.
For a finite set $S$ of places of $K$ containing $S_{\infty}$
we let $G_{L,S}$ be the Galois group over $L$ of the maximal extension of $L$
that is unramified outside $S(L)$.
For any topological $G_{L,S}$-module $M$
we write $R\Gamma(\mathcal O_{L,S}, M)$ for the complex
of continuous cochains of $G_{L,S}$ with coefficients in $M$.
If $F$ is a field and $M$ is a topological $G_{F}$-module, we likewise define
$R\Gamma(F,M)$ to be the complex
of continuous cochains of $G_{F}$ with coefficients in $M$.

If $F$ is a global or a local field of characteristic zero, and $M$ is a discrete or
a compact $G_F$-module, then for $n \in \Z$ we denote the $n$-th Tate twist of $M$ by $M(n)$.
Now let $p$ be a prime and suppose that $S$ also contains all $p$-adic places of $K$.
Then we will consider the complexes
$R\Gamma(\mathcal O_{L,S}, \Z_p(n))$ in $\mathcal D(\Z_p[G])$.
Note that for an integer $i$ the cohomology group in degree $i$ of
$R\Gamma(\mathcal O_{L,S}, \Z_p(n))$ naturally identifies with
$H^i_{\et} (\mathcal{O}_{L,S}, \Z_p(n))$, the $i$-th \'etale cohomology group of the affine scheme
$\Spec(\mathcal{O}_{L,S})$ with coefficients in the \'etale $p$-adic sheaf $\Z_p(n)$.

\subsection{The Coates--Sinnott conjecture and generalisations}
For an integer $n \geq 0$
we let $K_n(\mathcal{O}_{L,S})$ denote the Quillen $K$-groups
of $\mathcal{O}_{L,S}$.
Coates and Sinnott \cite{MR0369322} 
formulated the following analogue of Brumer's conjecture for higher $K$-groups.

\begin{conj}[Coates--Sinnott] \label{conj:Coates--Sinnott}
	Let $L/K$ be a finite abelian extension of number fields with Galois group $G$.
	Let $r$ be a negative integer. Then for
	every pair $S,T$ of finite sets of places of $K$ satisfying $\Hyp(S,T)$ we have
	$\theta_{S}^T(r) \in \Ann_{\Z[G]}(K_{-2r}(\mathcal{O}_{L,S}))$.
\end{conj}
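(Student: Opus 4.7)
The plan is to attack Conjecture \ref{conj:Coates--Sinnott} one rational prime at a time, reducing it to the equivariant main conjecture of Iwasawa theory for totally real fields. First I would localise: by flatness it suffices to show, for each prime $p$, that $\theta_S^T(r)$ annihilates $\Z_p \otimes_{\Z} K_{-2r}(\mathcal{O}_{L,S})$. The prime $p=2$ is genuinely more delicate, and I would isolate it for separate analysis along the lines of Kolster--Taleb \cite{MR4120473}; the rest of the plan concerns odd $p$. For such $p$ and $r<0$, the Quillen--Lichtenbaum conjecture (now a theorem via the work of Voevodsky, Rost and Bloch--Kato) identifies $\Z_p \otimes_{\Z} K_{-2r}(\mathcal{O}_{L,S})$ with the \'etale cohomology group $H^2_{\et}(\mathcal{O}_{L,S}[1/p], \Z_p(1-r))$ as a $\Z_p[G]$-module, so the problem becomes showing that $\theta_S^T(r)$ annihilates this cohomology group.

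The natural route is to pass to the cyclotomic $\Z_p$-tower $L_{\infty} = L(\mu_{p^{\infty}})$ and form the Iwasawa-theoretic limit $X_{\infty} := \varprojlim_{n} H^2_{\et}(\mathcal{O}_{L_n,S}, \Z_p(1))$, a finitely generated torsion module over the appropriate Iwasawa algebra. The equivariant main conjecture, proved for totally real base fields by Dasgupta--Kakde \cite{dasgupta-kakde} (building on Wiles \cite{MR1053488} and refining earlier work of Burns--Greither \cite{MR2046598}, which depended on the vanishing of $\mu$), identifies a generator of the characteristic ideal of $X_{\infty}$ with a $p$-adic Stickelberger element. Twisting by the $(1-r)$-th power of the cyclotomic character and specialising the resulting $p$-adic $L$-series at the correct integer point recovers $\theta_S^T(r)$. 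I would then follow the strategy of \cite{abelian-MC}, which converts this Iwasawa-theoretic input into the desired finite-level annihilator.

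The hardest step is the descent from infinite to finite level, i.e.~converting a statement about characteristic ideals of an Iwasawa module into an honest annihilation statement for a finite $\Z_p[G]$-module. The role of the set $T$ and of condition $\Hyp(S,T)$(iii) is precisely to kill the $\Z_p$-torsion in $\mathcal{O}_{L,S,T}^{\times}$, so that the Euler-characteristic computation underlying the co-descent is clean and no parasitic terms enter the cokernel of the descent map. Controlling this kernel and cokernel, and matching the resulting annihilator with $\theta_S^T(r)$ on the nose, is the principal technical obstacle; this is precisely where the explicit machinery of \cite{abelian-MC} (and, at $p=2$, the subtler computations of \cite{MR4120473}) would enter.
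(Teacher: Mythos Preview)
The statement you are addressing is a \emph{conjecture} in the paper, not a theorem; the paper offers no proof of it. What the paper does record (Theorem~\ref{thm:Coates-Sinnott}, citing \cite[Corollary~14.4]{abelian-MC}) is that the conjecture holds \emph{away from its $2$-primary part}, and your outline for odd primes $p$ --- localise, invoke Quillen--Lichtenbaum to pass to \'etale cohomology, climb the cyclotomic $\Z_p$-tower, use the equivariant main conjecture now available unconditionally via Dasgupta--Kakde \cite{dasgupta-kakde}, and descend via the machinery of \cite{abelian-MC} --- is precisely the route by which that theorem is obtained. So for odd $p$ your plan matches the known argument.

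The gap is at $p=2$. You defer this case to ``separate analysis along the lines of Kolster--Taleb \cite{MR4120473}'', but that reference does not settle the $2$-primary part of the Coates--Sinnott conjecture in general; the paper only says the $2$-primary part ``has been considered'' there. The obstructions at $p=2$ are structural: the Quillen--Lichtenbaum identification with \'etale cohomology is more delicate, the Iwasawa-theoretic input from \cite{dasgupta-kakde} is proved only away from $2$, and the idempotent $e_r$ used throughout the paper to isolate the relevant parity lives in $\Z[\tfrac{1}{2}][G]$, not $\Z[G]$. Your plan therefore reproduces exactly what is already known (the odd part) and leaves open exactly what is still open (the $2$-part); it is not a proof of the conjecture as stated.
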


Building upon work of Dasgupta and Kakde \cite{dasgupta-kakde}
it has recently been shown by Johnston and the second named author that
the Coates--Sinnott conjecture holds away from its $2$-primary part.

\begin{theorem}[{\cite[Corollary 12.6]{abelian-MC}}] \label{thm:Coates-Sinnott}
	Let $L/K$ be a finite abelian extension of number fields with Galois group $G$.
	Let $r$ be a negative integer. Then for
	every pair $S,T$ of finite sets of places of $K$ satisfying $\Hyp(S,T)$ we have
	$\theta_{S}^T(r) \in \Ann_{\Z[\frac{1}{2}][G]}(\Z[\half] \otimes_{\Z} K_{-2r}(\mathcal{O}_{L,S}))$.
\end{theorem}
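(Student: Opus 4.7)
The plan is to localise at each odd prime $p$ and reduce the annihilation of higher $K$-groups to an analogous statement in étale cohomology, which can then be accessed via Iwasawa theory and the equivariant main conjecture established by Dasgupta--Kakde.

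First, fix an odd prime $p$. By the Quillen--Lichtenbaum conjecture, now a theorem thanks to the proof of the norm residue isomorphism theorem of Voevodsky--Rost, there is a natural $G$-equivariant isomorphism
\[
\Z_p \otimes_{\Z} K_{-2r}(\mathcal{O}_{L,S}) \;\simeq\; H^2_{\et}(\Spec \mathcal{O}_{L,S}[1/p], \Z_p(1-r))
\]
for every negative integer $r$, possibly after enlarging $S$ to include the $p$-adic places (harmless, since the missing Euler factors at $r$ are units in $\Z_p[G]$ once the $T$-modification is in place). It therefore suffices to prove that $\theta_{S}^{T}(r)$ annihilates the étale cohomology group on the right for every odd $p$.

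Second, pass to the cyclotomic $\Z_p$-tower. Let $L_\infty := L(\mu_{p^\infty})$, set $\mathcal{G} := \Gal(L_\infty/K)$, and consider the Iwasawa cohomology complex $\mathrm{R}\Gamma(\mathcal{O}_{L_\infty,S'}, \Z_p(1))$ over the Iwasawa algebra $\Lambda := \Z_p[[\mathcal{G}]]$. There is an associated Iwasawa-theoretic Stickelberger series $\Theta_\infty^T$ in (the total ring of quotients of) $\zeta(\Lambda)$ whose specialisation along Tate twists recovers the element $\theta_{S}^{T}(r)$ by the Deligne--Ribet--Cassou-Nogu\`es $p$-adic interpolation formula. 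The equivariant main conjecture in the refined form proved by Dasgupta--Kakde — an enhancement of Wiles' main conjecture for totally real fields — expresses $\Theta_\infty^T$ as a characteristic element for the relevant Iwasawa module, so that $\Theta_\infty^T$ annihilates it.

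Third, descend from $L_\infty$ down to $L$ and simultaneously pass from the case $r=0$ (essentially captured by Theorem \ref{thm:Dasgupta-Kakde}) to general negative $r$ via Tate twisting. The cyclotomic twist identifies the various $H^2_{\et}(\mathcal{O}_{L,S}, \Z_p(1-r))$ at the level of the tower, so the annihilation at the top descends to each finite layer for every $r < 0$. The main obstacle is making this descent work \emph{unconditionally}, without invoking the vanishing of the Iwasawa $\mu$-invariant; historical approaches such as Burns--Greither required the conjectural hypothesis $\mu = 0$. The crucial input that renders Theorem \ref{thm:Coates-Sinnott} unconditional (away from $2$) is Dasgupta--Kakde's refinement of Ribet's method, which gives a sufficiently strong version of the equivariant main conjecture to push the descent through at each finite layer. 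Assembling the information at all odd primes $p$ then yields the asserted annihilation in $\Ann_{\Z[\frac{1}{2}][G]}(\Z[\half] \otimes_{\Z} K_{-2r}(\mathcal{O}_{L,S}))$.
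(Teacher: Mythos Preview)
The paper does not prove this theorem; it is quoted verbatim from Johnston--Nickel \cite[Corollary~14.4]{abelian-MC} and used as a black box (the closest the paper comes to an argument is the short proof of the related Theorem~\ref{thm:stong-CS-abelian}, which again defers to \cite[Lemma~14.3]{abelian-MC}). So there is no in-paper proof to compare your proposal against.

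That said, your sketch is a reasonable description of the \emph{shape} of the argument in \cite{abelian-MC}, but as written it is an outline, not a proof. Two concrete issues. First, the attribution is off: Dasgupta--Kakde did not prove an ``equivariant main conjecture''; they proved (refinements of) the Brumer--Stark conjecture via a variant of Ribet's method with group-ring-valued Hilbert modular forms. It is Johnston--Nickel who then leverage this input to establish the relevant minus-part ETNC for $r<0$ unconditionally and deduce Coates--Sinnott. Your sentence ``the equivariant main conjecture in the refined form proved by Dasgupta--Kakde\dots expresses $\Theta_\infty^T$ as a characteristic element'' conflates two distinct steps. Second, the entire content of the theorem hides in your third paragraph: ``the annihilation at the top descends to each finite layer for every $r<0$'' and ``Dasgupta--Kakde's refinement\dots gives a sufficiently strong version\dots to push the descent through''. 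This is precisely where the $\mu$-invariant obstruction used to bite, and asserting that the descent now works is restating the theorem rather than proving it. What actually happens in \cite{abelian-MC} is that one proves a Fitting-ideal containment $\theta_S^T(r)\in\Fitt_{\Z_p[G]}(H^2_T(\mathcal{O}_{L,S},\Z_p(1-r)))$ directly at finite level (after reducing to the CM case), and it is this Fitting statement---not a descent from $L_\infty$---that yields the annihilation.
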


The following is a non-abelian generalisation of Conjecture \ref{conj:Coates--Sinnott}
and is a consequence of \cite[Conjecture 2.11]{MR2801311} 
away from its $2$-primary part.

\begin{conj} \label{conj:Coates--Sinnott-nonab}
	Let $L/K$ be a finite Galois extension of number fields with Galois group $G$.
	Let $r$ be a negative integer. Then for
	every pair $S,T$ of finite sets of places of $K$ satisfying $\Hyp(S,T)$ we have
	\[
		\mathcal{H}(G)	\cdot \theta_{S}^T(r) \subseteq \Ann_{\zeta(\Z[G])}(K_{-2r}(\mathcal{O}_{L,S})).
	\]
\end{conj}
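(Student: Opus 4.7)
The plan is to mirror the inductive Clifford-theoretic strategy used in Section \ref{sec:Clifford-theory} for the denominator ideal, and to feed the abelian Coates--Sinnott theorem of Johnston and the second-named author (Theorem \ref{thm:Coates-Sinnott}) in as the base case. After inverting $2$ the statement becomes local, so for each odd prime $p$ it is enough to show that $\mathcal{H}_p(G)\cdot\theta_S^T(r)$ annihilates $\Z_p\otimes_\Z K_{-2r}(\mathcal{O}_{L,S})$, and by Corollary \ref{cor:hard-inclusion} this reduces to verifying both that $|G'|\theta_S^T(r)\in\zeta(\Z_p[G])$ and that this element annihilates the module. The integrality half is the content of the nilpotent case of Theorem \ref{thm:intro-main-result} (or equivalently $\theta_S^T(r)\in\mathcal{I}(G)$ combined with \eqref{eqn:HI-in-centre}), so the real work lies in the annihilation.

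For the annihilation I would induct on $|G|$, decomposing $\theta_S^T(r)=\sum_\eta \theta_S^T(r)\,e(\eta)$ with $\eta$ running over $G$-conjugacy classes of $\Irr(G')$ and $e(\eta)$ as in \eqref{eqn:definition-e-of-eta}. The embedding $\iota_\eta$ of \eqref{eqn:iota-eta}, together with the compatibility of Artin $L$-series under induction, identifies $\theta_S^T(L/K,r)\,e(\eta)$ with the central element coming from the equivariant Stickelberger element of $L/L^{G_\eta}$, reducing us to $G=G_\eta$. When $U_\psi\subsetneq G$ a further descent to $G_\rho$ is possible via \eqref{eqn:iota-rho} and Lemma \ref{lem:nr-under-restriction}. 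The critical step is $U_\psi=G=G_\eta$, where Lemma \ref{lem:psi-with-H-and-N} rewrites $\theta_S^T(r)\,e(\eta)$ in terms of the Stickelberger element of the \emph{abelian} subextension $L/L^H$. Applying Theorem \ref{thm:Coates-Sinnott} to $L/L^H$ and transporting the resulting annihilator back into $\zeta(\Z_p[G])$ via the idempotent identity of Lemma \ref{lem:final-lemma-on-idempotents}(ii)—multiplied by $|G'|$ to absorb the denominators introduced by the $e(\eta)$—then closes the induction.

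The principal obstacle is precisely the step $U_\psi=G=G_\eta$: Lemma \ref{lem:psi-with-H-and-N} needs $\psi$ to be induced from a linear character of a \emph{normal} subgroup $H$, and by \cite[Theorem 11.3]{MR632548} every irreducible character of a nilpotent group is of this form. For a general solvable $G$ monomiality is only available from non-normal subgroups, and for a non-solvable $G$ it can fail outright; this is exactly the reason the authors confine their unconditional theorem to nilpotent extensions and reflects the fact (flagged in the introduction) that Artin $L$-series are functorial under induction and inflation but not under the Morita equivalences that simplify the group-ring side. A secondary and unavoidable obstacle is the $2$-primary part, which is open even in the abelian case beyond the partial results of Kolster--Taleb, so the best statement accessible by this approach is the away-from-$2$ form of Conjecture \ref{conj:Coates--Sinnott-nonab} for nilpotent $G$.
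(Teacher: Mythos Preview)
The statement in question is Conjecture \ref{conj:Coates--Sinnott-nonab}, which the paper records as an \emph{open} conjecture and does not prove. What the paper establishes is the weaker Theorem \ref{thm:main-result}: for nilpotent $G$ the single element $|G'|\theta_S^T(r)$ annihilates $\Z[\tfrac12]\otimes_\Z K_{-2r}(\mathcal{O}_{L,S})$. Your first paragraph contains a logical slip here: knowing $|G'|\in\mathcal{H}_p(G)$ (Corollary \ref{cor:hard-inclusion}) does \emph{not} reduce the conjecture to annihilation by $|G'|\theta_S^T(r)$. The denominator ideal $\mathcal{H}(G)$ is an ideal of $\mathcal{I}(G)$ containing many non-integer central elements (cf.\ Remark \ref{rem:more-elements-in-H}), and $\mathcal{H}(G)\cdot\theta_S^T(r)\subseteq\Ann(\cdots)$ is strictly stronger than the $|G'|$ statement. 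So your final sentence overclaims: even for nilpotent $G$ and away from $2$, the paper does not obtain the full conjecture, only its $|G'|$ consequence.

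Granting that weakening, your inductive sketch is the same strategy as the paper's proof of Theorem \ref{thm:main-result}, with one inaccuracy in the critical step. When $U_\psi=G=G_\eta$ and $\psi$ is non-linear, the proper normal subgroup $H$ produced via monomiality and \cite[5.2.4]{MR1357169} is \emph{not} abelian in general, so $L/L^H$ is not an abelian extension and Theorem \ref{thm:Coates-Sinnott} cannot be applied there. Instead the paper uses the embedding $\iota_\chi^H$ (built from Lemma \ref{lem:psi-with-H-and-N} and Lemma \ref{lem:final-lemma-on-idempotents}) to replace the triple $(G,N,\chi)$ by the smaller triples $(H,N\cap H,{}^c\lambda)$ and \emph{continues the induction} on $H$. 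The abelian input enters only through the linear-character branch, where one passes to the genuinely abelian quotient $G/G'$ and invokes Theorems \ref{thm:Coates-Sinnott} and \ref{thm:stong-CS-abelian}. Your identification of the two obstructions---the need for a normal monomial subgroup (forcing nilpotency) and the unresolved $2$-primary part---matches the paper exactly.
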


Let $p$ be an odd prime and suppose in addition that $S$ contains the set $S_p$ of all
$p$-adic places of $K$.
For any negative integer $r$ and $i=0,1$ Soul\'e \cite{MR553999}
has constructed canonical $G$-equivariant $p$-adic Chern class maps
\begin{equation} \label{eqn:Chern-class-maps}
	\Z_{p} \otimes_{\Z} K_{i-2r}(\mathcal{O}_{L,S}) \longrightarrow 
	H^{2-i}_{\et} (\mathcal{O}_{L,S}, \Z_{p}(1-r)).
\end{equation}
Soul\'e  proved surjectivity
and by the norm residue isomorphism theorem \cite{MR2529300} (formerly
known as the Quillen--Lichtenbaum Conjecture) these maps are
actually isomorphisms. 
Hence the $p$-parts of Conjectures \ref{conj:Coates--Sinnott} and
\ref{conj:Coates--Sinnott-nonab} can be formulated
in terms of \'etale cohomology. As in the case of Brumer's conjecture we will
consider a strengthening as follows. For each $n \in \Z$ define a complex 
\[
	R\Gamma_T(\mathcal O_{L,S}, \Z_p(n)) := \cone (R\Gamma(\mathcal O_{L,S}, \Z_p(n)) 
	\rightarrow \bigoplus_{w \in T(K)} R\Gamma(L(w), \Z_p(n)))[-1]
\]
in $\mathcal{D}(\Z_p[G])$. Note that the associated long exact sequence in 
cohomology gives rise to the $p$-completion of sequence \eqref{eqn:ray-class-sequence}
if $n=1$. We denote the cohomology group of this complex in degree $i$ 
by $H^i_T(\mathcal O_{L,S}, \Z_p(n))$. Let $n>1$. Then
these groups vanish unless $i=1$ or $i=2$
by \cite[Lemma 5.2]{MR3072281}; moreover, the second cohomology group
$H^2_T(\mathcal O_{L,S}, \Z_p(n))$ is finite.

The following result is the ($p$-adic) analogue of Theorem \ref{thm:Dasgupta-Kakde}
and has essentially been shown by Johnston and the second named author
in the course of the proof of Theorem \ref{thm:Coates-Sinnott}.

\begin{theorem} \label{thm:stong-CS-abelian}
	Let $L/K$ be a finite abelian extension of number fields with Galois group $G$.
	Let $r$ be a negative integer and let $p$ be an odd prime. Let $S$ and $T$
	be two finite non-empty sets of places of $K$ such that $S$ contains 
	$S_{\infty} \cup S_{\ram} \cup S_p$ and $S \cap T = \emptyset$. Then we have
	$\theta_{S}^T(r) \in \Ann_{\Z_p[G]}(H_T^2(\mathcal{O}_{L,S}, \Z_p(1-r)))$.
\end{theorem}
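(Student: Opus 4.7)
The plan is to extract this statement from the arguments used to prove Theorem \ref{thm:Coates-Sinnott} in \cite{abelian-MC}, viewing the result as a $p$-adic refinement of Coates--Sinnott in which the $T$-modification is encoded at the level of \'etale cohomology rather than separated off as a multiplicative factor.

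First, I would reduce the statement to \'etale cohomology. The long exact sequence associated to the distinguished triangle defining $R\Gamma_T$ produces a surjection
\[
H^2_T(\mathcal{O}_{L,S}, \Z_p(1-r)) \twoheadrightarrow H^2(\mathcal{O}_{L,S}, \Z_p(1-r))
\]
whose kernel is a quotient of $\bigoplus_{w \in T(L)} H^1(L(w), \Z_p(1-r))$. Via the Soul\'e Chern class maps \eqref{eqn:Chern-class-maps}, which are isomorphisms by the norm residue isomorphism theorem, the right-hand target is canonically identified with $\Z_p \otimes_{\Z} K_{-2r}(\mathcal{O}_{L,S})$. The kernel contribution is precisely what the factor $\delta_T(r)$ is designed to measure, since Frobenius acts on $\Z_p(1-r)$ over the residue field of $w$ by the scalar $(\mathrm{N}v)^{1-r}$ appearing in \eqref{eq:def-delta-T}.

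Second, I would invoke the equivariant Iwasawa main conjecture in its abelian form, as established in \cite{abelian-MC} on the basis of the work of Dasgupta--Kakde \cite{dasgupta-kakde}. Passing to the cyclotomic $\Z_p$-tower $L_\infty/L$ with Galois group $\mathcal{G}$, this identifies a characteristic element of the Iwasawa-theoretic complex associated to $R\Gamma_T(\mathcal{O}_{L_\infty,S}, \Z_p(1-r))$ with a $T$-modified equivariant $p$-adic $L$-function of Deligne--Ribet type. Specialising via the interpolation property \eqref{eqn:intro-interpolation-property} at $s = r < 0$ then produces $\theta_S^T(r)$.

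Third, one descends from the infinite to the finite level. The assumption $\Hyp(S,T)$ (and in particular the torsion-freeness of $\mathcal{O}_{L,S,T}^{\times}$) ensures that the complex $R\Gamma_T(\mathcal{O}_{L,S}, \Z_p(1-r))$ is perfect over $\Z_p[G]$, with $H^1_T$ of projective dimension at most one and $H^2_T$ finite; the descent of Fitting ideals thus transfers the characteristic element into an annihilator of $H^2_T$, yielding the claim. The main subtle point, and what I expect to be the principal obstacle, is to track the Euler factors at the places in $S \setminus S_{\ram}$ and at those in $T$ under descent, so that the specialisation of the characteristic element produces exactly $\theta_S^T(r)$ rather than a variant differing by spurious Euler factors; this bookkeeping is carried out in \cite{abelian-MC} in the process of deducing Theorem \ref{thm:Coates-Sinnott}, and the present theorem is obtained by reading off that intermediate step before surjecting onto $H^2$.
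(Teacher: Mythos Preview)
Your broad strategy---extract the result from the Iwasawa-theoretic machinery of \cite{abelian-MC}, descending a characteristic element to the finite level---is indeed what the paper does, and you correctly identify that this theorem is an intermediate step occurring \emph{before} one passes to $H^2$ (and thence to $K$-theory) in the proof of Theorem~\ref{thm:Coates-Sinnott}. However, there is one genuine gap and some confusion.

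The gap is the reduction to CM extensions. The Deligne--Ribet $p$-adic $L$-functions and the main conjecture input from \cite{dasgupta-kakde, abelian-MC} require $K$ to be totally real and the relevant character parity to be fixed; for an arbitrary abelian extension $L/K$ of number fields none of this is available. The paper handles this by first invoking \cite[Lemma~5.8]{MR3072281} to reduce to the CM case, after which \cite[Lemma~14.3]{abelian-MC} applies directly. Your outline never addresses how the argument works when $K$ is not totally real.

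Two smaller points. First, your ``first step'' is not a reduction at all: the surjection $H^2_T \twoheadrightarrow H^2$ goes the wrong way to deduce annihilation of $H^2_T$ from that of $H^2$, and you tacitly acknowledge this in your last paragraph. It is better simply to say that the main conjecture computes a Fitting ideal of (an Iwasawa-theoretic analogue of) $H^2_T$ directly, and indeed the paper actually proves the stronger containment $\theta_S^T(r) \in \Fitt_{\Z_p[G]}(H^2_T(\mathcal{O}_{L,S},\Z_p(1-r)))$. Second, the hypotheses of the theorem do not include the full $\Hyp(S,T)$; in particular no torsion-freeness of $\mathcal{O}_{L,S,T}^\times$ is assumed, and none is needed since for $r<0$ the complex $R\Gamma_T(\mathcal{O}_{L,S},\Z_p(1-r))$ already has the required cohomological shape by \cite[Lemma~5.2]{MR3072281}.
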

\begin{proof}
	We actually show the stronger statement
	$\theta_{S}^T(r) \in \Fitt_{\Z_p[G]}(H_T^2(\mathcal{O}_{L,S}, \Z_p(1-r)))$,
	where $\Fitt_R(M)$ denotes the initial Fitting ideal of a
	finitely presented $R$-module $M$.
	By \cite[Lemma 5.8]{MR3072281} we may assume that $L/K$ is a CM extension,
	in which case the result follows from 
	\cite[Theorem 12.5]{abelian-MC}. 
\end{proof}

The following conjecture generalises Theorem \ref{thm:stong-CS-abelian}
to the non-abelian case
and is the ($p$-adic) analogue of Conjecture \ref{conj:Brumer--Stark}.
It is apparently implied by \cite[Conjecture 5.1]{MR3072281}.

\begin{conj} \label{conj:strong-Coates--Sinnott}
	Let $L/K$ be a finite Galois extension of number fields with Galois group $G$.
	Let $r$ be a negative integer and let $p$ be an odd prime.  Let $S$ and $T$
	be two finite non-empty sets of places of $K$ such that $S$ contains 
	$S_{\infty} \cup S_{\ram} \cup S_p$ and $S \cap T = \emptyset$. Then we have
	\[
		\mathcal{H}_p(G) \cdot \theta_{S}^T(r) \subseteq \Ann_{\zeta(\Z_p[G])}(H_T^2(\mathcal{O}_{L,S}, \Z_p(1-r))).
	\]
\end{conj}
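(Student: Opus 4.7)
The plan is to prove Conjecture~\ref{conj:strong-Coates--Sinnott} under the additional hypothesis that $G$ is nilpotent; this is the substance of Theorem~\ref{thm:intro-main-result}(ii) transported into \'etale cohomology via the Chern class isomorphisms \eqref{eqn:Chern-class-maps}. The proof proceeds by induction on $|G|$, with Theorem~\ref{thm:stong-CS-abelian} as the abelian base case and the Clifford-theoretic machinery of \S\ref{sec:Clifford-theory} supplying the inductive step, in direct parallel to the proof of Corollary~\ref{cor:hard-inclusion}.

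First I would reduce to checking generators of $\mathcal{H}_p(G)$. By Remark~\ref{rem:more-elements-in-H}, applied with $R = \Z_p$, the elements $|N|\eta(1)^{-1}e(\eta)$ lie in $\mathcal{H}_p(G)$ for each normal subgroup $N$ of $G$ containing $G'$ and each $G$-conjugacy class of characters $\eta \in \Irr(N)$. Since $1 = \sum_{\eta} e(\eta)$ is a central decomposition orthogonal to the Stickelberger element in each character component, it suffices to establish, for every such $\eta$,
\[
|N|\eta(1)^{-1} e(\eta) \cdot \theta_{S}^{T}(r) \in \Ann_{\zeta(\Z_p[G])}\bigl(e(\eta)\cdot H_T^2(\mathcal{O}_{L,S},\Z_p(1-r))\bigr).
\]

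The inductive step mirrors the trichotomy used in the proof of Theorem~\ref{thm:integrality-mainstep}. If $G_\eta \subsetneq G$, the embedding $\iota_\eta$ of \eqref{eqn:iota-eta} identifies $\zeta(\Q_p[G]e(\eta))$ as a direct summand of $\zeta(\Q_p[G_\eta]e(\eta))$, and under this identification the projection of $\theta_S^T(L/K,r)$ corresponds (by the Artin-formalism analogue of Lemma~\ref{lem:nr-under-restriction}) to the projection of $\theta_S^T(L/L^{G_\eta},r)$; since the complex $R\Gamma_T(\mathcal{O}_{L,S},\Z_p(1-r))$ depends only on $L$ (not on the choice of base field within $L^{G_\eta}$), the inductive hypothesis applied to $L/L^{G_\eta}$ with Galois group $G_\eta$ yields the claim. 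The case $G_\eta = G$ with $U_\psi \subsetneq G$ is handled analogously, using $\iota_\rho$ of \eqref{eqn:iota-rho} to reduce to the proper subgroup $U_\psi$.

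The remaining case $G_\eta = U_\psi = G$ is the main obstacle, because the Morita equivalence $\iota_\pi$ of \eqref{eqn:iota_pi} between $\Q_p[G]e(\eta)$ and $M_{\psi(1)}(\Q_p[G/N])$ does not carry Stickelberger elements to Stickelberger elements (Artin $L$-series are compatible with induction and inflation of characters, not with Morita equivalence). To overcome this I would use nilpotence: by \cite[Theorem 11.3]{MR632548} every irreducible character $\psi$ of a nilpotent group is monomial, so $\psi = \ind_H^G \lambda$ for some subgroup $H \leq G$ and linear $\lambda$ of $H$. Lemma~\ref{lem:psi-with-H-and-N} then writes each twist $\psi\otimes\omega$ as $\ind_H^G(\lambda\otimes\omega)$, and Lemma~\ref{lem:final-lemma-on-idempotents} places $e(\eta)$ in $\Q_p[H\cap N]$ as a sum of idempotents attached to linear characters of $H$. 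Since $H/(H\cap N) \simeq G/N$ is abelian, the Stickelberger element $\theta_S^T(L/L^H,r)$ has integral coefficients, and Theorem~\ref{thm:stong-CS-abelian} provides annihilation of $H_T^2(\mathcal{O}_{L,S},\Z_p(1-r))$ as a $\Z_p[H/(H\cap N)]$-module. The technical core is then to verify that, after passing to the $e(\eta)$-component, the image of $\theta_S^T(L/K,r)$ in $\zeta(\Q_p[H])$ coincides with $\theta_S^T(L/L^H,r)$ times the predicted integrality factor $|G'|\eta(1)^{-1}$; this follows from multiplicativity of Artin $L$-series under induction. The hard part is precisely this bridging of the Morita equivalence with character-theoretic behaviour of $L$-series, which explains why the hypothesis of nilpotence intervenes here but not in Theorem~\ref{thm:denominator-ideal-capZ}, exactly as flagged in the introduction.
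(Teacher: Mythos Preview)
The statement you are addressing is a \emph{conjecture}; the paper does not prove it. What the paper does prove, under the nilpotence hypothesis you also assume, is the weaker assertion that the single element $|G'|\theta_S^T(r)$ annihilates $H_T^2(\mathcal{O}_{L,S},\Z_p(1-r))$ (Theorem~\ref{thm:main-result}(iii)). Your reduction to the elements $|N|\eta(1)^{-1}e(\eta)$ does not recover the full conjecture either: Remark~\ref{rem:more-elements-in-H} says only that these lie in the denominator ideal, not that they generate it, so checking annihilation for them does not give $\mathcal{H}_p(G)\cdot\theta_S^T(r)\subseteq\Ann(\cdots)$. Moreover, the idempotents $e(\eta)$ live over a splitting field, not over $\Q_p$; this is why the paper works instead with the Galois-orbit sums $\epsilon_\chi = \sum_{\sigma\in\mathcal{A}_{\chi,N}}\sigma(e(\eta))\in\Q[N]$ and proves the annihilation claim for $|N|\epsilon_\chi\theta_S^T(r)$, summing over $\chi$ afterwards to get $|G'|\theta_S^T(r)$.

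Your treatment of the case $G_\eta = U_\psi = G$ has two further gaps. First, monomiality gives $\psi = \ind_{\tilde H}^G\tilde\lambda$ with $\tilde\lambda$ linear, but Lemmas~\ref{lem:psi-with-H-and-N} and~\ref{lem:final-lemma-on-idempotents} require $H$ to be \emph{normal}; the paper inserts the nilpotence fact (via \cite[5.2.4]{MR1357169}) that $\tilde H$ is contained in a proper normal subgroup $H$, and sets $\lambda := \ind_{\tilde H}^H\tilde\lambda$, which is no longer linear. Second, and more seriously, $H$ is in general not abelian, so you cannot invoke Theorem~\ref{thm:stong-CS-abelian} for $L/L^H$ as you suggest. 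The paper does not jump to the abelian case here: it replaces the triple $(G,N,\chi)$ by the triples $(H,N\cap H,{}^c\lambda)$ and \emph{continues the induction} on the strictly smaller group $H$. The abelian base case is reached only when $\chi=\psi$ becomes linear, and that situation is handled separately in the paper (via the projection to $G/G'$ and the factorisation of multiplication by $\Tr_{G'}$ through the coinvariants, together with the natural isomorphism $H_T^2(\mathcal{O}_{L,S},\Z_p(1-r))_{G'}\simeq H_T^2(\mathcal{O}_{L^{G'},S},\Z_p(1-r))$); your sketch omits this linear branch entirely.
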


\subsection{Results}
In this subsection we prove the annihilation results from the introduction. 
More generally, we show the following.

\begin{theorem} \label{thm:main-result}
	Let $L/K$ be a Galois extension of number fields such that $G = \Gal(L/K)$
	is nilpotent. Let $S,T$ be a pair of finite sets of places of $K$ such that
	$\Hyp(S,T)$ is satisfied. Let $r \leq 0$ be an integer. Then we have
	\[
		|G'| \theta_S^T(r) \in \zeta(\Z[G])
	\]
	and moreover the following holds.
	\begin{enumerate}
		\item 
		If $r=0$, then $|G'| \theta_S^T(0)$ annihilates $\Z[\half] \otimes_{\Z} \cl_{L}^{T}$.
		\item
		If $r<0$, then $|G'| \theta_S^T(r)$ annihilates $\Z[\half] \otimes_{\Z} 
		K_{-2r}(\mathcal{O}_{L,S})$.
		\item
		Let $p$ be an odd prime and assume in addition that $S_p \subseteq S$.
		If $r<0$, then 
		$|G'| \theta_S^T(r)$ annihilates $H_T^2(\mathcal{O}_{L,S}, \Z_p(1-r))$.
	\end{enumerate}
\end{theorem}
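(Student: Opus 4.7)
The plan is to argue by induction on $|G|$. The base case is $G$ abelian: here $|G'|=1$, integrality $\theta_S^T(r) \in \Z[G]$ is the classical theorem of Cassou-Nogu\`es, Deligne--Ribet and Barsky recalled in \eqref{eqn:abelian-int}, and the three annihilation assertions (i)--(iii) are supplied, respectively, by Theorem \ref{thm:Dasgupta-Kakde}, Theorem \ref{thm:Coates-Sinnott}, and Theorem \ref{thm:stong-CS-abelian}.

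For the inductive step, suppose $G$ is nilpotent and non-abelian, and work $p$-locally. Since $G/G'$ is a non-trivial abelian group, one may pick a normal subgroup $N \supseteq G'$ of $G$ such that $G/N$ is cyclic of prime order, and decompose $1 = \sum_{[\eta]} e(\eta)$ exactly as in the proof of Theorem \ref{thm:integrality-mainstep}. Treat each block $\theta_S^T(r) \cdot e(\eta)$ separately. If $G_\eta \subsetneq G$, the embedding $\iota_\eta$ of \eqref{eqn:iota-eta} together with Artin's formalism $L_S(L/K, s, \ind^G_{G_\eta} \psi) = L_{S(L^{G_\eta})}(L/L^{G_\eta}, s, \psi)$ identifies $\theta_S^T(L/K, r) \cdot e(\eta)$ with the analogous block of $\theta_{S(L^{G_\eta})}^{T(L^{G_\eta})}(L/L^{G_\eta}, r)$; the inductive hypothesis applies to $G_\eta$, which is nilpotent, strictly smaller, and satisfies $|G_\eta'| \mid |G'|$. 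The sub-case $G = G_\eta$ with $U_\psi \subsetneq G$ is handled analogously via $\iota_\rho$ of \eqref{eqn:iota-rho}.

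The core case is $G = G_\eta$ and $U_\psi = G$, where Proposition \ref{prop:psi-when-N-contains-commutator} forces $m=1$, $s = [G:N]$, and the characters in the block are exactly $\{\psi \otimes \omega : \omega \in \Irr(G/N)\}$. Since $G$ is nilpotent, $\psi$ is monomial, and because a nilpotent group decomposes as the direct product of its Sylow subgroups (in each of which one can induce any irreducible character from a linear character of a normal subgroup), one may arrange $\psi = \ind_H^G \lambda$ with $H$ normal in $G$ and $\lambda$ linear. If $H = G$, the block consists of linear characters of $G$ and the abelian base case finishes the argument; otherwise Lemmas \ref{lem:psi-with-H-and-N} and \ref{lem:final-lemma-on-idempotents} apply. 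Combining the idempotent identities $e_{\psi\otimes\omega} = \sum_{c\in G/H} e_{{}^c\lambda\otimes\omega}$ and $e_\eta = \sum_{c \in G/H} e(\eta_c)$ with Artin's formalism $L_S(L/K, s, \psi\otimes\omega) = L_{S(L^H)}(L/L^H, s, \lambda\otimes\omega)$ (and its $T$-analogue), together with the Galois-invariance of the abelian-type Stickelberger element, yields the clean identity
\[
	\theta_S^T(L/K, r) \cdot e(\eta) = \theta_{S(L^H)}^{T(L^H)}(L/L^H, r) \cdot e(\eta)
\]
inside $\zeta(\Q[G]) e(\eta)$. Since $H$ is nilpotent and strictly smaller than $G$, the inductive hypothesis yields integrality $|H'| \theta_{S(L^H)}^{T(L^H)}(L/L^H, r) \in \zeta(\Z[H])$ together with the required annihilation of $\cl_L^T$, $K_{-2r}(\mathcal{O}_{L,S})$ and $H^2_T(\mathcal{O}_{L,S},\Z_p(1-r))$ as $\Z[H]$-modules; these modules depend only on $L$, so the $\Z[H]$-action is simply the restriction of the $\Z[G]$-action, and any $\Z[H]$-annihilator that happens to lie in $\zeta(\Z[G])$ automatically annihilates them as $\Z[G]$-modules. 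Multiplying by the central idempotent $e(\eta)$ preserves these properties, and since $|H'|$ divides $|G'|$ we obtain $|G'| \theta_S^T(r) \cdot e(\eta) \in \zeta(\Z[G])$ together with the required annihilation on this block; summing over $[\eta]$ closes the induction.

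The principal obstacle is exactly the one anticipated in the introduction: the Morita isomorphism $F[G] e(\eta) \cong M_{\psi(1)}(F[G/N])$ of \eqref{eqn:iota_pi}, which drives the purely algebraic Theorem \ref{thm:integrality-mainstep}, does not preserve Artin $L$-series, so one cannot directly reduce the core case to the abelian quotient $G/N$ as in the algebraic argument. Nilpotency of $G$ rescues the induction by furnishing the monomial presentation $\psi = \ind_H^G \lambda$ with $H$ normal in $G$, along which $L$-series behave compatibly, permitting descent to the strictly smaller nilpotent extension $L/L^H$. A secondary check for part (iii), that the annihilator over $\Z_p[H]$ transfers to one over $\Z_p[G]$ once it has been shown to lie in the centre, is immediate from the module-theoretic discussion above, and it is precisely the requirement that $|H'|\theta(L/L^H,r) \cdot e(\eta)$ lies in $\zeta(\Z[G])$ (rather than merely in $\Z[H]e(\eta)$) that forces the slightly wasteful bound $|G'|$, controlled by Corollary \ref{cor:hard-inclusion}.
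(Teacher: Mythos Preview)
Your overall architecture matches the paper's: induction on $|G|$ with Clifford-theoretic descent through $G_{\eta}$ and $G_{\rho}$, and in the core case $G=G_{\eta}=U_{\psi}$ a monomial descent to a proper normal subgroup $H$. However, the inductive hypothesis you invoke is too weak to support the block-by-block argument you run.

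Concretely, when $G_{\eta}\subsetneq G$ you identify $\theta_{S}^{T}(L/K,r)\,e(\eta)$ with $\theta_{S_{\eta}}^{T_{\eta}}(L/K_{\eta},r)\,e(\eta)$ inside $\zeta(F[G_{\eta}]e(\eta))$ and then ``apply the inductive hypothesis to $G_{\eta}$''. But the theorem for $G_{\eta}$ only gives $|G_{\eta}'|\,\theta_{S_{\eta}}^{T_{\eta}}(L/K_{\eta},r)\in\zeta(\Z[G_{\eta}])$; it says nothing about the single block $\theta_{S_{\eta}}^{T_{\eta}}(L/K_{\eta},r)\,e(\eta)$. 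Multiplying an integral element by $e(\eta)$ re-introduces denominators of size $|N|$, so you cannot conclude $|G'|\,\theta_{S}^{T}(r)\,e(\eta)\in\zeta(\mathcal{O}_{F}[G])$, and the sum over $[\eta]$ does not close. The same defect recurs in the core case when you descend to $H$. This is precisely the obstruction the paper flags in the proof of Theorem~\ref{thm:integrality-mainstep} (``it is not sufficient to consider \eqref{eqn:inductive-integrality} with $N=G'$, as the commutator subgroup of $G_{\eta}$ might be strictly smaller than $G'$''). The paper's fix is to strengthen the inductive claim to a block-wise statement: for every normal $N\supseteq G'$ and every rational block idempotent $\epsilon_{\chi}$, one has $|N|\,\theta_{S}^{T}(r)\,\epsilon_{\chi}\in\zeta(\Z[G])$ together with the corresponding annihilation. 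Keeping $N$ fixed along the descent to $G_{\eta}$ (where $N$ still contains $G_{\eta}'$) makes the induction go through; only at the very end does one specialise to $N=G'$.

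Two smaller points. First, your treatment of the case ``$H=G$, the block consists of linear characters, and the abelian base case finishes the argument'' is not a base case of your induction: $G$ is still non-abelian. The paper handles linear $\psi$ by an explicit projection to the abelian quotient $G/G'$ and, for annihilation, a trace-map factorisation through $\cl_{L^{G'}}^{T}$ (respectively through $H^{2}_{T}(\mathcal{O}_{L^{G'},S},\Z_{p}(1-r))$ via the coinvariants isomorphism); this needs to be spelled out. Second, in the monomial step you assert that one may take $H$ normal in $G$ \emph{and} $\lambda$ linear. The paper does not need (and does not claim) this: it takes $\lambda=\ind_{\tilde H}^{H}\tilde\lambda$ with $H$ merely proper and normal, allowing $\lambda$ to be non-linear, and descends to the triples $(H,N\cap H,{}^{c}\lambda)$. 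Your stronger claim would require justification.
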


\begin{proof}
	Let $N$ be a normal subgroup of $G$ 
	containing the commutator subgroup $G'$ of $G$.
	Let $\chi$ be an irreducible complex-valued character of $G$ and let $\eta$ 
	be an irreducible constituent of $\res^G_N \chi$ so that 
	$\res^G_N \chi = m \sum_{x \in G/G_{\eta}} {}^x \eta$ 
	for some positive integer $m$. 
	Let $\Q(\res^G_N \chi) := \Q(\chi(n) \mid n \in N)$
	which is a finite abelian extension of $\Q$. 
	We denote its Galois group by $\mathcal{A}_{\chi,N}$.
	Recall from 
	\eqref{eqn:definition-e-of-eta} that we have defined an idempotent
	\[
		e(\eta) = \sum_{x \in G/G_{\eta}} e_{{}^x \eta} = 
		\frac{\eta(1)}{m|N|} \sum_{g \in N} \chi(g^{-1}) g \in \Q(\res^G_N \chi)[N]
	\]
	which is actually central in $\Q(\res^G_N \chi)[G]$. Now let $\chi'$ be a second
	irreducible character of $G$. We say that $\chi'$ is equivalent to $\chi$
	if there is a $\sigma \in \mathcal{A}_{\chi,N}$ such that 
	$\res^G_N \chi' = \sigma(\res^G_N \chi)$. 
	Then $\mathcal{A}_{\chi,N} = \mathcal{A}_{\chi',N}$ and $G_{\eta} = G_{\eta'}$
	for every irreducible constituent $\eta'$ of $\res^G_N \chi'$.
	We set
	\[
		\epsilon_{\chi} := \sum_{\chi'} e_{\chi'} =
		\sum_{\sigma \in \mathcal{A}_{\chi,N}} \sigma(e(\eta))\in \Q[N]
	\]
	where the first sum runs over all $\chi'$ which are equivalent to $\chi$,
	and the equality follows from Corollary \ref{cor:idempotent-equalities} (i).
	
	We now show by induction on the group order that
	$|N| \theta_S^T(r) \epsilon_{\chi}$ is contained in $\zeta(\Z[G])$ and annihilates
	the Galois modules occurring in claims (i), (ii) and (iii). 
	Taking $N = G'$ then finishes the proof.
	If $G$ is abelian,
	then $|N| \epsilon_{\chi} \in \Z[G]$ and the claims follow from
	Theorems \ref{thm:Dasgupta-Kakde}, \ref{thm:Coates-Sinnott} and
	\ref{thm:stong-CS-abelian}, respectively.
	
	Now let $G$ be arbitrary. We first follow the strategy of the proof of
	Theorem \ref{thm:integrality-mainstep}.
	Let $F$ be a splitting field for $G$ and all
	of its subgroups. Recall the embedding $\iota_{\eta}$ from
	\eqref{eqn:iota-eta}. For each $\sigma \in \mathcal{A}_{\chi,N}$ we likewise
	have an embedding $\iota_{\eta}^{\sigma}: \zeta(F[G] \sigma(e(\eta)))
	\hookrightarrow \zeta(F[G_{\eta}] \sigma(e(\eta)))$. Their product then
	induces an embedding
	\[
		\iota_{\chi}: \zeta(F[G] \epsilon_{\chi})
		\hookrightarrow \zeta(F[G_{\eta}] \epsilon_{\chi})
	\]
	which is actually the canonical inclusion: The coefficient of an element
	on the left hand side at $g \in G$ equals $0$ whenever $g \not\in G_{\eta}$.
	Let $\psi'$ be an irreducible character of $G_{\eta}$ such that
	$e_{\psi'}$ divides $\epsilon_{\chi}$. Then Corollary 
	\ref{cor:idempotent-equalities} implies that $\chi' := \ind^G_{G_{\eta}} \psi'$
	is an irreducible character of $G$ which is equivalent to $\chi$
	(recall that in the notation of the Corollary we have that
	$\chi = \ind_{G_{\eta}}^G \psi = \ind_{G_{\eta}}^G {}^x\psi$
	for each $x \in G$). 
	We set $K_{\eta} := L^{G_{\eta}}$ and let $S_{\eta}$ and $T_{\eta}$
	be the sets of places of $K_{\eta}$ lying above those in 
	$S$ and $T$, respectively.
	Since $ L_S(r, \check{\chi}') = 
	L_{S_{\eta}}(r, \check{\psi}')$
	and likewise  $\delta_T(r, \chi') = \delta_{T_{\eta}}(r, \psi')$, we have
	\[
		\iota_{\chi}(\theta_S^T(L/K, r) \epsilon_{\chi}) = 
		\theta_{S_{\eta}}^{T_{\eta}}(L/K_{\eta}, r) \epsilon_{\chi}.
	\]
	As we may consider each $G$-module as $G_{\eta}$-module by restriction,
	we may and do assume that $G = G_{\eta}$. Hence $\chi = \psi$ is an
	irreducible character of $G = G_{\eta}$ and we consider the subgroup
	$U_{\psi}$, which has been defined before Lemma \ref{lem:psi-twists}.
	We may generalize the second step of the proof of 
	Theorem \ref{thm:integrality-mainstep} just as above and assume that
	$G = U_{\psi}$.
	
	If $\chi = \psi$ is linear, then so is $\eta = \res^G_N \psi$. We consider
	the factor groups $\overline{G} := G/G'$ and $\overline{N} := N/G'$.
	Then $\chi = \infl^G_{\overline{G}} \overline{\chi}$ and
	$\eta = \infl^N_{\overline{N}} \overline{\eta}$ for (linear) characters
	$\overline{\chi}$ and $\overline{\eta}$ of $\overline{G}$ and 
	$\overline{N}$, respectively. The natural projection
	$\Q[G] \rightarrow \Q[\overline{G}]$ is split by the map
	$\phi: 1 \mapsto |G'|^{-1} \Tr_{G'}$. In particular, the image of
	$\Z[\overline{G}]$ under $\phi$ is contained in $|G'|^{-1} \Z[G]$.
	Since $\overline{G}$ is abelian, we know that
	$|\overline{N}| \theta_{S}^T(L^{G'}/K,r)\epsilon_{\overline{\chi}}
	\in \Z[\overline{G}]$, where $L^{G'}$ is the maximal abelian
	subextension of $K$ in $L$. Thus we obtain that
	\[
		|N| \theta_{S}^T(L/K,r) \epsilon_{\chi} = 
		|G'|\phi(|\overline{N}|\theta_S^T(L^{G'}/K,r)\epsilon_{\overline{\chi}})
		\in \Z[G].
	\]
	Multiplication by $|N| \theta_{S}^T(L/K,0) \epsilon_{\chi} = 
	|\overline{N}| \theta_{S}^T(L/K,0) \epsilon_{\chi} \Tr_{G'}$ on 
	$\cl_L^T$ factors as
	\[
		\cl_L^T \xrightarrow{\Tr_{G'}} \cl_{L^{G'}}^T
		\xrightarrow{|\overline{N}|\theta_S^T(L^{G'}/K,0)\epsilon_{\overline{\chi}}}
		\cl_{L^{G'}}^T \longrightarrow \cl_L^T,
	\]
	where the first and last map are induced by the usual trace and inclusion
	maps on ideals. Since the middle arrow annihilates $\Z[\half] \otimes_{\Z} \cl_{L^{G'}}^T$ by Theorem \ref{thm:Dasgupta-Kakde}, we are done in case (i). 
	We now consider case (iii) and for this purpose we temporarily assume
	that $S$ contains $S_p$. By \cite[(22)]{MR3072281} there is a natural isomorphism
	of $\Z_p[\overline{G}]$-modules
	\[
		H_T^2(\mathcal{O}_{L,S}, \Z_p(1-r))_{G'} \simeq
		H_T^2(\mathcal{O}_{L^{G'},S}, \Z_p(1-r)).
	\]
	Since multiplication by $\Tr_{G'}$ factors through the quotient
	$H_T^2(\mathcal{O}_{L,S}, \Z_p(1-r))_{G'}$ and 
	$|\overline{N}|\theta_S^T(L^{G'}/K,r)\epsilon_{\overline{\chi}}$ annihilates
	$H_T^2(\mathcal{O}_{L^{G'},S}, \Z_p(1-r))$ by Theorem
	\ref{thm:stong-CS-abelian}, this shows the claim
	in case (iii).
	By \eqref{eqn:Chern-class-maps}
	and the fact that  $(1-(Nv)^r \sigma_{w}) \in \Z_p[G]^{\times}$
	for $v \in S_p$ and $r<0$ (see the proof of
	\cite[Lemma 6.13]{MR3383600}, for instance), this also implies (ii).
	
	We are left with the case where $\chi = \psi$ is non-linear.
	By \cite[Theorem 11.3]{MR632548} there is a necessarily proper subgroup
	$\tilde H$ of $G$ and a linear character $\tilde \lambda$ of $\tilde H$
	such that $\psi= \ind_{\tilde H}^G \tilde \lambda$.
	By \cite[5.2.4]{MR1357169} there is a proper \emph{normal}
	subgroup $H$ containing $\tilde H$. We set 
	$\lambda := \ind_{\tilde H}^H \tilde \lambda$ so that $\psi = \ind_H^G \lambda$
	and we are in the situation of \S \ref{subsec:more-Clifford}.
	
	Since $G = G_{\eta}$ we have $e(\eta) = e_{\eta}$ which is a 
	primitive central idempotent in $F[N]$ and which is still central in $F[G]$.
	As $G = U_{\psi}$ we have a decomposition into primitive central 
	idempotents $e_{\eta} = \sum_{\omega \in \Irr(G/N)} e_{\psi \otimes \omega}$ in $F[G]$ by Proposition 
	\ref{prop:psi-when-N-contains-commutator} and Corollary
	\ref{cor:idempotent-equalities} (ii). Moreover, the idempotent $e_{\eta}$
	is indeed in $F[H]$, where we have a decomposition into primitive central
	idempotents $e_{\eta} = \sum_{c \in G/H} \sum_{\omega \in \Irr(G/N)}
	e_{^c\lambda \otimes \omega}$ by Lemma \ref{lem:final-lemma-on-idempotents} (ii).
	We obtain the diagonal embedding
	\[
		\iota_{\eta}^H: \zeta(F[G]e_{\eta}) = 
		\bigoplus_{\omega \in \Irr(G/N)} F e_{\psi \otimes \omega} \hookrightarrow
		\bigoplus_{\omega \in \Irr(G/N)} \bigoplus_{c \in G/H} 
		F e_{^c\lambda \otimes \omega} = \zeta(F[H] e_{\eta}).
	\]
	As above we have a similar embedding for $\sigma(e_{\eta})$ for each 
	$\sigma \in \mathcal{A}_{\chi,N}$ so that the product over all $\sigma$
	induces an embedding
	\[
		\iota_{\chi}^H: \zeta(F[G] \epsilon_{\chi}) \hookrightarrow
		\zeta(F[H] \epsilon_{\chi}).
	\]
	By Lemma \ref{lem:psi-with-H-and-N} (i) we have that 
	$\ind_H^G ({}^c \lambda \otimes \omega)
	= \ind_H^G (\lambda \otimes \omega) = \psi \otimes \omega$
	for each $c \in G/H$, $\omega \in \Irr(G/N)$. Since the $L$-functions
	behave well under induction of characters, we see that
	\[
		\iota_{\chi}^H(\theta_{S}^{T}(L/K, r) \epsilon_{\chi}) = 
		\theta_{S(L^H)}^{T(L^H)}(L/L^H, r) \epsilon_{\chi}.
	\]
	Note that the inclusion $\iota_{\chi}^H$ actually leaves any element $z$ in
	$\zeta(F[G] \epsilon_{\chi})$ unchanged; the coefficients of $z$ at
	any $g \in G \setminus H$ must vanish. We may therefore view our $G$-modules
	as $H$-modules by restriction and replace the triple $(G, N, \chi)$
	by the triples $(H, N \cap H, {}^c \lambda)$, $c \in G/H$. 
	Since $H$ is a proper subgroup of $G$ and $\epsilon_{\chi}$
	is a sum of analogously defined idempotents $\epsilon_{{}^c \lambda}$,
	where $c$ runs over a possibly proper subset of $G/H$, this completes the proof.
\end{proof}

Recall that $E_d$ was defined as the sum over all primitive central idempotents
$e_{\chi}$ associated to irreducible characters of $G$ of degree $d$.
Then $E_d$ may be written as a sum of idempotents of the form $\epsilon_{\chi}$
and the proof of Theorem \ref{thm:main-result} indeed shows the following.

\begin{corollary}
	Let $L/K$ be a Galois extension of number fields such that $G = \Gal(L/K)$
	is nilpotent. Let $S,T$ be a pair of finite sets of places of $K$ such that
	$\Hyp(S,T)$ is satisfied. Let $r \leq 0$ and $d>0$ be integers. Then we have
	\[
	|G'| \theta_S^T(r) E_d \in \zeta(\Z[G])
	\]
	and moreover the following holds.
	\begin{enumerate}
		\item 
		If $r=0$, then $|G'| \theta_S^T(0) E_d$ annihilates $\Z[\half] \otimes_{\Z} \cl_{L}^{T}$.
		\item
		If $r<0$, then $|G'| \theta_S^T(r) E_d$ annihilates $\Z[\half] \otimes_{\Z} 
		K_{-2r}(\mathcal{O}_{L,S})$.
		\item
		Let $p$ be an odd prime and assume in addition that $S_p \subseteq S$.
		If $r<0$, then 
		$|G'| \theta_S^T(r) E_d$ annihilates $H_T^2(\mathcal{O}_{L,S}, \Z_p(1-r))$.
	\end{enumerate}
\end{corollary}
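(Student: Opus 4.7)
The plan is to deduce this corollary directly from the proof of Theorem \ref{thm:main-result}, exploiting the fact that its argument produces the stronger statement for each idempotent $\epsilon_{\chi}$ individually, and that $E_d$ decomposes as a sum of such idempotents.

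First I would unwind the definition of $\epsilon_{\chi}$ for the distinguished normal subgroup $N = G'$. Recall that two irreducible characters $\chi, \chi'$ of $G$ are declared equivalent if $\res^G_{G'} \chi' = \sigma(\res^G_{G'} \chi)$ for some $\sigma \in \mathcal{A}_{\chi,G'} = \Gal(\Q(\res^G_{G'}\chi)/\Q)$, and that
\[
\epsilon_{\chi} = \sum_{\chi' \sim \chi} e_{\chi'}.
\]
Since a Galois automorphism preserves the degree of a character and restriction preserves the degree, any two equivalent characters have the same degree. Hence $\{\epsilon_{\chi}\}$ partitions $\Irr(G)$ into classes within a single degree, and the observation recorded just before the corollary, namely
\[
E_d = \sum_{[\chi]:\, \chi(1) = d} \epsilon_{\chi},
\]
holds, where the sum runs over equivalence classes of irreducible characters of degree $d$.

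Next I would invoke the inductive assertion established inside the proof of Theorem \ref{thm:main-result}: with $N = G'$ one has $|G'|\theta_S^T(r)\epsilon_{\chi} \in \zeta(\Z[G])$, and moreover this element annihilates $\Z[\tfrac{1}{2}] \otimes_{\Z}\cl_L^T$ when $r = 0$, annihilates $\Z[\tfrac{1}{2}]\otimes_{\Z} K_{-2r}(\mathcal{O}_{L,S})$ when $r < 0$, and (after adjoining $S_p$ to $S$) annihilates $H^2_T(\mathcal{O}_{L,S},\Z_p(1-r))$ for odd primes $p$ when $r < 0$. This is precisely what the main body of the proof showed, before finally taking $N = G'$ to assemble Theorem \ref{thm:main-result} from the identity $1 = \sum_{[\chi]} \epsilon_{\chi}$.

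Finally I would sum the per-class statements: multiplying the identity $E_d = \sum_{[\chi],\, \chi(1)=d} \epsilon_{\chi}$ by $|G'|\theta_S^T(r)$ immediately gives
\[
|G'|\theta_S^T(r) E_d = \sum_{[\chi],\,\chi(1)=d} |G'|\theta_S^T(r)\epsilon_{\chi} \in \zeta(\Z[G]),
\]
and each summand annihilates the module appearing in (i), (ii), or (iii) respectively, hence so does their sum. There is essentially no new obstacle; the whole point is that the stronger per-idempotent statement was already proved during Theorem \ref{thm:main-result}. The only thing one needs to verify carefully is that the indexing set used to form $\epsilon_{\chi}$ is consistent with the one needed for $E_d$, which amounts to the degree-preservation remark in the first paragraph above.
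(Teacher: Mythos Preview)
Your proposal is correct and follows essentially the same approach as the paper: both observe that $E_d$ decomposes as a sum of the idempotents $\epsilon_{\chi}$ with $\chi(1)=d$ (since equivalent characters share the same degree), and then invoke the per-$\epsilon_{\chi}$ statement already established inside the proof of Theorem~\ref{thm:main-result}. The paper simply states this observation in one line, whereas you spell out the degree-preservation argument explicitly; there is no substantive difference.
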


\begin{remark}
	Let $N$ be a normal subgroup of $G$ containing $G'$.
	Similar to Remark \ref{rem:more-elements-in-H}, one can show that
	indeed 
	\[
		\left(\frac{|N|}{\eta(1)} \varepsilon_{\chi}\right) \theta_{S}^T(r) 
		\in \zeta(\Z[G])
	\]
	and annihilates the corresponding Galois module. This follows along
	the lines of the proof of Theorem \ref{thm:main-result}. One only
	has to observe in the last step of the proof that 
	$|N| / \eta(1) = |N \cap H| / \eta_c(1)$ for all $c \in G/H$,
	as $\eta(1) = [G : H] \eta_c(1)$ and $H / (H\cap N) \simeq G/N$
	by Lemma \ref{lem:psi-with-H-and-N}.
\end{remark}

\section{The relation to the equivariant Tamagawa number conjecture}

\subsection{Algebraic $K$-theory}
We briefly recall some basic notions from algebraic $K$-theory.
For more details we refer the reader to
\cite{MR892316} and \cite{MR0245634}.

Let $R$ be any ring.
Recall that $\PMod(R)$ denotes the category of finitely generated 
projective left $R$-modules. We write $K_{0}(R)$ for 
the Grothendieck group of $\PMod(R)$
and $K_{1}(R)$ for the Whitehead group 
(see \cite[\S 38 and \S 40]{MR892316}).

Let $\mathcal{O}$ be a noetherian integral domain of characteristic $0$ with field of fractions $F$.
Let $A$ be a finite-dimensional semisimple $F$-algebra and let $\Lambda$ be an $\mathcal{O}$-order in $A$.
For any field extension $E$ of $F$ we set $A_{E} := E \otimes_{F} A$.
Let $K_{0}(\Lambda, E)$ denote the relative
algebraic $K$-group associated to the ring homomorphism $\Lambda \hookrightarrow A_{E}$.
We recall that $K_{0}(\Lambda, E)$ is an abelian group with generators $[X,g,Y]$ where
$X$ and $Y$ are finitely generated projective $\Lambda$-modules
and $g:E \otimes_{R} X \rightarrow E \otimes_{R} Y$ is an isomorphism of $A_{E}$-modules;
for a full description in terms of generators and relations, we refer the reader to \cite[p.\ 215]{MR0245634}.
Furthermore, there is a long exact sequence of relative $K$-theory
(see  \cite[Chapter 15]{MR0245634})
\begin{equation}\label{eqn:long-exact-seq}
	K_{1}(\Lambda) \longrightarrow K_{1}(A_{E}) \stackrel{\partial_{\Lambda,E}}{\longrightarrow}
	K_{0}(\Lambda,E) \longrightarrow K_{0}(\Lambda) \longrightarrow K_{0}(A_{E}).
\end{equation}

Let $p$ be a prime and let $G$ be a finite group.
Let $F$ be a finite extension of $\Q_{p}$
with ring of integers $\mathcal{O} = \mathcal{O}_F$ and let $e \in
\mathcal{O}[G]$ be a central idempotent. It follows
from a theorem of Swan \cite[Theorem 32.1]{MR632548} that the map
$K_0(e\mathcal{O}[G]) \rightarrow K_0(eF[G])$ is injective. Hence from
\eqref{eqn:long-exact-seq} we obtain a right-exact sequence 
\begin{equation} \label{eqn:longterm-for-grouprings}
	K_1(e\mathcal{O}[G]) \rightarrow
	K_1(eF[G]) \rightarrow K_0(e\mathcal{O}[G],F) \rightarrow 0.
\end{equation}
Moreover, by \cite[Theorems 45.3 and 40.31]{MR892316} the reduced
norm induces an isomorphism $K_1(eF[G]) \simeq \zeta(eF[G])^{\times}$
and $\nr(K_1(e\mathcal{O}[G])) = \nr(e\mathcal{O}[G]^{\times})$.
So from \eqref{eqn:longterm-for-grouprings} we obtain an isomorphism
\begin{equation} \label{eqn:relative-K0-iso}
	K_0(e\mathcal{O}[G], F) \simeq \zeta(eF[G])^{\times} / 
	\nr(e\mathcal{O}[G]^{\times}).
\end{equation}

Choose a maximal $\mathcal{O}$-order $\mathcal{M}(G)$ in $F[G]$ containing
$\mathcal{O}[G]$. Then a similar reasoning using 
\cite[Proposition 45.8]{MR892316} yields an isomorphism
\begin{equation} \label{eqn:relative-K0-iso-max}
	K_0(e\mathcal{M}(G), F) \simeq \zeta(eF[G])^{\times} / 
	\zeta(e\mathcal{M}(G))^{\times}.
\end{equation}

Now let $E$ be a finite Galois extension of $F$. We let
$\sigma \in \Gal(E/F)$ act on $x = \sum_{g \in G} x_g g \in E[G]$
by $\sigma(x) = \sum_{g \in G} \sigma(x_g) g$.

\begin{lemma} \label{lem:norm-surjective}
	Let $p$ be a prime and let $G$ by a finite $p$-group.
	Let $F$ be a finite extension of $\Q_p$ and let $E$ be a finite
	unramified extension of $F$ such that $p$ and $[E:F]$ are coprime.
	Then the norm map
	\[
		N_{E/F}: \mathcal{O}_E[G]^{\times} \rightarrow
			\mathcal{O}_F[G]^{\times}, \quad
			x \mapsto \prod_{\sigma \in \Gal(E/F)} \sigma(x)
	\]
	is surjective.
\end{lemma}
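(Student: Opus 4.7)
The plan is to exploit that $G$ being a finite $p$-group makes $\mathcal{O}_F[G]$ a complete noetherian local ring with residue ring $\kappa_F$ (the residue field of $F$); denote its Jacobson radical by $J_F$ and set $U_F^n := 1 + J_F^n$ for $n \geq 1$, and analogously over $E$. Since $E/F$ is unramified, a uniformiser of $F$ is also one for $E$, so $J_E = \mathcal{O}_E \cdot J_F$, and flatness of $\mathcal{O}_E/\mathcal{O}_F$ will give
\[
    J_E^n / J_E^{n+1} \;\simeq\; \kappa_E \otimes_{\kappa_F} (J_F^n / J_F^{n+1}) \qquad (n \geq 1),
\]
compatibly with the natural $\Gal(E/F)$-action on the left factor.

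I would first handle the residue quotient. Modding out by $J$ identifies $\mathcal{O}_F[G]^\times / U_F^1 \simeq \kappa_F^\times$ (and analogously over $E$), and under these identifications $N_{E/F}$ induces the finite-field norm $\kappa_E^\times \to \kappa_F^\times$, which is surjective. Lifting a preimage through $\mathcal{O}_E^\times \twoheadrightarrow \kappa_E^\times$ and multiplying by its norm reduces the problem to showing that $U_F^1 \subseteq N_{E/F}(U_E^1)$.

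The core of the argument will be the computation that for $y \in J_E^n$ with $n \geq 1$,
\[
    N_{E/F}(1+y) \;=\; \prod_{\sigma \in \Gal(E/F)}(1 + \sigma y) \;\equiv\; 1 + \Tr_{E/F}(y) \pmod{J_F^{n+1}},
\]
since every cross-term is a product of at least two elements of $J_E^n$ and hence lies in $J_E^{2n} \cap \mathcal{O}_F[G] = J_F^{2n} \subseteq J_F^{n+1}$. Because $E/F$ is unramified, $\Tr_{E/F}\colon \mathcal{O}_E \to \mathcal{O}_F$ is surjective, and tensoring with $J_F^n/J_F^{n+1}$ yields surjectivity of $\Tr_{E/F}\colon J_E^n/J_E^{n+1} \to J_F^n/J_F^{n+1}$. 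Given $u \in U_F^1$, I would then inductively construct $v_k \in U_E^1$ with $N_{E/F}(v_k) \equiv u \pmod{U_F^{k+1}}$: at step $k$, write the current defect as $1+w$ with $w \in J_F^k$, lift $w$ to $y \in J_E^k$ with $\Tr_{E/F}(y) \equiv w \pmod{J_F^{k+1}}$, and replace $v_k$ by $v_k(1+y)$.

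The sequence $(v_k)$ will be Cauchy in the $J_E$-adic topology and converge, by completeness of $\mathcal{O}_E[G]$ as a finite $\mathcal{O}_E$-module, to some $v \in U_E^1$ with $N_{E/F}(v) = u$. The main technical input is the identification $J_E^n = \mathcal{O}_E \cdot J_F^n$ and the resulting description of the graded pieces (using unramifiedness); once these are in hand, everything else is a standard successive-approximation argument driven by surjectivity of the trace for an unramified extension.
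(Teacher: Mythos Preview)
Your proof is correct but takes a genuinely different route from the paper. Both arguments share the same opening move: reduce modulo the Jacobson radical to the surjectivity of the norm $\kappa_E^\times \to \kappa_F^\times$ for the residue fields, and thereby reduce to showing that $1+J_F$ lies in the image. From there the paths diverge. The paper observes that $1+J_F$ is a pro-$p$-group and that for $x\in 1+J_F\subseteq 1+J_E$ the Galois action is trivial, so $N_{E/F}(x)=x^{[E:F]}$; since $\gcd(p,[E:F])=1$, raising to the $[E:F]$-th power is a bijection on the pro-$p$-group $1+J_F$, and surjectivity follows in one line. Your argument instead runs a successive-approximation scheme driven by the congruence $N_{E/F}(1+y)\equiv 1+\Tr_{E/F}(y)\pmod{J_F^{n+1}}$ and the surjectivity of the trace on each graded piece $J_E^n/J_E^{n+1}\to J_F^n/J_F^{n+1}$. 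Your approach is longer but notably never uses the hypothesis that $[E:F]$ is prime to $p$; only unramifiedness (for the residue norm and for trace surjectivity) is needed. So you in fact prove a stronger statement, whereas the paper's argument is shorter precisely because it exploits the coprimality assumption that you do not touch.
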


\begin{proof}
	We denote the radical of a ring $\Lambda$ by $\rad(\Lambda)$.
	Let $\mathbb{E}$ and $\F$ be the residue fields of
	$E$ and $F$, respectively. By \cite[Corollary 5.25]{MR632548}
	the radical of $\mathcal{O}_F[G]$ is generated by
	a uniformizer of $\mathcal{O}_F$ and the augmentation ideal,
	which is the kernel of the natural $\mathcal{O}_F$-linear map
	$\mathcal{O}_F[G] \rightarrow \mathcal{O}_F$ that sends each
	$g \in G$ to $1$. In particular, we have an isomorphism
	$\mathcal{O}_F[G] / \rad(\mathcal{O}_F[G]) \simeq \F$.
	Similar observations hold with $F$ replaced by $E$. 
	
	Since an element is invertible if and only if it is invertible modulo
	the radical, we obtain a commutative diagram
	\[ \xymatrix{
		1 \ar[r] & 1 + \rad(\mathcal{O}_E[G]) \ar[r] \ar[d]^{N_{E/K}} &
			\mathcal{O}_E[G]^{\times} \ar[r] \ar[d]^{N_{E/K}} &
			\mathbb{E}^{\times} \ar[r] \ar[d]^{N_{E/K}} & 1\\
		1 \ar[r] & 1 + \rad(\mathcal{O}_F[G]) \ar[r] &
		\mathcal{O}_F[G]^{\times} \ar[r] &
		\mathbb{F}^{\times} \ar[r] & 1.
	}\]
	As $E/F$ is unramified, the rightmost vertical map is surjective.
	Since $N_{E/K}(1+x) = (1+x)^{[E:F]}$ for $x \in \rad(\mathcal{O}_F[G])$
	and $1 + \rad(\mathcal{O}_F[G])$ is a pro-$p$-group (see
	\cite[Example 45.29]{MR892316}), the leftmost vertical map
	is also surjective. The result follows.
\end{proof}

\begin{corollary} \label{cor:nr-image}
	Let $p$ be a prime and let $G$ be a finite abelian $p$-group.
	Let $F$ be a finite extension of $\Q_p$ and let $D$ be a skewfield
	with centre $F$ such that $[D:F]$ is finite and coprime to $p$.
	Let $\mathcal{O}_D$ be the unique maximal $\mathcal{O}_F$-order in $D$.
	Then we have an equality
	\[
		\nr(K_1(\mathcal{O}_D[G])) = \mathcal{O}_F[G]^{\times}.
	\]
\end{corollary}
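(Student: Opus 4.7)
The plan is to exhibit the maximal unramified subfield $E \subset D$ and reduce the claim to the surjectivity statement of Lemma \ref{lem:norm-surjective}. Since $[D:F]=s^2$ is coprime to $p$, so is $s$. Over the local field $F$, every central division algebra is cyclic, so $D$ contains a unique maximal subfield $E$ which is unramified of degree $s$ over $F$, and one may write $D = (E/F,\sigma,\pi)$ for a generator $\sigma$ of $\Gal(E/F)$ and a uniformizer $\pi$ of $\mathcal{O}_F$. The maximal order then satisfies $\mathcal{O}_D = \bigoplus_{i=0}^{s-1} u^i \mathcal{O}_E$ with $u^s=\pi$ and $ua = \sigma(a)u$ for all $a\in E$, so in particular $\mathcal{O}_E \subset \mathcal{O}_D$.

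The key computation is that for any $a \in E[G] \subset D[G]$, the reduced norm $\nr_{D[G]}(a)$ coincides with $\prod_{\tau\in\Gal(E/F)} \tau(a)$, i.e.\ with the norm $N_{E/F}$ used in Lemma \ref{lem:norm-surjective}. Indeed, left multiplication makes $D$ into a right $E$-module of rank $s$, giving an embedding $D \hookrightarrow \End_{E}(D) = M_s(E)$; in the basis $1,u,\dots,u^{s-1}$ each $a \in E$ maps to $\mathrm{diag}(a, \sigma^{-1}(a), \ldots, \sigma^{-(s-1)}(a))$, so $\nr_D(a) = N_{E/F}(a)$. Base-changing along $F \hookrightarrow F[G]$ gives $D[G]\hookrightarrow M_s(E[G])$ and identifies the reduced norm (which lands in $\zeta(D[G]) = F[G]$ since $G$ is abelian) with the determinant on $M_s(E[G])$. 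For $a \in E[G]$ the diagonal formula persists with $\sigma$ now acting coefficientwise on $E[G]$, so $\nr_{D[G]}|_{E[G]}$ agrees with the coefficientwise norm from Lemma \ref{lem:norm-surjective}.

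From here both inclusions follow. Because $E/F$ is unramified with $[E:F]=s$ coprime to $p$, Lemma \ref{lem:norm-surjective} gives $N_{E/F}(\mathcal{O}_E[G]^\times) = \mathcal{O}_F[G]^\times$, and since $\mathcal{O}_E[G] \subset \mathcal{O}_D[G]$ this yields $\mathcal{O}_F[G]^\times \subseteq \nr(\mathcal{O}_D[G]^\times) \subseteq \nr(K_1(\mathcal{O}_D[G]))$. For the reverse inclusion, the embedding $\mathcal{O}_D[G]\hookrightarrow M_s(\mathcal{O}_E[G])$ shows that for any $a\in\mathcal{O}_D[G]$ one has $\nr(a)=\det(a) \in \mathcal{O}_E[G] \cap F[G] = \mathcal{O}_F[G]$; applied to both $a$ and $a^{-1}$, this gives $\nr(\mathcal{O}_D[G]^\times) \subseteq \mathcal{O}_F[G]^\times$. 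Finally, $\mathcal{O}_D[G]$ is semilocal as an $\mathcal{O}_F$-order in the semisimple algebra $D[G]$ over a complete DVR, so $K_1(\mathcal{O}_D[G])$ is generated by $\mathcal{O}_D[G]^\times$ and $\nr(K_1(\mathcal{O}_D[G])) = \nr(\mathcal{O}_D[G]^\times)$. The main obstacle will be verifying the middle paragraph: one must carefully track the cyclic-algebra diagonalisation $D \hookrightarrow M_s(E)$ after tensoring with $F[G]$ and check that the resulting formula truly matches the coefficientwise product $\prod_\sigma \sigma(\cdot)$ appearing in Lemma \ref{lem:norm-surjective}.
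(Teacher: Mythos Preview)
Your proof is correct and follows essentially the same route as the paper: embed the unramified degree-$s$ extension $E$ into $D$, identify $\nr_{D[G]}$ restricted to $E[G]$ with the coefficientwise Galois norm $N_{E/F}$, and then invoke Lemma~\ref{lem:norm-surjective}. The only cosmetic difference is in the upper inclusion $\nr(\mathcal{O}_D[G]^{\times}) \subseteq \mathcal{O}_F[G]^{\times}$: the paper sets up an explicit $\Gal(E/F)$-action on $M_s(E[G])$ and uses invariance of the image of $\mathcal{O}_D[G]$, whereas you argue more directly via $\det \in \mathcal{O}_E[G]$, $\nr \in F[G]$, and $\mathcal{O}_E[G]\cap F[G]=\mathcal{O}_F[G]$.
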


\begin{proof}
	Let $s$ be the Schur index of $D$ so that $s^2 = [D:F]$.
	Then $s$ is prime to $p$ by assumption. By \cite[\S 14]{MR1972204}
	there is an unramified extension $E$ of $F$ of degree $s$, which is
	contained in $D$. Then $E$ is a maximal subfield of $D$ and splits
	$D$. Moreover, the ring of integers $\mathcal{O}_E$ is contained
	in $\mathcal{O}_D$ and the matrix representation that occurs in the
	proof of \cite[Theorem 14.6]{MR1972204} (see \cite[(14.7)]{MR1972204})
	shows that $\nr(x) = N_{E/F}(x)$ for $x \in E$. More precisely,
	the matrix corresponding to $x$ is diagonal with the Galois
	conjugates of $x$ as entries. We have isomorphisms of rings
	\begin{equation} \label{eqn:splitting-of-DG}
		E \otimes_F D[G] \simeq (E \otimes_F D)[G] \simeq
		M_{s}(E)[G] \simeq M_{s}(E[G]),
	\end{equation}
	where the second isomorphism is induced by just that matrix
	representation. 
	Let $\sigma \in \Gal(E/F)$ be the unique lift of the Frobenius 
	automorphism. Choose $1 \leq r \leq s$ such that $r/s$ is the
	Hasse invariant of $D$. Then $r$ is prime to $s$ and 
	$\theta := \sigma^r$ is also a generator of $\Gal(E/F)$.
	Then $\theta$ acts upon
	$a \otimes x$, $a \in E$, $x \in D[G]$ by $\theta(a \otimes x) =
	\theta(a) \otimes x$. If $A = (\alpha_{i,j})_{1 \leq i,j \leq s}$
	is a matrix in $M_{s}(E[G])$, then we let $\theta$ act on $A$ by
	$\theta(A) = (\beta_{i,j})_{i,j}$ with 
	$\beta_{i,j} = \theta(\alpha_{i-1,j-1})$, where the indices are
	understood to be considered modulo $s$.
	By inspection of \cite[(14.7)]{MR1972204}
	the composite map \eqref{eqn:splitting-of-DG}
	is then $\Gal(E/F)$-equivariant.
	Let $x \in \mathcal{O}_D[G]^{\times}$. 
	Then the image $A_x$ of $1 \otimes x$
	under \eqref{eqn:splitting-of-DG} is invariant under the
	action of $\Gal(E/F)$ and a matrix with entries in
	$\mathcal{O}_E[G]$. Hence 
	\[
		\nr(x) = \det(A_x) \in (\mathcal{O}_E[G]^{\times})^{\Gal(E/F)} = 
		\mathcal{O}_F[G]^{\times}
	\]
	so that $\nr(K_1(\mathcal{O}_D[G])) = \nr(\mathcal{O}_D[G]^{\times})$ 
	is contained in 
	$\mathcal{O}_F[G]^{\times}$. For $x \in \mathcal{O}_E[G]$
	its image $A_x$ is a diagonal matrix whose
	entries are again the Galois conjugates of $x$.
	Hence $\nr(x) = N_{E/K}(x)$ and the result follows from
	Lemma \ref{lem:norm-surjective}.
\end{proof}

\subsection{The relation to the equivariant Tamagawa number conjecture}
Let $L/K$ be a Galois extension of number fields with Galois group 
$G = \Gal(L/K)$ and let $r$ be an integer.
The equivariant Tamagawa number conjecture (ETNC for short)
for the pair $(h^0(\Spec(L))(r), \Z[G])$ has been formulated by
Burns and Flach \cite{MR1884523} and
asserts that a certain canonical element $T\Omega(L/K, \Z[G], r)$
in the relative algebraic $K$-group $K_0(\Z[G], \R)$ vanishes.
We refrain from giving a more precise statement, as we will only exploit known
cases and functorial properties of the conjecture.

We henceforth assume that $L/K$ is a CM extension.
So $L$ is a totally complex extension of the totally real field $K$ such that
complex conjugation induces a unique automorphism $\tau$ of $L$ 
which is central in $G$. Note that this is the essential case
for the integrality and annihilation conjectures considered in this
article (see \cite[\S 4]{MR3552493}, for instance). 

We define a central idempotent 
$e_r := \frac{1- (-1)^{r} \tau}{2}$ in $\Z[\half][G]$, where
$\tau \in G$ denotes complex conjugation. 
The ETNC for the pair $(h^0(\Spec(L))(r), e_{r}\Z[\half][G])$ then likewise
asserts that a canonical element 
$T\Omega(L/K,r)$ in $K_{0}(e_{r} \Z[\half][G], \R)$ vanishes. 
(It would be more accurate to denote $T\Omega(L/K, r)$ by
$T\Omega(L/K, e_{r} \Z[\half][G], r)$, but in order to avoid too heavy notation
we will drop the reference to the order $e_{r} \Z[\half][G]$.)
This corresponds to the plus
or minus part of the ETNC (away from $2$) if $r$ is odd or even, respectively.

Now assume that $r \leq 0$. Then a result of Siegel
\cite{MR0285488} implies that $T\Omega(L/K, r)$ actually
belongs to the subgroup
\begin{equation} \label{eqn:K0-iso}
	K_{0}(e_{r} \Z[\half][G], \Q) \simeq \bigoplus_{p\, \mathrm{ odd}} K_{0}(e_{r} \Z_{p}[G], \Q_{p})
\end{equation}
and we say that the $p$-part of the ETNC for the pair 
$(h^0(\Spec(L))(r), e_r\Z[\half][G])$ holds if its image $T\Omega_p(L/K,r)$ in
$K_{0}(e_{r} \Z_{p}[G], \Q_{p})$ vanishes.
More generally, if $\mathcal{M}$ is an order containing $e_r\Z[\half][G]$,
we say that the $p$-part of the ETNC for the pair 
$(h^0(\Spec(L))(r), \mathcal{M})$ holds if $T\Omega_p(L/K,r)$
is contained in the kernel of the natural map
$K_{0}(e_{r} \Z_{p}[G], \Q_{p}) \rightarrow K_0(\Z_p \otimes_{\Z} \mathcal{M}, \Q_p)$.

\subsection{Functorialities} \label{subsec:functorialities}
By \cite[Theorem 4.1]{MR1884523} the ETNC behaves well under base change.
In particular, the following holds in our situation.

Let $H$ be a subgroup of $G$ which contains $\tau$. Then there is a
natural restriction map
\[
	\res^G_H: K_{0}(e_{r} \Z[\half][G], \R) \rightarrow
	K_{0}(e_{r} \Z[\half][H], \R).
\]
such that for every $r \in \Z$ one has
\[
	\res^G_H(T\Omega(L/K,r)) = T\Omega(L/L^H,r).
\]
Likewise, if $N$ is a normal subgroup of $G$ 
which does not contain $\tau$, then
there is a natural quotient map
\[
\quot^G_{G/N}: K_{0}(e_{r} \Z[\half][G], \R) \rightarrow
K_{0}(e_{r} \Z[\half][G/N], \R)
\]
such that
\[
\quot^G_{G/N}(T\Omega(L/K,r)) = T\Omega(L^N/K,r).
\]
If $r \leq 0$ and $p$ is an odd prime, then similar observations hold
with $T\Omega(L/K,r)$ replaced by $T\Omega_p(L/K,r)$.

\subsection{Results on the ETNC}
The following result and variants thereof will be important for our purposes.
Note that parts (i) and (ii) are essentially known, but part (iii) is new
and generalizes \cite[Theorem 1.2]{MR3552493}.

\begin{theorem} \label{thm:ETNC-cases}
	Let $L/K$ be a Galois CM extension of number fields with Galois group $G$.
	Let $p$ be an odd prime and let $r \leq 0$ be an integer. 
	Let $S$ and $T$ be two finite sets of places of $K$ such that
	$\Hyp(S,T)$ is satisfied.
	Then the following holds.
	\begin{enumerate}
		\item 
		If the $p$-part of the ETNC for the pair 
		$(h^0(\Spec(L))(r), e_r\Z[\half][G])$ holds, then 
		$\theta_{S}^{T}(L/K,r) \in \mathcal{I}_p(G)$.
		In particular, we then have that
		$|G'| \theta_{S}^{T}(L/K,r) \in \zeta(\Z_p[G])$.
		\item
		Let $\mathcal{M}(G)$ be a maximal order containing $\Z[\half][G]$.
		Then the $p$-part of the ETNC for the pair 
		$(h^0(\Spec(L))(r), e_r\mathcal{M}(G))$ holds. In particular, we have 
		$\theta_{S}^{T}(L/K,r) \in \zeta(\Z_p \otimes_{\Z} \mathcal{M}(G))$
		and hence
		$|G| \theta_{S}^{T}(L/K,r) \in \zeta(\Z_p[G])$.
		\item 
		Assume that $G$ decomposes into a direct product $G = N \times \Delta$
		of a finite group $N$ and a finite abelian $p$-group $\Delta$.
		Let $\mathcal{M}(N)$ be a maximal order containing $\Z[\half][N]$.
		Then the $p$-part of the ETNC for the pair 
		$(h^0(\Spec(L))(r), e_r\mathcal{M}(N)[\Delta])$ holds. In particular, we have 
		$\theta_{S}^{T}(L/K,r) \in \zeta(\Z_p \otimes_{\Z} \mathcal{M}(N))[\Delta]$ 
		and hence
		$|N| \theta_{S}^{T}(L/K,r) \in \zeta(\Z_p[G])$.
	\end{enumerate}
\end{theorem}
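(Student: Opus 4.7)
The overall plan for all three parts is the same: exploit the $K$-theoretic formulation of the ETNC, whereby via the isomorphisms \eqref{eqn:relative-K0-iso} and \eqref{eqn:relative-K0-iso-max} the vanishing of $T\Omega_p(L/K,r)$ in a relative $K_0$-group translates into the statement that $\theta_S^T(L/K,r) e_r$ lies in the image of the reduced norm from $K_1$ of an appropriate $\Z_p$-order. What distinguishes the three parts is the order used and, correspondingly, the denominator bound on $\theta_S^T(L/K,r)$.

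For part (i), the hypothesis gives $T\Omega_p(L/K,r)=0$ in $K_0(e_r\Z_p[G], \Q_p)$. The Burns--Flach formulation of the ETNC identifies $T\Omega_p(L/K,r)$ under \eqref{eqn:relative-K0-iso} with the class of $\theta_S^T(L/K,r) e_r$ modulo $\nr(K_1(e_r\Z_p[G]))$, so vanishing yields $\theta_S^T(L/K,r) e_r \in \nr(K_1(e_r\Z_p[G])) \subseteq \mathcal{I}_p(G)$. Together with $(1-e_r)\theta_S^T(L/K,r) = 0$, which reflects the parity of the vanishing of Artin $L$-series at $r \leq 0$ for CM extensions, we conclude $\theta_S^T(L/K,r) \in \mathcal{I}_p(G)$, and Corollary \ref{cor:bound-integrality-ring}(i) supplies the divisibility by $|G'|$.

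Part (ii) is the same argument, but the required ETNC vanishing is now unconditional: the $p$-part of the ETNC for the pair $(h^0(\Spec(L))(r), e_r\mathcal{M}(G))$ in this CM setting is known by work of the second named author, who verified the strong Stark conjecture in \cite{StrongStark}. Applying \eqref{eqn:relative-K0-iso-max} in place of \eqref{eqn:relative-K0-iso} then gives $\theta_S^T(L/K,r) \in \zeta(\Z_p \otimes_{\Z} \mathcal{M}(G))$; the divisibility by $|G|$ follows from the standard inclusion $|G|\,\zeta(\Z_p \otimes_{\Z} \mathcal{M}(G)) \subseteq \zeta(\Z_p[G])$, which is a simple conductor bound on the integral closure of $\zeta(\Z_p[G])$ in $\zeta(\Q_p[G])$.

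For part (iii), the genuinely new statement, the plan is to run the argument of part (i) with the intermediate order $\mathcal{M}(N)[\Delta]_p$ in place of $\Z_p[G]$. Decompose $\mathcal{M}(N)_p = \bigoplus_i \Lambda_i$ with each $\Lambda_i$ a maximal $\mathcal{O}_{F_i}$-order in $M_{n_i}(D_i)$, so that $\mathcal{M}(N)[\Delta]_p = \bigoplus_i \Lambda_i[\Delta]$ and, since $\Delta$ is abelian, $\zeta(\Lambda_i[\Delta]) = \mathcal{O}_{F_i}[\Delta]$. Corollary \ref{cor:nr-image}, applied to each $D_i$ after Morita reduction and provided its Schur index is coprime to the odd prime $p$, yields the surjection $\nr: K_1(\mathcal{M}(N)[\Delta]_p) \twoheadrightarrow \zeta(\mathcal{M}(N)[\Delta]_p)^{\times}$, and hence an analogue of \eqref{eqn:relative-K0-iso-max} for $\mathcal{M}(N)[\Delta]_p$. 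Functoriality of $T\Omega_p$ under the natural map $K_0(e_r\mathcal{M}(N)[\Delta]_p, \Q_p) \to K_0(e_r\mathcal{M}(G)_p, \Q_p)$ then combines with part (ii) to force vanishing already in the smaller $K_0$-group, giving $\theta_S^T(L/K,r) \in \zeta(\Z_p \otimes_{\Z} \mathcal{M}(N))[\Delta]$; the bound by $|N|$ in place of $|G|$ follows from the conductor inclusion $|N|\,\zeta(\Z_p \otimes_{\Z} \mathcal{M}(N)) \subseteq \zeta(\Z_p[N])$. The main obstacles are verifying the coprimality of the local Schur indices of characters of $N$ at the odd prime $p$ and the precise compatibility of the canonical elements $T\Omega_p$ under the change of order $\mathcal{M}(N)[\Delta] \hookrightarrow \mathcal{M}(G)$; these follow the template already established in \cite{MR3552493}.
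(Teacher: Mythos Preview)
Your outline for parts (i) and (ii) is in the right spirit but oversimplifies the link between $T\Omega_p(L/K,r)$ and the Stickelberger element. The element $T\Omega_p(L/K,r)$ is \emph{not} literally the class of $\theta_S^T(L/K,r)e_r$ in $\zeta(e_r\Q_p[G])^{\times}/\nr(K_1(e_r\Z_p[G]))$; the passage from vanishing of $T\Omega_p$ to $\theta_S^T(L/K,r)\in\mathcal{I}_p(G)$ is a genuine theorem requiring the construction of a suitable perfect complex and is handled in the cited references (\cite{MR2976321} when $\mu_L(p)$ is cohomologically trivial, and \cite{MR4195656} in general). The paper treats (i) and (ii) essentially by citation, so your sketch is acceptable as a plan provided you acknowledge this.

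Part (iii), however, contains a real gap. You propose to deduce vanishing of $T\Omega_p$ in $K_0(e_r\mathcal{M}(N)[\Delta]_p,\Q_p)$ from its vanishing in $K_0(e_r\mathcal{M}(G)_p,\Q_p)$ via the natural change-of-order map. But this map goes from the finer group to the coarser one, and is \emph{not} injective: under the identifications you invoke it is the quotient map
\[
\zeta(e_r\Q_p[G])^{\times}/\zeta(e_r\mathcal{M}(N)[\Delta]_p)^{\times}\;\longrightarrow\;\zeta(e_r\Q_p[G])^{\times}/\zeta(e_r\mathcal{M}(G)_p)^{\times},
\]
whose kernel $\zeta(e_r\mathcal{M}(G)_p)^{\times}/\zeta(e_r\mathcal{M}(N)[\Delta]_p)^{\times}$ is typically nontrivial. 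So knowing the image vanishes (part (ii)) tells you nothing about the source. The appeal to \cite{MR3552493} does not repair this: that paper establishes the analogous result only under a $\mu=0$ hypothesis and by a different mechanism.

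The paper's argument for (iii) is genuinely different. It does \emph{not} descend from part (ii); instead it uses the unconditional validity of the full ETNC for \emph{abelian} CM extensions (\cite{abelian-ETNC} for $r=0$, \cite{abelian-MC} for $r<0$) together with Brauer induction. After decomposing $K_0(\Z_p\otimes e_r\mathcal{M}(N)[\Delta],\Q_p)\simeq\prod_{\chi}\Q_p(\chi)[\Delta]^{\times}/\Z_p(\chi)[\Delta]^{\times}$ via Corollary~\ref{cor:nr-image} (your observation that the local Schur indices of $N$ are prime to $p$, by \cite{MR279210}, is correct and is exactly what is used here), one writes each $\chi\in\Irr(N)$ as $\sum_i z_i\,\ind_{U_i}^N\lambda_i$ with $\lambda_i$ linear and $\tau\in U_i$. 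The restriction maps are compatible with this decomposition, and the $\chi$-component of $T\Omega_p$ is then expressed as a product of the $\lambda_i$-components, each of which vanishes because $L/L^{U_i\times\Delta}$ is an abelian CM extension. This is the missing idea: you must go \emph{down} to abelian subquotients where stronger input is available, not \emph{up} to the maximal order where only weaker information holds.
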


\begin{proof}
	If the $p$-power roots of unity $\mu_L(p)$ in $L$ form 
	a cohomologically trivial $G$-module, 
	then the first claim  in (i) is	due to the second named author
	\cite[Theorem 5.1]{MR2976321}; this includes the cases $\mu_L(p) = 1$
	and $p \nmid |G|$. The general case follows from more recent
	work of Burns \cite[Proof of Corollary 3.11 (iii)]{MR4195656}.
	The final claim is a consequence of Theorem \ref{thm:denominator-ideal-capZ}.
	
	For (ii) we first observe that it suffices to prove 
	the $p$-part of the ETNC for the pair 
	$(h^0(\Spec(L))(r), e_r\mathcal{M}(G))$.
	This follows from (the proofs of) \cite[Theorem 4.1]{MR2976321} if $r=0$ and
	\cite[Theorem 5.7]{MR2801311} if $r<0$. The relevant special cases of the
	ETNC indeed hold as follows from work of the second named author
	\cite[Theorem 1]{StrongStark} if $r=0$
	and of Burns \cite[Corollary 2.10]{MR3294653} if $r<0$ (see also 
	\cite[Theorem 12.3 (i)]{abelian-MC} for details).
	
	For part (iii) it again suffices to show the claim on the ETNC.
	The final assertion can then be deduced as in (i) and (ii).
	If $G$ is abelian, the ETNC for the pair
	$(h^0(\Spec(L))(r), e_{r}\Z[\half][G])$ holds unconditionally by recent work of
	Bullach, Burns, Daoud and Seo \cite{abelian-ETNC} if $r=0$ and of
	Johnston and the second named author \cite[Theorem 1.3]{abelian-MC} if $r<0$.
	From this we now deduce the $p$-part of the ETNC over the order 
	$e_r \mathcal{M}(N)[\Delta]$ by using Brauer induction as follows.
	Since $\Delta$ is a $p$-group for an odd prime $p$, 
	we have $\tau \in N$.
	We then have a decomposition
	\[
		\Z_p \otimes_{\Z} e_r \mathcal{M}(N) \simeq \prod_{\chi} \mathcal{M}_p(\chi)
	\]
	where the product runs over all irreducible odd (resp.\ even)
	characters of $N$ up to Galois action if $r$ is even (resp.\ odd).
	Each $\mathcal{M}_p(\chi)$ is a maximal $\Z_p$-order with centre 
	$\Z_p(\chi)$, the ring of integers in $\Q_p(\chi) := \Q_p(\chi(n) \mid n \in N)$,
	in a central simple $\Q_p(\chi)$-algebra. 
	The latter is a matrix ring over a skewfield whose Schur index
	is divisible by $(p-1)$ by \cite[Theorem 1]{MR279210}
	and thus in particular is prime to $p$. By Corollary \ref{cor:nr-image}
	the above decomposition and
	the reduced norm induce isomorphisms
	\[
		K_0(\Z_p \otimes_{\Z} e_r \mathcal{M}(N)[\Delta], \Q_p) \simeq
		\prod_{\chi} K_0(\mathcal{M}_p(\chi)[\Delta], \Q_p) \simeq
		\prod_{\chi} \Q_p(\chi)[\Delta]^{\times} / \Z_p(\chi)[\Delta]^{\times}.
	\]
	Now let $U$ be a subgroup of $N$ containing $\tau$. 
	Then there is a natural restriction map
	\[
		K_0(\Z_p \otimes_{\Z} e_r \mathcal{M}(N)[\Delta], \Q_p)  \rightarrow
		K_0(\Z_p \otimes_{\Z} e_r \mathcal{M}(U)[\Delta], \Q_p) 
	\]
	that maps a tuple $(f_{\chi})_{\chi}$
	to the tuple $(g_{\psi})_{\psi} \in \prod_{\psi}
	\Q_p(\psi)[\Delta]^{\times} / \Z_p(\psi)[\Delta]^{\times}$, where the product
	is over all irreducible characters $\psi$ of $U$ of the same parity as $\chi$
	modulo Galois action, and
	\[
		g_{\psi} = \prod_{\chi} f_{\chi}^{\langle \chi, \ind_U^N \psi \rangle_N}
	\]
	by a (variant of) Lemma \ref{lem:nr-under-restriction}.
	Let us fix one of the characters $\chi$. By Brauer's induction theorem 
	\cite[Theorem 15.9]{MR632548} there are integers $z_i$ and linear characters
	$\lambda_i$ of subgroups $U_i$ of $N$ such that
	\begin{equation} \label{eqn:Brauer-for-chi}
		\chi = \sum_i z_i \ind_{U_i}^N \lambda_i.
	\end{equation}
	We may assume that $\tau \in U_i$ for all $i$. To see this assume that
	$\tau \not\in U_i$ and set $\tilde U_i := U_i \times \langle \tau \rangle$. 
	We write $\ind_{U_i}^N \lambda_i = \ind_{\tilde U_i}^N (\ind_{U_i}^{\tilde U_i}
	\lambda_i)$ and observe that $\ind_{U_i}^{\tilde U_i} \lambda_i$ is the sum
	of two linear characters. Since the parity of a character is preserved under induction, we may assume in addition 
	that all $\lambda_i$ are of the same parity as $\chi$.
	We compute
	\[
		f_{\chi} = \prod_{\chi'} f_{\chi'}^{\langle \chi',\chi \rangle_N}
		= \prod_i \prod_{\chi'} f_{\chi'}^{z_i \langle\chi', \ind_{U_i}^N \lambda_i \rangle_N}
		= \prod_i g_{\lambda_i}^{z_i}.
	\]
	If all $g_{\lambda_i}$ are in $\Z_p(\lambda_i)[\Delta]^{\times}$,
	then $f_{\chi}$ must belong to $\Z_p(\chi)[\Delta]^{\times}$.
	Since the relevant special case of the ETNC holds for abelian CM extensions,
	the functorial behaviour of the ETNC, as discussed in
	\S \ref{subsec:functorialities}, implies the result.
\end{proof}

This theorem turns out to be sufficient in many, but not all cases of interest to us.
In order to handle the general case, we need a further variant in the case
where $G$ decomposes into a semi-direct product $G = N \rtimes \Delta$,
where $\Delta$ is a cyclic $p$-group. We start with some technical results
which are inspired by work of Ritter and Weiss \cite{MR2114937}.

\begin{lemma} \label{lem:center-of-semidirect}
	Let $G$ be a finite group. Let $N$ be a normal subgroup of $G$ such that
	$\Delta := G/N$ is a cyclic group. Choose a generator $\overline{\delta}$
	of $\Delta$.
	Let $F$ be a field of characteristic $0$
	such that $F$ is a splitting
	field for $G$ and all of its subgroups.
	Let $V_{\chi}$ be an irreducible $F[G]$-module with character $\chi$
	and let $\eta$ be an irreducible constituent of $\res^G_N \chi$.
	Then the following holds.
	\begin{enumerate}
		\item 
		The idempotent $e(\eta) := \sum_{x \in G / G_{\eta}} e_{^x \eta} \in F[N]$
		is central in $F[G]$ and there exists a unique 
		$\delta_{\chi} \in\zeta(F[G]e(\eta))$
		with all of the following properties.
		\begin{enumerate}
			\item 
			We have $\delta_{\chi} = \delta^{w_{\chi}} \cdot c 
			= c \cdot \delta^{w_{\chi}}$, 
			where $\delta \in G$ maps to $\overline{\delta}$
			under the canonical map $G \rightarrow G/N = \Delta$, 
			$c \in (F[N]e(\eta))^{\times}$ and $w_{\chi} = [G:G_{\eta}]$.
			\item
			The action of $\delta_{\chi}$ on $V_{\chi}$ is trivial.
			\item
			One has $\zeta(F[G]e(\eta)) = F[\Delta_{\chi}]$, where $\Delta_{\chi}$
			is a cyclic group of order $[G_{\eta}:N]$, generated by $\delta_{\chi}$.
		\end{enumerate}
		\item
		Let $\chi'$ be a second irreducible character of $G$ and $\eta'$
		be an irreducible constituent of $\res^G_N \chi'$. Then
		$e(\eta) = e(\eta')$ if and only if $\chi' = \chi \otimes \rho$
		for a linear character $\rho$ of $G$ such that $N \subset \ker(\rho)$.
		If this is the case, then 
		$\delta_{\chi \otimes \rho} = \delta_{\chi} \cdot \rho(\delta)^{-w_{\chi}}$.
		\item
		If $F$ is a finite extension of either $\Q$ or $\Q_p$ for some
		prime $p$ and $H$ is a matrix with entries in $\mathcal{O}_F[G]e(\eta)$,
		then $\nr(H) \in \mathcal{O}_F[\Delta_{\chi}]$.
	\end{enumerate}

\end{lemma}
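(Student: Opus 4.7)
The plan is to exploit the cyclicity of $G_{\eta}/N$ (which is forced by that of $\Delta$, since subgroups of cyclic groups are cyclic) to reduce the Clifford-theoretic setup to its simplest form. A classical result of Clifford theory asserts that when $G_{\eta}/N$ is cyclic, $\eta$ extends to an irreducible character of $G_{\eta}$; I fix such an extension $\psi$ chosen so that $\chi = \ind_{G_{\eta}}^{G}\psi$. This forces the multiplicity $m$ of Clifford's theorem to equal $1$, so by Proposition \ref{prop:psi-when-N-contains-commutator} and Corollary \ref{cor:idempotent-equalities} the irreducible characters $\chi_{i}$ of $G$ with $\langle \res^{G}_{N}\chi_{i}, \eta \rangle > 0$ are precisely $\chi_{\omega} := \ind_{G_{\eta}}^{G}(\psi \otimes \omega)$ for $\omega \in \Irr(G_{\eta}/N)$, these are pairwise distinct, and $\zeta(F[G]e(\eta)) = \bigoplus_{\omega} F e_{\chi_{\omega}}$ has $F$-dimension $k := [G_{\eta}:N]$.

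For part (i), the goal is to construct $\delta_{\chi}$ so that it acts on $V_{\chi_{\omega}}$ as the scalar $\omega(\overline{\delta}^{w_{\chi}}) \in F$. Since $\overline{\delta}^{w_{\chi}}$ generates the cyclic group $G_{\eta}/N$ of order $k$, these scalars range exactly over the $k$-th roots of unity in $F$; hence any such $\delta_{\chi}$ has multiplicative order $k$, and by the Chinese remainder theorem $F[\Delta_{\chi}] = \zeta(F[G]e(\eta))$, yielding (c). To realise $\delta_{\chi}$ in the form required by (a), Burnside density applied to the irreducible representation $\eta$ of $N$ produces $c_{0} \in F[N]$ with $\eta(c_{0}) = \psi(\delta^{w_{\chi}})^{-1}$; spreading $c_{0}$ $G$-equivariantly across the $G$-conjugates of $\eta$ yields $c \in (F[N]e(\eta))^{\times}$, and I set $\delta_{\chi} := \delta^{w_{\chi}} c$. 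A direct check using $(\psi \otimes \omega)|_{N} = \eta$ confirms that $\delta^{w_{\chi}}$ acts on $V_{\psi \otimes \omega}$ as $\omega(\overline{\delta}^{w_{\chi}})\psi(\delta^{w_{\chi}})$ while $c$ acts as $\psi(\delta^{w_{\chi}})^{-1}$, giving the predicted scalar and properties (a), (b), together with $\delta^{w_{\chi}} c = c \delta^{w_{\chi}}$. Uniqueness follows because centrality combined with form (a) forces $\psi(\delta^{w_{\chi}})\eta(c)$ to be a scalar matrix, and (b) pins this scalar to $1$.

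Part (ii) is then a short deduction: if $e(\eta) = e(\eta')$, then $\chi'$ appears among the $\chi_{\omega}$, say $\chi' = \chi_{\omega}$; cyclicity of $G/N$ extends $\omega$ to some $\rho \in \Irr(G/N)$, and the projection formula gives $\chi_{\omega} = \ind_{G_{\eta}}^{G}(\psi \otimes \res^{G}_{G_{\eta}}\rho) = \chi \otimes \rho$. The converse direction is immediate since $N \subseteq \ker\rho$ implies $\res^{G}_{N}(\chi \otimes \rho) = \res^{G}_{N}\chi$. Running the construction of (i) with the extension $\psi \otimes \res^{G}_{G_{\eta}}\rho$ of $\eta$ in place of $\psi$ multiplies the matrix $\psi(\delta^{w_{\chi}})$ by the scalar $\rho(\delta)^{w_{\chi}}$ and hence rescales $c$ by $\rho(\delta)^{-w_{\chi}}$, yielding $\delta_{\chi \otimes \rho} = \delta_{\chi} \cdot \rho(\delta)^{-w_{\chi}}$.

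Part (iii) is the real obstacle and requires an extra device, since a priori $\nr(H)$ is only known to lie in the maximal order $\mathcal{O}_{F}^{k}$ of $F[\Delta_{\chi}]$, which strictly contains $\mathcal{O}_{F}[\Delta_{\chi}]$ in general. Fix an integral representation of $\psi$ using \cite[Kapitel V, Satz 12.5]{MR0224703}, set $\pi_{\chi} := \ind_{G_{\eta}}^{G}\psi$, and assemble all $\pi_{\chi_{\omega}}$ into a single representation $\pi_{T}: G \to \GL_{\chi(1)}(\mathcal{O}_{F}[T]/(T^{d} - 1))$, where $d := [G:N]$ and $\pi_{T}(g) := T^{a(g)}\pi_{\chi}(g)$ with $a(g)$ the image of $g$ in $G/N \cong \Z/d\Z$. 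Specialising $T$ to $\rho_{i}(\delta)$ for any character $\rho_{i} \in \Irr(G/N)$ extending $\omega_{i}$ recovers $\pi_{\chi_{i}}$, and setting $Q(T) := \det\pi_{T}(H) \in \mathcal{O}_{F}[T]/(T^{d} - 1)$ gives $\nr(H) e_{\chi_{i}} = Q(\rho_{i}(\delta)) e_{\chi_{i}}$. The crucial symmetry is $Q(\zeta T) = Q(T)$ for every $\zeta \in \mu_{w_{\chi}} \subset F$: rescaling $T$ by $\zeta$ multiplies $\pi_{T}$ pointwise by the character $\tau_{\zeta} \in \Irr(G/N)$ sending $\overline{\delta} \mapsto \zeta$, which is trivial on $G_{\eta}/N$, so the projection formula yields $\chi \otimes \tau_{\zeta} = \chi$ as characters; consequently $\pi_{\zeta T}$ and $\pi_{T}$ are conjugate via some $P \in \GL_{\chi(1)}(F)$, preserving determinants. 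Therefore $Q(T) = R(T^{w_{\chi}})$ for some $R \in \mathcal{O}_{F}[S]/(S^{k} - 1)$, and $\nr(H) = R(\delta_{\chi}) \in \mathcal{O}_{F}[\Delta_{\chi}]$, completing the proof. The hard part is precisely this symmetry, which marries the combinatorics of the character twist with the identification $F[\Delta_{\chi}] \cong \zeta(F[G]e(\eta))$ pinned down in (i).
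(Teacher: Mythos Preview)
Your proof is correct. Parts (i) and (ii) follow the same Ritter--Weiss construction as the paper (the paper simply cites \cite[Proposition 5]{MR2114937} and supplements the verification that $\delta_{\chi}^{[G_{\eta}:N]} = e(\eta)$, while you spell out the construction of $c$ more explicitly). The only point worth tightening in your write-up of (i) is that the phrase ``spreading $c_{0}$ $G$-equivariantly'' should be made precise as $c := \sum_{j=0}^{w_{\chi}-1} \delta^{j} (c_{0} e_{\eta}) \delta^{-j}$, after which centrality of $\delta_{\chi}$ follows by computing its action on each summand $\delta^{j} \otimes V_{\psi\otimes\omega}$ of $V_{\chi_{\omega}} = \ind_{G_{\eta}}^{G} V_{\psi\otimes\omega}$ and seeing the scalar $\omega(\overline{\delta}^{w_{\chi}})$ on each.

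For part (iii) the two arguments diverge in packaging. The paper first reduces to $G = G_{\eta}$ via the embedding $\iota_{\eta}$ of \eqref{eqn:iota-eta} together with Lemma~\ref{lem:nr-under-restriction}, and then invokes the ring isomorphism $\iota_{\pi}\colon F[G]e(\eta) \simeq M_{\psi(1)}(F[G_{\eta}/N])$ of \eqref{eqn:iota_pi} and the integrality statement \eqref{eqn:integrality-of-nr}; this recycles machinery already built in the proof of Theorem~\ref{thm:integrality-mainstep}. You avoid the reduction step entirely: your universal representation $\pi_{T}$ over $\mathcal{O}_{F}[T]/(T^{d}-1)$ is precisely $\iota_{\pi}$ (with $T \leftrightarrow \overline{\delta}$) applied to the \emph{full} group $G$ rather than to $G_{\eta}$, and the $\mu_{w_{\chi}}$-symmetry $Q(\zeta T) = Q(T)$ replaces the passage from $G$ to $G_{\eta}$ by an algebraic descent from $\mathcal{O}_{F}[T]/(T^{d}-1)$ to its invariant subring $\mathcal{O}_{F}[T^{w_{\chi}}]/(T^{d}-1) \cong \mathcal{O}_{F}[\Delta_{\chi}]$. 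The two approaches are therefore closely related repackagings of the same idea; yours is a self-contained direct argument, while the paper's has the virtue of explaining how the result fits into the inductive framework of \S\ref{sec:Clifford-theory} and Theorem~\ref{thm:integrality-mainstep}.
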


\begin{proof}
	It is clear that $e(\eta)$ is central in $G$ (see also Corollary 
	\ref{cor:idempotent-equalities} (i)). The existence of $\delta_{\chi}$
	with properties (a) and (b) can be shown by the argument
	of Ritter and Weiss \cite[Proposition 5 (2)]{MR2114937}.
	Note that this actually shows that (a) and (b) determine $\delta_{\chi}$
	uniquely.
	We warn the reader that our $e_{\eta}$ and $e(\eta)$ are denoted
	by $e(\eta)$ and $e_{\chi}$ in \cite{MR2114937}, respectively.
	The main point is that in the notation of \S \ref{sec:Clifford-theory}
	we have $m=1$ and $U_{\psi} = G_{\eta}$ by \cite[Satz 17.12]{MR0224703} because
	$\Delta$ is a cyclic group. For (c) one first has to check
	that $\delta_{\chi}^{[G_{\eta}:N]} = e(\eta)$ once
	(a) and (b) are established. Here the argument slightly differs from that
	given in \cite{MR2114937} and runs as follows. 
	
	Since $c$ and $\delta^{w_{\chi}}$ commute and
	$w_{\chi} \cdot	[G_{\eta}:N] = [G:N] = |\Delta|$, we certainly
	have that $\delta_{\chi}^{[G_{\eta}:N]} = n \cdot c^{[G_{\eta}:N]}$ for 
	some $n \in N$. In particular, $\delta_{\chi}^{[G_{\eta}:N]}$ is in
	$(F[N]e(\eta))^{\times}$ and acts trivially upon $V_{\chi}$.
	However, the module $F[N]e(\eta)$ is a submodule of some multiple
	of $\res^G_N V_{\chi}$ so that the only such element is $e(\eta)$.
	
	The index $[G_{\eta}:N]$ is actually the minimal positive integer $z$ such that
	$\delta_{\chi}^z = e(\eta)$ as the latter equality in particular implies
	that $\delta^{w_{\chi} z} \in N$. Hence $\delta_{\chi}$ indeed generates a
	cyclic group of order $[G_{\eta}:N]$. Now the inclusion
	$F[\Delta_{\chi}] \subseteq \zeta(F[G]e(\eta))$ must be an equality as we
	already know that the right hand side is an $F$-vector space of dimension
	$[G_{\eta}:N]$ by Proposition \ref{prop:psi-when-N-contains-commutator} (iii).
	This finishes the proof of (i).
	
	Part (ii) can be shown along the lines of 
	\cite[Corollary on p.\ 556 and Proposition 5 (3)]{MR2114937}.
	
	To show the final claim, we give a second argument 
	which reproves a large part
	of (c) and clarifies the relation to our results of 
	\S \ref{sec:Clifford-theory}. 
	We have seen in the proof of Theorem \ref{thm:integrality-mainstep}
	that the embedding \eqref{eqn:iota-eta} allows us to assume 
	$G = G_{\eta}$ by Lemma \ref{lem:nr-under-restriction}.
	Since $G_{\eta} = U_{\psi}$, we then have developed the (abstract)
	isomorphism \eqref{eqn:iota_pi}. Putting both maps together this gives an
	abstract isomorphism $\zeta(F[G]e(\eta)) \simeq F[G_{\eta}/N]$.
	Note that the quotient $G_{\eta}/N$ identifies with the subgroup of 
	$\Delta$ generated by $\overline\delta^{w_{\chi}}$,
	and an isomorphism is
	explicitly given by $\delta_{\chi} \mapsto \overline\delta^{w_{\chi}}$.
	Finally, equation \eqref{eqn:integrality-of-nr} shows that
	$\nr(H) \in \mathcal{O}_F[G_{\eta}/N]$ under this identification.
\end{proof}

Let $p$ be a prime and let $G$ be a finite group with a normal subgroup $N$
such that $\Delta := G/N$ is cyclic.
If $F/\Q_p$ is a finite extension that is a splitting field for $G$
and all of its subgroups, then we have canonical maps
\begin{equation} \label{eqn:scaler-extension}
	K_0(\Z_p[G], \Q_p) \rightarrow K_0(\mathcal{O}_F[G], F) \simeq
	\zeta(F[G])^{\times} / \nr(\mathcal{O}_F[G]^{\times}) \rightarrow
	\prod_{\chi / \sim}F[\Delta_{\chi}]^{\times} / \mathcal{O}_F[\Delta_{\chi}]^{\times}.
\end{equation}
Here, the first map and the isomorphism are induced from extension of scalars
and the reduced norm, respectively. The product on the right is over all
irreducible characters $\chi$ of $G$ modulo the relation $\chi \sim \chi'$
if and only if $\chi' = \chi \otimes \rho$ for a linear character $\rho$
of $G$ whose kernel contains $N$. Lemma
\ref{lem:center-of-semidirect} (i)(c) implies that
\[
	\zeta(F[G]) = \prod_{\chi/ \sim} F[\Delta_{\chi}]
\]
and the final claim of the same lemma implies that the rightmost arrow
is a well-defined epimorphism.
Note that if $(f_{\chi})_{\chi} \in \zeta(F[G])$, then replacing a
character $\chi$ by an equivalent character $\chi \otimes \rho$,
changes $f_{\chi}$ to $f_{\chi \otimes \rho} = \rho^{\sharp}(f_{\chi})$,
where $\rho^{\sharp}$ is the $F$-linear map on $F[\Delta_{\chi}]$ sending
$\delta_{\chi}^{i}$ to $(\rho(\delta)^{-w_{\chi}} \delta_{\chi})^{i} = 
\delta_{\chi \otimes \rho}^{i}$, $i \in \Z$.

We now prove the following variant of Theorem \ref{thm:ETNC-cases} (iii).

\begin{prop} \label{prop:bounding-denominators}
	Let $L/K$ be a Galois CM extension of number fields with Galois group 
	$G$. Let $N$ be a normal subgroup of $G$ such that $\Delta := G/N$ 
	is a cyclic group.
	Let $p$ be an odd prime and let $r \leq 0$ be an integer. 
	Let $S$ and $T$ be two finite sets of places of $K$ such that
	$\Hyp(S,T)$ is satisfied. 
	Let $F/\Q_p$ be a finite extension that is a splitting field for $G$
	and all of its subgroups. Then the following holds.
	\begin{enumerate}
		\item 
		The image of $T\Omega(L/K, r)$ in
		$\prod_{\chi / \sim}F[\Delta_{\chi}]^{\times} / \mathcal{O}_F[\Delta_{\chi}]^{\times}$ under \eqref{eqn:K0-iso}
		and \eqref{eqn:scaler-extension} vanishes.
		\item 
		We have that $\theta_{S}^T(L/K,r) \in 
		(\prod_{\chi / \sim} \mathcal{O}_F[\Delta_{\chi}]) \cap \zeta(\Q_p[G])$
		so that in particular $|N|\theta_{S}^{T}(L/K,r) \in \zeta(\Z_p[G])$.
	\end{enumerate}
\end{prop}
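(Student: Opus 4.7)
The plan is to derive (i) by Brauer induction, reducing to known cases of the ETNC for abelian CM extensions, and then to deduce (ii) as a formal consequence. This is a variant of the strategy used in the proof of Theorem \ref{thm:ETNC-cases} (iii), adapted to the present setting where $\Delta = G/N$ is merely cyclic rather than a $p$-group and need not be a direct factor of $G$.

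For (i), I would fix an equivalence class $\chi/\sim$ with an irreducible constituent $\eta$ of $\res^G_N \chi$ and verify that the image of $T\Omega(L/K, r)$ in the single factor $F[\Delta_\chi]^\times/\mathcal{O}_F[\Delta_\chi]^\times$ vanishes. Applying Brauer's induction theorem, write $\chi = \sum_i z_i \ind_{U_i}^G \lambda_i$ with integers $z_i$, subgroups $U_i \leq G$ and linear characters $\lambda_i$ of $U_i$; as in the proof of Theorem \ref{thm:ETNC-cases} (iii), one arranges that $\tau \in U_i$ and that each $\lambda_i$ has the same parity as $\chi$, so that $L^{\ker \lambda_i}/L^{U_i}$ is an abelian CM extension. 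For such extensions the $p$-part of the ETNC is known by \cite{abelian-ETNC} if $r=0$ and by \cite[Theorem 1.3]{abelian-MC} if $r<0$. Combining the functoriality of the ETNC under restriction and inflation (\S \ref{subsec:functorialities}) with a version of Lemma \ref{lem:nr-under-restriction} adapted to the reduced norm on the factor $\zeta(F[G] e(\eta))$ of $\zeta(F[G])$, the image at the $\chi$-component equals a $z_i$-weighted product of images of the abelian elements $T\Omega(L^{\ker \lambda_i}/L^{U_i}, r)$, and therefore vanishes.

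The main technical obstacle is matching this computation with the group ring $\mathcal{O}_F[\Delta_\chi]$ itself rather than with some coarser quotient: one needs Lemma \ref{lem:center-of-semidirect} (i)(a) to track how $\delta^{w_\chi}$ interacts with each $U_i$, and to check that the Brauer-induction formula at the level of $K$-theory produces an element genuinely in $F[\Delta_\chi]^\times/\mathcal{O}_F[\Delta_\chi]^\times$. This should be feasible by first enlarging each $U_i$ so that its image in $\Delta$ contains the subgroup $\langle \overline{\delta}^{w_\chi} \rangle$ and then decomposing any resulting non-linear induction back into linear characters via Clifford's Theorem \ref{thm:Clifford} and a second application of Brauer induction.

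For (ii), the identification of $\theta_S^T(L/K, r)$ with the reduced norm of a representative of $T\Omega_p(L/K, r)$ via the interpolation property of the ETNC (as used in the proof of Theorem \ref{thm:ETNC-cases} (i)), combined with (i) and the right-exact sequence \eqref{eqn:longterm-for-grouprings}, forces $\theta_S^T(L/K, r)$ to lie componentwise in $\mathcal{O}_F[\Delta_\chi]$. The integrality statement $|N|\theta_S^T(L/K, r) \in \zeta(\Z_p[G])$ then follows from the bound $|N| e(\eta) \in \mathcal{O}_F[N]$, the explicit form $\delta_\chi = \delta^{w_\chi} c$ with $c \in (F[N] e(\eta))^\times$ provided by Lemma \ref{lem:center-of-semidirect} (i)(a), and $\Gal(F/\Q_p)$-invariance of $\zeta(\Q_p[G])$ inside $\zeta(F[G])$.
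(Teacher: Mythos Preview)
Your overall strategy for (i) --- Brauer induction reducing to the known abelian cases of the ETNC --- matches the paper. However, your treatment of what you call the ``main technical obstacle'' diverges from the paper and is where I see a real gap.

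The paper does \emph{not} enlarge the subgroups $U_i$. Instead it makes two observations. First, for \emph{any} subgroup $U \leq G$ containing $\tau$, the quotient $\Sigma := U/(U \cap N)$ embeds in $\Delta$ and is therefore cyclic, so Lemma \ref{lem:center-of-semidirect} applies to $U$ with normal subgroup $U \cap N$ just as it does to $G$, producing groups $\Sigma_\psi$ for $\psi \in \Irr(U)$. Second, the injections $j_\chi : F[\Delta_\chi] \to F[\Delta]$, $\delta_\chi \mapsto \overline{\delta}^{\,w_\chi}$ (and analogously $j_\psi : F[\Sigma_\psi] \to F[\Delta]$) place all the factors inside the common target $F[\Delta]$, where the restriction map on $K_0$ takes the clean form
\[
j_\psi(g_\psi) \;=\; \prod_{\chi} j_\chi(f_\chi)^{\langle \chi,\, \ind_U^G \psi \rangle_G}.
\]
Fixing $\chi$ and a Brauer decomposition $\chi = \sum_i z_i \ind_{U_i}^G \lambda_i$, one then reads off $j_\chi(f_\chi) = \prod_i j_{\lambda_i}(g_{\lambda_i})^{z_i}$ inside $F[\Delta]$. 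The decisive compatibility --- absent from your proposal --- is that this Brauer decomposition of $\chi$ automatically yields one for every twist $\chi \otimes \rho$ via $\lambda_i \mapsto \lambda_i \otimes \res^G_{U_i}\rho$, and since $\ker(\res^G_{U_i}\rho) \supseteq U_i \cap N$ each such twist is $\sim$-equivalent to $\lambda_i$. This is precisely what makes the computation respect the $\sim$-classes and hence land in $F[\Delta_\chi]^\times / \mathcal{O}_F[\Delta_\chi]^\times$. Your proposed alternative (enlarge $U_i$, then re-apply Clifford and a second Brauer induction) is at best circuitous, and it is not clear the process terminates: the new linear characters produced by the second Brauer step may again fail to have large enough image in $\Delta$.

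For (ii), your description of $\theta_S^T$ as ``the reduced norm of a representative of $T\Omega_p$'' is not accurate, and the paper's argument is different and shorter: it invokes the implication (full $p$-part of ETNC $\Rightarrow \theta_S^T \in \mathcal{I}_p(G)$) from Theorem \ref{thm:ETNC-cases} (i), combined with the containment $\mathcal{I}_p(G) \subseteq \prod_{\chi/\sim} \mathcal{O}_F[\Delta_\chi]$ supplied by Lemma \ref{lem:center-of-semidirect} (iii). Since part (i) establishes the ETNC only up to a unit in $\prod_{\chi/\sim} \mathcal{O}_F[\Delta_\chi]$, that same mechanism nevertheless forces $\theta_S^T$ into $\prod_{\chi/\sim} \mathcal{O}_F[\Delta_\chi]$, which is all that is claimed.
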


\begin{proof}
	Let $U$ be a subgroup of $G$ containing $\tau$. 
	We set $V := U \cap N$ which is a normal subgroup of $U$.
	Then $\Sigma := U/V$ naturally identifies with a subgroup of $\Delta$
	and is therefore cyclic.
	Consider the canonical restriction map 
	$K_0(\mathcal{O}_F[G],F) \rightarrow K_0(\mathcal{O}_F[U],F)$.
	We view $K_0(\mathcal{O}_F[G],F)$ as a quotient of $\zeta(F[G])^{\times}
	\simeq \prod_{\chi/ \sim} F[\Delta_{\chi}]^{\times}$ and likewise
	$K_0(\mathcal{O}_F[U],F)$ as a quotient of $\zeta(F[U])^{\times}
	\simeq \prod_{\psi/ \sim} F[\Sigma_{\psi}]^{\times}$.
	We now describe the image $(g_{\psi})_{\psi}$ 
	of a tuple $(f_{\chi})_{\chi}$ under restriction.
	For this let $j_{\chi}: F[\Delta_{\chi}] \rightarrow
	F[\Delta]$ be the injective map which sends $\delta_{\chi}$ to
	$\overline \delta^{w_{\chi}}$. Define $j_{\psi}: F[\Sigma_{\psi}]
	\rightarrow F[\Sigma] \subseteq F[\Delta]$ similarly. 
	Then one has the following equality in $F[\Delta]$ which determines $g_{\psi}$
	uniquely:
	\[
		j_{\psi}(g_{\psi}) = \prod_{\chi} j_{\chi}(f_{\chi})^{\langle\chi, \ind^G_U \psi \rangle_G}
	\]
	This can be shown as \cite[Lemma 9]{MR2114937}.
	Now fix $\chi \in \Irr(G)$. By Brauer's induction theorem 
	\cite[Theorem 15.9]{MR632548} there are integers $z_i$ and linear characters
	$\lambda_i$ of subgroups $U_i$ of $G$ such that
	\[
		\chi = \sum_i z_i \ind_{U_i}^G \lambda_i.
	\]
	If $\chi' = \chi \otimes \rho$ is a character equivalent to $\chi$, then likewise
	\[
		\chi \otimes \rho = \sum_i z_i (\ind_{U_i}^G \lambda_i) \otimes \rho
		= \sum_i z_i \ind_{U_i}^G (\lambda_i \otimes \res^G_{U_i} \rho)
	\]
	and each $\lambda_i \otimes \res^G_{U_i} \rho$ is a character equivalent to
	$\lambda_i$. For (i) we now may conclude as in the proof of Theorem
	\ref{thm:ETNC-cases}.
	
	By Theorem \ref{thm:ETNC-cases} (i) the full $p$-part of the ETNC implies
	that the Stickelberger elements belong to
	\[
		\mathcal{I}_p(G) \subseteq \prod_{\chi / \sim} \mathcal{O}_F[\Delta_{\chi}],
	\]
	where the inclusion follows from Lemma \ref{lem:center-of-semidirect} (iii).
	Since we know the ETNC up to a factor in the right hand side,
	we get (ii).
\end{proof}

\section{Iwasawa theory}

\subsection{Stickelberger elements in cyclotomic $\Z_{p}$-extensions}
Fix an odd prime $p$. We write $F_{\infty}$
for the cyclotomic $\Z_p$-extension of a number field $F$ and $F_n$
for its $n$-th layer which is the unique intermediate field of degree
$[F_n:F] = p^n$. Let $\Gamma_F := \Gal(F_{\infty}/F)$ which is 
(non-canonically) isomorphic to $\Z_p$. We choose a topological
generator $\gamma_F$ of $\Gamma_F$.
For a profinite group $\mathcal{G}$ we let
$\Lambda(\mathcal{G}) :=
\Z_p\llbracket \mathcal{G} \rrbracket = \varprojlim \Z_p[\mathcal{G}/\mathcal{N}]$,
where the inverse limit runs over all open normal subgroups $\mathcal{N}$ of
$\mathcal{G}$.

Let $L/K$ be a Galois CM extension of number fields with Galois group $G$.
 We set $G_n := \Gal(L_n/K)$ and 
$\mathcal{G} := \Gal(L_{\infty}/K) = \varprojlim_n G_n$.
Then $\mathcal{G}$ may be written as a semi-direct product
$\mathcal{G} = H \rtimes \Gamma$, where $H$ identifies with a subgroup
of $G$ and $\Gamma \simeq \Z_p$. The quotient $\mathcal{G}/H$ naturally
identifies with $\Gamma_K$.
If $n$ is large enough, one likewise has $G_n = H \rtimes \Gamma_n$,
where $\Gamma_n \simeq G_n/H = \Gal(L_n \cap K_{\infty}/K)$ is a finite
cyclic $p$-group. 

Now let $S$ and $T$ be two finite sets of places of $K$ such that
$\Hyp(S,T)$ is satisfied for all extensions $L_n/K$, $n \in \N$.
In particular, all $p$-adic places are contained in $S$.
Let $r \leq 0$ be an integer. Then the Stickelberger element
$\theta_{S}^{T}(L_{n+1}/K,r)$ maps to $\theta_{S}^{T}(L_n/K,r)$
under the canonical projection $\Q[G_{n+1}] \rightarrow \Q[G_n]$.
Since $\mathcal{G}/H$ is abelian, the commutator subgroup
$\mathcal{G}'$ of $\mathcal{G}$ is indeed a subgroup of $H$ and the canonical
surjection $\mathcal{G} \rightarrow G_n$ induces an epimorphism
$\mathcal{G}' \rightarrow G_n'$ which is an isomorphism for sufficiently large $n$.
If the integrality conjectures hold along the cyclotomic tower, then the
denominators of $\theta_{S}^{T}(L_n/K,r)$ are bounded by $|\mathcal{G}'|$
as a consequence of Theorem \ref{thm:denominator-ideal-capZ}.
We do not know this in general, 
but as a consequence of Proposition 
\ref{prop:bounding-denominators} (ii) the following holds.
	
\begin{theorem} \label{thm:bounded-denominators}
	Let $p$ be an odd prime and let $r \leq 0$ be an integer.
	Let $L/K$ be a Galois CM extension of number fields
	and suppose that $\Hyp(S,T)$ is satisfied for every
	extension $L_n/K$.
	Then the denominators of the Stickelberger elements 
	$\theta_{S}^T(L_n/K,r) \in \Q_p[G_n]$
	are bounded by a constant that does not depend on either
	$n$ or $r$.
\end{theorem}

This suffices to define
\begin{equation} \label{eqn:infinite-Stickelberger}
	\theta_{S}^{T}(L_{\infty}/K,r) := \varprojlim_n \theta_{S}^{T}(L_n/K,r)
	\in \Q_p \otimes_{\Z_p} \Lambda(\mathcal{G}).
\end{equation}
Note that we can use Theorem \ref{thm:ETNC-cases} (iii) if 
$\mathcal{G} = H \times \Gamma$ is a direct product.

\subsection{The Iwasawa algebra as an order}
Recall the $\mathcal{G} = H \rtimes \Gamma$ is a one-dimensional $p$-adic Lie
group for an odd prime $p$.
If $F$ is a finite field extension of $\Q_{p}$  with ring of integers $\mathcal{O}=\mathcal{O}_{F}$,
we put $\Lambda^{\mathcal{O}}(\mathcal{G}) := \mathcal{O} \otimes_{\Z_{p}} \Lambda(\mathcal{G}) = \mathcal{O} \llbracket \mathcal{G} \rrbracket $.
We choose a sufficiently large integer $n$ such that $\Gamma^{p^n}$ is central
in $\mathcal{G}$ and set $\Gamma_{0} := \Gamma^{p^n}$ which is an open central
subgroup of $\mathcal{G}$. Choose a topological generator $\gamma$ of $\Gamma$.
There is a ring isomorphism
$R:=\mathcal{O} \llbracket \Gamma_{0} \rrbracket  \simeq \mathcal{O} \llbracket t \rrbracket $ induced by $\gamma^{p^n} \mapsto 1+t$
where $\mathcal{O} \llbracket t \rrbracket $ denotes the power series ring in one variable over $\mathcal{O}$.
If we view $\Lambda^{\mathcal{O}}(\mathcal{G})$ as an $R$-module (or indeed as a left $R[H]$-module), there is a decomposition
\[
	\Lambda^{\mathcal{O}}(\mathcal{G}) = \bigoplus_{i=0}^{p^n-1} R[H] \gamma^{i}.
\]
Hence $\Lambda^{\mathcal{O}}(\mathcal{G})$ is finitely generated as an $R$-module and is an $R$-order in the separable $E:=Quot(R)$-algebra
$\mathcal{Q}^{F} (\mathcal{G})$, the total ring of fractions of $\Lambda^{\mathcal{O}}(\mathcal{G})$, obtained
from $\Lambda^{\mathcal{O}}(\mathcal{G})$ by adjoining inverses of all central regular elements.
Note that $\mathcal{Q}^{F} (\mathcal{G}) =  E \otimes_{R} \Lambda^{\mathcal{O}}(\mathcal{G})$ and that by
\cite[Lemma 1]{MR2114937} we have $\mathcal{Q}^{F} (\mathcal{G}) = F \otimes_{\Q_{p}} \mathcal{Q}(\mathcal{G})$,
where $\mathcal{Q}(\mathcal{G}) := \mathcal{Q}^{\Q_{p}}(\mathcal{G})$.

\subsection{Characters and central primitive idempotents} \label{subsec:idempotents}
Fix a character $\chi \in \Irr_{\Q_{p}^{c}}(\mathcal{G})$ 
(i.e.\ an irreducible $\Q_{p}^{c}$-valued character of $\mathcal{G}$ with open kernel)
and let $\eta$ be an irreducible constituent of
$\res^{\mathcal{G}}_{H} \chi$.
As in the case of finite groups, $\mathcal{G}$ acts on $\eta$ as 
$^{g}\eta(h) = \eta(g^{-1}hg)$
for $g \in \mathcal{G}$, $h \in H$, and we set
\[
\mathcal{G}_{\eta} := \{g \in \mathcal{G} \mid {}^{g}\eta = \eta \}, \quad e_{\eta} := \frac{\eta(1)}{|H|} \sum_{h \in H} \eta(h^{-1}) h,
\quad \varepsilon_{\chi} := \sum_{\eta \mid \res^{\mathcal{G}}_{H} \chi} e_{\eta}.
\]
Note that $\varepsilon_{\chi}$ was denoted $e(\eta)$ in previous sections,
but we now focus on its dependence on $\chi$ rather than $\eta$.
By \cite[Corollary to Proposition 6]{MR2114937} $\varepsilon_{\chi}$ 
is a primitive central idempotent of
$\mathcal{Q}^{c}(\mathcal{G}) := \Q_{p}^{c} \otimes_{\Q_{p}} \mathcal{Q}(\mathcal{G})$.
In fact, every primitive central idempotent of $\mathcal{Q}^{c}(\mathcal{G})$ is of this form
and $\varepsilon_{\chi} = \varepsilon_{\chi'}$ if and only if $\chi = \chi' \otimes \rho$ for some character $\rho$ of $\mathcal{G}$ of type $W$
(i.e.~$\res^{\mathcal{G}}_{H} \rho = 1$).
Let $w_{\chi} = [\mathcal{G} : \mathcal{G}_{\eta}]$ and note that this is a power of $p$ since $H$ is a subgroup of $\mathcal{G}_{\eta}$.

Let $F/\Q_{p}$ be a finite extension over which both characters $\chi$ and $\eta$ have realisations.
Let $V_{\chi}$ denote a realisation of $\chi$ over $F$.	
By \cite[Propositions 5 and 6]{MR2114937}, there exists a unique element $\gamma_{\chi} \in \zeta(\mathcal{Q}^{F}(\mathcal{G})\varepsilon_{\chi})$ 
such that $\gamma_{\chi}$ acts trivially on $V_{\chi}$ and $\gamma_{\chi} = g_{\chi}c_{\chi}$ with $g_{\chi} \in \mathcal{G}$ mapping to
$\gamma_{K}^{w_{\chi}} \bmod H$ and with $c_{\chi} \in (F[H]\varepsilon_{\chi})^{\times}$.
Moreover, $\gamma_{\chi}$ generates a pro-cyclic $p$-subgroup $\Gamma_{\chi}$ of $\mathcal{Q}^{F}(\mathcal{G})\varepsilon_{\chi}$ and induces an isomorphism $\mathcal{Q}^{F}(\Gamma_{\chi}) \stackrel{\simeq}{\longrightarrow} \zeta(\mathcal{Q}^{F} (\mathcal{G})\varepsilon_{\chi})$.
One should compare this to Lemma \ref{lem:center-of-semidirect}.

\subsection{Determinants and reduced norms}\label{subsec:dets-and-nr}
Following \cite[Proposition 6]{MR2114937}, we define a map
\[
j_{\chi}: \zeta(\mathcal{Q}^{F} (\mathcal{G})) \twoheadrightarrow \zeta(\mathcal{Q}^{F} (\mathcal{G})\varepsilon_{\chi}) \simeq \mathcal{Q}^{F}(\Gamma_{\chi}) \rightarrow  \mathcal{Q}^{F}(\Gamma_{K}),
\]
where the last arrow is induced by mapping $\gamma_{\chi}$ to $\gamma_{K}^{w_{\chi}}$.
It follows from op.\ cit.\ that $j_{\chi}$ is independent of the choice of $\gamma_{K}$ and that 
for every matrix $\Theta \in M_{n \times n} (\mathcal{Q}(\mathcal{G}))$ we have
\begin{equation} \label{eqn:jchi-det}
	j_{\chi} (\nr(\Theta)) = \mathrm{det}_{\mathcal{Q}^{F}(\Gamma_{K})} (\Theta \mid \Hom_{F[H]}(V_{\chi},  \mathcal{Q}^{F}(\mathcal{G})^n)).
\end{equation}
Here, $\Theta$ acts on $f \in \Hom_{F[H]}(V_{\chi},  \mathcal{Q}^{F}(\mathcal{G})^{n})$ via right multiplication,
and $\gamma_{K}$ acts on the left via $(\gamma_{K} f)(v) = \gamma \cdot f(\gamma^{-1} v)$ for all $v \in V_{\chi}$
which is easily seen to be independent of the choice of $\gamma$.
If $\rho$ is a character of $\mathcal{G}$ of type $W$,
then we denote by
$\rho^{\sharp}$ the automorphism of the field $\mathcal{Q}^{c}(\Gamma_{K})$ induced by
$\rho^{\sharp}(\gamma_{K}) = \rho(\gamma_{K}) \gamma_{K}$. 
Moreover, we denote the additive group generated by all $\Q_{p}^{c}$-valued
characters of $\mathcal{G}$ with open kernel by $R_p(\mathcal{G})$. 
If $\mathcal{Q}$ is a subring of $\mathcal{Q}^{c}(\Gamma_{K})$
which is closed under the actions of $G_{\Q_{p}}$ and $\rho^{\sharp}$
for all $\rho$, we let $\Maps^{\ast}(\Irr_{\Q_{p}^{c}}(\mathcal{G}), \mathcal{Q})$ (resp.\
$\Hom^{\ast}(R_{p}( \mathcal{G}), \mathcal{Q}^{\times})$)
be the set of all maps $f: \Irr_{\Q_{p}^{c}}(\mathcal{G}) \rightarrow 
\mathcal{Q}$ (resp.\
the group of homomorphisms 
$f: R_p(\mathcal{G}) \rightarrow \mathcal{Q}^{\times}$) satisfying
\[
\begin{array}{ll}
	f(\chi \otimes \rho) = \rho^{\sharp}(f(\chi)) & \mbox{ for all characters } \rho \mbox{ of type } W \mbox{ and}\\
	f({}^{\sigma}\chi) = \sigma(f(\chi)) & \mbox{ for all Galois automorphisms } \sigma \in G_{\Q_{p}}.
\end{array}
\]
By \cite[Proof of Theorem 8]{MR2114937} we have isomorphisms
\begin{eqnarray*}
	\zeta(\mathcal{Q}(\mathcal{G})) & \simeq & 
	\Maps^{\ast}(\Irr_{\Q_{p}^{c}}(\mathcal{G}), \mathcal{Q}^{c}(\Gamma_{K}))\\
	\zeta(\mathcal{Q}(\mathcal{G}))^{\times} & \simeq & 
	\Hom^{\ast}(R_{p}(\mathcal{G}), \mathcal{Q}^{c}(\Gamma_{K})^{\times})\\
	x & \mapsto & [\chi \mapsto j_{\chi}(x)].
\end{eqnarray*}
Let $\Z_p^c$ be the integral closure of $\Z_p$ in $\Q_p^c$.
Let $\mathcal{M}(\mathcal{G})$ be a maximal $R$-order in 
$\mathcal{Q}(\mathcal{G})$ containing $\Lambda(\mathcal{G})$.
Then the first isomorphism restricts to
\begin{equation} \label{eqn:center-of-max}
	\zeta(\mathcal{M}(\mathcal{G})) \simeq
	\Maps^{\ast}(\Irr_{\Q_{p}^{c}}(\mathcal{G}), \Z_p^c \otimes_{\Z_p}\Lambda(\Gamma_{K}))
\end{equation}
by \cite[Remark H]{MR2114937}. Since $\Q_p \otimes_{\Z_p} \Lambda(\mathcal{G})
= \Q_p \otimes_{\Z_p} \mathcal{M}(\mathcal{G})$, we finally obtain
an isomorphism
\[
	\zeta(\Q_p \otimes_{\Z_p} \Lambda(\mathcal{G})) \simeq
	\Maps^{\ast}(\Irr_{\Q_{p}^{c}}(\mathcal{G}), \Q_p^c \otimes_{\Z_p}\Lambda(\Gamma_{K})).
\]

For each totally odd irreducible character $\chi$ we set
\[
	g_{\chi, S}^{T} := j_{\chi}(\theta_{S}^T(L_{\infty}/K,0)) \in
	\Q_{p}^c \otimes_{\Z_p} \Lambda(\Gamma_K),
\]
which is well-defined by \eqref{eqn:infinite-Stickelberger}.
Let $\rho$ be a character of type $W$.
We may view $g_{\chi, S}^{T}$ as a power series in $t$ via $\gamma_K \mapsto 1+t$.
Evaluating at $\rho(\gamma_K)-1$ gives
\begin{equation} \label{eqn:evaluation-S-T}
	g_{\chi, S}^{T}(\rho(\gamma_K)-1) = g_{\chi \otimes \rho, S}^{T}(0) =
	\delta_T(0, \chi \otimes \rho)L_S(0, \check\chi \otimes \check \rho).
\end{equation}
If $v$ is a place not in $S \cup T$ and $T' = T \cup \{v\}$, then
\[
	\theta_{S}^{T'}(L_{\infty}/K,0) = \delta_v \cdot
	\theta_{S}^T(L_{\infty}/K,0), \quad \delta_v := \nr(1- (Nv) \sigma^{-1}_{w_{\infty}}),
\]
where $\sigma_{w_{\infty}}$ denotes the Frobenius automorphism
at a place $w_{\infty}$ in $L_{\infty}$ above $v$. 
Therefore we may define
\[
	\theta_{S}(L_{\infty}/K,0) :=  \left(\prod_{v \in T} 
	\delta_v^{-1}\right) \theta_{S}^T(L_{\infty}/K,0)
\]
and likewise $g_{\chi, S} := j_{\chi}(\theta_{S}(L_{\infty}/K,0))$.
Then \eqref{eqn:evaluation-S-T} implies that
\begin{equation} \label{eqn:evaluation_S}
	g_{\chi, S}(\rho(\gamma_K)-1) = g_{\chi \otimes \rho, S}(0) =
	L_S(0, \check\chi \otimes \check \rho).
\end{equation}
If $\chi$ is a not necessarily irreducible character of $\mathcal{G}$
with open kernel, we write $\chi$ as a finite sum 
$\chi = \sum_i n_i \chi_i$ with positive integers $n_i$ and
$\chi_i \in \Irr_{\Q_{p}^{c}}(\mathcal{G})$, and define
$g_{\chi,S}^T := \prod_i (g_{\chi_i,S}^T)^{n_i}$ and similarly if $T = \emptyset$. Note that \eqref{eqn:evaluation-S-T} and \eqref{eqn:evaluation_S}
remain valid.

Let $N$ be a finite normal subgroup of $\mathcal{G}$ such that
$\tau \not\in N$. Put
$\overline{\mathcal{G}} := \mathcal{G}/N$.
Suppose that $\chi = \infl_{\overline{\mathcal{G}}}^{\mathcal{G}}
\overline{\chi}$ for some character $\overline{\chi}$
of $\overline{\mathcal{G}}$. As Artin $L$-series behave well under
inflation of characters, we have that
\begin{equation} \label{eqn:behaviour-inflation}
	g_{\chi,S}^T = g_{\overline{\chi},S}^T \mbox{ and }
	g_{\chi,S} = g_{\overline{\chi},S} \mbox{ in } \mathcal{Q}^c(\Gamma_K).
\end{equation}

Let $\mathcal{H}$ be an open subgroup of $\mathcal{G}$,
which contains $\tau$. Set
$K' := L_{\infty}^{\mathcal{H}}$. Restricting $\gamma' \in \Gamma_{K'}$
to $K_{\infty}$ induces a natural
embedding $\Gamma_{K'} \hookrightarrow \Gamma_K$. We obtain an inclusion  
of $\mathcal{Q}^c(\Gamma_{K'})$ into $\mathcal{Q}^c(\Gamma_K)$. Now let 
$\chi'$ be a character of $\mathcal{H}$ and set
$\chi := \ind_{\mathcal{H}}^{\mathcal{G}} \chi'$. 
Let $S' := S(K')$ and $T' := T(K')$.
As Artin $L$-series behave well
under induction of characters, we have an equality
\begin{equation} \label{eqn:behaviour-induction}
	g_{\chi,S}^T = g_{\chi',S'}^{T'} \mbox{ and }
	g_{\chi,S} = g_{\chi',S'} \mbox{ in } \mathcal{Q}^c(\Gamma_K).
\end{equation}
We refer the reader to \cite[Lemma 7.8 and Proposition 7.9]{Nils_thesis}
for more details.

\subsection{The $p$-adic cyclotomic character and its projections}\label{subsec:cyclotomic-char}
Let $\chi_{\mathrm{cyc}}$ be the $p$-adic cyclotomic character
\[
\chi_{\mathrm{cyc}}: \Gal(L_{\infty}(\zeta_{p})/K) \longrightarrow \Z_{p}^{\times},
\]
defined by $\sigma(\zeta) = \zeta^{\chi_{\mathrm{cyc}}(\sigma)}$ for any $\sigma \in \Gal(L_{\infty}(\zeta_{p})/K)$ and any $p$-power root of unity $\zeta$.
Let $\omega$ and $\kappa$ denote the composition of $\chi_{\mathrm{cyc}}$ with the projections onto the first and second factors of the canonical decomposition $\Z_{p}^{\times} = \mu_{p-1} \times (1+p\Z_{p})$, respectively;
thus $\omega$ is the Teichm\"{u}ller character.
We note that $\kappa$ factors through $\Gamma_{K}$ 
(and thus also through $\mathcal{G}$) and by abuse of notation we also 
use $\kappa$ to denote the associated maps with these domains.
We put $u := \kappa(\gamma_{K})$.

\subsection{Power series and $p$-adic Artin $L$-functions}\label{subsec:power-series-p-adic-L-functions}
Fix a totally even character $\psi \in \Irr_{\Q_{p}^{c}}(\mathcal{G})$. 
We set $\mathcal{G}^+ = \mathcal{G} / \langle \tau \rangle$ and view $\psi$
as a character of $\mathcal{G}^+$.
Each topological generator $\gamma_{K}$ of  $\Gamma_{K}$ permits the definition of a 
power series $G_{\psi,S}(t) \in \Q_{p}^{c} \otimes_{\Q_{p}} Quot(\Z_{p} \llbracket t \rrbracket )$ 
by starting out from the Deligne-Ribet power series for one-dimensional characters of open subgroups 
of $\mathcal{G}$ (see \cite{MR579702}; also see \cite{ MR525346, MR524276}) 
and then extending to the general case by using Brauer induction (see \cite{MR692344}).
One then has an equality
\[
L_{p,S}(1-s,\psi) = \frac{G_{\psi,S}(u^s-1)}{H_{\psi}(u^s-1)},
\]
where $L_{p,S}(s,\psi)$ denotes the `$S$-truncated $p$-adic Artin $L$-function' attached to $\psi$ constructed by Greenberg \cite{MR692344},
and where, for irreducible $\psi$, one has
\[
H_{\psi}(t) = \left\{\begin{array}{ll} \psi(\gamma_{K})(1+t)-1 & \mbox{ if }  H \subseteq \ker \psi\\
	1 & \mbox{ otherwise.}  \end{array}\right.
\]
Recall that $L_{p,S}(s, \psi) : \Z_{p} \rightarrow \C_{p}$ 
is the unique $p$-adic meromorphic
function with the property that for each strictly negative 
integer $r$ and each field isomorphism $\iota : \C \simeq \C_{p}$, we have
\begin{equation}\label{eqn:interpolation-property}
	L_{p,S}(r, \psi) = \iota \left( L_{S}(r, \iota^{-1} \circ (\psi \otimes \omega^{r-1})) \right).
\end{equation}
By Siegel's result \cite{MR0285488}, the right-hand side of  \eqref{eqn:interpolation-property}
does not depend on the choice of $\iota$.
By a slight abuse of notation, we simply write
$L_{S}(r,\psi \otimes \omega^{r-1})$ for the right hand side in
\eqref{eqn:interpolation-property} in the following.
If $\psi$ is linear, then \eqref{eqn:interpolation-property} is also valid when $r=0$.

Now \cite[Proposition 11]{MR2114937} implies that
\[
L_{K,S} : \psi \mapsto \frac{G_{\psi,S}(\gamma_{K}-1)}{H_{\psi}(\gamma_{K}-1)}
\]
is independent of the topological generator $\gamma_{K}$ and lies in 
$\Hom^{\ast}(R_{p}( \mathcal{G}^+), \mathcal{Q}^{c}(\Gamma_{K})^{\times})$.

\subsection{Comparing the power series}
We now relate the power series $g_{\chi,S}$ of a totally odd character $\chi$
to the power series of the previous subsection associated to the even character
$\check\chi \omega$.

\begin{theorem} \label{thm:comparing-series}
	Let $p$ be an odd prime.
	Let $L/K$ be a Galois CM extension and let $L_{\infty}$ be the cyclotomic
	$\Z_p$-extension of $L$. Let $\chi$ be a totally odd irreducible character
	of $\mathcal{G} = \Gal(L_{\infty}/K)$ with open kernel. 
	Then for each finite set of places $S$ of $K$
	containing all archimedean places and all places that ramify in $L_{\infty}/K$
	we have an equality
	\begin{equation} \label{eqn:power-series-equality}
		g_{\chi,S} = 
		\frac{G_{\check\chi \omega,S}(u(1+t)^{-1}-1)}{H_{\check\chi \omega}(u(1+t)^{-1}-1)}.
	\end{equation}
\end{theorem}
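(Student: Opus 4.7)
The plan is to reduce \eqref{eqn:power-series-equality} via Brauer's theorem to the case of linear characters, where the classical interpolation at $r=0$ of the Deligne--Ribet power series yields the identity directly. For the base case, suppose that $\chi$ is linear and (necessarily) totally odd. Both sides of \eqref{eqn:power-series-equality} then lie in $\mathcal{Q}^{c}(\Gamma_K)$, and I would prove them equal by evaluating at the points $t_0 = \rho(\gamma_K) - 1$ for $\rho$ ranging over finite-order characters of type $W$. These $t_0$ are precisely $\zeta - 1$ for $\zeta$ a $p$-power root of unity; they form an infinite set accumulating at $0$ in $\C_p$, so after clearing the denominator $H_{\check\chi\omega}$, agreement of both sides on this set suffices to identify them. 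By \eqref{eqn:evaluation_S} the left-hand side at $t_0$ equals $L_S(0, \check\chi\check\rho)$. For the right-hand side, the substitution produces $u\check\rho(\gamma_K) - 1$ in the inner argument; applying the $\check\rho^{\sharp}$-rule (coming from the $\Hom^{\ast}$-property of $L_{K,S}$) to both numerator and denominator rewrites the expression as $G_{\check\chi\omega\check\rho, S}(u-1)/H_{\check\chi\omega\check\rho}(u-1) = L_{p,S}(0, \check\chi\omega\check\rho)$. Since $\check\chi\omega\check\rho$ is linear and totally even, the classical interpolation of Cassou-Nogu\`es, Deligne-Ribet, and Barsky at $r=0$ gives $L_{p,S}(0, \check\chi\omega\check\rho) = L_S(0, \check\chi\check\rho)$, which matches.

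For the inductive step, Brauer's theorem yields a decomposition $\chi = \sum_i n_i \ind_{\mathcal{H}_i}^{\mathcal{G}} \lambda_i$ with linear $\lambda_i$ on open subgroups $\mathcal{H}_i$. Both sides of \eqref{eqn:power-series-equality} are multiplicative in the virtual character $\chi$ (since both classical and Greenberg $L$-series are multiplicative on sums of characters), so it suffices to treat $\chi = \ind_{\mathcal{H}}^{\mathcal{G}} \lambda$ for a single linear $\lambda$. Setting $K' := L_\infty^{\mathcal{H}}$, $w := [\Gamma_K : \Gamma_{K'}]$, and $1 + t' := (1+t)^w$ under the inclusion $\Gamma_{K'} \hookrightarrow \Gamma_K$, equation \eqref{eqn:behaviour-induction} identifies the left-hand side with $g_{\lambda, S(K')}$, while the analogous inductive compatibility of Greenberg's construction (which follows from Artin induction of $p$-adic $L$-series) identifies the right-hand side with $G_{\check\lambda\omega', S(K')}(u_{K'}(1+t')^{-1} - 1)/H_{\check\lambda\omega'}(u_{K'}(1+t')^{-1} - 1)$; one uses here the identity $u_{K'}(1+t')^{-1} = (u_K(1+t)^{-1})^w$, which is immediate from $u_{K'} = u_K^w$. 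Thus the identity for $\chi$ over $K$ reduces to the identity for $\lambda$ over $K'$, which is the linear case already established.

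The main obstacle is careful control of the auxiliary denominator $H_{\check\chi\omega}$, which can vanish at the evaluation points and whose behaviour under both the $\sharp$-action (in the base case) and induction (in the reduction step) must mirror that of the numerator $G_{\check\chi\omega, S}$ exactly, so that the equality of rational functions can legitimately be deduced from their agreement at the sequence of regular evaluation points. A minor additional technicality is the handling of Brauer summands $\lambda_i$ that are not totally odd; in those cases both sides vanish at the relevant evaluation points (in harmony with $L_S(0, \check\psi) = 0$ for totally even $\psi$), so the argument goes through without modification.
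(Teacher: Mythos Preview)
Your proposal is correct and follows essentially the same route as the paper: establish the linear case by evaluating both sides at the points $\rho(\gamma_K)-1$ for type-$W$ characters $\rho$ (using the interpolation property \eqref{eqn:interpolation-property} at $r=0$ for linear characters and the $\Hom^{\ast}$-compatibility of $L_{K,S}$), then deduce the general case by Brauer induction together with the induction/inflation compatibilities \eqref{eqn:behaviour-inflation}, \eqref{eqn:behaviour-induction}. One small correction: your proposed handling of even Brauer summands $\lambda_i$ is not quite the right fix---rather, as in the proof of Theorem~\ref{thm:ETNC-cases}, one first enlarges each $U_i$ to contain $\tau$ and then observes that the even pieces of $\sum_i z_i\,\ind_{U_i}^{\mathcal G}\lambda_i$ must cancel (since $\chi$ is odd and induction from a subgroup containing $\tau$ preserves parity), so one may simply drop them.
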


\begin{proof}
	Let us denote the right hand side of equation \eqref{eqn:power-series-equality}
	by $f_{\chi,S}$. Let $\rho$ be a character of type $W$. 
	Since $L_{K,S}$ lies in 
	$\Hom^{\ast}(R_{p}( \mathcal{G}^+), \mathcal{Q}^{c}(\Gamma_{K})^{\times})$,
	an easy computation shows that also $\chi \mapsto f_{\chi,S}$
	is compatible with $\rho$-twists. See \cite[Lemma 7.13]{Nils_thesis}
	for details.
	
	Now assume that $\chi$ is a linear character. Then we compute
	\begin{align*}
		f_{\chi,S}(\rho(\gamma_K)-1) & =  f_{\chi \otimes \rho, S}(0)
		= \frac{G_{(\check\chi \otimes \check\rho)\omega, S}(u-1)}{H_{(\check\chi \otimes \check\rho) \omega}(u-1)}\\
		& = L_{p,S}(0,(\check\chi \otimes \check\rho) \omega) = L_S(0, \check\chi \otimes \check\rho)\\
		& = g_{\chi,S}(\rho(\gamma_K)-1),
	\end{align*}
	where the last equality is \eqref{eqn:evaluation_S}. Then an application
	of \cite[Corollary 7.4]{MR1421575} shows that $f_{\chi,S} = g_{\chi,S}$.
	The equation \eqref{eqn:power-series-equality} is actually well-known
	for linear characters and we refer the reader to 
	\cite[\S 5]{MR3383600} and \cite[\S 7]{Nils_thesis} for 
	more details. Note that both $f_{\chi,S}$ and $g_{\chi,S}$ behave well
	under induction and inflation of characters. For $f_{\chi,S}$ this 
	actually holds by Greenberg's construction \cite[\S 2]{MR692344}
	(also see \cite[Proposition 12]{MR2114937}). 
	For $g_{\chi,S}$ this holds by \eqref{eqn:behaviour-induction}
	and \eqref{eqn:behaviour-inflation}, respectively.
	Hence the general case follows from Brauer's
	induction theorem.
\end{proof}

We now draw a few important consequences. Our first observation is
that we obtain a direct construction of $p$-adic $L$-series
of totally even characters.

\begin{corollary}
	Let $\psi \in \Irr(\mathcal{G})$ be a totally even character.
	Then one has 
	\[
		L_{p,S}(s, \psi) = g_{\check{\psi}\omega, S}(u^s-1).
	\]
\end{corollary}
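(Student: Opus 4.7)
The plan is to apply Theorem~\ref{thm:comparing-series} with a suitable odd character obtained from $\psi$, and then perform a simple change of variables to match the defining formula of the $p$-adic Artin $L$-series.

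More precisely, I would set $\chi := \check{\psi}\omega$ and first check that $\chi$ is a totally odd irreducible character of $\mathcal{G}$ with open kernel. Indeed, the contragredient of a totally even character is totally even (since $\check\psi(\tau) = \psi(\tau^{-1}) = \psi(\tau)$ and $\tau$ is of order $2$), and twisting by the Teichm\"uller character $\omega$, which is totally odd, gives a totally odd character. Moreover $\check\chi\omega = \psi$, so Theorem~\ref{thm:comparing-series} applied to $\chi$ yields
\[
g_{\chi,S}(t) \;=\; \frac{G_{\psi,S}\!\bigl(u(1+t)^{-1}-1\bigr)}{H_{\psi}\!\bigl(u(1+t)^{-1}-1\bigr)}.
\]

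Next I would substitute $t = u^{s}-1$. Then $1+t = u^{s}$ and hence $u(1+t)^{-1}-1 = u^{1-s}-1$. By the formula recalled in \S\ref{subsec:power-series-p-adic-L-functions}, namely
\[
L_{p,S}(1-s',\psi) \;=\; \frac{G_{\psi,S}(u^{s'}-1)}{H_{\psi}(u^{s'}-1)},
\]
setting $s' = 1-s$ gives
\[
L_{p,S}(s,\psi) \;=\; \frac{G_{\psi,S}(u^{1-s}-1)}{H_{\psi}(u^{1-s}-1)} \;=\; g_{\chi,S}(u^{s}-1) \;=\; g_{\check\psi\omega,S}(u^{s}-1),
\]
which is exactly the claim.

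There is no real obstacle here: the entire content of the corollary is packaged in Theorem~\ref{thm:comparing-series}, and the statement is deduced by a parity check (to verify that $\check\psi\omega$ is totally odd so that the theorem applies) together with the elementary substitution $1+t \mapsto u^{s}$. The only point one has to be mindful of is bookkeeping between the variables $s$ and $1-s$ arising from the two different conventions — Greenberg's power series is parametrised by $u^{s}-1$ via $L_{p,S}(1-s,\cdot)$, whereas $g_{\chi,S}$ naturally sees $u(1+t)^{-1}-1$ — and these conventions are reconciled by exactly the change of variables $t = u^{s}-1$.
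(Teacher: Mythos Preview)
Your proposal is correct and is exactly the intended argument: the paper states this corollary without proof, but the very next corollary (Corollary~\ref{cor:Gross-conjecture}) is proved in precisely this way, applying Theorem~\ref{thm:comparing-series} to $\chi = \check\psi\omega$ and then specialising the variable. Your substitution $t = u^{s}-1$ and the bookkeeping between $s$ and $1-s$ are handled correctly.
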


As a second application we give an affirmative answer to a question
raised by Greenberg \cite[\S 4]{MR692344}, which is also a 
special case of a conjecture of Gross \cite{MR656068}.

\begin{corollary} \label{cor:Gross-conjecture}
	Let $\psi \in \Irr(\mathcal{G})$ be a totally even character.
	Then one has an equality
	\[
		L_{p,S}(0,\psi) = L_S(0, \psi \omega^{-1}).
	\]
\end{corollary}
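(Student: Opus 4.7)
The plan is to derive the corollary as an essentially immediate consequence of the preceding corollary (the direct construction $L_{p,S}(s,\psi) = g_{\check\psi\omega}(u^s-1)$) together with the interpolation identity \eqref{eqn:evaluation_S} at the trivial character of type $W$. All the substantial work has been done upstream.

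First I would check that the character $\chi := \check\psi\omega$ lies in the domain to which the power series $g_{\chi,S}$ is attached, namely that it is totally odd. Since $\psi$ is totally even, so is $\check\psi$, and $\omega$ is totally odd (as $\omega(\tau) = -1$), so $\chi = \check\psi\omega$ is indeed totally odd and the power series $g_{\check\psi\omega,S} \in \Q_p^c \otimes_{\Z_p} \Lambda(\Gamma_K)$ is defined via \eqref{eqn:infinite-Stickelberger} and the projection $j_{\check\psi\omega}$. Applying the preceding corollary at $s=0$ gives $u^0-1=0$, hence
\[
L_{p,S}(0,\psi) \;=\; g_{\check\psi\omega}(u^0-1) \;=\; g_{\check\psi\omega}(0).
\]

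Next I would invoke \eqref{eqn:evaluation_S} with $\chi = \check\psi\omega$ and $\rho$ the trivial character of type $W$, for which $\rho(\gamma_K)-1 = 0$ and $\check\rho$ is trivial. That identity reads $g_{\chi,S}(\rho(\gamma_K)-1) = L_S(0,\check\chi\otimes\check\rho)$, which specialises to
\[
g_{\check\psi\omega,S}(0) \;=\; L_S\bigl(0,\,\psi\omega^{-1}\bigr),
\]
since $\check{\check\psi\omega} = \psi\omega^{-1}$. Concatenating the two displays yields the claimed equality.

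The substantive obstacle is not in this final deduction but in making the construction $g_{\chi,S}$ available in the first place: one needs $\theta_S^T(L_{\infty}/K,0)$ to exist as an honest limit in $\Q_p\otimes_{\Z_p}\Lambda(\mathcal{G})$, which requires a uniform bound on the denominators of $\theta_S^T(L_n/K,0)$ along the cyclotomic tower. This is precisely what Proposition \ref{prop:bounding-denominators} (via the ETNC input at an order of the shape $\mathcal{M}(N)[\Delta]$ with $\Delta$ a cyclic $p$-group) delivers, and it is why the factor $|\mathcal{G}'|$ from Theorem \ref{thm:denominator-ideal-capZ} — rather than the useless bound $|G_n|\to\infty$ — is what matters. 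Once the limit exists, the identification of $g_{\check\psi\omega,S}$ with Greenberg's $p$-adic $L$-series (Theorem \ref{thm:comparing-series}) removes the potential zero-over-zero indeterminacy that blocks the naive interpolation formula \eqref{eqn:intro-interpolation-property} at $r=0$, and the value at $s=0$ is read off tautologically from \eqref{eqn:evaluation_S}.
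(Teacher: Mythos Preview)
Your proof is correct and follows essentially the same route as the paper: set $\chi = \check\psi\omega$, evaluate the power series identity at $t=0$, and read off $g_{\chi,S}(0) = L_S(0,\psi\omega^{-1})$ from \eqref{eqn:evaluation_S} with the trivial $\rho$. The only cosmetic difference is that the paper cites Theorem~\ref{thm:comparing-series} directly (evaluating both sides of \eqref{eqn:power-series-equality} at $t=0$), whereas you route through the intermediate corollary $L_{p,S}(s,\psi) = g_{\check\psi\omega,S}(u^s-1)$ and set $s=0$; these are the same computation.
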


\begin{proof}
	We apply Theorem \ref{thm:comparing-series} to the character
	$\chi= \check\psi\omega$. Evaluating $g_{\chi,S}$ at $t=0$ yields
	$L_S(0, \check\chi) = L_S(0, \psi \omega^{-1})$ 
	by \eqref{eqn:evaluation_S} with $\rho$ equal to
	the trivial character. Evaluating the right hand side of 
	\eqref{eqn:power-series-equality} at $t=0$ gives
	$L_{p,S}(0, \psi)$ as desired.
\end{proof}

\begin{remark}
	In \cite{MR3980291} Johnston and the second named author deduced cases
	of the non-abelian Brumer--Stark conjecture from the equivariant Iwasawa
	main conjecture. In many of these results it had been assumed that
	the underlying group is monomial. However, this was only required to guarantee
	the conclusion of Corollary \ref{cor:Gross-conjecture}.
	Since this now holds unconditionally for an arbitrary totally
	even character, one can drop the hypothesis of being monomial
	in the respective results.
\end{remark}

Finally, we obtain a new proof of the $p$-adic Artin conjecture
\cite[p.\ 82]{MR692344}, which does not rely on the main conjecture
proved by Wiles \cite{MR1053488}.

\begin{corollary}[The $p$-adic Artin conjecture] \label{cor:p-adic-Artin}
	Let $p$ be an odd prime and let $K$ be a totally real number field.
	Let $\psi$	be an irreducible totally even character of $G_K$. Then
	for each finite set $S$ of places of $K$ containing $S_{\infty}
	\cup S_p$ and all places where $\psi$ is ramified, we have
	\[
		G_{\psi,S}(t) \in \Q_p^c \otimes_{\Z_p} \Z_p\llbracket t \rrbracket.
	\]
\end{corollary}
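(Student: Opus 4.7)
The plan is to deduce integrality of $G_{\psi,S}(t)$ from the Iwasawa-theoretic integrality of Stickelberger elements via Theorem~\ref{thm:comparing-series}. Set $\chi := \check\psi\omega$; then $\chi(\tau) = \psi(\tau)\omega(\tau) = -\psi(1) = -\chi(1)$, so $\chi$ is totally odd, it is irreducible (as a linear twist of $\check\psi$), and $\check\chi\omega = \psi$. Choose a finite Galois CM extension $L/K$ through which both $\psi$ and $\omega$ factor, form the cyclotomic tower $L_\infty/L$, and pick a set $T$ of places of $K$, disjoint from $S$, such that $\Hyp(S,T)$ holds at every layer $L_n/K$ simultaneously. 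Theorem~\ref{thm:comparing-series} then yields the identity
$$g_{\chi,S}^T \;=\; \delta_T(0,\chi)\cdot\frac{G_{\psi,S}(u(1+t)^{-1}-1)}{H_{\psi}(u(1+t)^{-1}-1)} \quad \text{in } \mathcal{Q}^c(\Gamma_K).$$

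The first substantive step is to show that $g_{\chi,S}^T$ lies in $\Z_p^c\otimes_{\Z_p}\Lambda(\Gamma_K)$. For $n$ sufficiently large one has $G_n = H\rtimes\Gamma_n$ with cyclic quotient $\Gamma_n$, so Proposition~\ref{prop:bounding-denominators}(ii) (applied with $N=H$) gives $\theta_S^T(L_n/K,0) \in \prod_{\chi/\sim}\mathcal{O}_F[\Delta_\chi]$, i.e., central integrality in the maximal $\Z_p$-order. Passing to the inverse limit yields $\theta_S^T(L_\infty/K,0) \in \zeta(\mathcal{M}(\mathcal{G}))$, and the identification~\eqref{eqn:center-of-max} then translates this precisely into $g_{\chi,S}^T = j_\chi(\theta_S^T(L_\infty/K,0)) \in \Z_p^c\otimes\Lambda(\Gamma_K) \simeq \Z_p^c\otimes\Z_p\llbracket t\rrbracket$ under $\gamma_K\mapsto 1+t$.

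Next, I would further enlarge $T$ so that $\delta_T(0,\chi)$ becomes a unit in $\Z_p^c\otimes\Lambda(\Gamma_K)$. Its value at the trivial character of $\Gamma_K$ is $\prod_{v\in T}\det(1-Nv\cdot\chi(\sigma_w^{-1})\mid V_\chi^{I_w})$, and a Chebotarev-density argument produces a prime $v\notin S$ of residue characteristic distinct from $p$ for which this constant term is a $p$-adic unit; the corresponding $\delta_v(0,\chi)$ is then a unit in $\Z_p^c\llbracket t\rrbracket$. Rearranging the displayed identity,
$$G_{\psi,S}(u(1+t)^{-1}-1) \;=\; \delta_T(0,\chi)^{-1}\cdot g_{\chi,S}^T\cdot H_{\psi}(u(1+t)^{-1}-1),$$
and observing that $H_{\psi}(u(1+t)^{-1}-1)$ is itself an integral power series in $t$ (equal to either $1$ or $\psi(\gamma_K)u(1+t)^{-1}-1$, where $(1+t)^{-1}\in\Z_p\llbracket t\rrbracket$), the right-hand side lies in $\Q_p^c\otimes\Z_p\llbracket t\rrbracket$. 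Since $t\mapsto u(1+t)^{-1}-1$ is an involutive automorphism of $\Q_p^c\otimes\Z_p\llbracket t\rrbracket$ (as $u\in\Z_p^\times$), the desired conclusion $G_{\psi,S}(t)\in\Q_p^c\otimes\Z_p\llbracket t\rrbracket$ follows.

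The main obstacle is the first step: transferring the finite-level integrality of Proposition~\ref{prop:bounding-denominators}(ii) into integrality of $g_{\chi,S}^T$ in the inverse limit. Concretely, one must check that the factorwise integrality $\theta_S^T(L_n/K,0)\in\prod_{\chi/\sim}\mathcal{O}_F[\Delta_\chi]$ at each level is compatible with the projections $G_{n+1}\twoheadrightarrow G_n$ and assembles into the integral structure on $\zeta(\mathcal{M}(\mathcal{G}))$ recorded in~\eqref{eqn:center-of-max}, so that $j_\chi$ of the limit lies in $\Z_p^c\otimes\Lambda(\Gamma_K)$. The Chebotarev manipulation to arrange $\delta_T(0,\chi)$ being a unit, while technical, is routine.
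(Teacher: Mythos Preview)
Your overall strategy coincides with the paper's: rewrite $G_{\psi,S}$ via Theorem~\ref{thm:comparing-series}, use the integrality of $g_{\chi,S}^T$ (which the paper also takes as input from Proposition~\ref{prop:bounding-denominators} along the tower), and then control the Euler-type factor linking $g_{\chi,S}^T$ to $g_{\chi,S}$. A small notational point first: the factor in your displayed identity is not the constant $\delta_T(0,\chi)$ but the power series $j_\chi(\prod_{v\in T}\delta_v)\in\mathcal{Q}^c(\Gamma_K)$, whose value at $t=0$ happens to equal $\delta_T(0,\chi)$.

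The genuine gap is your claim that a Chebotarev argument always produces $v$ making this factor a unit. This fails exactly when $H\subseteq\ker\psi$, i.e.\ when $\psi$ is of type $W$. Take $\psi$ trivial, so $\chi=\omega$: for any $v\notin S$ one has $\omega(\sigma_w)\equiv Nv\pmod p$, hence the constant term
\[
1 - Nv\cdot\omega(\sigma_w^{-1}) \equiv 1 - Nv\cdot(Nv)^{-1} = 0 \pmod p,
\]
so $j_\chi(\delta_v)$ is \emph{never} a unit in $\Z_p^c\llbracket t\rrbracket$, no matter how $v$ is chosen. Your ``routine'' step is therefore false as stated. The paper splits into two cases. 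When $H\not\subseteq\ker\psi$ it shows (via the exact sequence \eqref{eqn:resolution-of-TateTwist} and Chebotarev) that $j_\chi(\delta_v)$ can be made a unit, which is essentially your argument. When $H\subseteq\ker\psi$ it does something different: after reducing by a $\rho^\sharp$-twist to $\psi$ trivial, it chooses $v$ so that $\sigma_{w_\infty}$ is a topological generator of $\Gamma$ mapping to $\gamma_K$, computes $j_\chi(\delta_v)(t)=u(1+t)^{-1}-1$, and observes that under the change of variable $t\mapsto u(1+t)^{-1}-1$ this becomes exactly $t=H_\psi(t)$. Thus the non-unit Euler factor \emph{cancels} the $H_\psi$-denominator rather than being inverted. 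You could instead dispose of the type-$W$ case by appealing directly to Deligne--Ribet (such $\psi$ are linear), but either way the case distinction must be made.
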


\begin{proof}
	By Theorem \ref{thm:comparing-series} we have
	$G_{\psi,S} (t) = H_{\psi}(t) \cdot g_{\chi,S}(u(1+t)^{-1}-1)$,
	where we set $\chi := \check\psi \omega$. The main obstacle
	now is that we only know that $g_{\chi,S}^T(t) \in 
	\Q_p^c \otimes_{\Z_p} \Z_p\llbracket t \rrbracket$ for any
	set $T$ of places of $K$ such that $\Hyp(S,T)$ is satisfied for
	all layers $L_n/K$. We may choose $T = T_v := \{v\}$, 
	where $v$ is a place 
	of $K$ of sufficiently large norm. 
	For each such $v$ we then have an equality
	\[
		g_{\chi,S}^{T_v} = j_{\chi}(\delta_v) g_{\chi,S}.
	\]
	Moreover, there is a short exact sequence of 
	$\Q_p^c \otimes_{\Z_p} \Lambda(\mathcal{G})$-modules
	\begin{equation} \label{eqn:resolution-of-TateTwist}
		0 \rightarrow \Q_p^c \otimes_{\Z_p} \Lambda(\mathcal{G})
		\rightarrow \Q_p^c \otimes_{\Z_p} \Lambda(\mathcal{G})
		\rightarrow \ind_{\mathcal{G}_{w_{\infty}}}^{\mathcal{G}} \Q_p^c(1)
		\rightarrow 0,
	\end{equation}
	where the first non-trivial arrow is multiplication by
	$(Nv)\sigma_{w_{\infty}}^{-1} - 1$. Here $\sigma_{w_{\infty}}$ denotes
	again the Frobenius automorphism at a place $w_{\infty}$ in 
	$L_{\infty}$ above $v$, which is also a topological generator
	of the decomposition group $\mathcal{G}_{w_{\infty}}$.
	
	We now consider the case where $H$ is not contained in the
	kernel of $\psi$. Then the Teichmüller character does not occur
	as an irreducible constituent of $\res^{\mathcal{G}}_H \chi$, and
	there is an $h \in H$
	such that $\psi(h) \not= \psi(1)$. By  Chebotarev's
	density theorem we may choose $v$ such that
	$\sigma_{w_{\infty}} = h \gamma \in H \rtimes \Gamma$ 
	for some $\gamma \in \Gamma$. 
	The primitive central idempotents $\varepsilon_{\chi}$ lie in
	$\Q_p^c \otimes_{\Z_p} \Lambda(\mathcal{G})$ and taking
	`$\chi$-parts' of sequence \eqref{eqn:resolution-of-TateTwist}
	shows that $((Nv)\sigma_{w_{\infty}}^{-1} - 1)\varepsilon_{\chi}$
	is invertible in 
	$\Q_p^c \otimes_{\Z_p} \Lambda(\mathcal{G})\varepsilon_{\chi}$.
	Hence $j_{\chi}(\delta_v) \in
	(\Q_{p}^c \otimes_{\Z_p} \Lambda(\Gamma_K))^{\times}$ and we are done
	in this case.
	
	Now assume that $H$ is contained in the kernel of $\psi$ so that
	$\psi$ is a character of type $W$.
	As $G_{\psi\otimes\rho,S} = \rho^{\sharp}(G_{\psi,S})$
	for any totally even $\psi$ and any $\rho$ of type $W$, we may assume that
	$\psi$ is the trivial character. Now choose $v$ such that
	$\sigma_{w_{\infty}}$ is a topological generator of $\Gamma$.
	By replacing $\gamma_K$ if necessary,
	we may assume in addition that $\sigma_{w_{\infty}}$ maps to $\gamma_K$
	under $\mathcal{G} \twoheadrightarrow \Gamma_{K}$.
	Then we get $j_{\chi}(\delta_v)(t) = u(1+t)^{-1} - 1$ and thus
	$j_{\chi}(\delta_v)(u(1+t)^{-1}-1) = t = H_{\psi}(t)$
	as desired.
\end{proof}

The main conjecture actually implies the stronger statement that
$G_{\psi,S} \in \Z_p^c \otimes_{\Z_p}\Z_p \llbracket t \rrbracket$,
see \cite[Remark G]{MR2114937}. The proof of Corollary
\ref{cor:p-adic-Artin} and \eqref{eqn:center-of-max}
then imply the following result in the case $r=0$.
The case $r<0$ can be shown similarly (indeed the proof simplifies
as \eqref{eqn:interpolation-property} was already known to hold for
non-linear $\chi$).

\begin{corollary}
	Let $p$ be an odd prime.
	Let $L/K$ be a Galois CM extension and let $L_{\infty}$ be the cyclotomic
	$\Z_p$-extension of $L$. Let $\mathcal{G} = \Gal(L_{\infty}/K)$
	and choose a maximal $R$-order $\mathcal{M}(\mathcal{G})$ containing 
	$\Lambda(\mathcal{G})$. Let $S$ and $T$ be finite sets of places
	of $K$ such that $\Hyp(S,T)$ is satisfied for all layers $L_n/K$.
	Then for each integer $r \leq 0$ one has
	\[
		\theta_S^T(L_{\infty}/K,r) \in \zeta(\mathcal{M}(\mathcal{G})).
	\]
\end{corollary}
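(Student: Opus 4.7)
By tensoring \eqref{eqn:center-of-max} with $\Q_p$ over $\Z_p$ one obtains the identification
\[
\zeta(\Q_p \otimes_{\Z_p}\Lambda(\mathcal{G})) \simeq \Maps^{\ast}(\Irr_{\Q_p^c}(\mathcal{G}), \Q_p^c \otimes_{\Z_p}\Lambda(\Gamma_K))
\]
under which $\theta_S^T(L_\infty/K, r)$ corresponds to the map $\chi \mapsto g_{\chi,S}^T := j_\chi(\theta_S^T(L_\infty/K, r))$. In view of \eqref{eqn:center-of-max}, the plan is to verify, for each irreducible $\chi \in \Irr_{\Q_p^c}(\mathcal{G})$, that $g_{\chi, S}^T$ actually lies in the smaller submodule $\Z_p^c \otimes_{\Z_p} \Lambda(\Gamma_K)$.

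Consider first $r=0$ and split on the parity of $\chi$. If $\chi$ is totally even, Tate's formula for the order of vanishing of an Artin $L$-function at zero gives $L_S(0, \check\chi \otimes \check\rho) = 0$ for every type-$W$ twist $\rho$; the trivial-character case is handled by the vanishing Euler factors at primes in $S_f$ (when $K = \Q$) or by $\zeta_K(0) = 0$, which follows from $\mathrm{ord}_{s=0}\zeta_K = [K:\Q]-1 \geq 1$ (when $K \neq \Q$). Since the values $\rho(\gamma_K) - 1$ accumulate at the origin of the maximal ideal of $\Lambda(\Gamma_K)$, $p$-adic rigidity of power series forces $g_{\chi, S}^T = 0$, which is trivially integral.

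If $\chi$ is totally odd, Theorem \ref{thm:comparing-series} identifies $g_{\chi,S}(t)$ with $G_{\check\chi\omega,S}(\sigma(t))/H_{\check\chi\omega}(\sigma(t))$, where $\sigma(t) := u(1+t)^{-1} - 1$ lies in the maximal ideal of $\Lambda(\Gamma_K)$ because $u \in 1 + p\Z_p$. The Iwasawa main conjecture of Wiles \cite{MR1053488}, in the sharper form recorded in \cite[Remark G]{MR2114937}, ensures that $G_{\check\chi\omega, S}$ has coefficients in $\Z_p^c \otimes_{\Z_p} \Z_p\llbracket t\rrbracket$, so the numerator $G_{\check\chi\omega, S}(\sigma(t))$ is integral. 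The denominator $H_{\check\chi\omega}(\sigma(t))$ is trivial when $H \not\subseteq \ker(\check\chi\omega)$; otherwise it equals $(\check\chi\omega(\gamma_K) u - (1+t))/(1+t)$, a distinguished polynomial of degree one up to a unit. Writing $g_{\chi, S}^T = j_\chi(\delta_T)\, g_{\chi, S}$, I would argue that this polynomial is absorbed by $j_\chi(\delta_T) = \prod_{v \in T} j_\chi(\delta_v)$, each factor of which is integral as the $j_\chi$-image of a reduced norm on $\Lambda(\mathcal{G})$; the required vanishing at $t = \check\chi\omega(\gamma_K) u - 1$ is obtained by a direct computation parallel to the one at the end of the proof of Corollary \ref{cor:p-adic-Artin}. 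The case $r < 0$ proceeds entirely in parallel and is simpler: the interpolation \eqref{eqn:interpolation-property} is unconditional for every $\chi$ at every negative integer, so no appeal to Gross's conjecture is required to reduce to integrality of a Deligne--Ribet-type power series.

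The main obstacle is this final cancellation in the sub-case $\chi$ totally odd with $H \subseteq \ker(\check\chi\omega)$. Since $T$ is given and cannot be freely enlarged, one cannot prescribe a place $v \in T$ whose Frobenius maps to $\gamma_K$ in $\Gamma_K$, and the desired zero of $j_\chi(\delta_T)$ must be extracted indirectly. I expect this to follow by combining Proposition \ref{prop:bounding-denominators}, which forces the denominators of $g_{\chi, S}^T$ to be bounded by an integer independent of $t$, with the precise pole structure of $G_{\check\chi\omega, S}(\sigma(t))/H_{\check\chi\omega}(\sigma(t))$ furnished by the main conjecture; together, these preclude any leftover pole in $t$ and yield the required integrality.
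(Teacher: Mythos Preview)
Your argument is correct and follows essentially the same route as the paper: reduce via \eqref{eqn:center-of-max} to checking $j_\chi(\theta_S^T(L_\infty/K,r)) \in \Z_p^c \otimes_{\Z_p} \Lambda(\Gamma_K)$ for each $\chi$, dispose of even $\chi$ by vanishing, and for odd $\chi$ invoke Theorem~\ref{thm:comparing-series} together with the integrality of $G_{\check\chi\omega,S}$ furnished by Wiles' main conjecture. Your final step in the problematic sub-case is also valid: writing $g_{\chi,S}^T = A/B$ with $A \in \mathcal{O}\llbracket t\rrbracket$ and $B$ a degree-one distinguished polynomial, the bounded-denominator statement (already recorded in \eqref{eqn:infinite-Stickelberger}) gives $g_{\chi,S}^T \in \varpi^{-k}\mathcal{O}\llbracket t\rrbracket$, and since $B$ is coprime to $\varpi$ in the UFD $\mathcal{O}\llbracket t\rrbracket$ one gets $B \mid A$. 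So your ``I expect'' is in fact a proof.

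That said, there is a shorter way through the sub-case $H \subseteq \ker(\check\chi\omega)$ which you may be overlooking. In that case $\check\chi\omega$ is an irreducible character of the abelian quotient $\mathcal{G}/H \simeq \Gamma_K$, hence is one-dimensional, and therefore $\chi$ itself is linear. Then $g_{\chi,S}^T$ is computed (via \eqref{eqn:behaviour-inflation}) from the Stickelberger element of the abelian extension $L_\infty^{\mathcal{G}'}/K$, which lies in $\Z_p[\Gal(L_n^{\mathcal{G}'}/K)]$ for each $n$ by Deligne--Ribet, and integrality is immediate. This bypasses the pole-cancellation issue entirely and is closer in spirit to how the paper's terse ``the proof of Corollary~\ref{cor:p-adic-Artin} and \eqref{eqn:center-of-max} then imply'' is most naturally unpacked.
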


\bibliography{integrality_Bib}{}
\bibliographystyle{amsalpha}

\end{document}